\documentclass[11pt,a4paper]{article}
\usepackage{xcolor}
\usepackage[utf8]{inputenc}
\usepackage[T1]{fontenc}
\usepackage{amsmath,amssymb,amsthm}
\usepackage{mathtools}
\usepackage[a4paper,margin=2.1cm]{geometry}
\usepackage{hyperref}
\usepackage{enumitem}
\usepackage{booktabs}
\usepackage{graphicx}

\newtheorem{theorem}{Theorem}
\newtheorem{proposition}{Proposition}
\newtheorem{lemma}{Lemma}
\newtheorem{corollary}{Corollary}
\theoremstyle{definition}
\newtheorem{remark}{Remark}

\newtheorem{assumption}{Assumption}

\DeclareMathOperator{\vol}{vol}
\DeclareMathOperator{\supp}{supp}
\DeclareMathOperator{\dive}{div}
\DeclareMathOperator{\arccosh}{arccosh}
\DeclareMathOperator{\capac}{cap}
\newcommand{\R}{\mathbb{R}}
\newcommand{\N}{\mathbb{N}}
\newcommand{\C}{\mathbb{C}}
\newcommand{\bg}{\overline{g}}
\newcommand{\ws}{|w|}
\newcommand{\wox}{w \odot x}
\newcommand{\lmin}{\lambda_{\min}}

\title{
	Volume of quasi-homogeneous sublevel sets: Two linear algebra deterministic algorithms with convergence rates}
\author{Didier Henrion\footnote{LAAS-CNRS, University of Toulouse, France and Faculty of Electrical Engineering, Czech Technical University in Prague, Czechia. Email: henrion@laas.fr} \quad and \quad Jean Bernard Lasserre\footnote{LAAS-CNRS, University of Toulouse, and Toulouse School of Economics, France. Email: lasserre@laas.fr}}
\date{\today}

\begin{document}
	\maketitle
	
	\begin{abstract}
		We consider the problem of computing the Lebesgue volume of the unit
		sublevel set of a positive quasi-homogeneous polynomial. Pushing the
		Lebesgue measure of an ambient bounding box forward through the polynomial
		reduces this high-dimensional volume to a one-dimensional moment problem.
		This removes the ambient dimension from the optimization and confines the dimension to a single preprocessing stage, computing
		the moments of the polynomial over the box, which is polynomial in the ambient dimension for sparse
		or separable polynomials.
		
		We propose two deterministic algorithms for the resulting univariate relaxations, each
		returning certified upper and lower bounds on the volume. Both bypass
		semidefinite optimization entirely and rely only on standard numerical
		linear algebra. The first approximates a piecewise-constant function by a
		Chebyshev polynomial, so that each relaxation reduces to a fast cosine
		transform, and converges at a polynomial rate in the relaxation order. The
		second extracts the volume bounds from a single generalized eigenvalue
		problem involving moment and localizing matrices whose size grows linearly
		with the relaxation order, and converges at an exponential rate; the ratio
		governing this rate is determined by an a priori upper bound on the
		polynomial over the bounding box.
		
		Finally, the univariate polynomials produced by either algorithm are
		feasible for the multivariate moment-SOS volume hierarchy. The algebraic and
		geometric rates therefore transfer to the hierarchy itself, improving on its
		best known convergence rates.
	\end{abstract}
	
	\section{Introduction}\label{sec:setup}

	The \emph{volume problem} we address is elementary to state and notoriously
	hard to solve: given a compact set
	\begin{equation}\label{eq:K}
		K \;:=\; \{ x \in \R^n : g(x) \leq 1 \}
	\end{equation}
	described by the unit sublevel set of a polynomial $g$,  compute, or approximate, with guarantees, its
	Lebesgue measure
	\[
	\vol K \;=\; \int_K dx .
	\]
	Beyond its intrinsic interest in computational geometry \cite{hls2009}, this quantity is the
	basic primitive behind computing the probability of a event and
	behind estimating regions of attraction and reachable sets of polynomial
	control systems~\cite{HenrionKorda2014}, and it carries with it the integration
	of polynomials against the uniform measure on $K$. 
	{In addition and importantly, when $K$ has the form \eqref{eq:K} with $g$ homogeneous and nonnegative, then $\vol K$ is also related to:
		
		-- \emph{Partition functions} $Z(g)=\int\exp(-g(x))dx$ for exponential families in statistics as $\vol K=Z(g)/\Gamma(1+n/\mathrm{deg}(g))$;
		see e.g. \cite{Lasserre2017,lasserre2019,WainwrightJordan2008}\,,
		
		-- \emph{Integral discriminants} $\int\exp(g(x))dx$ that appear in computational physics, as described in \cite{Morozov2009} where the authors' goal is to provide an exact analytical formula in terms of hypergeometric functions of invariants of $g$ via non linear numerical algebra. While elegant, this approach (so far successful in a few cases only), reveals that several extraordinary long and complex formula of resultants may be needed; see e.g. \cite{Morozov2009}[p. 30]}

	The difficulty is twofold.
	On the modelling side, $K$ is typically non-convex, possibly disconnected, and
	given only implicitly through its defining polynomial. On the computational
	side, the problem is subject to a severe curse of dimensionality, which any
	practical method must confront.
	
	\paragraph{Intrinsic hardness}
	Even for the simplest sets the problem is intractable. Computing the exact
	volume of a polytope presented by its facets, or by its vertices, is as hard as evaluating a permanent~\cite{DyerFrieze1988}. In the
	black-box model, where $K$ is convex and accessed through a membership oracle,
	no deterministic algorithm making polynomially many oracle calls can approximate
	$\vol K$ within a factor better than exponential in the dimension \cite{Elekes1986,BaranyFuredi1987}. These obstructions leave
	three escape routes: restrict the geometry of $K$, allow randomization, or
	settle for certified approximation. The literature organizes naturally
	along them.
	
	\paragraph{{Randomized methods for convex bodies and some star-shaped sets}}
	When $K$ is convex, randomization changes the picture entirely. The reference \cite{DyerFriezeKannan1991} gave the first fully
	polynomial randomized approximation scheme {(FPRAS)}: for any $\varepsilon,\delta
	> 0$ it returns a value within relative error $\varepsilon$ with probability at
	least $1-\delta$, using a rapidly mixing random walk (ball walk, hit-and-run) to
	sample near-uniformly from $K$ together with a multiphase Monte Carlo
	telescoping of nested bodies. A sustained effort then reduced the oracle
	complexity \cite{KannanLovaszSimonovits1997,LovaszVempala2006,CousinsVempala2018}; the same machinery integrates log-concave densities. The strength of these
	methods is their low-degree polynomial scaling in $n$. Their
	limitations are equally structural: they require convexity (or log-concavity),
	the guarantee is probabilistic rather than deterministic, and for non-convex $K$
	the underlying chain may mix exponentially slowly. Plain rejection Monte Carlo
	over a bounding box $B$ dispenses with convexity and converges at the
	dimension-free rate $O(N^{-1/2})$, but its variance scales like the inverse of
	the volume ratio $\vol K / \vol B$, which is typically exponentially small in
	$n$, precisely the regime one wants to handle. {It turns out that similar FPRAS also work for a class of star-shaped sets
		as described in the apparently overlooked contribution \cite{CDV2010}. A crucial parameter of the computational 
		complexity of the associated FPRAS is
		the fraction of volume occupied by the kernel, as well as the ability to sample in that kernel. Recently this approach has been extended beyond star-shaped compact sets in \cite{Vempala2026}.}

	\paragraph{Symbolic-numeric methods}
	A second route trades statistical estimates for certified digits. Reference \cite{slm2019} realizes the volume
	of a compact semialgebraic set as a {period} of a rational integral and
	evaluate it to absolute precision by numerically integrating the
	associated Picard--Fuchs (holonomic) differential equation, improving on the
	exponential precision bounds of Monte Carlo and moment methods. Very recently, the reference \cite{Ramesh2026} specializes this approach to convex bodies cut out by
	concave polynomials, exploiting convexity to reduce the number of
	creative-telescoping steps by an exponential factor. Such methods are
	deterministic and deliver arbitrary accuracy, but their cost grows steeply with
	both the number of variables and the degree, and they rely on heavy
	computer-algebra machinery; in practice they remain confined to a handful of
	variables.
	
	\paragraph{Moment-SOS hierarchies}
	Closest to the present work is the deterministic moment-sums-of-squares (SOS)
	methodology of \cite{hls2009}. Writing $\vol K$ as the value of
	an infinite-dimensional linear program over measures dominated by the Lebesgue
	measure on a bounding box, and truncating to moments of bounded degree, one
	obtains a hierarchy of semidefinite relaxations whose optimal values form a
	monotone sequence of upper bounds converging to $\vol K$; the dual computes a
	polynomial majorizing the indicator $\mathbf 1_K$, and the same data yield all
	moments of the uniform measure on $K$, hence the integral of any polynomial. The
	scheme is deterministic, comes with an a~priori convergence guarantee, and
	handles non-convex and disconnected sets. It has, however, two well-documented
	weaknesses. First, the indicator function is discontinuous, so its polynomial
	majorants exhibit a Gibbs phenomenon  \cite[Chap. 9]{trefethen2013}. As a result, the bounds, although
	monotone, tighten only slowly, and persistent oscillations near $\partial
	K$ make convergence slow. Second, the
	order-$r$ relaxation manipulates pseudo-moments up to degree $2r$ and
	positive-semidefinite matrices of size $\binom{n+r}{n}$, so its dimension
	explodes with $n$. 
	
	\paragraph{Pushforward and dimension reduction}
	The curse of dimensionality afflicting the moment-SOS hierarchy lives in the
	\emph{ambient} variable $x \in \R^n$. A way to escape it, when $K$ has the form \eqref{eq:K}, is to push the uniform measure forward through $g$ and work
	in the image space, whose dimension is governed by the degree of $g$ rather than
	by $n$. For a positive  {homogeneous} form $g$, it was shown \cite{lasserre2019}  that computing the volume of $K$ reduces to
	a one-dimensional moment problem for the pushforward of the Lebesgue measure
	under $g$, and is approximated as closely as desired by solving a sequence of
	generalized eigenvalue problems for a pair of Hankel matrices whose entries are
	available in closed form. This rests on structural properties of homogeneous sublevel
	volumes: the identity relating $\vol\{g\leq1\}$ to the non-Gaussian integral
	$\int \exp(-g)$~\cite{Lasserre2017}, the convexity (complete monotonicity) \cite{KozhasovLasserre2023} of the volume as a function of the
	coefficients of $g$, and the associated extremal
	problems~\cite{KozhasovLasserre2022}. Because its cost is dictated by $\deg g$
	and not by $n$, the pushforward strategy is naturally suited to the regime of
	moderate-to-large $n$ and small degree.
	
	\paragraph{Scope of this paper}
	Homogeneity is, however, a stringent requirement: it forces every monomial of
	$g$ to share the same total degree, so that $K$ is the unit ball of a norm-like
	form with isotropic scaling. Many sublevel sets of interest are anisotropic,
	with the variables entering at different natural scales. We therefore work in
	the broader \emph{quasi-homogeneous} setting, in which $g$ is homogeneous with
	respect to a vector of positive integer weights.
	This class is wide enough to model anisotropic sublevel sets while retaining the
	exact scaling symmetry that drives the pushforward construction, with the sum of the weights
	playing the role of the ambient dimension.
	
	{On the side of randomized algorithms, remarkable progress has been made since
		the (overlooked) work of \cite{CDV2010} where FPRAS were proved for sampling in a class of (possibly non convex) star-shaped sets, and very recently for an even larger class  of compact sets \cite{Vempala2026}; this opens the door
		to FPRAS for volume computation via such sampling methods.
		On the other hand, for deterministic methods, 
		(with the exception of moment-SOS or symbolic-numeric methods mentioned before), non-convex sets are considered to be significantly harder to address than convex sets.
		
		The scope of the present paper is to also describe 
		a significant progress for deterministic volume algorithms when considering sublevel sets of \emph{quasi-homogeneous}
		polynomials. So, as for randomized algorithms, some non-convex sets are amenable to practical efficient volume approximation (of course modulo some size limitation).}

	\subsection{Quasi-homogeneous polynomials}\label{sec:setup_qh}
	
	Let $w = (w_1, \ldots, w_n) \in \N_{>0}^n$ be a vector of positive integer
	weights, and let $g : \R^n \to \R_+$ be a polynomial that is
	\emph{quasi-homogeneous of weighted degree $m \geq 1$ with respect to $w$}, that
	is,
	\begin{equation}\label{eq:qh}
		g(t^{w_1} x_1, \ldots, t^{w_n} x_n) \;=\; t^m\, g(x),
		\qquad t > 0,\ x \in \R^n.
	\end{equation}
	Equivalently, every monomial $x^\alpha$ appearing in $g$ satisfies the linear
	weighted-degree constraint
	$\langle w, \alpha\rangle := \sum_{i=1}^n w_i \alpha_i = m$. The \emph{weight sum}
	\begin{equation}\label{eq:wsum}
		\ws \;:=\; \sum_{i=1}^n w_i
	\end{equation}
	plays the structural role of the ambient
	dimension. The classical homogeneous case $g(\alpha x) = \alpha^m g(x)$ is
	recovered for $w = (1, \ldots, 1)$, for which $\ws = n$.
	
	We make one standing assumption on the values of $g$.
	
	\begin{assumption}[Positivity]\label{ass:pos}
		The polynomial $g$ is strictly positive on $\R^n \setminus \{0\}$.
	\end{assumption}
	
	Because $g$ is quasi-homogeneous of weighted degree $m \geq 1$,
	Assumption~\ref{ass:pos} forces $g$ to be coercive along the weighted scaling:
	for $x \neq 0$, $g(t^{w_1}x_1, \ldots, t^{w_n}x_n) = t^m g(x) \to \infty$ as
	$t \to \infty$. Consequently every sublevel set
	$\{x \in \R^n : g(x) \leq c\}$ is compact, for every $c \geq 0$.
	
	\subsection{The volume problem}\label{sec:setup_problem}
	
	Let $B := [-1,1]^n$ be the centred unit box, and let
	$K$ be as in \ref{eq:K} 
	the unit sublevel set of $g$, which is a compact basic semialgebraic set by
	Assumption~\ref{ass:pos}. {We assume (possibly after scaling) that $K\subset B$.}
	
	\begin{assumption}[Bounding box]\label{ass:box}
		The box $B = [-1,1]^n$ contains the sublevel set $K$, that is $K \subseteq B$;
		equivalently, $g \geq 1$ on $\R^n \setminus \operatorname{int} B$.
	\end{assumption}
	
	Under Assumption~\ref{ass:box}, $K$ coincides with the box-constrained set
	$\{x \in B : g(x) \leq 1\}$, and its boundary is the level set
	$\partial K = \{g = 1\}$. The box then serves as the ambient integration domain,
	and the target of the paper is the Lebesgue volume
	\[
	\vol K \;=\; \int_B \mathbf{1}_K(x)\, dx .
	\]
	The setting of interest is $n$ moderate-to-large (say up to $n = 20$ or more)
	with $m$ small (typically $m \in \{2, 4, 6\}$). The single auxiliary global
	quantity needed by the two algorithms developed below is the maximal value
	\begin{equation}\label{eq:bg}
		\bg \;:=\; \max_{x \in B} g(x)
	\end{equation}
	whose computation will be addressed later on. In fact an upper
	bound on $\bg$ is enough, but it should be as small as possible, for reasons to be explained further. Throughout the paper we assume $\bg > 1$; the degenerate case $\bg = 1$
	forces $K = B$ and $\vol K = 2^n$.

	\paragraph{Running example}
	Take $n = 2$, $w = (1, 2)$, $m = 4$ and $g(x_1, x_2) = x_1^4 + x_2^2$ on
	$B = [-1,1]^2$. Quasi-homogeneity holds, since
	$g(t x_1, t^2 x_2) = t^4 x_1^4 + t^4 x_2^2 = t^4 g(x)$, and $g$ is strictly
	positive away from the origin, so Assumption~\ref{ass:pos} holds; here
	$\ws = 3$ and $\bg = 2$. The sublevel set $K = \{x_1^4 + x_2^2 \leq 1\}$ is
	contained in $B$, touching the boundary at $(\pm 1, 0)$ and $(0, \pm 1)$, so
	Assumption~\ref{ass:box} holds as well. The exact volume is
	\[
	\vol K \;=\; 4\int_0^1 \sqrt{1 - x_1^4}\,dx_1
	\;=\; \frac{\Gamma(\tfrac14)\,\Gamma(\tfrac32)}{\Gamma(\tfrac74)}
	\;\approx\; 3.4961.
	\]
	
	\subsection{Background: moment-SOS volume hierarchies and their limits}\label{sec:bg}
	The volume of a compact basic semialgebraic set can be approximated by the
	moment-SOS   hierarchy as described in \cite{hls2009}. The volume is the value of an infinite-dimensional linear
	program over measures
	whose unique optimum is the pair of Lebesgue measures split across $K$ and its
	complement within $B$. Truncating to moments of degree $\leq 2r$ turns this into a
	semidefinite program: the unknowns are the truncated moment sequences,
	constrained with positive semidefinite moment and
	localizing matrices. The dual semidefinite program is a
	sum-of-squares (SOS) problem constructing a polynomial
	majorant of degree $\leq 2r$ of the discontinuous indicator $\mathbf 1_K$.
	
	As already mentioned, this hierarchy has two well-known limitations: the Gibbs phenomenon due to the approximation of a discontinuous function by polynomials, and the size of the moment matrices, exponential in the ambient dimension $n$.
	Both difficulties are addressed, for quasi-homogeneous $g$, by the
	Stokes-accelerated scalar pushforward of~\cite{lasserre2019}.
	The scalar pushforward moment-SOS reduction was originally proposed in
	\cite{JasourHofmannWilliams2018} for risk estimation in robotics.
	It was then improved in \cite{lasserre2019} in the
	homogeneous polynomial setting by replacing the resulting univariate
	hierarchy with generalized Hankel eigenvalue problems, and by using
	homogeneity moment identities derived from the Stokes divergence theorem. In a more general context, the Stokes-augmented dual formulations of
	\cite[Sec.~3]{stokesgibbs} replace the indicator function by a continuous
	PDE-defined function, accelerating the convergence of the moment-SOS hierarchy.
	
	Convergence rates for the moment-SOS volume hierarchy were first established in \cite{kordahenrion}. For a general compact basic
	semialgebraic set, the upper bounds produced by the hierarchy of \cite{hls2009}
	converge to the volume at the rate $O(1/\log\log r)$ in the
	relaxation order $r$; when the set is the sublevel set of a {single}
	polynomial, this improves to $O(1/\log r)$. These slow
	rates are the quantitative signature of the Gibbs phenomenon incurred by the
	dual, which majorizes the discontinuous indicator $\mathbf 1_K$ by polynomials.
	These bounds were substantially sharpened in \cite{schlosserlazarev}, where,
	combining a recent effective Putinar Positivstellensatz with quantitative
	polynomial approximation, the logarithmic rates are replaced by \emph{algebraic}
	ones: $O(r^{-1/(6nL)})$ for the standard formulation of \cite{hls2009}, and
	$O(r^{-1/(2.5\,nL)})$ (up to an arbitrarily small loss in the exponent) for
	the Stokes-augmented formulation of \cite{stokesgibbs}, whose dual admits a
	smooth optimal solution and hence exhibits no Gibbs phenomenon; here $n$ is the
	ambient dimension and $L \ge 1$ is a {\L}ojasiewicz exponent of the
	constraints. The Stokes constraints thus improve the rate exponent by a factor
	exceeding two, the gain originating from the regularity of the optimal dual
	solution rather than from the optimization itself. In particular, both algebraic
	rates degrade with the ambient dimension $n$.
	
	\subsection{Contribution}\label{sec:contrib}
	
	This paper builds on the scalar pushforward approach to volume
	approximation of $\{x : g(x) \le 1\}$, for a 
	quasi-homogeneous polynomial $g$. Our contribution is twofold.
	
	\begin{enumerate}[leftmargin=*,itemsep=0.7em]
		
		\item \textbf{Efficient implementations that bypass semidefinite
			programming.}
		We describe two numerical algorithms for the univariate relaxations of the
		pushforward approach that avoid semidefinite programming entirely and rely
		exclusively on standard routines of numerical linear algebra.
		The first algorithm \textsc{Cheb} approximates a piecewise constant
		function by a Chebyshev polynomial of degree $2r$, so that each relaxation
		reduces to the computation of Chebyshev coefficients by a fast cosine
		transform. The second algorithm \textsc{Gevp}  extracts two-sided
		volume bounds from a single generalized eigenvalue problem for the moment
		and localizing matrices, of size {exactly $r+1$}, formed from the pushforward
		moments. {In addition, computing all $2r$ entries of this moment matrix can be done
			fully in parallel.}
		Neither algorithm calls an interior-point solver, and both run at
		high relaxation order $r$ at a cost dominated by dense linear algebra. 
		The only stage whose cost depends on the ambient dimension is the computation of
		the box moments of $g$; it is performed once, independently of the relaxation
		order, and is polynomial in $n$ whenever $g$ is sparse or block-separable.
		
		\item \textbf{Rigorous convergence rates that transfer to the moment-SOS
			hierarchy.}
		Both algorithms come with strong convergence guarantees that we prove
		rigorously. \textsc{Cheb} converges at the algebraic rate $O(1/r)$ in the
		relaxation order $r$, the half-degree of the moments and polynomials
		involved. \textsc{Gevp} converges at the
		exponential rate $O(\gamma^{-2r})$, with
		\[
		\gamma \;:=\; \frac{\sqrt{\bg}+1}{\sqrt{\bg}-1} \;>\; 1,
		\]
		where $\bg > 1$ is the auxiliary upper bound on $g$ entering the pushforward
		construction. Furthermore, the univariate
		polynomials returned by either algorithm can be post-processed into
		\emph{admissible} feasible polynomials for the SOS relaxation of the
		moment-SOS volume hierarchy; since these
		explicit polynomials are suboptimal at the rate above, the same $O(1/r)$ and
		$O(\gamma^{-2r})$ rates hold for the hierarchy itself. This is a significant
		improvement over the state-of-the-art convergence rates for the moment-SOS
		hierarchy established in \cite{schlosserlazarev}. To the best of our knowledge, the
		exponential rate $O(\gamma^{-2r})$ is moreover the only one known for the
		moment-SOS hierarchy, beyond the trigonometric polynomial optimization result of \cite{bachrudi}, which requires regularity (curvature) conditions
		on the minimizers.
		
	\end{enumerate}
	
	In short, the scalar pushforward approach, implemented with nothing more than
	Chebyshev approximation and a generalized eigenvalue solver, yields
	two-sided volume bounds that come with rigorous algebraic and exponential
	convergence rates, and these rates carry over to the moment-SOS hierarchy
	itself.
	
	\section{Quasi-homogeneous pushforward representation}\label{sec:pushforward}
	
	This section carries out the reduction on which the whole paper rests: the
	$n$-dimensional volume $\vol K$ of the quasi-homogeneous sublevel set is
	rewritten, \emph{exactly}, as $2^n$ times the mass on $[0,1]$ of the univariate
	pushforward measure of the uniform probability measure on the box $B$
	through the map $g$,  and the moments of the restriction on $[0,1]$
	are pinned down --- up to the single unknown scalar $\vol K$ --- by a weighted
	Stokes identity. The pushforward collapses the ambient dimension into a
	one-dimensional truncated moment problem on $[0,\bg]$, and the weighted Euler
	identity for quasi-homogeneous $g$ is what makes the Stokes step closed-form.
	
	\subsection{Univariate moment problem}\label{sec:pf_univariate}
	
	Let $\mu$ denote the uniform probability measure on $B$,
	$d\mu(x) = 2^{-n}\mathbf{1}_B\,dx$. Its pushforward measure 
	\[
	\nu := g_\# \mu
	\]
	through the map $g$  is such that
	\[
	\nu(A) = \mu(\{x \in \R^n : g(x) \in A\}) = 2^{-n}\,\vol(\{x \in B : g(x) \in A\})
	\]
	for all Borel measurable set $A \subset \R$.
	Equivalently, for every bounded measurable $\varphi : \R \to \R$, it holds
	\begin{equation}\label{eq:cov}
		\int_B \varphi\bigl(g(x)\bigr)\, d\mu(x) \;=\; \int_0^{\bg} \varphi(t)\, d\nu(t)
	\end{equation}
	where $\bar g$ is the maximum of $g$ on B as defined in \eqref{eq:bg}.
	By definition of the pushforward, $\nu$ is a \emph{probability} measure
	supported on $[0,\bg]$ (total mass
	$\nu([0,\bg]) = \mu(\R^n) = 1$); its left endpoint is $0$ because
	$g(0) = 0$ (every monomial of $g$ has positive weighted degree $m \geq 1$, so $g$
	has no constant term) and $0 \in B$, while its right endpoint is $\bg$ by
	definition. Two specializations of~\eqref{eq:cov} drive the rest of the paper.
	
	Taking $\varphi = \mathbf{1}_{[0,1]}$ and using
	$g^{-1}([0,1]) \cap B = \{x \in B : g(x) \leq 1\} = K$ gives the
	\emph{fundamental representation}
	\begin{equation}\label{eq:vol_eq_nu}
		\vol K \;=\; 2^n\,\nu([0,1]).
	\end{equation}
	Computing $\vol K$ is thus equivalent to evaluating the mass that the univariate
	measure $\nu$ places on the sub-interval $[0,1] \subset [0,\bg]$. Taking
	$\varphi(t) = t^k$ gives the moment sequence
	\begin{equation}\label{eq:y_moments}
		y_k \;:=\; \int_0^{\bg} t^k\, d\nu(t) \;=\; \int_B g(x)^k\, d\mu(x)
		\;=\; \frac{1}{2^n}\int_B g(x)^k\, dx,
		\qquad k \in \N,
	\end{equation}
	so that $y_0 = 1$ and the moments of the univariate $\nu$ are precisely the box
	integrals of the powers of $g$, normalized by $\vol B = 2^n$. For polynomial $g$
	and the box $B$ these are available in closed form, as will be seen later on.
	
	A measure on a compact interval is uniquely determined by its moments, and the
	truncated moment sequences of~$\nu$ are characterized by the classical
	\emph{Hausdorff} positive-semidefiniteness conditions on $[0,\bg]$: for every
	$r \geq 0$,
	\begin{equation}\label{eq:hausdorff_cone}
		M_r(\nu) \;\succeq\; 0, \qquad M_{r-1}((\bg - t)\,t\,\nu) \;\succeq\; 0,
	\end{equation}
	where $M_r(\nu)_{ij} = \int t^{i+j}\,d\nu(t)$ is the
	$(r+1)\times(r+1)$ Hankel moment matrix of $\nu$ and
	$M_{r-1}(h\nu)_{ij} = \int h(t)\,t^{i+j}\,d\nu(t)$ its localizing matrix
	relative to $h$; a real sequence $(y_k)$ is the moment sequence of a
	non-negative measure on $[0,\bg]$ if and only if~\eqref{eq:hausdorff_cone} holds
	for all $r$ (see, e.g., \cite[Thm.~3.2(c)]{lasserre2010}). The localizer $(\bg-t)\,t$
	encodes membership in $[0,\bg]$.
	
	\begin{figure}[t]
		\centering
		\includegraphics[width=0.8\linewidth]{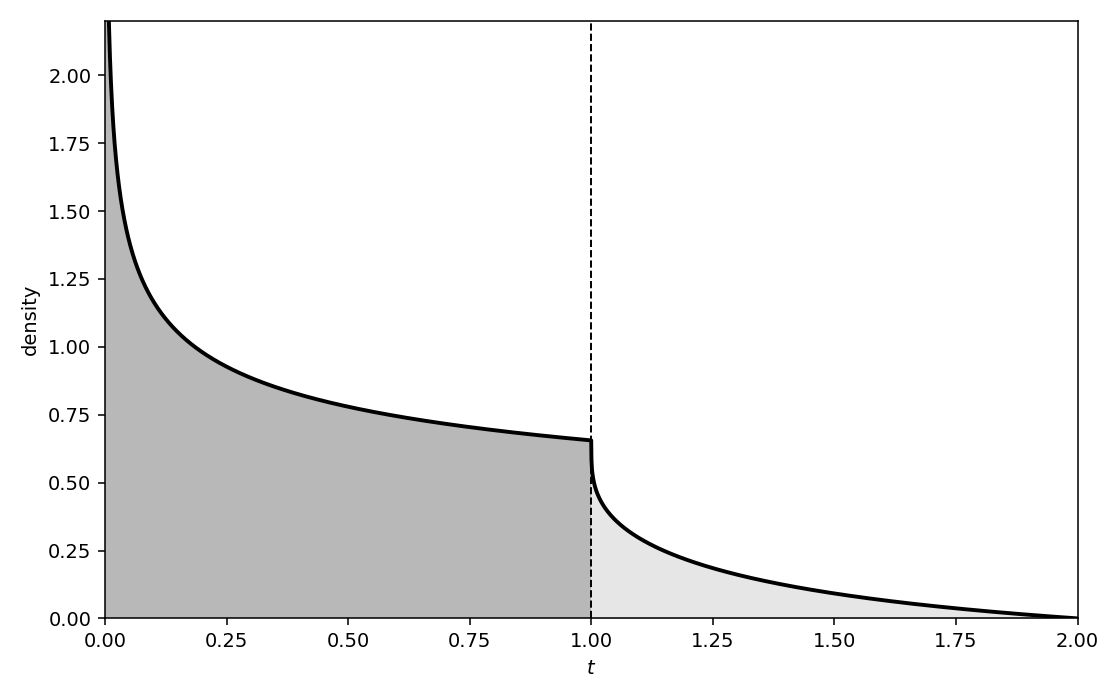}
		\caption{Density (solid red curve) of the pushforward $\nu=g_\#\mu$ of the uniform probability measure
			$\mu$ on $B=[-1,1]^2$ under $g(x)=x_1^4+x_2^2$ with $\bar g=2$. The darker area over
			$(0,1)$ equals the normalized volume of the unit sublevel set
			$\{g\le1\}$; the lighter area over $(1,2)$ carries the complementary mass.
		}
		\label{fig:pushforward}
	\end{figure}
	
	\paragraph{Running example}
	Let $\mu$ be the uniform probability measure on the box $B=[-1,1]^2$,
	$d\mu(dx)=\tfrac14\mathbf 1_B\,dx$, and let $\nu=g_\#\mu$ be its pushforward
	under $g(x)=x_1^4+x_2^2$; since $g(B)=[0,2]$, it holds $\bg=2$ and $\nu$ is supported on $[0,2]$.
	Disintegrating $\mu$ along the fibres of the first coordinate, for fixed
	$x_1$ the law of $x_2^2$ has density $1/(2\sqrt{y})$ on $(0,1)$, so the
	conditional law of $g$ given $x_1$ has density $1/\bigl(2\sqrt{t-x_1^4}\bigr)$
	on $(x_1^4,\,x_1^4+1)$. Averaging over $x_1$ yields the density of $\nu$,
	\[
	\rho(t)=\frac12\int_{\max((t-1)^{1/4},0)}^{\min(1,\,t^{1/4})}
	\frac{dx_1}{\sqrt{t-x_1^4}},\qquad t\in(0,2).
	\]
	For $t\le1$ the lower limit vanishes and the scaling $x_1=t^{1/4}s$ gives the
	pure power law $\rho(t)=\tfrac{\varpi}{4}\,t^{-1/4}$ with
	$\varpi=\Gamma(1/4)^2/(2\sqrt{2\pi})$ the lemniscate constant; for $t>1$ the
	level set $\{g=t\}$ leaves $B$ through the edges $x_2=\pm1$, the lower limit
	becomes $(t-1)^{1/4}$, and $\rho$ is a difference of incomplete elliptic
	integrals of the first kind of modulus $1/\sqrt2$, producing the kink at
	$t=1$ visible in Figure~\ref{fig:pushforward}.
	
	By construction the distribution function of $\nu$ is a normalized sublevel
	volume,
	\[
	\nu\bigl([0,t]\bigr)=\mu\bigl(\{x\in B:g(x)\le t\}\bigr)
	=\frac{\operatorname{vol}\bigl(\{g\le t\}\cap B\bigr)}{\operatorname{vol}B} .
	\]
	For $t\le1$ one has $\{g\le t\}\subset B$, so the clipping by $B$ is inactive
	and $\nu([0,t])$ is the \emph{exact} normalized volume of the sublevel set.
	In particular, the darker shaded area over $(0,1)$ in
	Figure~\ref{fig:pushforward} is the normalized volume of the unit sublevel
	set of $g$,
	\[
	\int_0^1\rho(t)\,dt=\nu\bigl([0,1]\bigr)
	=\frac{\operatorname{vol}\{g\le1\}}{\operatorname{vol}B}
	=\frac{\Gamma(1/4)^2}{6\sqrt{2\pi}}\approx0.8740,
	\]
	so that $\operatorname{vol}\{x:x_1^4+x_2^2\le1\}=4\,\nu([0,1])\approx3.4961$,
	while the lighter area over $(1,2)$ accounts for the remaining mass
	$1-\nu([0,1])\approx0.1260$.

	The representation~\eqref{eq:vol_eq_nu} is exact, but it cannot be exploited from
	the moments~\eqref{eq:y_moments} alone: the obstruction is a genuine
	ill-posedness, recorded next, and it is exactly what is addressed by the Stokes structure introduced next.
	
	\begin{remark}[Ill-posedness of the bare moment problem]\label{rem:illposed}
		The functional $\sigma \mapsto \sigma([0,1])$ is not weak-$\star$ continuous on
		the moment cone, so $\nu([0,1])$ cannot be recovered in a stable way from the
		moments $(y_k)$ alone. On a compact set, convergence of all moments is
		equivalent (by Stone-Weierstrass) to weak-$\star$ convergence, which only
		transmits $\int \varphi\, d\sigma_k \to \int \varphi\, d\sigma$ for
		\emph{continuous} $\varphi$; the indicator $\mathbf{1}_{[0,1]}$ is discontinuous
		at the endpoints. Concretely, with $\sigma_k = \delta_{1+1/k}$ and
		$\sigma = \delta_1$ one has $\int t^j\, d\sigma_k = (1+1/k)^j \to 1 = \int t^j\,
		d\sigma$ for every $j$, yet $\sigma_k([0,1]) = 0$ for all $k$ while
		$\sigma([0,1]) = 1$: a unit of mass appears in the limit without any moment
		detecting it. Restoring well-posedness requires additional structural
		information --- here, the moment constraints that quasi-homogeneity imposes on
		$\nu|_{[0,1]}$.
	\end{remark}

	\subsection{The weighted Euler identity}\label{sec:pf_euler}
	
	The structural information promised by Remark~\ref{rem:illposed} comes from
	differentiating the defining relation of quasi-homogeneity.
	
	\begin{lemma}[Weighted Euler identity]\label{lem:weuler}
		For every $x \in \R^n$,
		\begin{equation}\label{eq:weighted_euler}
			(\wox) \cdot \nabla g(x) \;=\; m\, g(x),
			\qquad (\wox) := (w_1 x_1, \ldots, w_n x_n).
		\end{equation}
	\end{lemma}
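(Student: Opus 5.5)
We differentiate the defining relation \eqref{eq:qh} with respect to the scaling parameter $t$ and then evaluate at $t=1$. Fix $x \in \R^n$ and set $\phi(t) := g(t^{w_1}x_1, \ldots, t^{w_n}x_n)$ for $t>0$. Since $g$ is a polynomial, $\phi$ is smooth on $(0,\infty)$. By \eqref{eq:qh}, $\phi(t) = t^m g(x)$, so the right-hand side gives directly
\[
\phi'(t) = m\,t^{m-1} g(x).
\]

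On the left-hand side we apply the chain rule to the curve $t \mapsto (t^{w_1}x_1,\ldots,t^{w_n}x_n)$, whose $i$-th component has derivative $w_i t^{w_i-1} x_i$. This gives
\[
\phi'(t) = \sum_{i=1}^n w_i\, t^{w_i-1} x_i\, \partial_i g(t^{w_1}x_1,\ldots,t^{w_n}x_n).
\]
Equating the two expressions for $\phi'(t)$ and setting $t=1$ yields $\sum_i w_i x_i\,\partial_i g(x) = m\,g(x)$, which is exactly \eqref{eq:weighted_euler}.

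As an independent check, one can argue monomial by monomial, since both sides of \eqref{eq:weighted_euler} are linear in $g$. For a single monomial $x^\alpha$ we have $w_i x_i\,\partial_i x^\alpha = w_i\alpha_i x^\alpha$. Summing over $i$ gives $\langle w,\alpha\rangle x^\alpha = m\,x^\alpha$, because every monomial of $g$ satisfies $\langle w,\alpha\rangle = m$.

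There is no real obstacle. The only subtlety is that \eqref{eq:qh} is stated only for $t>0$, but $t=1$ lies in the interior of that range, so the differentiation is legitimate.
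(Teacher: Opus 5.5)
Your proof is correct and follows the paper's argument exactly: differentiate the quasi-homogeneity relation \eqref{eq:qh} in $t$ by the chain rule and evaluate at $t=1$. Your monomial-by-monomial check, which uses the equivalent characterization $\langle w,\alpha\rangle = m$ stated after \eqref{eq:qh}, is a valid independent verification that the paper does not include.
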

	
	\begin{proof}
		Fix $x \in \R^n$ and apply $\frac{d}{dt}$ to the
		identity~\eqref{eq:qh}, $g(t^{w_1}x_1, \ldots, t^{w_n}x_n) = t^m g(x)$, valid
		for $t > 0$. By the chain rule the left-hand side has derivative
		$\sum_{i} w_i\, t^{w_i - 1} x_i\, (\partial_i g)(t^{w_1}x_1, \ldots, t^{w_n}x_n)$
		and the right-hand side has derivative $m\,t^{m-1} g(x)$. Evaluating at $t = 1$
		gives $\sum_i w_i x_i\, \partial_i g(x) = m\, g(x)$, which
		is~\eqref{eq:weighted_euler}. Both sides are polynomials in $x$, so the
		identity extends to all $x \in \R^n$.
	\end{proof}
	
	Two consequences are used repeatedly.
	
	\begin{corollary}[Regularity and power rule]\label{cor:regular}
		Under Assumption \ref{ass:pos}:
		\begin{enumerate}[label=\textup{(\roman*)},leftmargin=*,itemsep=0pt]
			\item every positive value of $g$ is regular; in particular $\{g = 1\}$ is a
			smooth compact hypersurface and the divergence theorem applies on
			$K = \{g \leq 1\}$;
			\item for every integer $k \geq 0$,
			$(\wox) \cdot \nabla(g^k) = k\,m\, g^k$.
		\end{enumerate}
	\end{corollary}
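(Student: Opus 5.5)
Both items follow directly from the weighted Euler identity of Lemma~\ref{lem:weuler}, combined with Assumption~\ref{ass:pos} for item (i).

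For item (i), I argue by contraposition. Let $c>0$ and let $x$ satisfy $g(x)=c$. Then $x\neq 0$, because $g(0)=0$ (there is no constant term, since $m\geq 1$). Lemma~\ref{lem:weuler} gives
\[
(\wox)\cdot\nabla g(x) = m\,c > 0 .
\]
Hence $\nabla g(x)\neq 0$, so $c$ is a regular value of $g$. By the regular value theorem, $\{g=1\}$ is then a smooth embedded hypersurface of $\R^n$. It is compact: it is closed as a level set of a continuous map, and bounded because it lies in $K$, which is compact by the coercivity remark following Assumption~\ref{ass:pos}.

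It remains to check that $K=\{g\le 1\}$ is a compact domain with smooth boundary $\partial K=\{g=1\}$, so that the divergence theorem applies. Near any point $x$ with $g(x)=1$, the implicit function theorem, using $\nabla g(x)\neq 0$, shows that $\{g\le 1\}$ lies locally on one side of the hypersurface, with outward normal $\nabla g/|\nabla g|$. The set $\{g<1\}$ is open, and it contains $0$, so $K$ has nonempty interior. These facts together make $K$ a compact smooth manifold with boundary in $\R^n$, to which Stokes' theorem applies.

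For item (ii), I treat $k=0$ separately: both sides vanish, since $\nabla(g^0)=0$. For $k\geq 1$, the chain rule gives $\nabla(g^k)=k\,g^{k-1}\nabla g$. Therefore
\[
(\wox)\cdot\nabla(g^k)=k\,g^{k-1}\,(\wox)\cdot\nabla g = k\,g^{k-1}\, m\, g = k\,m\,g^k ,
\]
where the middle step uses \eqref{eq:weighted_euler}. An alternative route gives the same result: $g^k$ is itself quasi-homogeneous of weighted degree $km$, so applying Lemma~\ref{lem:weuler} to $g^k$ yields the identity directly.

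The only step that needs any care is the manifold-with-boundary claim in (i). It reduces to nonvanishing of the gradient on $\{g=1\}$, which the Euler identity supplies.
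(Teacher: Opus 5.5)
Your proof is correct and follows the paper's route: in (i) the weighted Euler identity forces $\nabla g \neq 0$ wherever $g>0$ (the paper phrases it as $\nabla g(x)=0 \Rightarrow g(x)=0$) and coercivity gives compactness, while (ii) is the chain rule combined with \eqref{eq:weighted_euler}. Your implicit-function-theorem argument that $K$ is a compact manifold with boundary $\{g=1\}$, and your separate handling of $k=0$, just make explicit details the paper leaves implicit.
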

	
	\begin{proof}
		(i) If $\nabla g(x) = 0$ for some $x$, then~\eqref{eq:weighted_euler} gives
		$m\, g(x) = (\wox)\cdot \nabla g(x) = 0$, hence $g(x) = 0$ (as $m \geq 1$), hence
		$x = 0$ by Assumption \ref{ass:pos}. Thus $\nabla g$ does not vanish on
		$\{g > 0\}$, so every positive value of $g$ is regular; coercivity (a
		consequence of Assumption \ref{ass:pos}) makes the level sets compact. (ii) By the
		chain rule and~\eqref{eq:weighted_euler},
		$(\wox)\cdot\nabla(g^k) = k\,g^{k-1}\,(\wox)\cdot\nabla g = k\,g^{k-1}\,m\,g
		= k m\, g^k$.
	\end{proof}
	
	\subsection{Weighted Stokes moment identity}\label{sec:pf_stokes}
	
	We now show that every moment of $g$ over $K$ is a known rational multiple of
	$\vol K$ itself. The mechanism is a divergence identity for a weighted
	radial vector field that vanishes on $\partial K$.
	
	\begin{lemma}[Weighted Stokes moment identity]\label{lem:wstokes}
		For every $k \in \N$,
		\begin{equation}\label{eq:stokes_moment}
			\int_K g(x)^k\, dx \;=\; z_k\, \vol K,
			\qquad z_k \;:=\; \frac{\ws}{\ws + km}.
		\end{equation}
	\end{lemma}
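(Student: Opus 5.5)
The plan is to apply the divergence theorem on $K$ to the vector field $F(x) := (1 - g(x))\, g(x)^k\, (\wox)$, which vanishes on $\partial K = \{g = 1\}$. By Corollary~\ref{cor:regular}(i), $K$ is a compact domain with smooth boundary, so $\int_K \dive F\, dx = \int_{\partial K} F\cdot n\, dS = 0$. It then remains to compute $\dive F$ explicitly, using the weighted Euler identity (Lemma~\ref{lem:weuler}) and the power rule (Corollary~\ref{cor:regular}(ii)).

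The key elementary fact is $\dive(\wox) = \sum_i w_i = \ws$. Hence, for any $C^1$ function $h$, $\dive(h\,(\wox)) = \ws\, h + (\wox)\cdot\nabla h$. Taking $h = g^k - g^{k+1}$ and applying Corollary~\ref{cor:regular}(ii) to both powers gives
\[
\dive F = (\ws + km)\, g^k - (\ws + (k+1)m)\, g^{k+1}.
\]
Writing $I_k := \int_K g^k\,dx$, the vanishing flux yields the recursion $(\ws+km)\, I_k = (\ws+(k+1)m)\, I_{k+1}$.

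With $I_0 = \vol K$, induction gives $I_k = \frac{\ws}{\ws+km}\vol K$, since the product telescopes:
\[
\prod_{j<k}\frac{\ws+jm}{\ws+(j+1)m} = \frac{\ws}{\ws+km}.
\]

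A shortcut avoids the recursion. Apply the divergence theorem directly to $G := g^k\,(\wox)$. Its divergence is $(\ws+km)\,g^k$, and its boundary flux is $\int_{\partial K} (\wox)\cdot n\, dS$ because $g = 1$ on $\partial K$. The same computation with $k=0$ shows this flux equals $\ws\vol K$, so dividing gives the claim immediately. I would present this shortcut version.

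The main obstacle is justifying the divergence theorem on $K$. The field is polynomial, hence smooth. Since $\nabla g \neq 0$ on $\{g=1\}$, $K$ is a compact set with $C^\infty$ boundary, given by $K = \{g\le 1\}$, with outward normal $\nabla g/|\nabla g|$. One point needs care: the interior of $K$ must coincide with $\{g<1\}$. This holds by regularity, since near a boundary point $g$ takes values both above and below $1$.

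An alternative that sidesteps Stokes entirely is the weighted scaling $x\mapsto (t^{w_i}x_i)$, which gives $\vol\{g\le s\} = s^{\ws/m}\vol K$. The layer-cake formula then yields
\[
\int_K g^k\,dx = \int_0^1 s^k\, d\bigl(s^{\ws/m}\bigr)\vol K = \frac{\ws}{\ws+km}\vol K.
\]
This could serve as a cross-check.
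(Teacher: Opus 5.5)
Your proposal is correct, and the shortcut you plan to present is essentially the paper's proof. The paper applies the divergence theorem once to $X_k=(\wox)(1-g^k)$, which is exactly the difference of your fields $g^0(\wox)$ and $g^k(\wox)$; its flux through $\partial K=\{g=1\}$ therefore vanishes identically, giving $\int_K[\ws-(\ws+km)g^k]\,dx=0$ directly instead of by equating two separate flux computations.
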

	
	\begin{proof}
		The case $k = 0$ is the identity $\int_K dx = \vol K$, with $z_0 = 1$. Fix
		$k \geq 1$ and consider the weighted radial vector field
		\[
		X_k(x) \;:=\; (\wox)\,\bigl(1 - g(x)^k\bigr),
		\]
		which is of class $C^\infty$. Its divergence is, using
		$\dive(\wox) = \sum_i \partial_i(w_i x_i) = \sum_i w_i = \ws$ and
		Corollary~\ref{cor:regular}(ii),
		\[
		\dive X_k
		\;=\; (1 - g^k)\,\dive(\wox) \;-\; (\wox)\cdot \nabla(g^k)
		\;=\; \ws\,(1 - g^k) - k m\, g^k
		\;=\; \ws - (\ws + km)\, g^k.
		\]
		By Corollary~\ref{cor:regular}(i), $K$ is compact with smooth boundary
		$\partial K = \{g = 1\}$, so the divergence theorem yields
		\[
		\int_K \dive X_k \, dx \;=\; \int_{\partial K} X_k \cdot n_K \, d\sigma .
		\]
		On $\partial K$ one has $g = 1$, hence $g^k = 1$, hence
		$X_k = (\wox)\,(1 - 1) = 0$; the boundary integral vanishes. Therefore
		$\int_K \bigl[\ws - (\ws + km) g^k\bigr]\, dx = 0$, that is,
		$\ws\, \vol K = (\ws + km)\int_K g^k\, dx$, which is~\eqref{eq:stokes_moment}.
	\end{proof}
	
	For unweighted homogeneity $w = (1,\ldots,1)$ one has $\ws = n$ and recovers the
	classical homogeneous identity $z_k = n/(n + km)$ of
	\cite[Lemma~1]{lasserre2013} (the $\alpha = 0$ instance of the level-set moment
	formula), used in the Stokes-accelerated pushforward of
	\cite[Lemma~3.1]{lasserre2019}.
	
	\paragraph{Running example}
	For $g = x_1^4 + x_2^2$ with $w = (1,2)$, $m = 4$ one has $\ws = 3$ and
	$z_j = 3/(3 + 4j)$, so $z_0 = 1$, $z_1 = 3/7$, $z_2 = 3/11$, and so on. The
	weighted Euler identity reads
	$(\wox)\cdot\nabla g = x_1(4x_1^3) + 2x_2(2x_2) = 4x_1^4 + 4x_2^2 = 4g$, in
	agreement with $m = 4$.
	
	\subsection{Reformulation as a moment cone restriction}\label{sec:pf_reform}
	
	Lemma~\ref{lem:wstokes} states that the moments of the restriction $\nu|_{[0,1]}$
	are the numbers $2^{-n} z_j\,\vol K$. There is a canonical measure carrying exactly the
	moments $z_j$.
	
	\begin{lemma}[Reference measure]\label{lem:muw}
		Let $\nu_w$ be the probability measure on $[0,1]$ with density
		\begin{equation}\label{eq:mu_w_density}
			\frac{d\nu_w}{dt}(t) \;=\; \frac{\ws}{m}\, t^{\frac{\ws}{m} - 1},
			\qquad t \in (0,1],
		\end{equation}
		i.e.\ the $\mathrm{Beta}(\frac{\ws}{m},\,1)$ distribution, the law of $X^{m/\ws}$ for $X \sim \mathrm{Unif}[0,1]$. Its moments are
		$\int_0^1 t^k\, d\nu_w(t) = z_k$ for every $k \in \N$.
	\end{lemma}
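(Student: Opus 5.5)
This is a direct computation with an elementary density, and I will check the three claims in turn. Set $a := \ws/m > 0$, which is positive since every $w_i \ge 1$ and $m \ge 1$.

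\textbf{Probability density.} The function $a\,t^{a-1}$ is nonnegative on $(0,1]$. It is integrable near $0$ because $a - 1 > -1$, and its integral is
\[
\int_0^1 a\,t^{a-1}\,dt = \bigl[t^{a}\bigr]_0^1 = 1 ,
\]
so $\nu_w$ is a probability measure on $[0,1]$. Matching the Beta density $t^{\alpha-1}(1-t)^{\beta-1}/B(\alpha,\beta)$ with $\alpha = a$ and $\beta = 1$ gives $B(a,1) = 1/a$. This identifies $\nu_w$ with $\mathrm{Beta}(a,1)$.

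\textbf{Law of $X^{m/\ws}$.} Take $X \sim \mathrm{Unif}[0,1]$ and $t \in [0,1]$. Then
\[
P\bigl(X^{1/a} \le t\bigr) = P\bigl(X \le t^{a}\bigr) = t^{a},
\]
using that $s \mapsto s^{1/a}$ is increasing on $[0,1]$. Differentiating gives the density $a\,t^{a-1}$, and since $1/a = m/\ws$ this is the stated law.

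\textbf{Moments.} For $k \in \N$,
\[
\int_0^1 t^k\, a\,t^{a-1}\,dt = \frac{a}{k+a} = \frac{\ws/m}{k + \ws/m} = \frac{\ws}{\ws + km} = z_k ,
\]
which matches the constants of Lemma~\ref{lem:wstokes}. The only point requiring attention is integrability at $t = 0$ when $a < 1$; this is handled by $a > 0$, which makes the exponent $k + a - 1$ strictly greater than $-1$. No real obstacle arises.
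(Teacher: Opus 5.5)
Your proof is correct and matches the paper's argument: direct integration of $t^k$ against the density gives $\frac{a}{k+a} = z_k$, finiteness follows from $k + a > 0$, and the total mass is $z_0 = 1$. Your checks of the $\mathrm{Beta}(a,1)$ identification and of the law of $X^{m/\ws}$, which the paper states without proof, are also correct.
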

	
	\begin{proof}
		Direct integration: for $k \geq 0$,
		\[
		\int_0^1 t^k\, \frac{\ws}{m}\, t^{\frac{\ws}{m} - 1}\, dt
		\;=\; \frac{\ws}{m}\int_0^1 t^{\,k + \frac{\ws}{m} - 1}\, dt
		\;=\; \frac{\ws}{m}\cdot \frac{1}{\,k + \frac{\ws}{m}\,}
		\;=\; \frac{\ws}{\ws + km}
		\;=\; z_k,
		\]
		the integral being finite since $k + \frac{\ws}{m} > 0$. Total mass $z_0 = 1$ confirms
		$\nu_w$ is a probability measure.
	\end{proof}
	
	Comparing moments through Lemma~\ref{lem:wstokes} and Lemma~\ref{lem:muw} pins the
	restriction $\nu|_{[0,1]}$ exactly.
	
	\begin{proposition}[Moment cone restriction]\label{prop:nu_restriction}
		The restriction of $\nu$ to $[0,1]$ is a known scalar multiple of $\nu_w$:
		\begin{equation}\label{eq:nu_restriction}
			\nu|_{[0,1]} \;=\; 2^{-n}\,\vol K \cdot \nu_w .
		\end{equation}
		Consequently the residual measure
		$\hat\nu := \nu - 2^{-n}\,\vol K \cdot \nu_w$ on $[0,\bg]$ is non-negative and supported
		in $[1,\bg]$, with moments
		\begin{equation}\label{eq:residual_moments}
			\hat y_k \;:=\; \int_0^{\bg} t^k\, d\hat\nu(t) \;=\; y_k - 2^{-n}\,\vol K\cdot z_k,
			\qquad k \in \N.
		\end{equation}
	\end{proposition}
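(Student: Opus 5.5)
The plan is to compare the two measures $\nu|_{[0,1]}$ and $2^{-n}\vol K\cdot\nu_w$ through their moments. Both are finite non-negative measures on the compact interval $[0,1]$, so by the Hausdorff determinacy recalled after \eqref{eq:hausdorff_cone} (or directly by Stone--Weierstrass density of polynomials in $C([0,1])$), they coincide as soon as all their moments agree.

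The moments of $\nu|_{[0,1]}$ are obtained from the change of variables \eqref{eq:cov} with $\varphi(t)=t^k\mathbf 1_{[0,1]}(t)$. This $\varphi$ is bounded and measurable, so \eqref{eq:cov} applies and gives
\[
\int_{[0,1]} t^k\,d\nu(t)=2^{-n}\int_{\{x\in B:\,g(x)\le 1\}} g(x)^k\,dx .
\]
By Assumption~\ref{ass:box} the domain of integration is exactly $K$. Lemma~\ref{lem:wstokes} then turns the right-hand side into $2^{-n} z_k\vol K$. By Lemma~\ref{lem:muw}, this equals $\int_0^1 t^k\,d\bigl(2^{-n}\vol K\,\nu_w\bigr)$. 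Equality of the two measures therefore holds for every $k\in\N$, which proves \eqref{eq:nu_restriction}.

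For the residual, write $\nu=\nu|_{[0,1]}+\nu|_{(1,\bg]}$. Since $\nu_w$ is carried by $[0,1]$, subtracting \eqref{eq:nu_restriction} gives
\[
\hat\nu=\nu|_{(1,\bg]} .
\]
This is non-negative and supported in $[1,\bg]$. Its moments follow by linearity of integration: they equal $y_k-2^{-n}\vol K\,z_k$ by \eqref{eq:y_moments} and Lemma~\ref{lem:muw}, which is \eqref{eq:residual_moments}.

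The only delicate point is the use of the Stokes identity. One must check that the set $\{x\in B: g(x)\le1\}$ appearing in the pushforward coincides with the full sublevel set $K$ to which Lemma~\ref{lem:wstokes} applies; this is exactly the role of Assumption~\ref{ass:box}. One must also make sure the moment-determinacy argument is applied to finite positive measures on a compact set, which is immediate here.
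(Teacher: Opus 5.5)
Your proof is correct and follows the paper's argument essentially step for step. You match all moments of $\nu|_{[0,1]}$ and $2^{-n}\vol K\cdot\nu_w$ via \eqref{eq:cov}, Assumption~\ref{ass:box} and Lemmas~\ref{lem:wstokes}--\ref{lem:muw}, invoke moment determinacy on a compact interval, and then read off $\hat\nu=\nu|_{(1,\bg]}\ge 0$ and its moments by linearity, exactly as the paper does (your explicit bounded test function $t^k\mathbf 1_{[0,1]}(t)$ merely spells out a step the paper leaves implicit).
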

	
	\begin{proof}
		Both $\nu|_{[0,1]}$ and $2^{-n}\vol K \cdot \nu_w$ are finite measures on the compact
		interval $[0,1]$. For each $j \in \N$, the pushforward
		formula~\eqref{eq:cov} together with Assumption~\ref{ass:box} and Lemma~\ref{lem:wstokes} gives
		\begin{eqnarray*}
			\int_{[0,1]} t^k\, d\nu(t)
			\;=\; \int_{\{x \in B\,:\, g(x) \leq 1\}} g(x)^k\, d\mu(x)
			&=& 2^{-n}\int_K g(x)^k\, dx\\
			&=& 2^{-n} z_k\, \vol K
			\;=\; 2^{-n}\vol K \int_{[0,1]} t^k\, d\nu_w(t),
		\end{eqnarray*}
		using Lemma~\ref{lem:muw} in the last step. The two measures therefore have the
		same moments, and a measure on a compact interval is determined by its moments;
		hence~\eqref{eq:nu_restriction}. Restricting $\hat\nu$ to $[0,1]$ gives
		$\hat\nu|_{[0,1]} = \nu|_{[0,1]} - 2^{-n}\vol K\cdot\nu_w = 0$, while on $(1,\bg]$
		(where $\nu_w$ vanishes) $\hat\nu = \nu|_{(1,\bg]} \geq 0$; thus $\hat\nu \geq 0$
		with $\supp \hat\nu \subseteq [1,\bg]$. Equation~\eqref{eq:residual_moments} is
		linearity of the integral.
	\end{proof}
	
	Equation~\eqref{eq:nu_restriction} is the structural fact that defeats the
	ill-posedness of Remark~\ref{rem:illposed}: on $[0,1]$, the unknown measure
	$\nu|_{[0,1]}$ is a \emph{single scalar} $2^{-n}\vol K$ times the fully known reference
	$\nu_w$. The volume problem can now be stated as a one-parameter truncated moment
	problem on $[1,\bg]$.
	
	\medskip
	\noindent\textit{Volume as a partial moment problem.} {Find the largest
		(resp.\ smallest) scalar $\alpha \geq 0$ such that the truncated sequence
		$(y_k - \alpha\, z_k)_{k=0}^{2r}$ is the moment sequence of a non-negative
		measure on $[1,\bg]$.} By Proposition~\ref{prop:nu_restriction} the true value
	$\alpha = 2^{-n}\vol K = \nu([0,1])$, the normalized volume, is feasible for every
	$r$ (it produces the moments of $\hat\nu$), and the extremal feasible $\alpha$
	bracket $2^{-n}\vol K$; the volume is recovered as $\vol K = 2^n\alpha$.
	
	\begin{figure}[ht]
		\centering
		\includegraphics[width=\textwidth]{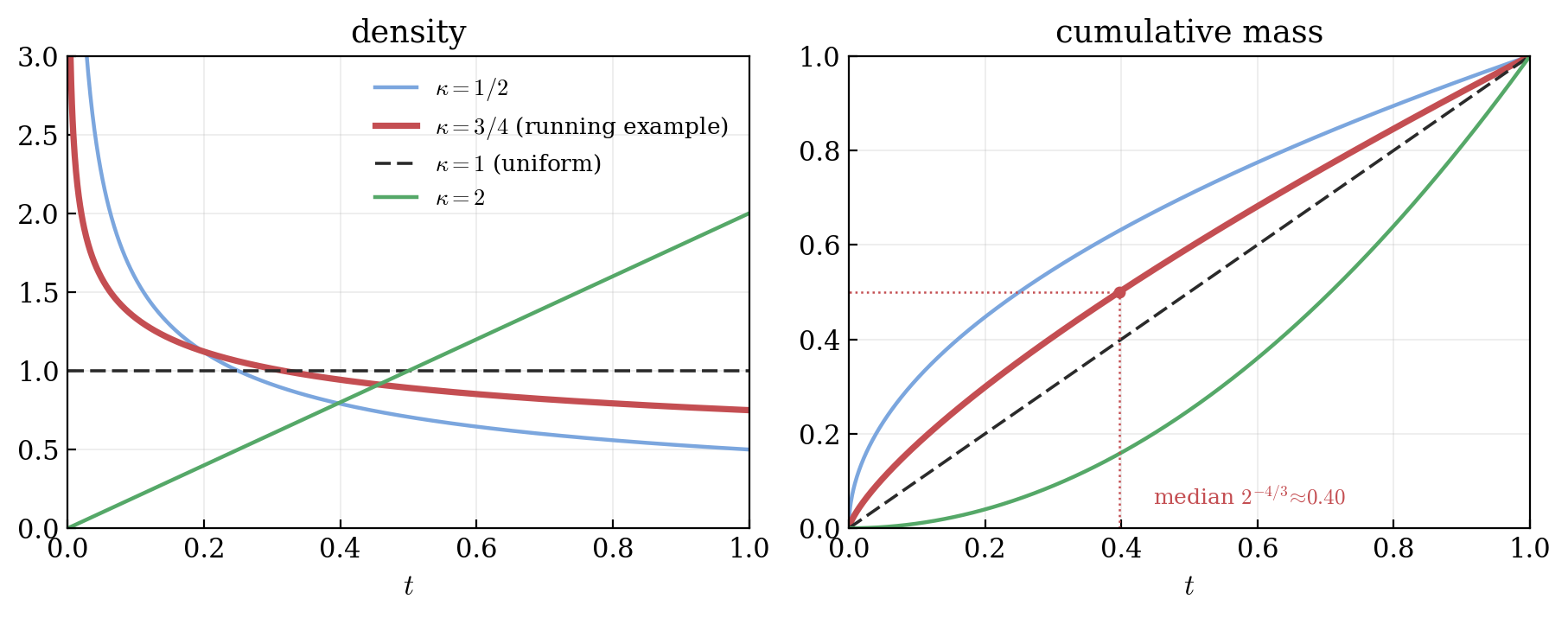}
		\caption{The reference measure $\nu_w = \mathrm{Beta}(\kappa,1)$ on $[0,1]$,
			with $\kappa = \ws/m$. Left plot: density $w_\kappa(t) = \kappa\,
			t^{\kappa-1}$: for $\kappa < 1$ it diverges at $t = 0$ with the integrable
			rate $t^{\kappa-1}$, at $\kappa = 1$ it is uniform, and for $\kappa > 1$ it
			vanishes at the origin. Right plot: cumulative mass
			$\int_0^t w_\kappa(s)\,ds = t^{\kappa}$, showing how the mass concentrates near $0$ as
			$\kappa$ decreases. The running example $\kappa = 3/4$ is highlighted, and its
			median $2^{-4/3} \approx 0.40$ is marked.}
		\label{fig:muw}
	\end{figure}
	
	\begin{remark}[Skewed weights and the singular density]\label{rem:singular_density}
		When $\kappa:=\frac{\ws}{m} < 1$, i.e. skewed weights or large degree $m$, the density~\eqref{eq:mu_w_density} is unbounded
		at $t = 0$. This is however harmless for the moment problem, since the moments
		are well defined and finite for all $k$.
		For the running example we have $\kappa = 3/4$, the density is $w_{3/4}(t) = 3/4\, t^{-1/4}$ and the
		cumulative mass is $\int_0^t w_{3/4}(s)ds = t^{3/4}$; half of the total mass lies in
		$[0,\,2^{-4/3}] \approx [0,\,0.397]$, so the measure concentrates markedly toward
		the origin while remaining finite. Figure~\ref{fig:muw} displays $w_\kappa$ and
		$\int_0^t w_\kappa(t)dt$ over a range of $\kappa$.
	\end{remark}
	
	\subsection{The weighted ReLU identity}\label{sec:pf_relu}
	
	Specializing the Stokes identity to $k = 1$ turns the discontinuous volume
	functional into the integral of a continuous test function, which is the source
	of the Gibbs-free behaviour of both algorithms.
	
	\begin{proposition}[Weighted ReLU identity]\label{prop:relu}
		For quasi-homogeneous positive $g$ with weight sum $\ws$ and degree $m$,
		\begin{equation}\label{eq:relu}
			\vol K
			\;=\; \frac{\ws + m}{m} \int_B (1 - g(x))_+\, dx
			\;=\; \frac{2^n(\ws + m)}{m} \int_0^{\bg}(1-t)_+\, d\nu(t),
		\end{equation}
		where $(t)_+ := \max(0, t)$ is the ReLU function%
		\footnote{The function $(t)_+ := \max(0, t)$ is called the
			\emph{ReLU} (\emph{Rectified Linear Unit}) in deep learning, where the
			activation $t \mapsto t_+$ is the canonical neuron nonlinearity. The related function $(1-t)_+ := \max(0, 1-t)$ is called the \emph{hinge} in machine learning, after the hinge
			loss $L(y,\hat y) = \max(0, 1 - y\hat y)$ of support vector machines: its graph
			has two linear pieces meeting at a corner, like the leaves of a door hinge.
			Classical approximation theory calls the  function $(1-t)_+$  
			a \emph{one-sided ramp}.}.
	\end{proposition}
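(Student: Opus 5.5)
The plan is to expand the ramp $(1-g)_+$ and apply the weighted Stokes moment identity, Lemma~\ref{lem:wstokes}, with $k=1$. Under Assumption~\ref{ass:box} we have $K\subseteq B$ and $g\ge 1$ on $B\setminus K$. Hence $(1-g(x))_+$ vanishes on $B\setminus K$ and equals $1-g(x)$ on $K$. This gives
\[
\int_B (1-g(x))_+\,dx \;=\; \int_K (1-g(x))\,dx \;=\; \vol K - \int_K g(x)\,dx .
\]

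Next I invoke Lemma~\ref{lem:wstokes} with $k=1$, which gives $\int_K g\,dx = z_1\vol K = \frac{\ws}{\ws+m}\vol K$. Substituting, the right-hand side becomes
\[
\Bigl(1-\tfrac{\ws}{\ws+m}\Bigr)\vol K \;=\; \tfrac{m}{\ws+m}\vol K .
\]
Multiplying by $\frac{\ws+m}{m}$ yields the first equality in \eqref{eq:relu}.

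For the second equality I apply the change of variables formula \eqref{eq:cov} with $\varphi(t)=(1-t)_+$. This function is bounded and continuous on $[0,\bg]$, so \eqref{eq:cov} applies. Since $d\mu = 2^{-n}\mathbf 1_B\,dx$, we get $\int_B(1-g)_+\,dx = 2^n\int_0^{\bg}(1-t)_+\,d\nu(t)$, and substituting this into the first equality gives the second.

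There is no genuine obstacle. The one point needing care is the first step, the restriction of the integral from $B$ to $K$. It relies on Assumption~\ref{ass:box} to guarantee that $K$ is exactly $\{x\in B: g(x)\le 1\}$, so that no region of $B$ with $g<1$ is lost by clipping. As a cross-check, the second equality also follows from Proposition~\ref{prop:nu_restriction}. That proposition gives $\int_{[0,1]}(1-t)\,d\nu = 2^{-n}\vol K\,(z_0-z_1)$, which is the same computation carried out on the pushforward side.
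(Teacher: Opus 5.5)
Your proof is correct and matches the paper's argument: you restrict $\int_B (1-g)_+\,dx$ to $\int_K (1-g)\,dx$ using Assumption~\ref{ass:box}, apply Lemma~\ref{lem:wstokes} with $k=1$, and obtain the second equality from the change of variables \eqref{eq:cov} with $\varphi=(1-t)_+$. Your cross-check through Proposition~\ref{prop:nu_restriction} is also valid, since $\int_0^1(1-t)\,d\nu_w = z_0-z_1 = m/(\ws+m)$.
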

	
	\begin{proof}
		The $k = 1$ case of Lemma~\ref{lem:wstokes} reads
		$\int_K g\, dx = z_1\, \vol K$ with $z_1 = \ws/(\ws + m)$, hence
		\[
		\int_K (1 - g)\, dx \;=\; (1 - z_1)\, \vol K
		\;=\; \Bigl(1 - \tfrac{\ws}{\ws+m}\Bigr)\vol K
		\;=\; \frac{m}{\ws + m}\, \vol K .
		\]
		Since $g \geq 0$, $(1-g)_+ = (1-g)\,\mathbf{1}_{\{g \leq 1\}}$, and under
		Assumption~\ref{ass:box}, $\{g \leq 1\} = K$, so
		$\int_B (1-g)_+\, dx = \int_K (1-g)\, dx = \tfrac{m}{\ws+m}\,\vol K$.
		Multiplying by $\frac{\ws+m}{m}$ gives the first equality
		in~\eqref{eq:relu}. For the second, the change of
		variables~\eqref{eq:cov} with $\varphi = (1-t)_+$ gives
		$\int_B (1-g)_+\, d\mu = \int_0^{\bg}(1-t)_+\, d\nu$, i.e.\
		$\int_B (1-g)_+\, dx = 2^n\int_0^{\bg}(1-t)_+\, d\nu$ since $d\mu = 2^{-n}dx$.
	\end{proof}
	
	\begin{remark}[Higher-order ReLU identities]\label{rem:relu_k}
		The same argument applied to the field $(\wox)(1 - g^j)$, i.e.\ the case
		$k = j$ of Lemma~\ref{lem:wstokes}, yields a one-parameter family of identities
		\begin{equation}\label{eq:relu_k}
			\vol K \;=\; \frac{\ws + jm}{jm} \int_B \bigl(1 - g(x)^j\bigr)_+\, dx,
			\qquad j \geq 1,
		\end{equation}
		since $\{g^j \leq 1\} = \{g \leq 1\} = K$ for $g \geq 0$. The case $j = 1$ is
		Proposition~\ref{prop:relu}; larger $j$ sharpens the slope of the continuous
		integrand near the level set.
	\end{remark}
	
	The ReLU function
	replaces the discontinuous indicator $\mathbf{1}_{[0,1]}$
	of~\eqref{eq:vol_eq_nu} by a continuous, indeed Lipschitz, test function. This is
	the single source of the Gibbs-free convergence of both algorithms below: the
	Chebyshev algorithm of Section~\ref{sec:cheb} approximates the
	\emph{continuous} integrand $(1-t)_+$ rather than a jump, and the
	generalized-eigenvalue algorithm of Section~\ref{sec:gevp} exploits the
	\emph{moment} content of the same identity through the reference measure
	$\nu_w$. For unweighted homogeneity the constant is $(\ws + m)/m = (n+m)/m$,
	recovering the hinge identity of \cite{lasserre2019}.
	
	\section{Chebyshev ReLU bounds (\textsc{Cheb})}\label{sec:cheb}
	
	The ReLU identity of Proposition~\ref{prop:relu} expresses the volume as the
	integral of the continuous univariate ReLU function $f(t) = (1-t)_+$ against the
	pushforward.
	The \textsc{Cheb} algorithm replaces $f$ by its degree-$r$ Chebyshev partial sum
	on $[0,\bg]$ --- a polynomial with \emph{closed-form} coefficients --- and
	integrates it against $\nu$ term by term. Because both the coefficients and the
	uniform approximation error are known explicitly, this produces a certified
	two-sided bracket for $\vol K$ without solving any semidefinite or linear program,
	converging at the algebraic rate $O(1/r)$.
	
	\subsection{Closed-form Chebyshev expansion of the ReLU function}\label{sec:cheb_coefs}
	
	\paragraph{Reduction to the canonical interval.}
	Consider $f : [0,\bg] \to \R$, $f(t) = (1-t)_+$. The affine map
	\begin{equation}\label{eq:affine}
		t \;=\; \tfrac{\bg}{2}(s+1), \qquad s \;=\; \tfrac{2t}{\bg}-1,
	\end{equation}
	is a bijection $[-1,1] \to [0,\bg]$ preserving polynomial degree, and it pulls $f$
	back to
	\[
	\widetilde f(s) \;:=\; f\bigl(\tfrac{\bg}{2}(s+1)\bigr) \;=\; \tfrac{\bg}{2}\,(s^\star - s)_+,
	\qquad s^\star \;:=\; \tfrac{2}{\bg}-1 \in (-1,1),
	\]
	which is supported on $[-1, s^\star]$, vanishes on $[s^\star, 1]$, and has a single
	interior corner at $s^\star$. Set
	\begin{equation}\label{eq:phidef}
		\varphi \;:=\; \arccos s^\star \in (0,\pi),
	\end{equation}
	so that, by direct computation,
	\begin{equation}\label{eq:phi_ids}
		\cos\varphi = \tfrac{2}{\bg}-1, \qquad
		\sin\varphi = \tfrac{2}{\bg}\sqrt{\bg-1}, \qquad
		\varphi = 2\arctan\sqrt{\bg-1},
	\end{equation}
	and under $s = \cos\theta$ the corner $s^\star$ corresponds to the angle
	$\theta = \varphi$.
	
	\paragraph{Chebyshev expansion and truncation.}
	Let $T_k$ denote the Chebyshev polynomial of the first kind. The $L^2(w)$-expansion of $\widetilde f$ on $[-1,1]$ with weight $w(s) = (1-s^2)^{-1/2}$ has Chebyshev coefficients
	$$c_k = \tfrac{2}{\pi}\int_{-1}^{1} \widetilde f(s)\, T_k(s)\, w(s)\, ds,$$ and we
	define the degree-$r$ truncated Chebyshev approximation of $f$ on $[0,\bg]$,
	\begin{equation}\label{eq:qkC}
		f_r^{\mathrm{ch}}(t) \;:=\; \frac{c_0}{2} + \sum_{k=1}^{r} c_k\, T_k \Bigl(\tfrac{2t}{\bg}-1\Bigr),
		\qquad t \in [0,\bg].
	\end{equation}
	Being the $L^2(w)$-orthogonal projection of $\widetilde f$ onto $\R[s]_r$ pulled
	back through~\eqref{eq:affine}, $f_r^{\mathrm{ch}}$ lies in $\R[t]_r$.
	
	\begin{theorem}[Closed-form coefficients]\label{thm:cheb_coefs}
		With $\varphi = \arccos(2/\bg - 1)$ as in~\eqref{eq:phidef}, it holds
		\begin{align}
			c_0 &= \tfrac{\bg}{\pi}\bigl[(\pi-\varphi)\cos\varphi + \sin\varphi\bigr],
			\label{eq:c0}\\
			c_1 &= -\tfrac{\bg}{4\pi}\bigl[2(\pi-\varphi)+\sin(2\varphi)\bigr],
			\label{eq:c1}\\
			c_k &= \tfrac{\bg}{\pi\, k(k^2-1)}\bigl[\cos\varphi\,\sin(k\varphi) - k\sin\varphi\,\cos(k\varphi)\bigr],
			\qquad k \geq 2. \label{eq:cj}
		\end{align}
	\end{theorem}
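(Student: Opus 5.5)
The plan is to compute the coefficients by passing to the angular variable. Substituting $s=\cos\theta$ turns the weighted integral into a plain trigonometric integral, since $T_k(\cos\theta)=\cos k\theta$ and $w(s)\,ds = d\theta$ up to orientation:
\[
c_k=\frac{2}{\pi}\int_0^\pi \widetilde f(\cos\theta)\cos(k\theta)\,d\theta .
\]
The integrand $\widetilde f(s)=\tfrac{\bg}{2}(s^\star-s)_+$ is nonzero exactly when $s<s^\star$. Because $\cos$ is decreasing on $[0,\pi]$, this means $\theta>\varphi$, where $s^\star=\cos\varphi$ by \eqref{eq:phidef}. Hence
\[
c_k=\frac{\bg}{\pi}\int_\varphi^\pi(\cos\varphi-\cos\theta)\cos(k\theta)\,d\theta ,
\]
and every coefficient reduces to elementary integrals of products of cosines over $[\varphi,\pi]$.

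For $k=0$ the integral is $(\pi-\varphi)\cos\varphi-[\sin\theta]_\varphi^\pi=(\pi-\varphi)\cos\varphi+\sin\varphi$, which gives \eqref{eq:c0}.

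For $k=1$ the integral is $\cos\varphi\,[\sin\theta]_\varphi^\pi-\int_\varphi^\pi\cos^2\theta\,d\theta$. The first term equals $-\cos\varphi\sin\varphi$. Using $\cos^2\theta=\tfrac12(1+\cos 2\theta)$, the second term equals $\tfrac12(\pi-\varphi)-\tfrac14\sin 2\varphi$. Since $\cos\varphi\sin\varphi=\tfrac12\sin 2\varphi$, the total is $-\tfrac14\bigl[2(\pi-\varphi)+\sin2\varphi\bigr]$, matching \eqref{eq:c1}.

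For $k\ge2$ there are two pieces.
\begin{itemize}
\item First piece: $\cos\varphi\int_\varphi^\pi\cos k\theta\,d\theta=-\cos\varphi\,\sin(k\varphi)/k$, because $\sin k\pi=0$.
\item Second piece: write $\cos\theta\cos k\theta=\tfrac12[\cos(k+1)\theta+\cos(k-1)\theta]$, which is valid and nondegenerate since $k-1\ge1$. This gives
\[
\int_\varphi^\pi\cos\theta\cos k\theta\,d\theta=-\tfrac12\Bigl[\tfrac{\sin(k+1)\varphi}{k+1}+\tfrac{\sin(k-1)\varphi}{k-1}\Bigr].
\]
\end{itemize}
Next, expand $\sin((k\pm1)\varphi)=\sin k\varphi\cos\varphi\pm\cos k\varphi\sin\varphi$. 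Collecting terms over the common denominator $k^2-1$, the bracket equals
\[
\frac{2k\sin k\varphi\cos\varphi-2\cos k\varphi\sin\varphi}{k^2-1}.
\]
Subtracting the second piece from the first, the $\cos\varphi\sin k\varphi$ terms combine through the identity $-\tfrac1k+\tfrac{k}{k^2-1}=\tfrac{1}{k(k^2-1)}$. The $\sin\varphi\cos k\varphi$ term carries the coefficient $-\tfrac1{k^2-1}=-\tfrac{k}{k(k^2-1)}$. Multiplying by $\bg/\pi$ then yields exactly \eqref{eq:cj}.

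I do not expect a genuine obstacle. The only points that need care are the orientation and sign in the substitution $s=\cos\theta$ (the corner $s^\star$ maps to $\theta=\varphi$, and the support becomes $[\varphi,\pi]$), and the bookkeeping in the $k\ge2$ case. As a sanity check, I would verify that the $k\ge2$ formula does not extend continuously to $k=1$: its denominator vanishes there, which is why the $k=1$ case is done separately.
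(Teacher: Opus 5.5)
Your proposal is correct and follows the paper's proof essentially step for step. Both use the substitution $s=\cos\theta$ to reach $c_k=\frac{\bg}{\pi}\int_\varphi^\pi(\cos\varphi-\cos\theta)\cos(k\theta)\,d\theta$, integrate directly for $k=0,1$, and use product-to-sum plus the expansion of $\sin((k\pm1)\varphi)$ for $k\ge2$. You collect over the denominator $k^2-1$ where the paper multiplies through by $\pi k(k^2-1)/\bg$, which is a cosmetic difference.

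One small correction to your closing sanity check. The numerator of \eqref{eq:cj} also vanishes at $k=1$, so the formula is $0/0$ there, not simply singular. The real reason $k=1$ needs separate treatment is the one you already gave in the product-to-sum step: $\cos((k-1)\theta)\equiv1$ integrates to $\pi-\varphi$, not to $-\sin((k-1)\varphi)/(k-1)$.
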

	
	\begin{proof}
		Substitute $s = \cos\theta$ in the defining integral. Then
		$ds = -\sin\theta\, d\theta$, $\sqrt{1-s^2} = \sin\theta$,
		$T_k(\cos\theta) = \cos(k\theta)$, and since $\widetilde f$ vanishes on
		$[s^\star,1] \leftrightarrow [0,\varphi]$ (with $s = -1 \leftrightarrow
		\theta = \pi$),
		\[
		c_k \;=\; \frac{2}{\pi}\int_\varphi^\pi \tfrac{\bg}{2}\,(\cos\varphi - \cos\theta)\,\cos(k\theta)\, d\theta
		\;=\; \frac{\bg}{\pi}\int_\varphi^\pi (\cos\varphi - \cos\theta)\,\cos(k\theta)\, d\theta .
		\]
		For $k = 0$, $\int_\varphi^\pi(\cos\varphi - \cos\theta)\,d\theta
		= (\pi-\varphi)\cos\varphi + \sin\varphi$, giving~\eqref{eq:c0}. For $k = 1$,
		using $\cos^2\theta = \tfrac12(1+\cos 2\theta)$ and
		$\cos\varphi\sin\varphi = \tfrac12\sin 2\varphi$ gives~\eqref{eq:c1}. For
		$k \geq 2$, the product-to-sum identity
		$\cos\theta\cos(k\theta) = \tfrac12[\cos((k{+}1)\theta) + \cos((k{-}1)\theta)]$
		together with $\int_\varphi^\pi \cos(l\theta)\, d\theta = -\sin(l\varphi)/l$
		(for integer $l \geq 1$, since $\sin(l\pi) = 0$) gives
		\[
		c_k \;=\; \frac{\bg}{\pi}\Bigl[-\frac{\cos\varphi\,\sin(k\varphi)}{k}
		+ \frac{\sin((k{+}1)\varphi)}{2(k+1)} + \frac{\sin((k{-}1)\varphi)}{2(k-1)}\Bigr].
		\]
		Multiplying by $\pi\, k(k^2-1)/\bg$ and expanding
		$\sin((k{\pm}1)\varphi) = \sin(k\varphi)\cos\varphi \pm \cos(k\varphi)\sin\varphi$,
		the coefficient of $\sin(k\varphi)\cos\varphi$ collapses to $[k(k-1)+k(k+1)]/2 = k^2$ and that of $\cos(k\varphi)\sin\varphi$ to $-k$;
		subtracting the $(k^2-1)\cos\varphi\sin(k\varphi)$ term leaves
		$\cos\varphi\sin(k\varphi) - k\sin\varphi\cos(k\varphi)$, which
		is~\eqref{eq:cj}.
	\end{proof}
	
	\begin{corollary}[Coefficient decay]\label{cor:cheb_decay}
		For $k \geq 2$,
		\[
		|c_k| \;\leq\; \frac{\bg\,(|\cos\varphi| + k\,|\sin\varphi|)}{\pi\, k(k^2-1)}
		\;\leq\; \frac{\bg\,(1+\sin\varphi)}{\pi\,(k^2-1)} \;=\; O(k^{-2}),
		\qquad k \to \infty .
		\]
	\end{corollary}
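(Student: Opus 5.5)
The bound follows directly from the closed form \eqref{eq:cj} of Theorem~\ref{thm:cheb_coefs} and the triangle inequality; no further analysis of $\widetilde f$ is needed. Fix $k\ge 2$. Since $|\sin(k\varphi)|\le 1$ and $|\cos(k\varphi)|\le 1$,
\[
\bigl|\cos\varphi\,\sin(k\varphi)-k\sin\varphi\,\cos(k\varphi)\bigr|\;\le\;|\cos\varphi|+k|\sin\varphi|.
\]
Dividing by $\pi k(k^2-1)>0$ and multiplying by $\bg>0$ gives the first inequality.

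For the second inequality, write
\[
\frac{|\cos\varphi|+k|\sin\varphi|}{k}\;=\;\frac{|\cos\varphi|}{k}+|\sin\varphi|.
\]
Use $|\cos\varphi|\le 1$ and $k\ge 2$, so that $|\cos\varphi|/k\le 1/2\le 1$. Also, $\varphi\in(0,\pi)$ by \eqref{eq:phidef}, so $\sin\varphi=|\sin\varphi|>0$. This yields $(|\cos\varphi|+k|\sin\varphi|)/k\le 1+\sin\varphi$, and hence
\[
|c_k|\;\le\;\frac{\bg(1+\sin\varphi)}{\pi(k^2-1)}.
\]
By \eqref{eq:phi_ids}, $\sin\varphi=\tfrac{2}{\bg}\sqrt{\bg-1}$ is a fixed number depending only on $\bg$. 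Since $k^2-1\sim k^2$, the right-hand side is $O(k^{-2})$ as $k\to\infty$, with a constant independent of $k$.

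There is no real obstacle here. The only point needing care is the dependence on the sign and size of $\cos\varphi$: this is handled by the crude bound $|\cos\varphi|/k\le 1$, valid uniformly for $k\ge2$ and for all $\bg>1$, including $\bg>2$, where $\cos\varphi<0$.
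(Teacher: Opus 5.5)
Your proposal is correct and is the intended argument: the paper gives no separate proof, treating the corollary as an immediate consequence of the closed form \eqref{eq:cj}. Applying the triangle inequality with $|\sin(k\varphi)|,|\cos(k\varphi)|\le 1$, and then $|\cos\varphi|/k\le 1$ together with $\sin\varphi>0$ on $(0,\pi)$, is exactly what that reading requires.
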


	\begin{corollary}[Certified uniform error]\label{cor:cheb_eps}
		For every $r \ge 2$,
		\begin{equation}\label{eq:eps_certified}
			\varepsilon_r \;\le\; \bar\varepsilon_r
			\;:=\; \sum_{k>r} |c_k|
			\;\le\; \frac{\bg\,(1+\sin\varphi)}{2\pi}\Bigl(\frac1r + \frac1{r+1}\Bigr).
		\end{equation}
	\end{corollary}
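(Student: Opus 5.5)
Here $\varepsilon_r$ is the uniform approximation error $\|f - f_r^{\mathrm{ch}}\|_{\infty,[0,\bg]}$. The statement then splits into two inequalities, and I will prove them one after the other.

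\emph{First inequality.} By Corollary~\ref{cor:cheb_decay}, $\sum_k |c_k| < \infty$. So the Chebyshev series of $\widetilde f$ converges absolutely and uniformly on $[-1,1]$, because $|T_k| \le 1$ there. Its sum is a continuous function with the same Chebyshev coefficients as the continuous function $\widetilde f$. Since the $T_k$ are complete in $L^2(w)$, the two functions agree almost everywhere, and by continuity they agree everywhere. Hence, for every $s\in[-1,1]$,
\[
\widetilde f(s) - \Bigl(\tfrac{c_0}{2} + \sum_{k=1}^r c_k T_k(s)\Bigr) = \sum_{k>r} c_k T_k(s),
\qquad\text{so}\qquad
\Bigl|\sum_{k>r} c_k T_k(s)\Bigr| \le \sum_{k>r}|c_k| = \bar\varepsilon_r .
\]
Pulling this back through the affine bijection~\eqref{eq:affine}, which does not change sup norms, gives $\varepsilon_r \le \bar\varepsilon_r$.

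\emph{Second inequality.} Since $r\ge2$, every index $k>r$ satisfies $k\ge 3$, so the bound $|c_k| \le \bg(1+\sin\varphi)/(\pi(k^2-1))$ of Corollary~\ref{cor:cheb_decay} applies to all of them. I then use the partial fractions
\[
\frac{1}{k^2-1} = \frac12\Bigl(\frac{1}{k-1} - \frac{1}{k+1}\Bigr).
\]
The sum over $k \ge r+1$ telescopes:
\[
\sum_{k\ge r+1} \frac{1}{k^2-1} = \frac12\Bigl(\frac1r + \frac1{r+1}\Bigr).
\]
Multiplying by $\bg(1+\sin\varphi)/\pi$ gives exactly the right-hand side of~\eqref{eq:eps_certified}.

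The main obstacle is justifying the pointwise identity between $\widetilde f$ and its Chebyshev series, that is, that the uniform error equals the tail series rather than merely being bounded in $L^2(w)$. The uniqueness argument above handles this. Alternatively, one can cite the classical fact that absolutely summable Chebyshev coefficients of a continuous function give uniform convergence to that function; the equivalent statement for the cosine series of the continuous, even, $2\pi$-periodic function $\theta\mapsto\widetilde f(\cos\theta)$ follows from Fejér's theorem. The remaining steps are routine.
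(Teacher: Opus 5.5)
Your proposal is correct and follows the paper's proof. Both bound the error by the coefficient tail using $|T_k|\le 1$, then apply the estimate of Corollary~\ref{cor:cheb_decay} and telescope $\tfrac{1}{k^2-1}=\tfrac12\bigl(\tfrac{1}{k-1}-\tfrac{1}{k+1}\bigr)$. Your one addition is the explicit justification, via absolute summability and completeness of $\{T_k\}$ in $L^2(w)$, that the Chebyshev series converges uniformly to $\widetilde f$, so that the error equals the tail series; the paper uses this step without comment.
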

	
	\begin{proof}
		The truncation error is bounded by the tail of the coefficient series,
		$\varepsilon_r = \sup_{[0,\bg]}\bigl|\sum_{k>r} c_k T_k\bigr| \le \sum_{k>r}|c_k|$
		since $|T_k|\le 1$ on $[-1,1]$. By Corollary~\ref{cor:cheb_decay},
		$|c_k| \le \bg(1+\sin\varphi)/[\pi(k^2-1)]$, and the partial fraction
		$\tfrac{1}{k^2-1} = \tfrac12(\tfrac{1}{k-1}-\tfrac{1}{k+1})$ telescopes:
		$\sum_{k>r}\tfrac{1}{k^2-1} = \tfrac12(\tfrac1r + \tfrac1{r+1})$.
	\end{proof}
	
	\begin{figure}[ht]
		\centering
		\includegraphics[width=\textwidth]{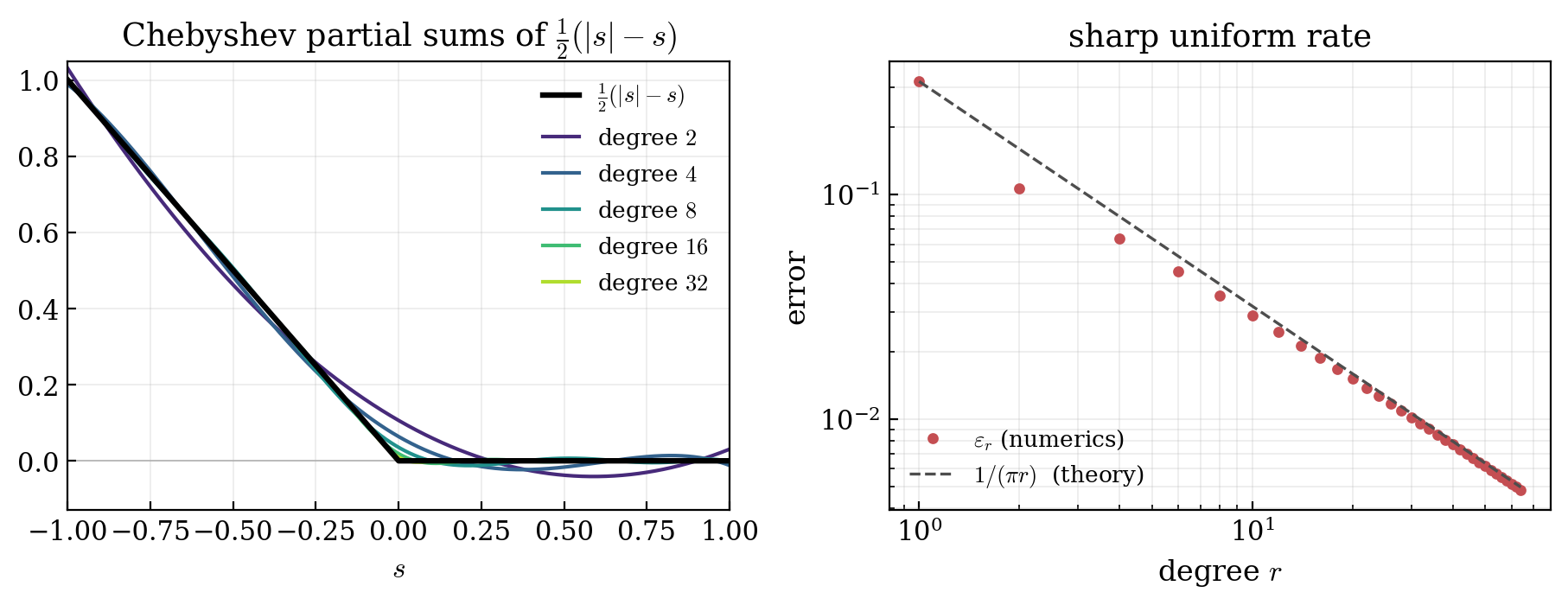}
		\caption{Chebyshev approximation of the ReLU on $[-1,1]$, i.e.\ the
			$\bg = 2$ case in which $f$ pulls back to the ramp
			$\widetilde f(s) = (-s)_+ = \tfrac12(|s| - s)$ with corner at $s = 0$.
			Left plot: the ramp (black) and its degree-$r$
			Chebyshev partial sums $\tfrac{c_0}{2} + \sum_{k=1}^r c_k\, T_k$ for
			$r = 2, 4, 8, 16, 32$, built from the closed-form coefficients $c_0 = 2/\pi$,
			$c_1 = -1/2$, $c_{2l} = (-1)^{l+1}\,\tfrac{2}{\pi(4l^2-1)}$: the partial sums
			converge uniformly, the residual oscillation concentrating at the corner. Right plot: 
			uniform approximation error versus degree on
			log--log axes, against the sharp asymptotic $\varepsilon_r \sim 1/(\pi r)$ of
			Theorem~\ref{thm:cheb_rate}.}
		\label{fig:cheb_relu}
	\end{figure}
	
	\begin{remark}[Consistency at $\bg = 2$; the running example]\label{rem:cheb_T2}
		For $\bg = 2$ one has $s^\star = 0$, $\varphi = \pi/2$, $\cos\varphi = 0$,
		$\sin\varphi = 1$, whence $c_0 = 2/\pi$, $c_1 = -1/2$, and
		$c_k = -2\cos(k\pi/2)/[\pi(k^2-1)]$; thus $c_k = 0$ for odd $k \geq 3$ and
		$c_{2l} = (-1)^{l+1}\,\tfrac{2}{\pi(4l^2-1)}$, recovering the classical
		Chebyshev expansion of $f(s)=\tfrac12(|s| - s)$ on $[-1,1]$, see Figure \ref{fig:cheb_relu}.
	\end{remark}
	
	\paragraph{Sharp uniform error rate.}
	The corner of $f$ is only a first-order singularity, and the truncation error decays at
	the sharp algebraic rate $1/r$, with an explicit constant.
	
	\begin{theorem}[Sharp uniform error]\label{thm:cheb_rate}
		For every fixed $\bg > 1$, the truncation error
		\begin{equation}\label{eq:eps_def}
			\varepsilon_r \;:=\; \sup_{t \in [0,\bg]} |f(t) - f_r^{\mathrm{ch}}(t)|
		\end{equation}
		satisfies
		\begin{equation}\label{eq:relu_rate}
			\lim_{r\to\infty} r\,\varepsilon_r \;=\; \frac{\bg\sin\varphi}{2\pi}
			\;=\; \frac{\sqrt{\bg-1}}{\pi},
			\qquad\text{i.e.}\qquad
			\varepsilon_r \;\sim\; \frac{\sqrt{\bg-1}}{\pi\, r}.
		\end{equation}
	\end{theorem}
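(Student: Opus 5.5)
The plan is to work in the angle variable $s=\cos\theta$. There the error is an explicit tail series. I will extract the leading $k^{-2}$ part of the coefficients from Theorem~\ref{thm:cheb_coefs} and treat the resulting lacunary-type cosine series with Abel summation.

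\emph{Step 1: the error as a tail series.} By Corollary~\ref{cor:cheb_decay} the Chebyshev series of the Lipschitz function $\widetilde f$ converges absolutely and uniformly, so
\[
\varepsilon_r=\sup_{\theta\in[0,\pi]}\Bigl|\sum_{k>r}c_k\cos(k\theta)\Bigr| .
\]
From \eqref{eq:cj},
\[
c_k=-\frac{\bg\sin\varphi}{\pi}\,\frac{\cos(k\varphi)}{k^2-1}+\frac{\bg\cos\varphi\,\sin(k\varphi)}{\pi k(k^2-1)} .
\]
The second term and the difference between $\frac{1}{k^2-1}$ and $\frac{1}{k^2}$ both contribute $O(\sum_{k>r}k^{-3})=O(r^{-2})$ uniformly in $\theta$. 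Writing $A:=\bg\sin\varphi/\pi$, we get
\[
\varepsilon_r=A\,\sup_{\theta}\Bigl|\sum_{k>r}\frac{\cos(k\varphi)\cos(k\theta)}{k^2}\Bigr|+O(r^{-2}).
\]
By the product-to-sum formula this equals
\[
\tfrac{A}{2}\sup_\theta\bigl|S_r(\varphi-\theta)+S_r(\varphi+\theta)\bigr|+O(r^{-2}),
\qquad S_r(\psi):=\sum_{k>r}\frac{\cos(k\psi)}{k^2}.
\]

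\emph{Step 2: two bounds on the tail $S_r$.} The first is trivial:
\[
|S_r(\psi)|\le\sum_{k>r}k^{-2}=\tfrac1r+O(r^{-2}),
\]
with equality at $\psi=0$. The second comes from Abel summation against the Dirichlet-type partial sums $\sum_{j\le k}\cos(j\psi)$, which are bounded by $1/|2\sin(\psi/2)|$:
\[
|S_r(\psi)|\le\frac{C}{r^2\,|\sin(\psi/2)|}.
\]

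\emph{Step 3: lower bound.} Take $\theta=\varphi\in(0,\pi)$. Then $S_r(0)=\tfrac1r+O(r^{-2})$. Since $\varphi$ is fixed, $\sin\varphi\neq 0$, so by Step 2 $S_r(2\varphi)=O(r^{-2})$. Hence $\liminf r\varepsilon_r\ge A/2$.

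\emph{Step 4: upper bound (the main obstacle).} I must show that the two terms cannot both be close to $1/r$ at once, uniformly in $\theta$. Fix $\delta>0$ small relative to $\min(\varphi,\pi-\varphi)$.

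\begin{itemize}
\item If $|\sin((\varphi+\theta)/2)|\ge\delta$, then $S_r(\varphi+\theta)=O(1/(\delta r^2))$ by Step 2, while $|S_r(\varphi-\theta)|\le \tfrac1r+O(r^{-2})$ by the trivial bound.
\item Otherwise $\varphi+\theta$ lies within $O(\delta)$ of $0$ or of $2\pi$. With $\theta\in[0,\pi]$, the first case forces $\theta,\varphi\lesssim\delta$, which is excluded by the choice of $\delta$. The second case forces $\theta\approx\pi$ and $\varphi\approx\pi$, also excluded. So this case is empty once $\delta$ is small enough.
\end{itemize}

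Thus $\sup_\theta|S_r(\varphi-\theta)+S_r(\varphi+\theta)|\le \tfrac1r+O_{\varphi}(r^{-2})$, giving $\limsup r\varepsilon_r\le A/2$.

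\emph{Step 5: conclusion.} Combining Steps 3 and 4 gives $\lim r\varepsilon_r=\bg\sin\varphi/(2\pi)$. Then \eqref{eq:phi_ids} gives $\bg\sin\varphi=2\sqrt{\bg-1}$, which yields the stated value $\sqrt{\bg-1}/\pi$.

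The only delicate point is keeping the constants in Step 2 uniform. This is where fixed $\bg>1$, hence $\varphi\in(0,\pi)$ bounded away from both endpoints, is used.
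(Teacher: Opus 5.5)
Your proposal is correct and is essentially the paper's proof. It uses the same split of $c_k$ into the leading $\cos(k\varphi)/(k^2-1)$ part plus an $O(k^{-3})$ remainder, the product-to-sum reduction, the trivial bound $\sup_\psi|S_r(\psi)|=S_r(0)$ on the $\varphi-\theta$ term, Abel summation on the $\varphi+\theta$ term, and $\theta=\varphi$ for the lower bound. Your Step 4 case split, whose second case is empty, is just the paper's observation that $(\theta+\varphi)/2\in[\varphi/2,(\pi+\varphi)/2]\subset(0,\pi)$, and replacing $1/(k^2-1)$ by $1/k^2$ is cosmetic; one harmless slip is that $\sum_{j=1}^k\cos(j\psi)$ is bounded by $1/|\sin(\psi/2)|$, not $1/|2\sin(\psi/2)|$ unless you include the $\tfrac12$ of the Dirichlet kernel, which your unspecified constant $C$ absorbs anyway.
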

	
	The proof rests on a single trigonometric estimate.
	
	\begin{lemma}[Trigonometric remainder]\label{lem:remainder}
		For $\displaystyle R_r(\alpha) := \sum_{k>r}\frac{\cos(k\alpha)}{k^2-1}$,
		$\alpha \in \R$, the series converges absolutely and uniformly, and:
		\begin{enumerate}[label=\textup{(\roman*)},leftmargin=*,itemsep=1pt]
			\item $R_r(0) = \tfrac12\bigl(\tfrac1r + \tfrac1{r+1}\bigr) = \tfrac1r + O(r^{-2})$;
			\item $|R_r(\alpha)| \leq \dfrac{1}{r(r+2)\,|\sin(\alpha/2)|}
			= O(r^{-2})$ for
			$\alpha \not\equiv 0 \pmod{2\pi}$.
		\end{enumerate}
		Moreover $\sup_{\alpha\in\R} |R_r(\alpha)| = R_r(0)$.
	\end{lemma}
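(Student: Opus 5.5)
Absolute and uniform convergence follows from the Weierstrass M-test, since $|\cos(k\alpha)|/(k^2-1)\le 1/(k^2-1)$ and the partial fraction $\tfrac{1}{k^2-1}=\tfrac12(\tfrac1{k-1}-\tfrac1{k+1})$ gives a summable majorant. The same bound yields $\sup_\alpha|R_r(\alpha)|\le\sum_{k>r}\tfrac1{k^2-1}=R_r(0)$, and equality holds at $\alpha=0$; this settles the ``moreover'' clause. For (i), the telescoping sum runs over $k=r+1,r+2,\dots$. The terms $\tfrac1{k-1}$ contribute $\tfrac1r,\tfrac1{r+1},\dots$ and the terms $\tfrac1{k+1}$ cancel all of these except $\tfrac1r+\tfrac1{r+1}$, so $R_r(0)=\tfrac12(\tfrac1r+\tfrac1{r+1})$. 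The expansion $\tfrac1{r+1}=\tfrac1r-\tfrac1{r(r+1)}$ then gives $\tfrac1r+O(r^{-2})$.

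For (ii) I will use Abel summation (summation by parts) against the Dirichlet-type kernel. Set $a_k:=1/(k^2-1)$, which is positive and strictly decreasing to $0$ for $k\ge2$. Let $D_N(\alpha):=\sum_{k=0}^{N}\cos(k\alpha)$. The standard identity
\[
\sum_{k=M}^{N}\cos(k\alpha)=\frac{\sin((N+\tfrac12)\alpha)-\sin((M-\tfrac12)\alpha)}{2\sin(\alpha/2)}
\]
gives the uniform bound $|S_{M,N}(\alpha)|\le 1/|\sin(\alpha/2)|$ for every block sum $S_{M,N}:=\sum_{k=M}^N\cos(k\alpha)$ when $\alpha\not\equiv0$. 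I then write $R_r(\alpha)=\sum_{k>r}a_k\cos(k\alpha)$ as $\sum_{k>r}(a_k-a_{k+1})S_{r+1,k}(\alpha)$. The boundary term $a_{N+1}S_{r+1,N}$ vanishes as $N\to\infty$ because $a_N\to0$ and $S$ is bounded. Each increment satisfies $a_k-a_{k+1}\ge0$ and the increments telescope to $a_{r+1}$, so
\[
|R_r(\alpha)|\le \frac{a_{r+1}}{|\sin(\alpha/2)|}=\frac{1}{(r+1)^2-1}\cdot\frac{1}{|\sin(\alpha/2)|}=\frac{1}{r(r+2)\,|\sin(\alpha/2)|},
\]
which is exactly the stated bound.

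The only delicate step is the constant in the block-sum bound. A cruder Dirichlet estimate $|D_N|\le \tfrac12+\tfrac{1}{2|\sin(\alpha/2)|}$, applied separately to two partial sums, could lose a factor of two. To avoid this I will use the difference-of-sines formula above directly for the block starting at $M=r+1$. That formula bounds the block by $2/(2|\sin(\alpha/2)|)$, which is precisely the constant needed, so no further work is required.
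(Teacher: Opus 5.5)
Your proof is correct and follows essentially the same route as the paper: the M-test and telescoping give (i) and the supremum claim, and Abel summation against the block sums $\sum_{k=r+1}^{N}\cos(k\alpha)$, each bounded by $1/|\sin(\alpha/2)|$, gives (ii) with the constant $a_{r+1}=1/(r(r+2))$. The only cosmetic difference is how you bound the block sum: you use the real difference-of-sines telescoping identity, while the paper uses the complex geometric-series bound $\bigl|\sum_{k=r+1}^{N} e^{ik\alpha}\bigr|\le 2/|1-e^{i\alpha}|$, and both yield the same constant.
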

	
	\begin{proof}
		The partial fraction $\tfrac{1}{k^2-1} = \tfrac12\bigl(\tfrac{1}{k-1} -
		\tfrac{1}{k+1}\bigr)$ telescopes to (i). 
		For (ii), fix $\alpha\not\equiv 0 \pmod{2\pi}$ and set
		\[
		a_k:=\frac{1}{k^2-1},
		\qquad
		B_n(\alpha):=\sum_{k=r+1}^{n}\cos(k\alpha).
		\]
		By the geometric-series formula,
		\[
		\left|B_n(\alpha)\right|
		\leq
		\left|\sum_{k=r+1}^{n}e^{ik\alpha}\right|
		=
		\left|
		e^{i(r+1)\alpha}
		\frac{1-e^{i(n-r)\alpha}}{1-e^{i\alpha}}
		\right|
		\leq
		\frac{2}{|1-e^{i\alpha}|}
		=
		\frac{1}{|\sin(\alpha/2)|}.
		\]
		Abel summation gives, for every $N>r$,
		\[
		\sum_{k=r+1}^{N}a_k\cos(k\alpha)
		=
		a_N B_N(\alpha)
		+
		\sum_{k=r+1}^{N-1}(a_k-a_{k+1})B_k(\alpha).
		\]
		Since $(a_k)$ is positive and decreasing, one has
		$a_k-a_{k+1}\geq 0$, and therefore
		\[
		\begin{aligned}
			\left|
			\sum_{k=r+1}^{N}a_k\cos(k\alpha)
			\right|
			&\leq
			\frac{1}{|\sin(\alpha/2)|}
			\left(
			a_N+\sum_{k=r+1}^{N-1}(a_k-a_{k+1})
			\right)\\
			&=
			\frac{1}{|\sin(\alpha/2)|}
			\left(a_N+a_{r+1}-a_N\right)\\
			&=
			\frac{a_{r+1}}{|\sin(\alpha/2)|}.
		\end{aligned}
		\]
		Letting $N\to\infty$ and using
		\[
		a_{r+1}
		=
		\frac{1}{(r+1)^2-1}
		=
		\frac{1}{r(r+2)}
		\]
		yields
		\[
		|R_r(\alpha)|
		\leq
		\frac{1}{r(r+2)|\sin(\alpha/2)|}.
		\]
		Thus, for every fixed $\alpha\not\equiv0\pmod{2\pi}$,
		one has $R_r(\alpha)=O(r^{-2})$. Notice that the estimate is not
		uniform as $\alpha\to0$, consistently with part~(i), where
		$R_r(0)$ is of order $r^{-1}$.
		Finally each summand is dominated by $1/(k^2-1)$ with equality at $\alpha = 0$,
		so the supremum equals $R_r(0)$.
	\end{proof}
	
	\begin{proof}[Proof of Theorem~\ref{thm:cheb_rate}]
		The affine map~\eqref{eq:affine} preserves the supremum norm, so
		$\varepsilon_r = \sup_{\theta\in[0,\pi]} |e_r(\theta)|$ with
		$e_r(\theta) := f_r^{\mathrm{ch}} - f$ evaluated at $s = \cos\theta$, i.e.\ $e_r(\theta) = -\sum_{k>r} c_k\cos(k\theta)$ (writing $s = \cos\theta$). For
		$k \geq 2$ split $c_k = \alpha_k + \beta_k$ from~\eqref{eq:cj},
		\[
		\alpha_k \;=\; -\frac{\bg\sin\varphi}{\pi}\,\frac{\cos(k\varphi)}{k^2-1},
		\qquad
		\beta_k \;=\; \frac{\bg\cos\varphi}{\pi}\,\frac{\sin(k\varphi)}{k(k^2-1)},
		\qquad |\beta_k| \leq \frac{\bg}{\pi\, k(k^2-1)} = O(k^{-3}),
		\]
		so that $\sum_{k>r}\beta_k\cos(k\theta) = O(r^{-2})$ uniformly. With
		$\cos(k\varphi)\cos(k\theta) = \tfrac12[\cos(k(\theta{-}\varphi)) +
		\cos(k(\theta{+}\varphi))]$,
		\[
		e_r(\theta) \;=\; \frac{\bg\sin\varphi}{2\pi}\bigl[R_r(\theta-\varphi)
		+ R_r(\theta+\varphi)\bigr] + O(r^{-2}).
		\]
		For $\theta \in [0,\pi]$ the angle $\tfrac{\theta+\varphi}{2}$ ranges in
		$[\tfrac{\varphi}{2}, \tfrac{\pi+\varphi}{2}] \subset (0,\pi)$, on which the sine
		is bounded below, so $R_r(\theta+\varphi) = O(r^{-2})$ uniformly by
		Lemma~\ref{lem:remainder}(ii); and
		$\sup_\theta |R_r(\theta-\varphi)| = \sup_\alpha |R_r(\alpha)| = R_r(0) =
		\tfrac1r + O(r^{-2})$. Hence $\varepsilon_r \leq \tfrac{\bg\sin\varphi}{2\pi}
		R_r(0) + O(r^{-2})$, while the choice $\theta = \varphi$ gives
		$e_r(\varphi) = \tfrac{\bg\sin\varphi}{2\pi} R_r(0) + O(r^{-2})$, so
		$\varepsilon_r \geq e_r(\varphi)$ matches the upper bound. Therefore
		$\varepsilon_r = \tfrac{\bg\sin\varphi}{2\pi r}(1+o(1))$, and
		$\bg\sin\varphi = 2\sqrt{\bg-1}$ from~\eqref{eq:phi_ids}
		gives~\eqref{eq:relu_rate}.
	\end{proof}
	
	\begin{remark}[The error concentrates at the corner; first-order Gibbs]\label{rem:gibbs}
		The proof shows the supremum is attained, asymptotically, at the corner
		$s^\star$ --- equivalently $t = 1$, the level set itself --- the trace of a
		Gibbs-type phenomenon. Because the singularity of $f$ is only first order (a jump in
		$f'$), the partial sums nonetheless converge uniformly, at the rate $1/r$; the
		discontinuous indicator $\mathbf{1}_{[0,1]}$ of~\eqref{eq:vol_eq_nu} would
		instead suffer the full logarithmic Gibbs overshoot. This is the quantitative
		payoff of passing through the ReLU identity.
	\end{remark}
	
	\begin{remark}[Near-optimality and Bernstein's constant]\label{rem:bernstein}
		The closed-form truncation is only a universal constant worse than the best
		polynomial approximation. Writing $E_r(f) := \inf_{p\in\R[t]_r}\sup_{[0,\bg]}|f-p|$,
		the affine reduction gives $E_r(f) = \tfrac{\bg}{4}\,E_r(|s-s^\star|;[-1,1])$,
		and Bernstein's asymptotics for the best approximation of $|s-s^\star|$ on
		$[-1,1]$ yield
		\[
		\lim_{r\to\infty} r\, E_r(f) \;=\; \frac{\beta}{2}\sqrt{\bg-1},
		\qquad \beta = 0.28017\ldots,
		\]
		$\beta$ being Bernstein's constant~\cite[Thm.~25.1]{trefethen2013};	
		the local factor $\sqrt{1-(s^\star)^2} = \sin\varphi$ is the reciprocal
		(up to $1/\pi$) of the density of the Chebyshev equilibrium measure of $[-1,1]$
		at the corner~\cite[Eq.~(12.10)]{trefethen2013}. Comparing with~\eqref{eq:relu_rate},
		\[
		\lim_{r\to\infty}\frac{\sup_{[0,\bg]}|f-f_r^{\mathrm{ch}}|}{E_r(f)} \;=\; \frac{2}{\pi\beta}
		\;\approx\; 2.2723,
		\]
		independently of $\bg$: the closed-form Chebyshev majorant pays only a factor
		$\approx 2.2723$ over the (non-closed-form) Remez optimum, and in particular escapes the $O(\log r)$ Lebesgue-constant
		penalty~\cite[Thms.~15.1 and~15.3]{trefethen2013} that the Chebyshev projection
		incurs on a generic continuous function.
	\end{remark}
	
	\subsection{The \textsc{Cheb} bracket}\label{sec:cheb_bracket}
	
	Inserting $f_r^{\mathrm{ch}}$ into the ReLU identity~\eqref{eq:relu} gives the volume
	estimate
	\begin{equation}\label{eq:Vk}
		V_r \;:=\; \frac{2^n(\ws+m)}{m}\Bigl(\frac{c_0}{2}\, y_0
		+ \sum_{k=1}^{r} c_k\, y_k^{\mathrm{ch}}\Bigr),
		\qquad
		y_k^{\mathrm{ch}} \;:=\; \int_B T_k\Bigl(\tfrac{2g(x)}{\bg}-1\Bigr)\, d\mu(x)
		\;=\; \int_0^{\bg} T_k\Bigl(\tfrac{2t}{\bg}-1\Bigr)\, d\nu(t) ,
	\end{equation}
	where $y_0 = \int_0^{\bg} d\nu = 1$ (the case $T_0 \equiv 1$) and the
	\emph{Chebyshev moments} $y_k^{\mathrm{ch}}$ replace the monomial moments
	$y_k$ of~\eqref{eq:y_moments}.
	
	\paragraph{Computing the Chebyshev moments.}
	The $y_k^{\mathrm{ch}}$ are evaluated by tensor Gauss-Legendre cubature for the
	uniform probability measure $\mu$ on $B$.
	By quasi-homogeneity the per-coordinate degree of $g$ in $x_i$ is at most
	$m/w_i$, so $T_k(2g/\bg-1)$ has per-coordinate degree at most $k\,m/w_i$; taking
	$Q_i = \lceil (r\,m/w_i + 1)/2\rceil$ Gauss-Legendre nodes in coordinate $i$
	integrates $y_0^{\mathrm{ch}}, \ldots, y_r^{\mathrm{ch}}$ \emph{exactly}. On the
	quadrature grid the Chebyshev values are produced by the three-term recurrence
	\begin{equation}\label{eq:cheb_rec}
		T_0(z) = 1, \quad T_1(z) = z, \quad T_{k+1}(z) = 2z\,T_k(z) - T_{k-1}(z),
		\qquad z = \tfrac{2g(x)}{\bg}-1 ,
	\end{equation}
	and since $g(x) \in [0,\bg]$ on $B$ forces $z \in [-1,1]$, every intermediate
	value stays bounded by $1$; correspondingly $|y_k^{\mathrm{ch}}| \leq 1$. This
	is the decisive conditioning advantage of \textsc{Cheb} over a monomial
	evaluation of the same integrals, whose moments $y_k = \int_B g^k\,d\mu$ grow like
	$\bg^{\,k}$ and induce catastrophic cancellation in~\eqref{eq:Vk}.
	
	\paragraph{Certified bracket.}
	By definition of $\varepsilon_r$, shifting $f_r^{\mathrm{ch}}$ by $\pm\varepsilon_r$ produces
	explicit polynomial minorant and majorant of $f$ on $[0,\bg]$,
	\[
	f_r^{\mathrm{ch}}(t) - \varepsilon_r \;\leq\; f(t) \;\leq\; f_r^{\mathrm{ch}}(t) + \varepsilon_r,
	\qquad t \in [0,\bg],
	\]
	and integrating against the pushforward turns these into a two-sided bracket.
	
	\begin{proposition}[\textsc{Cheb} certified bracket]\label{prop:cheb_bracket}
		For every $r \geq 0$,
		\begin{equation}\label{eq:cheb_bracket}
			V_r - \tfrac{\ws+m}{m}\,2^n\,\varepsilon_r
			\;\leq\; \vol K \;\leq\;
			V_r + \tfrac{\ws+m}{m}\,2^n\,\varepsilon_r .
		\end{equation}
	\end{proposition}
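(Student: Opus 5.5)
The plan is to integrate the pointwise sandwich $f_r^{\mathrm{ch}}-\varepsilon_r\le f\le f_r^{\mathrm{ch}}+\varepsilon_r$ on $[0,\bg]$ against the probability measure $\nu$. Then I rescale by the constant of the ReLU identity, Proposition~\ref{prop:relu}.

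First I would check that $\varepsilon_r$ is finite, so that the sandwich holds pointwise on $\supp\nu\subseteq[0,\bg]$. Both $f$ and $f_r^{\mathrm{ch}}$ are continuous on the compact interval $[0,\bg]$, so this is immediate. For $r\ge 2$, Corollary~\ref{cor:cheb_eps} even gives an explicit bound. For $r=0,1$, $\varepsilon_r$ is still a finite supremum, which is all the statement needs.

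Next I would integrate each inequality against $\nu$. Since $\nu$ is a nonnegative measure with total mass $y_0=1$, integration preserves the inequalities, and the constant $\varepsilon_r$ integrates to $\varepsilon_r$. This gives
\[
\int_0^{\bg} f_r^{\mathrm{ch}}\,d\nu-\varepsilon_r\;\le\;\int_0^{\bg}(1-t)_+\,d\nu(t)\;\le\;\int_0^{\bg} f_r^{\mathrm{ch}}\,d\nu+\varepsilon_r .
\]
I would then compute the middle-free term by linearity of the finite sum~\eqref{eq:qkC}:
\[
\int_0^{\bg} f_r^{\mathrm{ch}}\,d\nu=\frac{c_0}{2}\,y_0+\sum_{k=1}^r c_k\,y_k^{\mathrm{ch}},
\]
where the $y_k^{\mathrm{ch}}$ are the Chebyshev moments of~\eqref{eq:Vk}. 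The two expressions for $y_k^{\mathrm{ch}}$ in~\eqref{eq:Vk} agree by the change-of-variables formula~\eqref{eq:cov}.

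Finally I would multiply through by the positive constant $2^n(\ws+m)/m$. By the second equality in~\eqref{eq:relu}, the middle term becomes exactly $\vol K$. By definition~\eqref{eq:Vk}, the polynomial term becomes $V_r$, and the error term becomes $\tfrac{\ws+m}{m}2^n\varepsilon_r$. This yields~\eqref{eq:cheb_bracket}.

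There is no real obstacle here. The only points needing care are the positivity and unit mass of $\nu$, so that the constant $\varepsilon_r$ integrates to itself, and that the ReLU identity uses Assumption~\ref{ass:box}; both are already established earlier in the paper.
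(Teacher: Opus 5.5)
Your proposal is correct and is essentially the paper's argument. The paper integrates the uniform bound $|f-f_r^{\mathrm{ch}}|\le\varepsilon_r$ over the box against $dx$, using $g(B)\subseteq[0,\bg]$ and $\int_B dx=2^n$; you integrate the same sandwich against the unit-mass pushforward $\nu$ and then rescale by $2^n(\ws+m)/m$, which is the same computation transported through \eqref{eq:cov}.
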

	
	\begin{proof}
		By Proposition~\ref{prop:relu}, $\vol K = \tfrac{\ws+m}{m}\int_B f(g(x))\,dx$
		and, by~\eqref{eq:Vk}, $V_r = \tfrac{\ws+m}{m}\int_B f_r^{\mathrm{ch}}(g(x))\,dx$. Since
		$g(x) \in [0,\bg]$ for $x \in B$, the uniform bound $|f - f_r^{\mathrm{ch}}| \leq
		\varepsilon_r$ on $[0,\bg]$ gives
		\[
		|\vol K - V_r| \;\leq\; \frac{\ws+m}{m}\int_B \bigl|f(g(x)) - f_r^{\mathrm{ch}}(g(x))\bigr|\, dx
		\;\leq\; \frac{\ws+m}{m}\,\varepsilon_r\int_B dx
		\;=\; \frac{\ws+m}{m}\,2^n\,\varepsilon_r ,
		\]
		which is~\eqref{eq:cheb_bracket}.
	\end{proof}
	
	\subsection{The \textsc{Cheb} algorithm}
	
	At relaxation order $r$:
	\begin{description}
		\item[$(i)$] compute the coefficients $c_0,\ldots,c_r$ of Theorem~\ref{thm:cheb_coefs}; 
		\item[$(ii)$] compute
		$y_0^{\mathrm{ch}}, \ldots, y_r^{\mathrm{ch}}$ by tensor Gauss-Legendre on $B$
		through the recurrence~\eqref{eq:cheb_rec};
		\item[$(iii)$] form $V_r$ by~\eqref{eq:Vk} and the certified bracket by
		Proposition~\ref{prop:cheb_bracket}, using a certified bound on the uniform-error $\varepsilon_r$ of (\ref{eq:eps_def}): either   the closed-form majorant of Corollary~\ref{cor:cheb_eps} or the exact value obtained by maximizing the
		piecewise-polynomial error $f - f_r^{\mathrm{ch}}$ over $[0,\bg]$.
	\end{description}

	\subsection{Algebraic convergence rate}\label{sec:rate_cheb}
	
	The two ingredients of the \textsc{Cheb} construction --- the certified bracket
	of Proposition~\ref{prop:cheb_bracket} and the sharp uniform error of
	Theorem~\ref{thm:cheb_rate} --- combine to give the convergence rate of the
	method, in closed form. Recall the volume estimate $V_r$ of~\eqref{eq:Vk}, the
	truncation error $\varepsilon_r$ of \eqref{eq:eps_def}, and set
	\begin{equation}\label{eq:cheb_vpm}
		v_r^{\mathrm{ch},\pm} \;:=\; V_r \;\pm\; \frac{\ws+m}{m}\,2^n\,\varepsilon_r ,
	\end{equation}
	the two endpoints of the \textsc{Cheb} bracket.
	
	\begin{theorem}[Algebraic convergence of \textsc{Cheb}]\label{thm:cheb_conv}
		For every $r \geq 0$ the bounds~\eqref{eq:cheb_vpm} satisfy
		$v_r^{\mathrm{ch},-} \leq \vol K \leq v_r^{\mathrm{ch},+}$, and their width
		decreases at the sharp algebraic rate
		\begin{equation}\label{eq:cheb_conv}
			v_r^{\mathrm{ch},+} - v_r^{\mathrm{ch},-}
			\;=\; \frac{2(\ws+m)}{m}\,2^n\,\varepsilon_r,
			\qquad
			\lim_{r\to\infty} r\,\bigl(v_r^{\mathrm{ch},+} - v_r^{\mathrm{ch},-}\bigr)
			\;=\; \frac{2^{\,n+1}(\ws+m)\sqrt{\bg-1}}{\pi\,m} .
		\end{equation}
		In particular the certified accuracy of the point estimate obeys
		\[
		|\,V_r - \vol K\,| \;\leq\; \frac{\ws+m}{m}\,2^n\,\varepsilon_r
		\;=\; O\left(\tfrac1r\right).
		\]
	\end{theorem}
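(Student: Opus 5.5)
The theorem is a direct assembly of two earlier results, and I would prove it in three short steps.

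First, the bracket property. The inequality $v_r^{\mathrm{ch},-} \le \vol K \le v_r^{\mathrm{ch},+}$ is exactly Proposition~\ref{prop:cheb_bracket} rewritten with the notation~\eqref{eq:cheb_vpm}. That proposition is stated for every $r \ge 0$. Its proof uses only two facts: the ReLU identity of Proposition~\ref{prop:relu}, and the uniform bound $|f - f_r^{\mathrm{ch}}| \le \varepsilon_r$ on $[0,\bg]$, which holds because $g(B) \subseteq [0,\bg]$. So no separate argument is needed here.

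Second, the width identity. Subtracting the two endpoints in~\eqref{eq:cheb_vpm}, the point estimate $V_r$ cancels, leaving
\[
v_r^{\mathrm{ch},+} - v_r^{\mathrm{ch},-} = \frac{2(\ws+m)}{m}\,2^n\,\varepsilon_r .
\]
This is the first claim of~\eqref{eq:cheb_conv}, and it is purely algebraic.

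Third, the limit. I multiply the width identity by $r$ and invoke Theorem~\ref{thm:cheb_rate}, which gives $r\,\varepsilon_r \to \sqrt{\bg-1}/\pi$ for fixed $\bg > 1$. The prefactor $\frac{2(\ws+m)}{m}\,2^n$ does not depend on $r$, so the product converges to
\[
\frac{2(\ws+m)}{m}\,2^n\cdot\frac{\sqrt{\bg-1}}{\pi} = \frac{2^{\,n+1}(\ws+m)\sqrt{\bg-1}}{\pi\,m}.
\]
The word \emph{sharp} is justified because this limit is strictly positive when $\bg > 1$, so the width is exactly of order $1/r$, not merely $O(1/r)$.

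Finally, the accuracy statement. Since $V_r$ is the midpoint of the bracket and $\vol K$ lies in it, $|V_r - \vol K|$ is at most half the width, namely $\frac{\ws+m}{m}\,2^n\,\varepsilon_r$. This is $O(1/r)$ by Theorem~\ref{thm:cheb_rate}. If an explicit constant valid for every $r \ge 2$ is wanted, Corollary~\ref{cor:cheb_eps} gives $\varepsilon_r \le \frac{\bg(1+\sin\varphi)}{2\pi}\bigl(\frac1r+\frac1{r+1}\bigr)$.

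I expect no real obstacle, since all the analytic work (the asymptotics of the trigonometric remainder in Lemma~\ref{lem:remainder}) is already contained in Theorem~\ref{thm:cheb_rate}. The only point needing care is that the limit requires $\bg > 1$, which is a standing assumption.
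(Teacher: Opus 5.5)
Your proposal is correct and follows essentially the same route as the paper's proof. Both take the bracket from Proposition~\ref{prop:cheb_bracket}, the width by subtracting the two endpoints, the limit from Theorem~\ref{thm:cheb_rate}, and the accuracy bound from the midpoint/half-width observation.
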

	
	\begin{proof}
		By Proposition~\ref{prop:cheb_bracket} the endpoints~\eqref{eq:cheb_vpm}
		bracket the volume, $v_r^{\mathrm{ch},-} \leq \vol K \leq v_r^{\mathrm{ch},+}$,
		and by their definition the width is exactly twice the half-width
		$\tfrac{\ws+m}{m}2^n\varepsilon_r$. Since $\vol K$ lies in the bracket and
		$V_r$ is its midpoint, $|V_r - \vol K|$ is bounded by the half-width.
		
		Theorem~\ref{thm:cheb_rate} provides the two-sided asymptotic
		$\lim_{r\to\infty} r\,\varepsilon_r = \sqrt{\bg-1}/\pi$; in particular
		$\varepsilon_r = \Theta(1/r)$, so the width $\tfrac{2(\ws+m)}{m}2^n\varepsilon_r$
		and the half-width are $\Theta(1/r)$ and $O(1/r)$ respectively. Multiplying the
		width by $r$ and passing to the limit,
		\[
		\lim_{r\to\infty} r\,\bigl(v_r^{\mathrm{ch},+} - v_r^{\mathrm{ch},-}\bigr)
		\;=\; \frac{2(\ws+m)}{m}\,2^n \lim_{r\to\infty} r\,\varepsilon_r
		\;=\; \frac{2(\ws+m)}{m}\,2^n\,\frac{\sqrt{\bg-1}}{\pi}
		\;=\; \frac{2^{\,n+1}(\ws+m)\sqrt{\bg-1}}{\pi\,m},
		\]
		which is~\eqref{eq:cheb_conv}.
	\end{proof}
	
	\begin{remark}[The rate is intrinsic to the ReLU]\label{rem:cheb_intrinsic}
		The exponent $-1$ is dictated solely by the first-order singularity of the ReLU $f$ at
		$t = 1$, see Remark~\ref{rem:gibbs}: no degree-$r$ polynomial surrogate of $f$ can
		beat the order $1/r$ on this corner, and the best one --- the Remez optimum ---
		improves only the constant, by the universal factor $2/(\pi\beta) \approx 2.27$
		of Remark~\ref{rem:bernstein}. The geometric data $n$, $w$, $m$ of the problem
		enter \eqref{eq:cheb_conv} only through  the multiplicative constant $2^{n+1}(\vert w\vert+m)/m$, never the
		exponent; the algebraic rate is a feature of approximating a non-smooth function
		by polynomials, not of the volume problem.
	\end{remark}

	\paragraph{Running example} 
	For $g = x_1^4 + x_2^2$ ($n = 2$, $\ws = 3$, $m = 4$, $\bg = 2$) the constant
	in~\eqref{eq:cheb_conv} is $2^{3}\cdot 7\cdot 1/(4\pi) = 14/\pi$, so the bracket
	width is $\sim 14/(\pi r) \approx 4.46/r$ and the point estimate satisfies
	$|V_r - \vol K| \lesssim 7/(\pi r) \approx 2.23/r$. At $r = 100$ this is about
	$2\times 10^{-2}$, i.e. two correct digits.

	\section{Generalized eigenvalue bracketing (\textsc{Gevp})}\label{sec:gevp}
	
	By Proposition~\ref{prop:nu_restriction}, the normalized volume
	$2^{-n}\vol K = \nu([0,1])$ is the unique scalar $\alpha$ for which the affine
	moment sequence $(y_k - \alpha\, z_k)_{k\in\N}$
	is the moment sequence of a non-negative measure
	$\hat\nu = \nu - \alpha\,\nu_w$ supported on $[1,\bg]$. The \textsc{Gevp}
	algorithm truncates the moment-cone membership of $\hat\nu$ to order $r$, which
	yields two linear matrix inequalities (LMIs) in the single unknown $\alpha$.
	Because the two  pencils entering these LMIs are sign-definite
	(Lemma~\ref{lem:hmu_pos}), each inequality collapses to a scalar bound on
	$\alpha$, and the extremal feasible values are smallest \emph{generalized
		eigenvalues} of explicit Hankel pencils. Rescaled by $2^n$ to bracket the
	volume itself, these deliver two sequences computed without any semidefinite
	solver --- a single symmetric eigenvalue routine per bound --- which converge to
	$\vol K$ monotonically from above and below. 
	\subsection{Two LMIs}\label{sec:gevp_lmis}
	
	For symmetric $A, B$ of equal size with $B \succ 0$, let
	$\lmin(A,B)$ denote the minimal generalized eigenvalue, i.e.\ the smallest real number
	$\lambda$ such that $\det(A - \lambda B) = 0$; equivalently
	$\lmin(A,B) = \lmin(B^{-1/2} A B^{-1/2})$.
	
	The support of a positive measure in the interval $[1,\bg]$ is encoded by the
	localizing polynomial
	\begin{equation}\label{eq:loc}
		h(t) \;:=\; (t-1)(\bg - t) \;=\; -t^2 + (\bg+1)\,t - \bg,
	\end{equation}
	which is $\geq 0$ on $[1,\bg]$ and $< 0$ on $(0,1)$. The truncated Hausdorff
	conditions for $\hat\nu$ to be a non-negative measure on $[1,\bg]$ at order $r$
	are $M_r(\hat\nu) \succeq 0$ and $M_{r-1}(h\hat\nu) \succeq 0$; substituting
	$\hat\nu = \nu - \alpha\,\nu_w$ and using linearity of the moment and localizing
	maps gives the two necessary LMIs
	\begin{align}
		M_r(\nu) - \alpha\, M_r(\nu_w) &\;\succeq\; 0, \label{eq:upper_lmi}\\
		M_{r-1}(h\,\nu) - \alpha\, M_{r-1}(h\,\nu_w) &\;\succeq\; 0. \label{eq:lower_lmi}
	\end{align}
	The entries of all four matrices are linear in the moment data: $M_r(\nu)_{ij} =
	y_{i+j}$ and $M_r(\nu_w)_{ij} = z_{i+j}$ from~\eqref{eq:y_moments}
	and~\eqref{eq:stokes_moment}, while, writing $h(t) = h_0 + h_1 t + h_2 t^2$ with
	$h_0 = -\bg$, $h_1 = \bg+1$, $h_2 = -1$,
	\[
	M_{r-1}(h\nu)_{ij} = \sum_{k=0}^{2} h_k\, y_{i+j+k},
	\qquad
	M_{r-1}(h\nu_w)_{ij} = \sum_{k=0}^{2} h_k\, z_{i+j+k},
	\]
	so that both localizing matrices use moments up to order $2r$, the same as the
	moment matrices.
	
	\begin{lemma}[Positivity of the reference pencils]\label{lem:hmu_pos}
		For every $r \geq 1$,
		\[
		M_r(\nu_w) \;\succ\; 0
		\qquad\text{and}\qquad
		-\,M_{r-1}(h\,\nu_w) \;\succ\; 0 .
		\]
	\end{lemma}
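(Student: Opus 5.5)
Both claims follow from writing the matrices as integrals of quadratic forms and using that $\nu_w$ has a positive density on the whole interval $(0,1]$.

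For a coefficient vector $p = (p_0,\ldots,p_r)^\top \in \R^{r+1}$, let $p(t) := \sum_i p_i t^i$ be the associated polynomial of degree at most $r$. By Lemma~\ref{lem:muw}, $M_r(\nu_w)_{ij} = z_{i+j} = \int_0^1 t^{i+j}\, d\nu_w(t)$. Hence
\[
p^\top M_r(\nu_w)\, p \;=\; \int_0^1 p(t)^2\, d\nu_w(t) \;=\; \int_0^1 p(t)^2\, \tfrac{\ws}{m}\, t^{\frac{\ws}{m}-1}\, dt .
\]
The density $\tfrac{\ws}{m} t^{\ws/m-1}$ is strictly positive on $(0,1]$, and $p^2 \ge 0$. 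So the integral vanishes only if $p^2$ vanishes almost everywhere on $(0,1)$. A nonzero polynomial has finitely many roots, so this forces $p = 0$. Therefore $M_r(\nu_w) \succ 0$.

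For the localizing matrix, take $q \in \R^{r}$ with associated polynomial $q(t)$ of degree at most $r-1$. By definition, $M_{r-1}(h\nu_w)_{ij} = \sum_k h_k z_{i+j+k} = \int_0^1 h(t)\, t^{i+j}\, d\nu_w(t)$, so
\[
-\,q^\top M_{r-1}(h\,\nu_w)\, q \;=\; \int_0^1 (1-t)(\bg - t)\, q(t)^2\, d\nu_w(t).
\]
Here I use $-h(t) = (1-t)(\bg-t)$ from~\eqref{eq:loc}. On $(0,1)$ both factors are positive: $1-t>0$, and $\bg - t > \bg - 1 > 0$ by the standing assumption $\bg>1$. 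So the weight $-h(t)\,\tfrac{\ws}{m}t^{\ws/m-1}$ is strictly positive on $(0,1)$. The same finite-roots argument gives strict positivity unless $q=0$, hence $-M_{r-1}(h\,\nu_w) \succ 0$.

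There is no real obstacle. The only points to check are these:
\begin{itemize}
\item The sign of $h$ on $(0,1)$ requires $\bg>1$, which is assumed throughout the paper.
\item Strict definiteness comes from $\nu_w$ being absolutely continuous with support the whole interval $[0,1]$, not a finitely atomic measure.
\item The singularity of the density at $0$ when $\ws/m<1$ (Remark~\ref{rem:singular_density}) is harmless, because all moments are finite.
\end{itemize}
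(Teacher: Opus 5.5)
Your proof is correct and follows essentially the same route as the paper. Both write the quadratic forms as $\int_0^1 p^2\,d\nu_w$ and $\int_0^1 (1-t)(\bg-t)\,q^2\,d\nu_w$, then get strict positivity from the positive density of $\nu_w$ on $(0,1]$, the sign of $-h$ there (using $\bg>1$), and the fact that a nonzero polynomial has only finitely many zeros.
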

	
	\begin{proof}
		Let $p \in \R[t]_r$, $p \neq 0$, with coefficient vector $\mathbf p$. By
		definition of the moment matrix,
		\[
		\langle \mathbf p,\, M_r(\nu_w)\,\mathbf p\rangle
		\;=\; \int_0^1 p(t)^2\, d\nu_w(t)
		\;=\; \int_0^1 p(t)^2\, \tfrac{\ws}{m}\, t^{\tfrac{\ws}{m} - 1}\, dt .
		\]
		The integrand is the product of $p(t)^2 \geq 0$ and the density
		$\tfrac{\ws}{m} t^{\tfrac{\ws}{m}-1} > 0$ on $(0,1]$; it is non-negative and vanishes
		only at the (at most $r$) zeros of $p$, hence the integral is strictly
		positive. Thus $M_r(\nu_w) \succ 0$.
		
		For $p \in \R[t]_{r-1}$, $p \neq 0$, using $-h(t) = (1-t)(\bg - t)$,
		\[
		\langle \mathbf p,\, -M_{r-1}(h\nu_w)\,\mathbf p\rangle
		\;=\; \int_0^1 (1-t)(\bg - t)\, p(t)^2\, \tfrac{\ws}{m}\, t^{\tfrac{\ws}{m}-1}\, dt .
		\]
		On $[0,1]$ one has $1 - t \geq 0$ and $\bg - t \geq \bg - 1 > 0$, and the
		density is positive on $(0,1]$; the integrand is therefore non-negative and not
		identically zero (a non-zero $p$ of degree $\leq r-1$ vanishes at finitely many
		points, and $1-t$ vanishes only at $t = 1$). Hence the integral is strictly
		positive, i.e.\ $-M_{r-1}(h\nu_w) \succ 0$.
	\end{proof}
	
	\subsection{Two bounds as smallest generalized eigenvalues}\label{sec:gevp_bounds}
	
	We record the elementary equivalence underlying both bounds: for symmetric
	$X$ and $C \succ 0$,
	\begin{equation}\label{eq:gevp_equiv}
		X - \lambda C \succeq 0 \quad\Longleftrightarrow\quad \lambda \leq \lmin(X,C),
	\end{equation}
	which follows by congruence with $C^{-1/2}$. Lemma~\ref{lem:hmu_pos} lets us
	apply it to each LMI.
	
	For~\eqref{eq:upper_lmi}, since $M_r(\nu_w) \succ 0$, \eqref{eq:gevp_equiv} gives
	$\alpha \leq \lmin\bigl(M_r(\nu), M_r(\nu_w)\bigr)$. For~\eqref{eq:lower_lmi},
	write $C := -M_{r-1}(h\nu_w) \succ 0$; the inequality reads
	$M_{r-1}(h\nu) + \alpha C \succeq 0$, i.e.\ $M_{r-1}(h\nu) - (-\alpha) C \succeq 0$,
	so by~\eqref{eq:gevp_equiv} $-\alpha \leq \lmin\bigl(M_{r-1}(h\nu), C\bigr)$,
	that is $\alpha \geq -\lmin\bigl(M_{r-1}(h\nu), -M_{r-1}(h\nu_w)\bigr)$. The scalar
	pinned by the two LMIs is the normalized volume $\alpha = 2^{-n}\vol K$
	(Proposition~\ref{prop:nu_restriction}); rescaling the extremal eigenvalues by
	$2^n$ so as to bracket the volume itself, we define
	\begin{equation}\label{eq:vr_pm}
		v_r^+ \;:=\; 2^n\,\lmin\bigl(M_r(\nu),\, M_r(\nu_w)\bigr),
		\qquad
		v_r^- \;:=\; -\,2^n\,\lmin\bigl(M_{r-1}(h\nu),\, -M_{r-1}(h\nu_w)\bigr),
	\end{equation}
	so that, for the volume candidate $2^n\alpha$, \eqref{eq:upper_lmi} holds exactly
	for $2^n\alpha \leq v_r^+$ and~\eqref{eq:lower_lmi} exactly for
	$2^n\alpha \geq v_r^-$.
	
	\begin{proposition}[Two-sided bracket]\label{prop:bracket}
		For every $r \geq 1$,
		\[
		v_r^- \;\leq\; \vol K \;\leq\; v_r^+ .
		\]
	\end{proposition}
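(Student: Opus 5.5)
The plan is to check that the true scalar $\alpha^\star := 2^{-n}\vol K = \nu([0,1])$ satisfies both LMIs \eqref{eq:upper_lmi} and \eqref{eq:lower_lmi}. Once that is done, the equivalence \eqref{eq:gevp_equiv} together with the sign-definiteness of Lemma~\ref{lem:hmu_pos} turns each LMI into a scalar inequality. These inequalities, rescaled by $2^n$, are exactly $v_r^-\le \vol K\le v_r^+$. The argument thus has two parts: a feasibility statement about $\hat\nu$, and a purely linear-algebraic translation.

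\emph{Step 1 (feasibility).} By Proposition~\ref{prop:nu_restriction}, $\hat\nu=\nu-\alpha^\star\nu_w$ is a non-negative measure supported in $[1,\bg]$. By linearity of the moment and localizing maps,
\[
M_r(\nu)-\alpha^\star M_r(\nu_w)=M_r(\hat\nu), \qquad M_{r-1}(h\nu)-\alpha^\star M_{r-1}(h\nu_w)=M_{r-1}(h\hat\nu).
\]
For any coefficient vector $\mathbf p$ of $p\in\R[t]_r$ we have $\langle\mathbf p,M_r(\hat\nu)\mathbf p\rangle=\int p^2\,d\hat\nu\ge0$, since $\hat\nu\ge0$. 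Likewise, for $p\in\R[t]_{r-1}$ we have $\langle\mathbf p,M_{r-1}(h\hat\nu)\mathbf p\rangle=\int h\,p^2\,d\hat\nu\ge0$, because $h=(t-1)(\bg-t)\ge0$ on $[1,\bg]\supseteq\supp\hat\nu$. Hence both LMIs hold at $\alpha=\alpha^\star$. Note that only necessity of the Hausdorff conditions is used, so no sufficiency theorem is needed.

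\emph{Step 2 (translation to eigenvalues).} Lemma~\ref{lem:hmu_pos} gives $M_r(\nu_w)\succ0$. Applying \eqref{eq:gevp_equiv} with $X=M_r(\nu)$ and $C=M_r(\nu_w)$ yields $\alpha^\star\le\lmin(M_r(\nu),M_r(\nu_w))$; multiplying by $2^n$ gives $\vol K\le v_r^+$. For the lower bound, set $C=-M_{r-1}(h\nu_w)\succ0$, again by Lemma~\ref{lem:hmu_pos}. Then the second LMI reads $M_{r-1}(h\nu)-(-\alpha^\star)C\succeq0$. Applying \eqref{eq:gevp_equiv} gives $-\alpha^\star\le\lmin(M_{r-1}(h\nu),C)$, that is, $\vol K=2^n\alpha^\star\ge v_r^-$. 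This is the manipulation already carried out in the text preceding \eqref{eq:vr_pm}; it only needs to be specialized to $\alpha^\star$.

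\emph{Expected obstacle.} The only delicate point is justifying \eqref{eq:gevp_equiv} cleanly. The argument is the congruence $X-\lambda C\succeq0\iff C^{-1/2}XC^{-1/2}-\lambda I\succeq0\iff\lambda\le\lmin(C^{-1/2}XC^{-1/2})$, and the last quantity coincides with the smallest root of $\det(X-\lambda C)=0$. Everything else reduces to the already established positivity of $\hat\nu$ and the sign of $h$ on its support, so I expect the proof to be short.
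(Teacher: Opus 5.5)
Your proposal is correct and follows the paper's proof essentially step for step. Both arguments show that $\alpha = 2^{-n}\vol K$ satisfies both LMIs, using $\hat\nu \ge 0$ with $\supp\hat\nu \subseteq [1,\bg]$ and $h \ge 0$ there, and then invoke \eqref{eq:gevp_equiv} with Lemma~\ref{lem:hmu_pos} to read off $v_r^- \le \vol K \le v_r^+$.
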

	
	\begin{proof}
		By Proposition~\ref{prop:nu_restriction}, $\hat\nu = \nu - 2^{-n}\vol K\,\nu_w$
		is a non-negative measure with $\supp \hat\nu \subseteq [1,\bg]$. Consequently
		$M_r(\hat\nu) \succeq 0$ (for any $p$, $\langle \mathbf p, M_r(\hat\nu)\mathbf p\rangle
		= \int p^2\, d\hat\nu \geq 0$), and, since $h \geq 0$ on $[1,\bg] \supseteq
		\supp\hat\nu$, also $M_{r-1}(h\hat\nu) \succeq 0$ (because $\int h\,p^2\, d\hat\nu
		\geq 0$). Thus $\alpha = 2^{-n}\vol K$ satisfies both~\eqref{eq:upper_lmi}
		and~\eqref{eq:lower_lmi}; by the equivalences above,
		$2^{-n}\vol K \leq \lmin(M_r(\nu),M_r(\nu_w)) = 2^{-n}v_r^+$ and
		$2^{-n}\vol K \geq 2^{-n}v_r^-$, i.e.\ $\vol K \leq v_r^+$ and $\vol K \geq v_r^-$.
	\end{proof}
	
	The upper bound $v_r^+$ is the Stokes-augmented moment-SOS upper bound of
	\cite[Thm.~3.3]{lasserre2019}, here generalized to quasi-homogeneous $g$ through
	the reference measure $\nu_w$; the lower bound $v_r^-$ is its localizing
	counterpart.
	
	The convergence proof rests on a standard fact, which we isolate.
	
	\begin{lemma}[Positive-semidefinite moments force a positive measure]\label{lem:moment_pos}
		Let $\sigma$ be a finite signed Borel measure with compact support in $\R$ such
		that $M_r(\sigma) \succeq 0$ for every $r \geq 0$. Then $\sigma \geq 0$.
	\end{lemma}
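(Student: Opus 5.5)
The plan is to show that $\int \varphi\,d\sigma \ge 0$ for every continuous $\varphi \ge 0$ on a compact interval $[a,b]$ containing $\supp\sigma$. Since the positive cone of $C([a,b])$ determines the sign of a finite signed measure (Riesz representation / Jordan decomposition), this gives $\sigma \ge 0$. The hypothesis $M_r(\sigma)\succeq 0$ for every $r$ says exactly that $\int p^2\,d\sigma = \langle \mathbf p, M_r(\sigma)\mathbf p\rangle \ge 0$ for every polynomial $p$. The task is therefore to pass from squares of polynomials to arbitrary nonnegative continuous functions, using only that the support is compact.

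Fix $\varphi\in C([a,b])$ with $\varphi\ge 0$, and $\varepsilon>0$. Consider $\psi:=\sqrt{\varphi+\varepsilon}$, which is continuous and at least $\sqrt\varepsilon>0$ on $[a,b]$. By the Weierstrass approximation theorem, choose polynomials $p_k$ converging to $\psi$ uniformly on $[a,b]$. Then $p_k^2 \to \varphi+\varepsilon$ uniformly on $[a,b]$, because $|p_k^2-\psi^2|\le |p_k-\psi|\,(|p_k|+|\psi|)$ and the second factor is uniformly bounded. The total variation $|\sigma|([a,b])$ is finite, so
\[
\int (\varphi+\varepsilon)\,d\sigma \;=\; \lim_{k\to\infty}\int p_k^2\,d\sigma \;\ge\; 0 .
\]
Letting $\varepsilon\downarrow 0$ and using $\int \varepsilon\, d\sigma = \varepsilon\,\sigma(\R)\to 0$ gives $\int\varphi\,d\sigma\ge 0$.

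To conclude, I would take the Hahn decomposition $\R = P\cup N$ for $\sigma$ and show that $\sigma^-(N)=0$. Given a compact set $F\subseteq N\cap[a,b]$, approximate $\mathbf 1_F$ from above by continuous functions $0\le \varphi_j\le 1$ that decrease pointwise to $\mathbf 1_F$, for instance $\varphi_j(t) = \max(0,1-j\,\mathrm{dist}(t,F))$. Dominated convergence with respect to $|\sigma|$ then yields $\sigma(F)=\lim_j\int\varphi_j\,d\sigma\ge 0$. Inner regularity of the finite Borel measure $\sigma^-$ on $\R$ then gives $\sigma(N)=-\sigma^-(N)\ge 0$, so $\sigma^-=0$.

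The only delicate step is the approximation: we must approximate the square root of $\varphi$ rather than $\varphi$ itself, so that the approximants are genuine squares. The $\varepsilon$-shift is only a convenience that keeps $\psi$ bounded away from zero, and plain Weierstrass approximation of $\sqrt{\varphi}$ would also work. Everything else is routine measure theory that relies on compactness of the support.
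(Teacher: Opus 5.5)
Your proposal is correct and follows essentially the same route as the paper: Weierstrass approximation of $\sqrt{f+\varepsilon}$ to get $\int (f+\varepsilon)\,d\sigma \ge 0$, then letting $\varepsilon \to 0$. The only addition is that you spell out the final passage from positivity on nonnegative continuous functions to $\sigma \ge 0$ (via the Hahn decomposition and inner regularity), a step the paper leaves implicit.
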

	
	\begin{proof}
		$M_r(\sigma) \succeq 0$ for all $r$ is equivalent to
		$\int p^2\, d\sigma \geq 0$ for every $p \in \R[t]$. Let $S := \supp\sigma$,
		compact, and let $f \in C(S)$ with $f \geq 0$. For $\varepsilon > 0$ the
		function $\sqrt{f + \varepsilon}$ is continuous on $S$, so by the Weierstrass
		theorem there is a polynomial $p$ with $\sup_S |p - \sqrt{f+\varepsilon}|$ as
		small as desired; then $p^2 \to f + \varepsilon$ uniformly on $S$, whence
		$\int p^2\, d\sigma \to \int (f+\varepsilon)\, d\sigma$ and therefore
		$\int (f+\varepsilon)\, d\sigma \geq 0$. Letting $\varepsilon \to 0$ (and using
		$|\sigma|(S) < \infty$) gives $\int f\, d\sigma \geq 0$. As this holds for every
		non-negative $f \in C(S)$, $\sigma \geq 0$.
	\end{proof}
	
	\begin{theorem}[Monotone convergence]\label{thm:monotone}
		The sequences $(v_r^+)_{r \geq 0}$ and $(v_r^-)_{r \geq 1}$ are respectively
		non-increasing and non-decreasing, and both converge to $\vol K$:
		\[
		v_r^- \;\uparrow\; \vol K \qquad\text{and}\qquad v_r^+ \;\downarrow\; \vol K,
		\qquad r \to \infty.
		\]
	\end{theorem}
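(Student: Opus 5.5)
Monotonicity comes from the nested structure of the truncated moment and localizing matrices. Convergence will be obtained by a compactness argument that produces a limiting measure, identified through Lemma~\ref{lem:moment_pos} and Proposition~\ref{prop:nu_restriction}.

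\emph{Monotonicity.} By the variational characterization, $\lmin(A,C)=\min_{\mathbf p\neq0}\langle\mathbf p,A\mathbf p\rangle/\langle\mathbf p,C\mathbf p\rangle$ for $C\succ0$. The matrices $M_r(\nu)$ and $M_r(\nu_w)$ are the leading principal $(r+1)\times(r+1)$ blocks of $M_{r+1}(\nu)$ and $M_{r+1}(\nu_w)$. Equivalently, $\R[t]_r\subset\R[t]_{r+1}$, so the minimum over the larger space is no larger, and $v_{r+1}^+\le v_r^+$. The same nesting for $M_{r-1}(h\nu)$ and $-M_{r-1}(h\nu_w)\succ0$ (Lemma~\ref{lem:hmu_pos}) gives that $\lmin(M_{r-1}(h\nu),-M_{r-1}(h\nu_w))$ is non-increasing, hence $v_r^-$ is non-decreasing. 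Together with Proposition~\ref{prop:bracket}, both sequences are monotone and bounded by $\vol K$, so they have limits $v^+\ge\vol K\ge v^-$.

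\emph{Upper limit.} Set $\alpha^+:=2^{-n}v^+$. For every $r$ and every $s\ge r$, $\alpha^+\le 2^{-n}v_s^+$. By \eqref{eq:gevp_equiv}, $M_s(\nu)-\alpha^+M_s(\nu_w)\succeq0$, and its principal submatrix gives $M_r(\nu-\alpha^+\nu_w)\succeq0$ for all $r$. The signed measure $\sigma:=\nu-\alpha^+\nu_w$ has compact support in $[0,\bg]$, so Lemma~\ref{lem:moment_pos} yields $\sigma\ge0$. Restricting to $[0,1]$ and using \eqref{eq:nu_restriction} gives $(2^{-n}\vol K-\alpha^+)\nu_w\ge0$, that is, $\alpha^+\le2^{-n}\vol K$. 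Hence $v^+=\vol K$.

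\emph{Lower limit.} This is the main obstacle, because only the localizing matrices are controlled. Set $\alpha^-:=2^{-n}v^-$. As above, $M_{r-1}(h(\nu-\alpha^-\nu_w))\succeq0$ for all $r$, so Lemma~\ref{lem:moment_pos} gives $h\,\sigma\ge0$ with $\sigma:=\nu-\alpha^-\nu_w$. On $[0,1)$ we have $h<0$, and there $\sigma=(2^{-n}\vol K-\alpha^-)\nu_w$. It follows that $(2^{-n}\vol K-\alpha^-)\,h\,\nu_w\ge0$ on $[0,1)$, where $h\nu_w$ is a nonzero negative measure. Therefore $2^{-n}\vol K\le\alpha^-$, i.e.\ $v^-\ge\vol K$, and so $v^-=\vol K$.

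Two details need checking. First, $h\sigma$ is a compactly supported signed measure with moments $\int t^k h\,d\sigma$, so Lemma~\ref{lem:moment_pos} applies to it directly. Second, $\nu|_{\{1\}}$ is harmless because $h(1)=0$, and $\nu_w$ has no atom at $1$, so working on $[0,1)$ suffices.
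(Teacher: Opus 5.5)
Your proof is correct and follows essentially the same route as the paper. Monotonicity comes from the nesting of the truncated matrices: you phrase it through Rayleigh quotients over $\R[t]_r\subset\R[t]_{r+1}$, the paper through principal submatrices of the LMIs, which is the same fact. Both limits are then identified by applying Lemma~\ref{lem:moment_pos} to $\nu-\alpha\nu_w$, resp.\ $h(\nu-\alpha\nu_w)$, and restricting to $[0,1]$, where Proposition~\ref{prop:nu_restriction} pins $\nu$ to a multiple of $\nu_w$. The only mismatch is cosmetic: your opening announces a compactness argument producing a limiting measure, but the argument you actually give needs none.
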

	
	\begin{proof}
		\emph{Monotonicity.} If $\alpha$ satisfies the order $(r+1)$
		inequality~\eqref{eq:upper_lmi}, then $M_{r+1}(\nu) - \alpha M_{r+1}(\nu_w)
		\succeq 0$; deleting its last row and column (a principal submatrix of a
		positive-semidefinite matrix is positive-semidefinite) gives $M_r(\nu) - \alpha
		M_r(\nu_w) \succeq 0$. Hence the feasible set shrinks with $r$ and $v_{r+1}^+
		\leq v_r^+$. The identical argument applied to the localizing
		matrices~\eqref{eq:lower_lmi} gives $v_r^- \leq v_{r+1}^-$.
		
		\emph{Limits exist and bracket the volume.} By Proposition~\ref{prop:bracket},
		$v_r^- \leq \vol K \leq v_r^+$ for all $r$. The monotone bounded sequences
		therefore converge, say $v_r^+ \downarrow L^+ \geq \vol K$ and
		$v_r^- \uparrow L^- \leq \vol K$.
		
		\emph{$L^+ = \vol K$.} Since $L^+ \leq v_r^+$ for every $r$, i.e.\
		$2^{-n}L^+ \leq \lmin(M_r(\nu),M_r(\nu_w))$, the
		equivalence~\eqref{eq:gevp_equiv} gives $M_r(\nu - 2^{-n}L^+\nu_w) \succeq 0$ for
		every $r$. The signed measure $\nu - 2^{-n}L^+\nu_w$ has compact support in
		$[0,\bg]$, so Lemma~\ref{lem:moment_pos} yields $\nu - 2^{-n}L^+\nu_w \geq 0$.
		Restricting to $(0,1)$ and using $\nu|_{[0,1]} = 2^{-n}\vol K\,\nu_w$
		(Proposition~\ref{prop:nu_restriction}),
		\[
		(\nu - 2^{-n}L^+\nu_w)\big|_{(0,1)} \;=\; 2^{-n}(\vol K - L^+)\,\nu_w\big|_{(0,1)} \;\geq\; 0 ,
		\]
		and since $\nu_w > 0$ on $(0,1)$ this forces $\vol K - L^+ \geq 0$, i.e.\
		$L^+ \leq \vol K$. With $L^+ \geq \vol K$ we conclude $L^+ = \vol K$.
		
		\emph{$L^- = \vol K$.} Since $v_r^- \uparrow L^-$, we have $L^- \geq v_r^-$ for
		every $r$, i.e.\ $-2^{-n}L^- \leq \lmin(M_{r-1}(h\nu),-M_{r-1}(h\nu_w))$, so
		$\alpha = 2^{-n}L^-$ satisfies~\eqref{eq:lower_lmi} at every order, i.e.\
		$M_{r-1}\bigl(h(\nu - 2^{-n}L^-\nu_w)\bigr) \succeq 0$ for all $r$. The signed measure
		$h\,(\nu - 2^{-n}L^-\nu_w)$ has compact support in $[0,\bg]$, so
		Lemma~\ref{lem:moment_pos} gives $h\,(\nu - 2^{-n}L^-\nu_w) \geq 0$. Restricting to
		$(0,1)$, where $\nu - 2^{-n}L^-\nu_w = 2^{-n}(\vol K - L^-)\nu_w$ and $h < 0$,
		\[
		h\,(\nu - 2^{-n}L^-\nu_w)\big|_{(0,1)} \;=\; 2^{-n}(\vol K - L^-)\, h\,\nu_w\big|_{(0,1)} \;\geq\; 0 ,
		\]
		and since $h < 0$ and $\nu_w > 0$ there, this forces $\vol K - L^- \leq 0$, i.e.\
		$L^- \geq \vol K$. With $L^- \leq \vol K$ we conclude $L^- = \vol K$.
	\end{proof}
	
	\begin{remark}[Matrix sizes]\label{rem:sizes}
		The lower bound $v_r^-$ uses moment data up to order $2r$ --- the same as
		$v_r^+$ --- but through localizing matrices of size $r \times r$ rather than the
		$(r+1)\times(r+1)$ moment matrices of the upper bound.
	\end{remark}
	
	\begin{remark}[No semidefinite solver in the loop]\label{rem:nosdp}
		By construction $v_r^+$ is $2^n$ times the largest $\alpha$ satisfying~\eqref{eq:upper_lmi}
		and $v_r^-$ is $2^n$ times the smallest $\alpha$ satisfying~\eqref{eq:lower_lmi}. Both extremal
		values are $2^n$ times generalized eigenvalues of explicit Hankel pencils whose entries
		depend linearly on the computed data $y_0, \ldots, y_{2r}$ and on the fixed
		rational data $z_0, \ldots, z_{2r}$. No semidefinite program is solved: a single
		symmetric generalized-eigenvalue routine of size $O(r)$ delivers each bound.
	\end{remark}
	
	\subsection{The \textsc{Gevp} algorithm}
	
	At relaxation order $r$:
	\begin{description}
		\item[$(i)$] compute the moments $y_0, \ldots, y_{2r}$
		of~\eqref{eq:y_moments} in closed form; 
		\item[$(ii)$] compute $z_0, \ldots, z_{2r}$ in closed form via $z_j = \ws/(\ws + jm)$;
		\item[$(iii)$]  compute $v_r^+$ and $v_r^-$ via~\eqref{eq:vr_pm}, as $2^n$ times the
		smallest generalized eigenvalues of the two pencils, each by a symmetric eigensolve. 
	\end{description}
	\subsection{Exponential convergence rate}\label{sec:rate_gevp}
	
	Theorem~\ref{thm:monotone} establishes convergence of the bracket but no rate. We
	now show that both bounds converge to $\vol K$ \emph{exponentially}, with an
	explicit base, and that the base is a conformal invariant of the pair of
	intervals $[0,1]$ and $[1,\bg]$. The argument is a Bernstein extremal estimate:
	each gap is the best contrast a degree-$r$ polynomial achieves between its size
	on the support $[1,\bg]$ of $\hat\nu$ and its size on the support $[0,1]$ of
	$\nu_w$, and the asymptotics of such a contrast are governed by the Green
	function of the complement of $[1,\bg]$.
	
	\subsubsection*{The gap as an extremal eigenvalue ratio}
	
	Write $\hat\nu := \nu - 2^{-n}\vol K\,\nu_w$, the non-negative measure on $[1,\bg]$ of
	Proposition~\ref{prop:nu_restriction}, and recall the localizer
	$h(t) = (t-1)(\bg-t)$ of~\eqref{eq:loc}.
	
	\begin{lemma}[Rayleigh-quotient form of the gaps]\label{lem:rayleigh}
		For every $r \geq 1$,
		\begin{align}
			v_r^+ - \vol K
			&= 2^n\min_{p \in \R[t]_r \setminus\{0\}}
			\frac{\displaystyle\int_1^{\bg} p(t)^2\, d\hat\nu(t)}
			{\displaystyle\int_0^1 p(t)^2\, d\nu_w(t)},
			\label{eq:gap_upper_ray}\\[2pt]
			\vol K - v_r^-
			&= 2^n\min_{q \in \R[t]_{r-1} \setminus\{0\}}
			\frac{\displaystyle\int_1^{\bg} h(t)\, q(t)^2\, d\hat\nu(t)}
			{\displaystyle\int_0^1 \bigl(-h(t)\bigr) q(t)^2\, d\nu_w(t)}.
			\label{eq:gap_lower_ray}
		\end{align}
	\end{lemma}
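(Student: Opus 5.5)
The plan is to express each generalized eigenvalue as a minimum of a Rayleigh quotient and then substitute the decomposition $\nu = \hat\nu + \alpha^\star\nu_w$ of Proposition~\ref{prop:nu_restriction}, with $\alpha^\star := 2^{-n}\vol K$. First I would record the variational characterization. For symmetric $A$ and $C\succ 0$ of equal size,
\[
\lmin(A,C)=\min_{\mathbf p\neq 0}\frac{\langle\mathbf p,A\mathbf p\rangle}{\langle\mathbf p,C\mathbf p\rangle}.
\]
This follows from the Courant--Fischer theorem applied to $C^{-1/2}AC^{-1/2}$ after the change of variables $\mathbf p = C^{-1/2}\mathbf u$. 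Lemma~\ref{lem:hmu_pos} guarantees that $M_r(\nu_w)\succ0$ and $-M_{r-1}(h\nu_w)\succ0$, so the formula applies to both pencils in~\eqref{eq:vr_pm}. I would also translate the quadratic forms into integrals via $\langle\mathbf p,M_r(\sigma)\mathbf p\rangle=\int p^2\,d\sigma$ and $\langle\mathbf q,M_{r-1}(h\sigma)\mathbf q\rangle=\int hq^2\,d\sigma$, valid for any finite signed measure $\sigma$ by linearity.

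For the upper gap, linearity of $\sigma\mapsto M_r(\sigma)$ gives $M_r(\nu)=M_r(\hat\nu)+\alpha^\star M_r(\nu_w)$. Hence, for every $p\neq0$,
\[
\frac{\int p^2\,d\nu}{\int p^2\,d\nu_w}=\alpha^\star+\frac{\int p^2\,d\hat\nu}{\int_0^1 p^2\,d\nu_w}.
\]
Taking the minimum over $p\in\R[t]_r\setminus\{0\}$ and multiplying by $2^n$ turns $v_r^+=2^n\lmin(M_r(\nu),M_r(\nu_w))$ into $\vol K$ plus $2^n$ times the minimum in~\eqref{eq:gap_upper_ray}. The remaining points are bookkeeping: $\supp\hat\nu\subseteq[1,\bg]$ lets us write the numerator over $[1,\bg]$, and $\supp\nu_w=[0,1]$ does the same for the denominator.

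For the lower gap, $v_r^-=-2^n\lmin(A,C)$ with $A=M_{r-1}(h\nu)$ and $C=-M_{r-1}(h\nu_w)$. Decomposing again gives $A=M_{r-1}(h\hat\nu)-\alpha^\star C$, so
\[
\frac{\langle\mathbf q,A\mathbf q\rangle}{\langle\mathbf q,C\mathbf q\rangle}=-\alpha^\star+\frac{\int hq^2\,d\hat\nu}{\int_0^1(-h)q^2\,d\nu_w}.
\]
Minimizing over $q\in\R[t]_{r-1}\setminus\{0\}$ gives $\lmin(A,C)=-\alpha^\star+\min(\cdots)$. Then $\vol K-v_r^-=2^n\alpha^\star+2^n\lmin(A,C)=2^n\min(\cdots)$, which is~\eqref{eq:gap_lower_ray}.

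No step is a real obstacle; the argument is a short computation. The places where care is needed are the sign bookkeeping in the lower bound, specifically the double negation through $C=-M_{r-1}(h\nu_w)$, and the check that the minimum is attained. Attainment holds because on the finite-dimensional space the quotient is continuous on the unit sphere, or equivalently because the minimal generalized eigenvalue has an eigenvector. Strict positivity of the denominators for nonzero $p$ and $q$ was already established in Lemma~\ref{lem:hmu_pos}, so the quotients are well defined.
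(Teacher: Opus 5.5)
Your proposal is correct and follows the paper's proof essentially step for step. Both write the smallest generalized eigenvalue as a Rayleigh-quotient minimum, which is legitimate by Lemma~\ref{lem:hmu_pos}; both then substitute $\nu=\hat\nu+2^{-n}\vol K\,\nu_w$ in the numerator, restrict the integrals to the supports of $\hat\nu$ and $\nu_w$, and handle the lower pencil $(M_{r-1}(h\nu),-M_{r-1}(h\nu_w))$ with the same sign bookkeeping. Your explicit remark on attainment of the minimum is a small point the paper leaves implicit.
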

	
	\begin{proof}
		Since $M_r(\nu_w) \succ 0$ (Lemma~\ref{lem:hmu_pos}), the variational
		characterization~\eqref{eq:gevp_equiv} of the smallest generalized eigenvalue
		reads $2^{-n}v_r^+ = \lmin(M_r(\nu),M_r(\nu_w)) =
		\min_{p\neq 0}\langle \mathbf p, M_r(\nu)\mathbf p\rangle/
		\langle \mathbf p, M_r(\nu_w)\mathbf p\rangle$, identifying a coefficient vector
		$\mathbf p = (p_0,\ldots,p_r)$ with $p(t) = \sum_k p_k t^k \in \R[t]_r$. For any
		measure $\sigma$ one has $\langle \mathbf p, M_r(\sigma)\mathbf p\rangle =
		\sum_{i,j} p_i p_j \int t^{i+j}\,d\sigma = \int p(t)^2\, d\sigma(t)$, so
		$2^{-n}v_r^+ = \min_p \int p^2\,d\nu / \int p^2\,d\nu_w$. Substituting
		$\nu = \hat\nu + 2^{-n}\vol K\,\nu_w$ in the numerator and dividing by
		$\int p^2\,d\nu_w > 0$,
		\[
		\frac{\int p^2\, d\nu}{\int p^2\, d\nu_w}
		\;=\; 2^{-n}\vol K + \frac{\int p^2\, d\hat\nu}{\int p^2\, d\nu_w},
		\]
		and pulling the constant out of the minimum gives
		$2^{-n}(v_r^+ - \vol K) = \min_p \int p^2\,d\hat\nu / \int p^2\,d\nu_w$, i.e.\
		$v_r^+ - \vol K = 2^n\min_p \int p^2\,d\hat\nu / \int p^2\,d\nu_w$. Restricting each
		integral to the support of its measure ($\supp\hat\nu \subseteq [1,\bg]$,
		$\supp\nu_w = [0,1]$) yields~\eqref{eq:gap_upper_ray}.
		
		The same three steps applied to the pencil $(M_{r-1}(h\nu), -M_{r-1}(h\nu_w))$
		of~\eqref{eq:vr_pm} --- using $\langle \mathbf q, M_{r-1}(h\sigma)\mathbf q\rangle
		= \int h\, q^2\, d\sigma$, together with $h \geq 0$ on $[1,\bg] \supseteq
		\supp\hat\nu$ and $-h \geq 0$ on $[0,1] = \supp\nu_w$ --- give the lower bound.
		Indeed $-2^{-n}v_r^- = \lmin(M_{r-1}(h\nu), -M_{r-1}(h\nu_w)) = \min_q \int h\,q^2\,d\nu
		/ \int(-h)q^2\,d\nu_w$; substituting $\nu = \hat\nu + 2^{-n}\vol K\,\nu_w$ in the
		numerator gives $\int h\,q^2\,d\nu = \int h\,q^2\,d\hat\nu + 2^{-n}\vol K \int h\,q^2\,d\nu_w
		= \int h\,q^2\,d\hat\nu - 2^{-n}\vol K \int(-h)q^2\,d\nu_w$, so
		$\int h\,q^2\,d\nu/\int(-h)q^2\,d\nu_w = -2^{-n}\vol K + \int h\,q^2\,d\hat\nu/\int(-h)q^2\,d\nu_w$,
		and $-2^{-n}v_r^- = -2^{-n}\vol K + \min_q(\cdots)$ rearranges to
		$\vol K - v_r^- = 2^n\min_q(\cdots)$, i.e.~\eqref{eq:gap_lower_ray}.
	\end{proof}
	
	Both ratios measure how small a degree-$r$ polynomial can be made on $[1,\bg]$
	relative to its size on $[0,1]$. This is the prototypical Bernstein extremal
	problem, whose rate is set by the conformal geometry of $[1,\bg]$.
	
	\subsubsection*{The Bernstein-Walsh lemma and the Green function of $[1,\bg]$}
	
	Let $E \subset \C$ be compact with connected complement
	$\Omega_E := (\C \cup \{\infty\}) \setminus E$. The \emph{Green function} of
	$\Omega_E$ with pole at infinity is the unique non-negative $G_E : \Omega_E \to \R$
	that is harmonic on $\C \setminus E$, vanishes quasi-everywhere on $\partial E$,
	and satisfies $G_E(t) = \log|t| - \log\capac E + o(1)$ as $t \to \infty$, where
	$\capac E$ is the logarithmic capacity~\cite[Sec.~I.4]{safftotik}.
	
	\begin{lemma}[Bernstein-Walsh lemma]\label{thm:bw}
		With $E$ as above, let $p$ be a polynomial of degree $r$. Then for every
		$t \notin E$,
		\begin{equation}\label{eq:bw}
			|p(t)| \;\leq\; \exp\bigl(r\,G_E(t)\bigr)\,\max_{s \in E}|p(s)| .
		\end{equation}
	\end{lemma}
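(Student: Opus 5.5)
The approach is the classical one: apply the maximum principle for subharmonic functions to the function
\[
u(t) \;:=\; \frac{1}{r}\log\frac{|p(t)|}{M} \;-\; G_E(t), \qquad M := \max_{s\in E}|p(s)|,
\]
on the domain $\Omega_E$. We may assume $p \not\equiv 0$ and $M>0$. If $M = 0$, then $E$ is infinite (it has positive capacity, which is implicit in the existence of $G_E$), so $p$ vanishes on infinitely many points and hence $p \equiv 0$; the inequality is then trivial. If $\deg p < r$, note that the proof below only uses $\deg p \le r$, because $G_E \geq 0$.

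The key steps are as follows.

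\textbf{(a) Subharmonicity.} On $\C \setminus E$, the function $\log|p|$ is subharmonic, being $\log$ of the modulus of a holomorphic function. Since $G_E$ is harmonic there, $u$ is subharmonic on $\C\setminus E$.

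\textbf{(b) Behaviour at infinity.} As $t\to\infty$ we have $\log|p(t)| \le r\log|t| + O(1)$ when $\deg p \le r$, and $G_E(t) = \log|t| - \log\capac E + o(1)$. Hence $u$ is bounded above near $\infty$. In fact $u$ extends to a subharmonic function at the point $\infty$ of the Riemann sphere, since the singularity there is removable for a function bounded above.

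\textbf{(c) Boundary values.} At boundary points $\zeta\in\partial E$ where $G_E$ vanishes, i.e.\ at every point outside a polar set, we have
\[
\limsup_{t\to\zeta} u(t) \;\le\; \frac{1}{r}\log\frac{|p(\zeta)|}{M} - 0 \;\le\; 0 ,
\]
using continuity of $p$ and $|p|\le M$ on $E$.

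\textbf{(d) Conclusion.} Apply the extended maximum principle, which tolerates an exceptional polar set provided $u$ is bounded above (\cite[Thm.~I.2.4 / Sec.~II.3]{safftotik}). This gives $u\le 0$ on $\Omega_E$, i.e.\ $|p(t)|\le M e^{rG_E(t)}$, which is~\eqref{eq:bw}.

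The main obstacle is step (d): the boundary values of $G_E$ are only known quasi-everywhere. The ordinary maximum principle does not suffice, and one must invoke the extended version, which needs the polar exceptional set together with boundedness from above established in (b).

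For the case of interest, $E=[1,\bg]$, this difficulty disappears. Every point of an interval is regular, and $G_E$ is explicit: $G_E(t) = \log\bigl|\xi + \sqrt{\xi^2-1}\bigr|$ with $\xi = (2t-1-\bg)/(\bg-1)$. Thus $G_E$ is continuous up to $E$ and vanishes there, and the classical maximum principle on the sphere applies directly. The Chebyshev polynomials provide the equality case, consistent with the ratio $\gamma$ announced in the introduction.
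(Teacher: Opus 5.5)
Your proof is correct and is the classical maximum-principle argument found in the references the paper cites in place of a proof (Ransford's Bernstein lemma, Saff--Totik Thm.~III.2.1): $u=\frac1r\log(|p|/M)-G_E$ is subharmonic on $\Omega_E$ once its bounded-above singularity at $\infty$ is removed, and it is $\le 0$ at the boundary. The ``main obstacle'' you identify in step (d) is not actually there: since $G_E\ge 0$, you have $u\le\frac1r\log(|p|/M)$, hence $\limsup_{t\to\zeta}u(t)\le\frac1r\log(|p(\zeta)|/M)\le 0$ at \emph{every} $\zeta\in\partial\Omega_E$, so the ordinary maximum principle already suffices. (A side remark: on $[-1,1]$ the Chebyshev polynomial $T_r$ is extremal but does not give equality; at real points off the interval $|T_r|=\tfrac12\bigl(e^{rG_E}+e^{-rG_E}\bigr)<e^{rG_E}$.)
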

	
	See \cite[Thm.~5.5.7]{ransford} or \cite[Thm.~III.2.1]{safftotik}. The Green
	function of the unit interval is
	$G_{[-1,1]}(x) = \log\bigl|x + \sqrt{x^2-1}\bigr|$, which for real $|x| \geq 1$
	equals $\arccosh|x|$; see \cite[Example~I.3.5]{safftotik} or, in the equivalent
	language of the Bernstein ellipse, \cite[Ch.~12, Eq.~(12.13)]{trefethen2013}. The
	affine map
	\begin{equation}\label{eq:affine_u}
		u(t) \;:=\; \frac{2t - 1 - \bg}{\bg - 1}
	\end{equation}
	sends $[1,\bg]$ onto $[-1,1]$ and fixes $\infty$, so by conformal invariance of
	the Green function~\cite[Thm.~II.4.9]{safftotik},
	$G_{[1,\bg]}(t) = G_{[-1,1]}(u(t))$. For $t \in [0,1]$ one has $u(t) \leq -1$ and
	$|u(t)| = (1 + \bg - 2t)/(\bg-1) \geq 1$, hence
	\begin{equation}\label{eq:greenval}
		G_{[1,\bg]}(t) \;=\; \arccosh\!\left(\frac{1 + \bg - 2t}{\bg-1}\right),
		\qquad t \in [0,1],
	\end{equation}
	which decreases from $G_{[1,\bg]}(0) = \log\gamma$ to $G_{[1,\bg]}(1) = 0$, with
	\begin{equation}\label{eq:gamma_def}
		\gamma \;:=\; \frac{\sqrt{\bg} + 1}{\sqrt{\bg} - 1} \;>\; 1 .
	\end{equation}
	
	Figure~\ref{fig:green_function} plots $G_{[1,\bg]}$ on $[0,1]$ for several $\bg$.
	As $t \to 1^-$ the argument of the $\arccosh$ in~\eqref{eq:greenval} tends to $1$
	and $G_{[1,\bg]}(t) \sim 2\sqrt{(1-t)/(\bg-1)}$, so the Green function reaches the
	support $[1,\bg]$ with a vertical tangent and $|G_{[1,\bg]}'|$ is unbounded as
	$t \to 1^-$. Away from that endpoint, however --- on the interval $[0,\tfrac12]$,
	which is disjoint from $[1,\bg]$ --- $G_{[1,\bg]}$ is real-analytic with bounded
	derivative, so
	\begin{equation}\label{eq:C0_def}
		C_0 \;:=\; \max_{t \in [0,1/2]}\bigl|G_{[1,\bg]}'(t)\bigr| \;<\; \infty,
	\end{equation}
	and the mean value theorem yields the linear minorant
	$G_{[1,\bg]}(t) \ge \log\gamma - C_0\,t$ on $[0,\tfrac12]$ used in both proofs
	below. The value at the origin, $G_{[1,\bg]}(0) = \log\gamma$, controls the entire
	rate; smaller $\bg$ raises it, as the curves of Figure~\ref{fig:green_function}
	show.
	
	\begin{figure}[ht]
		\centering
		\includegraphics[width=0.8\textwidth]{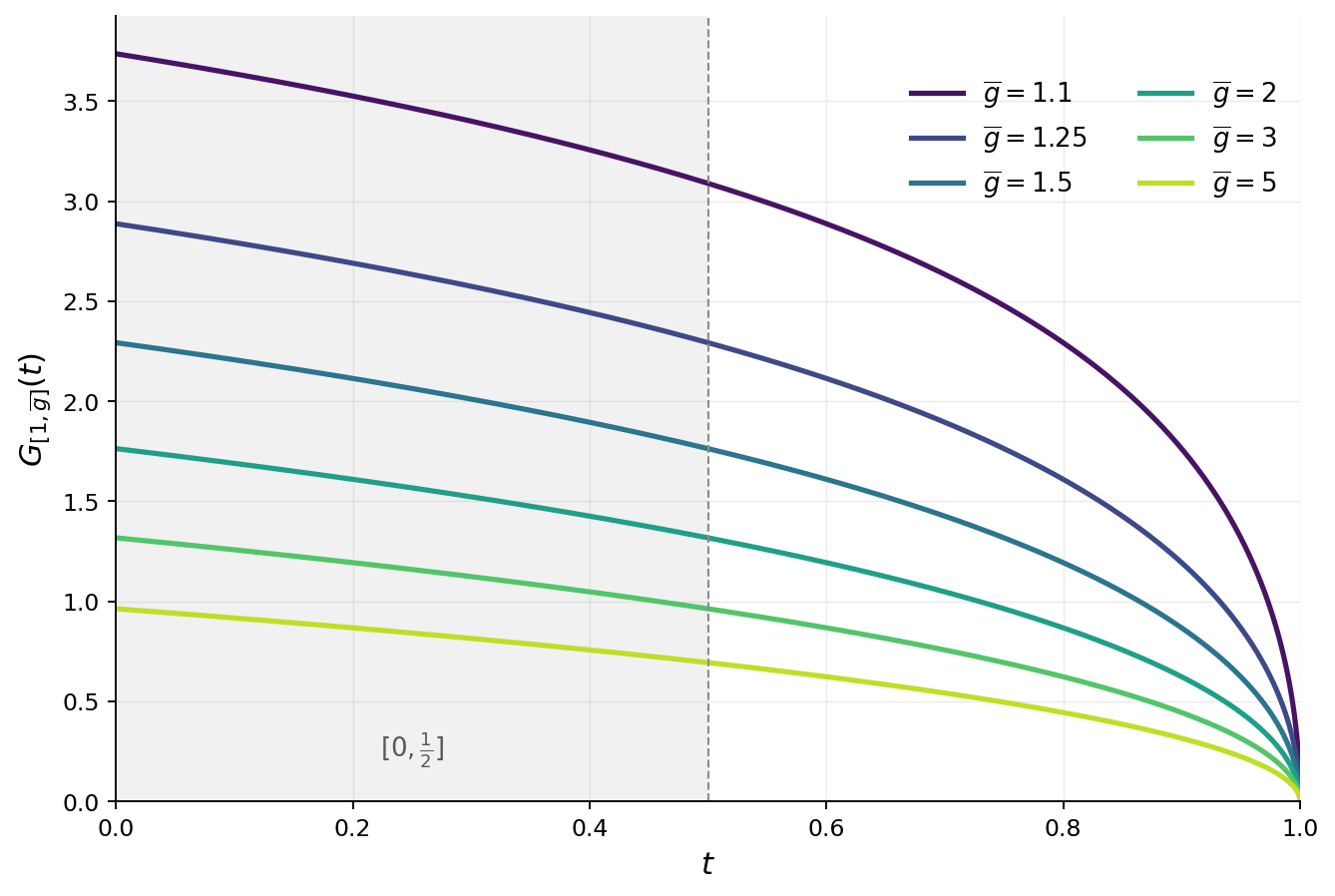}
		\caption{The Green function $G_{[1,\bg]}(t)$ of $\C\cup\{\infty\}\setminus[1,\bg]$
			restricted to $[0,1]$, for several values of $\bg>1$ (Eq.~\eqref{eq:greenval}).
			It vanishes at $t=1$ with a vertical tangent --- $|G_{[1,\bg]}'|$ is unbounded
			as $t\to1^-$ --- and at $t=0$ equals $\log\gamma$, with $\gamma$
			of~\eqref{eq:gamma_def} increasing as $\bg\downarrow1$. On the shaded interval
			$[0,\tfrac12]$, disjoint from $[1,\bg]$, the function is real-analytic with
			finite slope: this is the property underlying the constant
			$C_0=\max_{[0,1/2]}|G_{[1,\bg]}'|<\infty$ of~\eqref{eq:C0_def} in
			Propositions~\ref{prop:rate_upper}--\ref{prop:rate_lower}.}
		\label{fig:green_function}
	\end{figure}
	
	\subsubsection*{Geometric decay of the two gaps}
	
	\begin{proposition}[Upper gap]\label{prop:rate_upper}
		There is a constant $C^+ > 0$, depending on $n$, $m$, $\bg$ and $w$ but not on
		$r$, such that
		\begin{equation}\label{eq:rate_upper}
			v_r^+ - \vol K \;\leq\; C^+\, r^{\frac{\ws}{m}}\, \gamma^{-2r}, \qquad r \geq 1 .
		\end{equation}
	\end{proposition}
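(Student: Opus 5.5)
The plan is to bound the Rayleigh quotient \eqref{eq:gap_upper_ray} of Lemma~\ref{lem:rayleigh} from above by inserting one explicit test polynomial. The polynomial should be uniformly small on $[1,\bg]$ and as large as possible near $t=0$, where $\nu_w$ carries mass like $t^{\ws/m-1}$. The natural choice is the rescaled Chebyshev polynomial
\[
p_r(t) \;:=\; T_r\bigl(u(t)\bigr), \qquad u(t)=\frac{2t-1-\bg}{\bg-1},
\]
with $u$ the affine map \eqref{eq:affine_u}. Then $|p_r|\le 1$ on $[1,\bg]$. Since $\hat\nu\ge0$ has total mass $\hat\nu([1,\bg])\le \nu([0,\bg])=1$ (Proposition~\ref{prop:nu_restriction}), the numerator satisfies $\int_1^{\bg} p_r^2\,d\hat\nu\le 1$.

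For the denominator, on $[0,1]$ we have $|u(t)|\ge1$. Hence
\[
|T_r(u(t))| \;=\; \cosh\bigl(r\arccosh|u(t)|\bigr) \;\ge\; \tfrac12 e^{rG_{[1,\bg]}(t)},
\]
using \eqref{eq:greenval}. So Bernstein--Walsh (Lemma~\ref{thm:bw}) is attained with equality up to a factor $\tfrac12$ by this polynomial. Restrict the denominator integral to a small interval $[0,\delta_r]\subset[0,\tfrac12]$ and use the linear minorant $G_{[1,\bg]}(t)\ge\log\gamma-C_0t$ from \eqref{eq:C0_def}. This gives
\[
\int_0^1 p_r^2\,d\nu_w \;\ge\; \frac14\,\gamma^{2r}\int_0^{\delta_r} e^{-2rC_0 t}\,\frac{\ws}{m}\,t^{\frac{\ws}{m}-1}\,dt .
\]

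Choose $\delta_r=1/r$, which is at most $\tfrac12$ for $r\ge2$; treat $r=1$ separately, or take $\delta_r=\min(1/r,1/2)$. On this interval $e^{-2rC_0t}\ge e^{-2C_0}$, so the remaining integral is at least $e^{-2C_0}\,r^{-\ws/m}$, because $\int_0^{1/r}\frac{\ws}{m}t^{\ws/m-1}\,dt=r^{-\ws/m}$. The denominator is therefore at least $\tfrac14 e^{-2C_0}\gamma^{2r}r^{-\ws/m}$. Dividing the numerator bound by this and multiplying by $2^n$ yields
\[
v_r^+-\vol K \;\le\; 2^{n+2}e^{2C_0}\,r^{\ws/m}\gamma^{-2r},
\]
which is \eqref{eq:rate_upper} with $C^+=2^{n+2}e^{2C_0}$. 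This constant depends on $n$, and on $\bg$ through $C_0$. The case $r=1$ is covered either by taking $\delta=\tfrac12$, or by enlarging $C^+$ using the finite value $v_1^+-\vol K$.

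The main point requiring care is the lower bound on the denominator. It must capture the full factor $\gamma^{2r}$ rather than $e^{2rG(t)}$ at some interior $t$, and the polynomial loss $r^{\ws/m}$ must come out correctly. This is precisely where the singular or degenerate density $t^{\ws/m-1}$ of $\nu_w$ at the origin interacts with the Lipschitz behaviour of $G_{[1,\bg]}$ near $0$. The scale $\delta_r\asymp1/r$ balances the two, and the finiteness of $C_0$, which holds because $0$ stays away from $[1,\bg]$, is exactly what makes the exponential factor $e^{-2rC_0t}$ harmless on that scale. One must also check that the identity $\cosh(r\arccosh x)\ge\frac12e^{r\arccosh x}$ for $x\ge1$ is applied with $|T_r(u)|=T_r(|u|)$, which holds by parity.
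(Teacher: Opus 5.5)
Your proposal is correct and follows the paper's proof essentially step for step: you use the same shifted Chebyshev test polynomial $T_r(u(t))$ in the Rayleigh quotient \eqref{eq:gap_upper_ray}, the same numerator bound $\hat\nu([1,\bg])\le 1$, and the same linear minorant $G_{[1,\bg]}(t)\ge\log\gamma-C_0t$ on $[0,\tfrac12]$. The only difference is bookkeeping in the denominator: you integrate over the shrinking window $[0,1/r]$ with $e^{-2rC_0t}\ge e^{-2C_0}$, which gives the explicit constant $2^{n+2}e^{2C_0}$ for every $r\ge2$, whereas the paper integrates over $[0,\tfrac12]$, substitutes $s=2rC_0t$, bounds the resulting truncated Gamma integral for $r\ge1/C_0$, and absorbs the remaining small $r$ into the constant.
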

	
	\begin{proof}
		Let $T_r$ be the Chebyshev polynomial of the first kind and take, as test
		polynomial in~\eqref{eq:gap_upper_ray}, the shifted Chebyshev polynomial of
		$[1,\bg]$,
		\[
		p(t) := T_r\bigl(u(t)\bigr) \in \R[t]_r ,
		\]
		with $u$ as in~\eqref{eq:affine_u}. Since $u$ maps $[1,\bg]$ onto $[-1,1]$ and
		$|T_r| \leq 1$ there, $|p| \leq 1$ on $[1,\bg]$. Using $T_r(x) = (-1)^r
		\cosh(r\arccosh|x|)$ for $x \leq -1$ together with~\eqref{eq:greenval},
		\begin{equation}\label{eq:p_cosh}
			p(t) = (-1)^r \cosh\bigl(r\,G_{[1,\bg]}(t)\bigr), \qquad t \in [0,1].
		\end{equation}
		
		\emph{Numerator.} As $|p| \leq 1$ on $[1,\bg] \supseteq \supp\hat\nu$ and
		$\hat\nu|_{[1,\bg]} = \nu|_{[1,\bg]}$,
		\[
		\int_1^{\bg} p^2\, d\hat\nu \;\leq\; \hat\nu([1,\bg]) \;=\; \nu([1,\bg])
		\;\leq\; \nu([0,\bg]) \;=\; 1 .
		\]
		
		\emph{Denominator.} By~\eqref{eq:p_cosh} and $\cosh x \geq \tfrac12 e^x$,
		$p(t)^2 \geq \tfrac14 e^{2r G_{[1,\bg]}(t)}$ on $[0,1]$, so, with the density
		$\tfrac{\ws}{m} t^{\frac{\ws}{m}-1}$ of $\nu_w$,
		\[
		I_r := \int_0^1 p^2\, d\nu_w
		\;\geq\; \frac14\,\frac{\ws}{m}\int_0^1 e^{2r G_{[1,\bg]}(t)}\, t^{\frac{\ws}{m}-1}\, dt .
		\]
		On $[0,\tfrac12]$, which is disjoint from $[1,\bg]$, the Green function is
		real-analytic; with $C_0$ of~\eqref{eq:C0_def}, the mean value theorem and
		$G_{[1,\bg]}(0) = \log\gamma$ give $G_{[1,\bg]}(t) \geq \log\gamma - C_0 t$ on
		$[0,\tfrac12]$, so restricting $I_r$ to $[0,\tfrac12]$,
		\[
		I_r \;\geq\; \frac{\gamma^{2r}}{4}\,\frac{\ws}{m}
		\int_0^{1/2} e^{-2r C_0 t}\, t^{\frac{\ws}{m} - 1}\, dt
		\;=\; \frac{\gamma^{2r}}{4}\,\frac{\ws}{m}\,(2r C_0)^{-\frac{\ws}{m}}
		\int_0^{r C_0} e^{-s}\, s^{\frac{\ws}{m} - 1}\, ds ,
		\]
		by the substitution $s = 2 r C_0 t$. For $r \geq 1/C_0$ the last integral is at
		least $C_1 := \int_0^1 e^{-s} s^{\frac{\ws}{m}-1}\, ds > 0$, whence
		\[
		I_r \;\geq\; C_2\, \gamma^{2r}\, r^{-\frac{\ws}{m}},
		\qquad C_2 := \frac{\ws\, C_1}{4 m\,(2 C_0)^{\frac{\ws}{m}}} > 0 .
		\]
		Inserting the test polynomial into~\eqref{eq:gap_upper_ray},
		\[
		v_r^+ - \vol K \;\leq\;
		2^n\,\frac{\int_1^{\bg} p^2\, d\hat\nu}{\int_0^1 p^2\, d\nu_w}
		\;\leq\; \frac{2^n}{I_r}
		\;\leq\; \frac{2^n}{C_2}\, r^{\frac{\ws}{m}}\, \gamma^{-2r}
		\qquad (r \geq 1/C_0).
		\]
		The finitely many degrees $1 \leq r < 1/C_0$ each give a finite left-hand side
		and a strictly positive right-hand side, so~\eqref{eq:rate_upper} holds at each
		of them for a suitable constant; enlarging $C^+$ to the maximum of $2^n/C_2$ and
		these finitely many values gives a single $r$-independent $C^+$.
	\end{proof}
	
	\begin{proposition}[Lower gap]\label{prop:rate_lower}
		There is a constant $C^- > 0$, depending on $n$, $m$, $\bg$ and $w$ but not on
		$r$, such that
		\begin{equation}\label{eq:rate_lower}
			\vol K - v_r^- \;\leq\; C^-\, r^{\frac{\ws}{m}}\, \gamma^{-2r}, \qquad r \geq 1 .
		\end{equation}
	\end{proposition}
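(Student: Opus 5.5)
We mirror the proof of Proposition~\ref{prop:rate_upper}, now using the Rayleigh form \eqref{eq:gap_lower_ray} of Lemma~\ref{lem:rayleigh}. It suffices to exhibit one test polynomial $q\in\R[t]_{r-1}$ for which the numerator $\int_1^{\bg} h\,q^2\,d\hat\nu$ is $O(1)$ and the denominator $\int_0^1(-h)\,q^2\,d\nu_w$ is at least of order $\gamma^{2r}r^{-\ws/m}$.

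\textbf{Choice of test polynomial.} Since $h$ has degree two, $h\,q^2$ has degree $2r$, the same as $p^2$ in the upper gap. A natural candidate is $q(t):=U_{r-1}(u(t))$, the Chebyshev polynomial of the second kind composed with the affine map $u$ of \eqref{eq:affine_u}. On $[1,\bg]$ we have $h(t)=\tfrac{(\bg-1)^2}{4}(1-u^2)$, so
\[
h\,q^2=\tfrac{(\bg-1)^2}{4}\,(1-u^2)\,U_{r-1}(u)^2=\tfrac{(\bg-1)^2}{4}\sin^2(r\theta)\le \tfrac{(\bg-1)^2}{4},
\]
with $u=\cos\theta$. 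This gives the numerator bound $\int h\,q^2\,d\hat\nu\le\tfrac{(\bg-1)^2}{4}\,\nu([1,\bg])\le\tfrac{(\bg-1)^2}{4}$. The simpler alternative $q=T_{r-1}(u)$ would also work, at the price of a factor $\gamma^{2}$ that is absorbed into $C^-$.

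\textbf{Denominator.} For $t\in[0,1]$ we have $u\le-1$. Write $|u|=\cosh G$ with $G=G_{[1,\bg]}(t)$ from \eqref{eq:greenval}. Then
\[
(u^2-1)\,U_{r-1}(u)^2=\sinh^2(rG),
\qquad\text{hence}\qquad
-h\,q^2=\tfrac{(\bg-1)^2}{4}\sinh^2(rG).
\]
On $[0,\tfrac12]$ we have $G\ge G(\tfrac12)>0$, and $\sinh x\ge\tfrac14e^x$ once $x\ge\log 2$. So for $r\ge r_0$, where $r_0$ is such that $r_0G(\tfrac12)\ge\log 2$,
\[
\sinh^2(rG)\ge\tfrac1{16}e^{2rG}.
\]
Restricting the integral to $[0,\tfrac12]$ and using the linear minorant $G\ge\log\gamma-C_0t$ from \eqref{eq:C0_def}, the substitution $s=2rC_0t$ then yields, exactly as for $I_r$ in Proposition~\ref{prop:rate_upper},
\[
\int_0^1(-h)\,q^2\,d\nu_w\ge C_3\,\gamma^{2r}r^{-\ws/m}\qquad\text{for } r\ge\max(r_0,1/C_0).
\]

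\textbf{Conclusion.} Dividing the two bounds and multiplying by $2^n$ gives \eqref{eq:rate_lower} for all large $r$. The finitely many small $r$ are handled as in the upper-gap proof: each gap $\vol K-v_r^-$ is finite and nonnegative by Proposition~\ref{prop:bracket}, so enlarging $C^-$ covers them. The main point needing care is verifying the hyperbolic identity for $U_{r-1}$ off the interval, including its sign $(-1)^{r-1}$, which is irrelevant after squaring. One must also check that $q\neq0$, which holds since $U_{r-1}$ has exact degree $r-1$. If the $U_{r-1}$ identity turns out awkward, fall back on $q=T_{r-1}(u)$. In that case $-h\ge(1-t)(\bg-1)\ge(\bg-1)/2$ on $[0,\tfrac12]$, so the extra factor $-h$ is harmless, and the argument loses only the constant $\gamma^{-2}$.
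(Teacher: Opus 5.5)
Your proof is correct and is essentially the paper's: it uses the Rayleigh form \eqref{eq:gap_lower_ray}, a degree-$(r-1)$ shifted Chebyshev test polynomial bounded on $[1,\bg]$, and, for the denominator, the Green-function linear minorant with the substitution $s\propto rC_0t$ on $[0,\tfrac12]$. The paper takes your fallback $q=T_{r-1}(u)$: it bounds $-h\ge\tfrac12(\bg-\tfrac12)$ separately and absorbs the resulting factor $\gamma^{2}$ into $C^-$. Your choice $q=U_{r-1}(u)$ is a slight refinement: the identities $h\,q^2=\tfrac{(\bg-1)^2}{4}\sin^2(r\theta)$ and $-h\,q^2=\tfrac{(\bg-1)^2}{4}\sinh^2(rG)$ both check out, treat the localizer and the polynomial together, and give $\gamma^{2r}$ directly.
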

	
	\begin{proof}
		Take in~\eqref{eq:gap_lower_ray} the degree-$(r-1)$ shifted Chebyshev polynomial
		$q(t) := T_{r-1}(u(t)) \in \R[t]_{r-1}$, so that $|q| \leq 1$ on $[1,\bg]$ and,
		as in~\eqref{eq:p_cosh},
		$q(t) = (-1)^{r-1}\cosh\bigl((r-1)G_{[1,\bg]}(t)\bigr)$ for $t \in [0,1]$.
		
		\emph{Numerator (above).} On $[1,\bg]$, $0 \leq h(t) \leq \bar h :=
		\max_{[1,\bg]} h = \bigl(\tfrac{\bg-1}{2}\bigr)^2$ (attained at $t = \tfrac{1+\bg}{2}$),
		and $|q| \leq 1$, so
		\[
		\int_1^{\bg} h\, q^2\, d\hat\nu \;\leq\; \bar h\,\hat\nu([1,\bg]) \;\leq\; \bar h .
		\]
		
		\emph{Denominator (below).} On $[0,\tfrac12]$ two bounds hold: first
		$-h(t) = (1-t)(\bg-t) \geq \tfrac12\bigl(\bg-\tfrac12\bigr) =: h^- > 0$, since
		$1-t \geq \tfrac12$ and $\bg - t \geq \bg - \tfrac12$; second
		$q(t)^2 \geq \tfrac14 e^{2(r-1)G_{[1,\bg]}(t)}$. Hence, with the $\nu_w$-density,
		\[
		J_r := \int_0^1 (-h)\, q^2\, d\nu_w
		\;\geq\; \frac{h^-}{4}\,\frac{\ws}{m}
		\int_0^{1/2} e^{2(r-1) G_{[1,\bg]}(t)}\, t^{\frac{\ws}{m}-1}\, dt .
		\]
		Using $G_{[1,\bg]}(t) \geq \log\gamma - C_0 t$ on $[0,\tfrac12]$ as above and,
		for $r \geq 2$, the substitution $s = 2(r-1)C_0 t$,
		\[
		J_r \;\geq\; \frac{h^-}{4}\,\frac{\ws}{m}\,\gamma^{2(r-1)}
		\bigl(2(r-1)C_0\bigr)^{-\frac{\ws}{m}}\int_0^{(r-1)C_0} e^{-s} s^{\frac{\ws}{m}-1}\, ds
		\;\geq\; C_2'\,\gamma^{2(r-1)}\,(r-1)^{-\frac{\ws}{m}}
		\]
		once $r - 1 \geq 1/C_0$, with $C_2' := \tfrac{h^- \ws\, C_1}{4 m\,(2 C_0)^{\frac{\ws}{m}}} > 0$.
		Since $r - 1 < r$ gives $(r-1)^{-\frac{\ws}{m}} \geq r^{-\frac{\ws}{m}}$, we have
		$J_r \geq C_2'\,\gamma^{2(r-1)} r^{-\frac{\ws}{m}}$.
		
		\emph{Conclusion.} Inserting these bounds into~\eqref{eq:gap_lower_ray} and using
		$\gamma^{2(r-1)} = \gamma^{-2}\gamma^{2r}$,
		\[
		\vol K - v_r^- \;\leq\; 2^n\,\frac{\int_1^{\bg} h\, q^2\, d\hat\nu}{\int_0^1 (-h)\,q^2\,d\nu_w}
		\;\leq\; \frac{2^n\,\bar h}{J_r}
		\;\leq\; \frac{\bar h\, 2^n\,\gamma^2}{C_2'}\, r^{\frac{\ws}{m}}\, \gamma^{-2r}
		\qquad (r \geq \max(2, 1 + 1/C_0)),
		\]
		which is~\eqref{eq:rate_lower} with $C^- := \bar h\, 2^n \gamma^2/C_2'$; the
		finitely many smaller degrees are absorbed into $C^-$ as in
		Proposition~\ref{prop:rate_upper}.
	\end{proof}
	
	\begin{theorem}[Exponential convergence of \textsc{Gevp}]\label{thm:rate}
		There is a constant $C > 0$, depending on $n$, $m$, $\bg$ and $w$ but not on
		$r$, such that
		\begin{equation}\label{eq:rate_gap}
			v_r^+ - v_r^- \;\leq\; C\, r^{\frac{\ws}{m}}\, \gamma^{-2r}, \qquad r \geq 1 .
		\end{equation}
	\end{theorem}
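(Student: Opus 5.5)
The plan is to split the bracket width at the true volume and add the two one-sided gaps already controlled above:
\[
v_r^+ - v_r^- \;=\; \bigl(v_r^+ - \vol K\bigr) + \bigl(\vol K - v_r^-\bigr).
\]
Proposition~\ref{prop:bracket} makes both summands non-negative, so no absolute values are needed. Proposition~\ref{prop:rate_upper} bounds the first summand by $C^+ r^{\ws/m}\gamma^{-2r}$. Proposition~\ref{prop:rate_lower} bounds the second by $C^- r^{\ws/m}\gamma^{-2r}$. Both hold for all $r\ge1$, with constants depending only on $n,m,\bg,w$.

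Adding the two estimates gives \eqref{eq:rate_gap} with $C := C^+ + C^-$. This $C$ inherits independence from $r$ and the same parameter dependence.

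The only point to watch is that both propositions hold on the full range $r\ge1$, not just asymptotically. In each proof the finitely many small orders (namely $r<1/C_0$, respectively $r<\max(2,1+1/C_0)$) were absorbed into the constant. The same absorption applies to the sum, because each small-$r$ width $v_r^+-v_r^-$ is finite and the right-hand side of \eqref{eq:rate_gap} is strictly positive. Hence there is no genuine obstacle here: the substantive work, namely the Chebyshev test polynomials and the Green-function lower bounds on the denominators, lies entirely in the two preceding propositions.
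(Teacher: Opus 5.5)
Your proposal is correct and is exactly the paper's proof: split $v_r^+ - v_r^-$ at $\vol K$, use Proposition~\ref{prop:bracket} for non-negativity, bound the two gaps by Propositions~\ref{prop:rate_upper} and~\ref{prop:rate_lower}, and set $C := C^+ + C^-$. Your extra remark on absorbing small orders is harmless but redundant, since both propositions are already stated for every $r \geq 1$.
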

	
	\begin{proof}
		Writing $v_r^+ - v_r^- = (v_r^+ - \vol K) + (\vol K - v_r^-)$, with both terms
		non-negative by Proposition~\ref{prop:bracket}, bound the first summand with
		Proposition~\ref{prop:rate_upper} and the second with
		Proposition~\ref{prop:rate_lower}, and set $C := C^+ + C^-$.
	\end{proof}
	
	\begin{figure}[ht]
		\centering
		\includegraphics[width=0.82\textwidth]{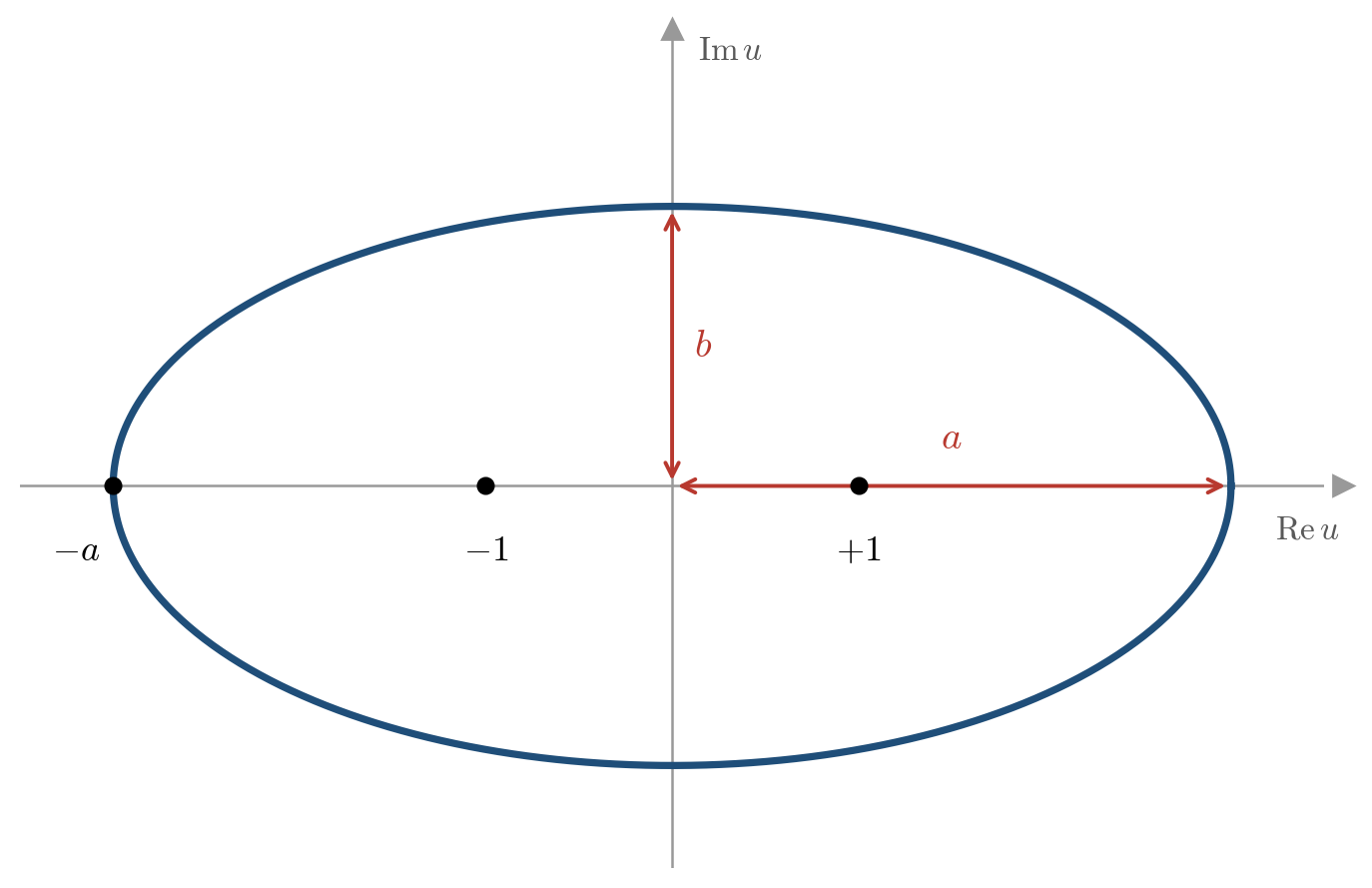}
		\caption{The Bernstein ellipse with foci $\pm1$ for the running
			example $\bg=2$, in the normalized variable $u=(2t-1-\bg)/(\bg-1)$ that sends
			$[1,\bg]$ to $[-1,1]$ (Remark~\ref{rem:gamma_geom}). The endpoints of the
			support $[1,\bg]$ of $\hat\nu$ are the foci $\pm1$, and the origin $t=0$ maps to
			the left vertex $u(0)=-a$. The ellipse has semi-axes $a=\cosh\log\gamma$ and
			$b=\sinh\log\gamma$, hence parameter
			$\gamma=a+b$; for $\bg=2$, $a=3$, $b=2\sqrt2$ and
			$\gamma=3+2\sqrt2\approx5.828$.}
		\label{fig:bernstein}
	\end{figure}
	
	\begin{remark}[Geometric meaning of $\gamma$]\label{rem:gamma_geom}
		The base $\gamma$ of~\eqref{eq:gamma_def} is a conformal invariant of the
		configuration, with the geometric reading of the \emph{Bernstein ellipse} in the complex plane of the normalized variable $u$. Recall that the map
		$u$ of~\eqref{eq:affine_u} normalizes the support $[1,\bg]$ of $\hat\nu$ to
		$[-1,1]$, placing its endpoints at the ellipse foci $\pm1$; the reference interval
		$[0,1]$ carrying $\nu_w$ maps to $[-a,-1]$, and the origin $t = 0$ --- the far
		end of that interval --- maps to
		\[
		u(0) = -\frac{\bg+1}{\bg-1} = -a, \qquad a = \cosh\log\gamma = \tfrac12(\gamma + \gamma^{-1}).
		\]
		Now $-a$ is exactly the left vertex of the ellipse
		with foci $\pm1$ and semi-axes $a = \cosh\log\gamma$ and $b = \sinh\log\gamma$;
		its parameter, the sum of the semi-axes, is
		\[
		a + b \;=\; e^{\log\gamma} \;=\; \gamma.
		\]
		Thus $\gamma$ is simultaneously (i) the exponential of the Green function value
		of $\C\setminus[1,\bg]$ at the origin, and (ii) the parameter of the Bernstein
		ellipse with foci at the normalized endpoints of $[1,\bg]$ that passes through
		the normalized origin. By the Bernstein-Walsh lemma~\eqref{eq:bw}, a degree-$r$
		polynomial bounded by $1$ on $[1,\bg]$ can be as large as $\gamma^r$ at $t = 0$,
		and the Chebyshev test polynomials~\eqref{eq:p_cosh} attain this growth; the
		\emph{square} of such a polynomial, weighed against $\nu_w$ near the origin,
		supplies the denominator $\sim \gamma^{2r}$ of the Rayleigh
		quotients~\eqref{eq:gap_upper_ray}--\eqref{eq:gap_lower_ray}, whence the rate
		$\gamma^{-2r}$. Smaller $\bg$ pushes $u(0)$ farther from $[-1,1]$, enlarging
		the ellipse and $\gamma$, hence accelerating convergence; as $\bg \downarrow 1$
		the ellipse and $\gamma$ blow up, while as $\bg \to \infty$ it collapses onto the
		segment $[-1,1]$ and $\gamma \downarrow 1$.
	\end{remark}
	
	\begin{remark}[Sharpness and the prefactor]\label{rem:rate_sharp}
		As will be seen in the numerical experiments, the exponential base $\gamma^{-2}$ in~\eqref{eq:rate_gap} is sharp. The
		factor $2$ in the exponent is the trace of the squared test polynomials in the
		Rayleigh quotients; the polynomial prefactor $r^{\,\ws/m}$ comes from the
		algebraic vanishing $t^{\ws/m-1}$ of the $\nu_w$-density at the origin
		(Remark~\ref{rem:singular_density}), and is the only place where the weight $w$
		enters the rate --- the base $\gamma$ depends on $\bg$ alone.
	\end{remark}
	
	\begin{remark}[The price of the geometric rate: working precision]\label{rem:rate_precision}
		The same conformal blow-up that produces the rate makes the eigenproblems
		ill-conditioned, and resolving a gap of size $\gamma^{-2r}$ therefore requires
		extended precision. Three effects contribute, additively in decimal digits.
		First, the dynamic range of the data: $y_k \le \bg^{\,k}$, so the entries of
		$M_r(\nu)$ span $2r\log_{10}\bg$ digits. Second, the conditioning of the reference
		pencil: $M_r(\nu_w)$ is the Hankel matrix of a Beta measure on $[0,1]$, of
		Hilbert type, whose condition number grows like $(1+\sqrt2)^{4r}$, i.e.\
		$2r\log_{10}(3+2\sqrt2) = 1.531\,r$ digits. Third, the scale of the eigenvalue
		itself, $\alpha = 2^{-n}\vol K$, which must be resolved against a matrix of unit
		scale. Hence the budget
		\begin{equation}\label{eq:precision_budget}
			P(r) \;=\; \bigl\lceil 2r\log_{10}\bg \bigr\rceil
			\;+\; \bigl\lceil 1.531\, r \bigr\rceil
			\;+\; \Bigl\lceil \log_{10}\frac{2^n}{\vol K} \Bigr\rceil \;+\; O(1)
		\end{equation}
		decimal digits, linear in $r$. Note that it is $\bg$, and not $\gamma$, that
		drives the first and dominant term: large $\bg$ penalizes \textsc{Gevp} twice,
		once through the rate $\gamma \downarrow 1$ and once through the precision --- hence
		the time --- required per order. In practice the maximum of the first two terms
		suffices, the two mechanisms rarely being simultaneously active, and this is what
		the implementation uses.
		
		This is the precise sense in which the geometric \textsc{Gevp} bracket is
		``precision-hungry'' where the algebraic \textsc{Cheb} bracket of
		Section~\ref{sec:rate_cheb} is stable in fixed precision. The two methods occupy
		opposite corners of a rate-robustness trade-off: \textsc{Cheb} converges at the
		algebraic rate $1/r$ but in ordinary double precision, while \textsc{Gevp}
		converges at the geometric rate $\gamma^{-2r}$ at a precision cost linear in $r$.
	\end{remark}
	
	\paragraph{Running example.}
	For $g = x_1^4 + x_2^2$ on $B = [-1,1]^2$, the pushforward $\nu$ is supported on $[0,2]$ and the reference measure is
	$\nu_w = \mathrm{Beta}(\tfrac34, 1)$, whose density is unbounded at the origin
	(Remark~\ref{rem:singular_density}). By Lemma~\ref{lem:hmu_pos} this singularity
	leaves both reference pencils definite, so each bound is delivered by a single
	symmetric eigensolve of size at most $r+1$. The certified bracket
	$v_r^- \le \vol K \le v_r^+$ (Proposition~\ref{prop:bracket}) tightens
	geometrically with the order $r$:
	\[
	\begin{array}{c c c}
		r & [\,v_r^-,\; v_r^+\,] & \text{gap } v_r^+ - v_r^- \\[2pt]
		\hline
		\rule{0pt}{2.6ex}%
		1  & [\,3.4222,\; 3.5254\,] & 1.0\times 10^{-1}\\
		2  & [\,3.49391,\; 3.49731\,] & 3.4\times 10^{-3}\\
		5  & [\,3.49607667,\; 3.49607679\,] & 1.3\times 10^{-7}\\
		10 & [\,3.4960767390562,\; 3.4960767390562\,] & 4.2\times 10^{-15}
	\end{array}
	\]
	Already the $2\times 2$ pencil at $r = 1$ localizes the volume to its first
	digit, and the bracket gains $2\log_{10}\gamma \approx 1.53$ correct digits per
	unit increase of $r$, where $\gamma = (\sqrt2+1)/(\sqrt2-1) = 3 + 2\sqrt2$ is the
	geometric rate established in Section~\ref{sec:rate_gevp}.
	
	\section{Computational complexity}\label{sec:complexity}
	
	The convergence rates of the two algorithms were settled in
	Sections~\ref{sec:rate_cheb} and~\ref{sec:rate_gevp}.
	This section addresses the orthogonal question of how much work a
	single relaxation order costs. We account for two distinct resources: the number
	of arithmetic operations, and the working precision (the number of digits)
	at which those operations must be carried out. The two methods differ as much in
	the second as in the first, and it is the precision axis that ultimately
	separates them.
	
	Both algorithms open with a common preprocessing stage --- estimate the maximal
	value $\bg = \max_B g$ of~\eqref{eq:bg}, then compute the moments of $g$ over the
	box --- and only afterwards diverge: \textsc{Cheb} contracts its moments against a
	vector of closed-form coefficients, while \textsc{Gevp} feeds its moments to two
	generalized eigenvalue problems.
	
	\subsection{Shared preprocessing}\label{sec:cost_shared}
	
	\subsubsection{Estimating the maximum value}
	
	The quantity $\bg = \max_{x \in B} g(x)$ is itself the optimal value of a
	polynomial optimization problem over the box, and would seem to demand a global
	optimizer. In fact the two brackets remain valid for any \emph{upper} estimate of
	$\bg$, so it need not be computed exactly.
	
	\begin{proposition}[Robustness to over-estimation of $\bg$]\label{prop:gbar_upper}
		Let $\bg' \geq \bg$. If the \textsc{Cheb} construction of
		Section~\ref{sec:cheb} and the \textsc{Gevp} construction of
		Section~\ref{sec:gevp} are run with $\bg'$ in place of $\bg$ throughout, the
		resulting brackets still satisfy
		$v_r^{\mathrm{ch},-} \leq \vol K \leq v_r^{\mathrm{ch},+}$ and
		$v_r^- \leq \vol K \leq v_r^+$ for every $r$.
	\end{proposition}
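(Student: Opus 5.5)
The plan is to re-read each bracket proof and isolate exactly where $\bg$ entered, checking that only the inclusion $\supp\nu\subseteq[0,\bg]$ (equivalently $g(B)\subseteq[0,\bg]$) was used and never the equality $\max_B g=\bg$. Since $g(B)\subseteq[0,\bg]\subseteq[0,\bg']$, every such use survives the replacement of $\bg$ by $\bg'$. Note $\bg'\geq\bg>1$, so all standing requirements ($s^\star\in(-1,1)$, $\varphi\in(0,\pi)$, $\bg'-1>0$) still hold.

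For \textsc{Cheb}, the approach is to define $f_r^{\mathrm{ch}}$, $c_k$, $\varphi$ and $\varepsilon_r$ with $\bg'$. Theorem~\ref{thm:cheb_coefs} and the definition \eqref{eq:eps_def} are purely univariate statements about $f(t)=(1-t)_+$ on $[0,\bg']$, valid for any $\bg'>1$, so we still have $|f-f_r^{\mathrm{ch}}|\leq\varepsilon_r$ on $[0,\bg']$. In the proof of Proposition~\ref{prop:cheb_bracket} we only need $g(x)\in[0,\bg']$ for $x\in B$, which holds, and the ReLU identity (Proposition~\ref{prop:relu}) does not involve $\bg$ at all apart from the integration range of $\nu$. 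This range may be enlarged to $[0,\bg']$ since $\nu$ vanishes on $(\bg,\bg']$. Hence $|\vol K-V_r|\leq\frac{\ws+m}{m}2^n\varepsilon_r$ with the primed quantities. The Chebyshev moments in \eqref{eq:Vk} are likewise integrals over $B$ of $T_k(2g/\bg'-1)$, so nothing changes there.

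For \textsc{Gevp}, $\bg$ enters only through the localizer, which becomes $h'(t)=(t-1)(\bg'-t)$. The upper bound $v_r^+$ does not depend on $\bg$ at all, so it is unaffected. For the lower bound, we will verify two facts and then repeat the argument of Proposition~\ref{prop:bracket}. First, Lemma~\ref{lem:hmu_pos} still holds: $-h'(t)=(1-t)(\bg'-t)\geq0$ on $[0,1]$ with $\bg'-t\geq\bg'-1>0$, so $-M_{r-1}(h'\nu_w)\succ0$ and $v_r^-$ is well defined via \eqref{eq:gevp_equiv}. Second, $h'\geq0$ on $[1,\bg']\supseteq[1,\bg]\supseteq\supp\hat\nu$, so $M_{r-1}(h'\hat\nu)\succeq0$, where $\hat\nu$ is the nonnegative residual of Proposition~\ref{prop:nu_restriction}, which itself is independent of $\bg$. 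Therefore $\alpha=2^{-n}\vol K$ satisfies the primed LMI \eqref{eq:lower_lmi}, and the equivalence gives $v_r^-\leq\vol K$.

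The only delicate point, which is not really an obstacle, is making sure that no step silently uses the attainment $\max_B g=\bg$. The support endpoint statement for $\nu$ is the only place where this occurs, and it is never invoked in the bracket proofs, only in descriptive remarks. Convergence rates do degrade with $\gamma'=(\sqrt{\bg'}+1)/(\sqrt{\bg'}-1)<\gamma$, but validity is all the statement asserts.
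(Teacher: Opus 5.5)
Your proposal is correct and follows essentially the same route as the paper. For \textsc{Cheb}, you use only the inclusion $g(B)\subseteq[0,\bg]\subseteq[0,\bg']$ to let the uniform error bound on $[0,\bg']$ control the integrand. For \textsc{Gevp}, you check that $h'(t)=(t-1)(\bg'-t)$ is non-negative on $[1,\bg']\supseteq\supp\hat\nu$ and that $-h'(t)=(1-t)(\bg'-t)$ is non-negative on $[0,1]$ with $\bg'-t\geq\bg'-1>0$, so Lemma~\ref{lem:hmu_pos} and Proposition~\ref{prop:bracket} carry over. Your extra observation that $v_r^+$ does not involve $\bg$ at all is correct; the paper leaves it implicit.
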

	
	\begin{proof}
		\emph{\textsc{Cheb}.} The ReLU identity~\eqref{eq:relu} and the integral
		$\int_B (1-g)_+\,dx$ do not involve $\bg$. Replacing $\bg$ by $\bg'$ only changes
		the interval $[0,\bg']$ on which $f(t) = (1-t)_+$ is expanded; since
		$g(x) \in [0,\bg] \subseteq [0,\bg']$ on $B$, the uniform bound
		$|f - f_r^{\mathrm{ch}}| \leq \varepsilon_r$ over $[0,\bg']$ still controls the
		integrand on the actual range of $g$, and Proposition~\ref{prop:cheb_bracket}
		holds verbatim. \emph{\textsc{Gevp}.} The pushforward $\nu$ is supported in
		$[0,\bg] \subseteq [0,\bg']$ and the residual $\hat\nu$ in
		$[1,\bg] \subseteq [1,\bg']$. The localizer $h'(t) = (t-1)(\bg'-t)$ is then
		$\geq 0$ on $[1,\bg'] \supseteq \supp\hat\nu$, while
		$-h'(t) = (1-t)(\bg'-t) \geq 0$ on $[0,1]$ because $\bg' - t \geq \bg'-1 > 0$
		there; hence Lemma~\ref{lem:hmu_pos} (positivity of the reference pencils) and
		Proposition~\ref{prop:bracket} (the two-sided bracket) carry over unchanged.
	\end{proof}
	
	By Proposition~\ref{prop:gbar_upper} a certified over-estimate is enough, and the
	cheapest one is immediate: on $B$ every monomial obeys $|x^\alpha| \leq 1$, so
	\begin{equation}\label{eq:gbar_ell1}
		\bg \;\leq\; \sum_\alpha |g_\alpha| \;=:\; \|g\|_1 ,
	\end{equation}
	the $\ell^1$-norm of the coefficient vector, computed in $O(s)$ operations, where
	\[
	s := \#\{\alpha : g_\alpha \neq 0\} \leq \binom{n+m-1}{m}
	\]
	is the number of monomials of $g$.
	For the running example $g = x_1^4 + x_2^2$ this returns $\bg \leq 2$, the exact
	value. A sharper certified bound at the same $O(s)$ cost follows from a single
	interval-arithmetic evaluation of $g$ on $B$; and if a near-optimal $\bg$ is
	wanted, to tighten the convergence constants of
	Sections~\ref{sec:rate_cheb}--\ref{sec:rate_gevp}, the moment-SOS hierarchy on
	the box supplies it through a finite sequence of semidefinite programs whose cost
	is independent of $r$ and therefore amortized over all orders.
	
	\subsubsection{Computing the box moments}
	
	Both algorithms collapse the $n$-dimensional volume onto univariate moments of
	$g$ over $B$, and this is the only stage whose cost grows with the ambient
	dimension. It rests on the fact that monomial moments over the box factorize,
	\begin{equation}\label{eq:box_factor}
		\int_B x^\alpha\, d\mu \;=\; \prod_{i=1}^n l_{\alpha_i},
		\qquad
		l_k := \frac12\int_{-1}^1 t^k\, dt =
		\begin{cases}
			\dfrac{1}{k+1}, & k \text{ even},\\[6pt]
			0, & k \text{ odd},
		\end{cases}
	\end{equation}
	so that any box moment is a product of $n$ one-dimensional moments.
	
	\textsc{Gevp} needs the power moments $y_k = \int_B g^k\, d\mu$ of~\eqref{eq:y_moments}
	for $0 \leq k \leq 2r$. Each is obtained by expanding $g^k$ in the monomial basis
	and applying~\eqref{eq:box_factor} term by term, producing $y_k$ as an exact
	rational. As $g$ is quasi-homogeneous of weighted degree $m$, the power $g^k$ is
	quasi-homogeneous of weighted degree $km$, hence a combination of the
	\[
	\sigma_{km} \;:=\; \#\{\alpha \in \N^n : w\cdot\alpha = km\}
	\;=\; O\!\left(\frac{(km)^{\,n-1}}{(n-1)!\,\prod_i w_i}\right)
	\]
	monomials of that degree. Building $g^1, \dots, g^{2r}$ by repeated multiplication
	and collecting like terms costs
	\begin{equation}\label{eq:gevp_moment_cost}
		O\!\left(s \sum_{k=1}^{2r} \sigma_{(k-1)m}\right)
		\;=\;
		O\!\left(\frac{s\,(2r)^n\, m^{\,n-1}}{n!\,\prod_i w_i}\right)
	\end{equation}
	field operations, after which the $O(n)$-cost factorization~\eqref{eq:box_factor}
	of each surviving monomial is lower order. The bound~\eqref{eq:gevp_moment_cost} is
	the dense worst case; when $g$ is sparse the count is governed instead by
	$\#\{\text{monomials of } g^k\} \leq \binom{k+s-1}{s-1} = O(k^{\,s-1})$, which for
	the running example ($s = 2$) is merely $k+1$, so the entire sequence
	$y_0, \dots, y_{2r}$ costs $O(r^2)$ rational operations.
	
	\textsc{Cheb} needs the Chebyshev moments
	$y_k^{\mathrm{ch}} = \int_B T_k(2g/\bg - 1)\, d\mu$ of~\eqref{eq:Vk} for
	$0 \leq k \leq r$ only --- half the order required by \textsc{Gevp}. These are
	deliberately \emph{not} expanded symbolically, as that would reintroduce the
	$\bg^{\,k}$ growth and catastrophic cancellation that the Chebyshev
	parameterization exists to avoid (Section~\ref{sec:cheb_bracket}); they are
	computed by tensor Gauss-Legendre cubature exact for the integrand. Since
	$T_k(2g/\bg - 1)$ has per-coordinate degree at most $km/w_i \leq rm/w_i$, taking
	\[
	Q_i = \Bigl\lceil \tfrac{rm/w_i + 1}{2} \Bigr\rceil \ \text{nodes in coordinate } i,
	\qquad
	N_{\mathrm{ch}} := \prod_{i=1}^n Q_i
	= O\!\left(\frac{(rm)^n}{2^n \prod_i w_i}\right),
	\]
	integrates $y_0^{\mathrm{ch}}, \dots, y_r^{\mathrm{ch}}$ exactly. At each node, one
	evaluation of $g$ ($O(s)$) feeds the three-term recurrence~\eqref{eq:cheb_rec}
	that produces $T_0, \dots, T_r$ ($O(r)$) and updates the $r+1$ running sums
	($O(r)$), for a total of
	\begin{equation}\label{eq:cheb_moment_cost}
		O\bigl(N_{\mathrm{ch}}\,(s + r)\bigr)
		\;=\;
		O\!\left(\frac{(rm)^n\,(s+r)}{2^n \prod_i w_i}\right)
	\end{equation}
	operations, all in fixed (double) precision: $g(x) \in [0,\bg]$ forces the
	argument into $[-1,1]$ and keeps every Chebyshev value --- and hence
	$|y_k^{\mathrm{ch}}| \leq 1$ --- bounded (Section~\ref{sec:cheb_bracket}). For
	the running example, $Q_1 = \lceil (4r+1)/2\rceil$, $Q_2 = \lceil (2r+1)/2\rceil$,
	a grid of $\approx 2r^2$ nodes and $O(r^3)$ flops; note that the tensor grid,
	unlike the symbolic route~\eqref{eq:gevp_moment_cost}, does not benefit from the
	sparsity of $g$.
	
	\subsubsection{Favorable large dimensional cases}
	
	The estimates \eqref{eq:gevp_moment_cost}--\eqref{eq:cheb_moment_cost} deserve a
	closer look, because they are the only obstruction to running the two algorithms in
	large ambient dimension, and because the asymptotic regime in which they were stated
	is not the one of interest here.
	
	Write
	\[
	N_k \;:=\; \#\{\alpha \in \N^n : (g^k)_\alpha \neq 0\}
	\]
	for the number of monomials of $g^k$. Two bounds apply simultaneously: expanding the
	$k$-th power of an $s$-term polynomial gives $N_k \le \binom{k+s-1}{s-1}$, while
	quasi-homogeneity confines $g^k$ to the weighted-degree-$km$ slice, so
	$N_k \le \sigma_{km}$. Hence
	\begin{equation}\label{eq:Nk_sharp}
		N_k \;\le\; \min\Bigl\{\tbinom{k+s-1}{s-1},\ \sigma_{km}\Bigr\},
		\qquad
		\sigma_{km} = \tbinom{n+km-1}{km} \ \text{ for } w = (1,\ldots,1).
	\end{equation}
	The count \eqref{eq:gevp_moment_cost} reported the regime $km \gg n$, in which
	$\sigma_{km} = O\bigl((km)^{n-1}/(n-1)!\bigr)$; the complementary regime $n \gg km$
	gives $\sigma_{km} = O\bigl(n^{km}/(km)!\bigr)$. The binomial is exponential in
	$\min(n, km)$, and in the regime that motivates the pushforward --- $n$ large, $m$
	small --- it is $n$ that binds. Concretely, a \emph{dense} quadratic form in $n = 20$
	variables requires, at order $r = 30$, the moment $y_{60}$, hence
	$\sigma_{60} = \binom{79}{60} \approx 10^{19}$ monomials: out of reach. The
	one-dimensional collapse achieved in Section~\ref{sec:pushforward} removes the
	exponential dependence from the \emph{optimization} --- positive semidefinite blocks
	of size $\binom{n+r}{n}$ become pencils of size $r+1$ --- but it does not remove it
	from the problem; it confines it entirely to a single preprocessing stage, performed
	once and independent of the relaxation order thereafter. Whether the method scales
	in $n$ is therefore exactly the question of whether that stage is tractable, and the
	answer depends on the structure of $g$, not on its dimension.
	
	\paragraph{Sparse forms.}
	For fixed $s = \#\{\alpha : g_\alpha \neq 0\}$ the first bound in \eqref{eq:Nk_sharp}
	gives $N_k = O(k^{s-1})$, so the whole sequence $y_0, \dots, y_{2r}$ costs
	$O(s\, r^{s})$ rational operations, \emph{polynomially in $n$} (indeed independently
	of $n$, apart from the $O(n)$ cost of each factorization \eqref{eq:box_factor}).
	
	\paragraph{Block-separable forms.}
	Let $I_1, \dots, I_J$ partition $\{1,\dots,n\}$ into blocks of sizes
	$b_1, \dots, b_J$, and let
	\begin{equation}\label{eq:blocksep}
		g(x) \;=\; \sum_{j=1}^J g_j\bigl(x_{I_j}\bigr),
	\end{equation}
	each $g_j$ quasi-homogeneous of the same weighted degree $m$ with respect to
	$w|_{I_j}$ (so that $g$ is quasi-homogeneous of degree $m$). This is the
	correlative-sparsity pattern exploited, in the ambient hierarchy, by
	\cite{TacchiWeisserLasserreHenrion2022}. Here it linearizes the moment computation.
	
	\begin{proposition}[Convolution of block pushforwards]\label{prop:blockmoments}
		Let $\mu_j$ be the uniform probability measure on $[-1,1]^{I_j}$ and
		$\nu_j := (g_j)_\# \mu_j$, with moments $y^{(j)}_k$. Under \eqref{eq:blocksep} the
		pushforward $\nu = g_\#\mu$ is the additive convolution
		$\nu = \nu_1 * \cdots * \nu_J$, so that the exponential generating functions
		multiply,
		\begin{equation}\label{eq:egf}
			\sum_{k \ge 0} y_k \frac{u^k}{k!}
			\;=\; \prod_{j=1}^J \Bigl(\sum_{k \ge 0} y^{(j)}_k \frac{u^k}{k!}\Bigr) ,
		\end{equation}
		and $\bg = \sum_j \bg_j$ with $\bg_j := \max_{[-1,1]^{I_j}} g_j$.
	\end{proposition}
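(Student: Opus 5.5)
The plan is to exploit the product structure of the uniform measure on the box. Since $\{1,\dots,n\}$ is partitioned into $I_1,\dots,I_J$, the box factors as $B=\prod_j [-1,1]^{I_j}$ and $\mu=\mu_1\otimes\cdots\otimes\mu_J$. Under $\mu$, the block vectors $x_{I_1},\dots,x_{I_J}$ are therefore independent random vectors with laws $\mu_j$. Consequently the random variables $g_j(x_{I_j})$ are independent with laws $\nu_j=(g_j)_\#\mu_j$. By~\eqref{eq:blocksep}, $\nu=g_\#\mu$ is then the law of their sum, i.e.\ the additive convolution $\nu_1*\cdots*\nu_J$.

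To make this rigorous without probabilistic language, I will verify it on test functions via~\eqref{eq:cov}. For bounded measurable $\varphi$, Fubini gives
\[
\int\varphi\,d\nu=\int_B\varphi\Bigl(\textstyle\sum_j g_j(x_{I_j})\Bigr)\,d\mu_1\cdots d\mu_J=\int\varphi(t_1+\cdots+t_J)\,d\nu_1(t_1)\cdots d\nu_J(t_J).
\]
The second equality uses the change of variables~\eqref{eq:cov} in each block separately. The last expression is the defining formula of the convolution.

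Next I take $\varphi(t)=e^{ut}$, which is bounded on the compact supports. This gives the moment generating function identity $\int e^{ut}\,d\nu=\prod_j\int e^{ut}\,d\nu_j$. Each side is an entire function of $u$ whose Taylor coefficients are $y_k/k!$ and $y^{(j)}_k/k!$ respectively. These series converge because the supports are compact, so $|y_k|\le\bg^k$. Comparing the two series yields~\eqref{eq:egf}. Equivalently, one can expand $(\sum_j g_j)^k$ multinomially and use independence to factor the expectations, which produces the same coefficientwise identity $y_k=\sum_{k_1+\cdots+k_J=k}\binom{k}{k_1,\dots,k_J}\prod_j y^{(j)}_{k_j}$.

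The last claim is $\bg=\sum_j\bg_j$. Each $g_j\ge0$ depends only on the variables in $I_j$, and these variable sets are disjoint. The maximum of $g$ over the product box is therefore attained by maximizing each summand independently, which gives equality. More precisely, the inequality $\le$ is immediate, and $\ge$ follows by concatenating the maximizers $x^\star_{I_j}$ into a single point of $B$.

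I do not expect any real obstacle here. The only point to be careful about is justifying the interchange of sum and integral for the exponential generating function. Boundedness of the supports gives absolute convergence, which handles this, and the finite multinomial identity sidesteps it entirely.
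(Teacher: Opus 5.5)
Your proposal is correct and follows essentially the same route as the paper: the product structure of $\mu$ makes the $g_j(x_{I_j})$ independent, so $\nu$ is their convolution; \eqref{eq:egf} is the multiplicativity of the moment generating function read coefficientwise; and $\bg=\sum_j\bg_j$ follows from separability of the maximization. Your Fubini verification on test functions and the explicit multinomial identity simply make rigorous the steps the paper states in probabilistic shorthand.
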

	
	\begin{proof}
		The measure $\mu$ is the product $\bigotimes_j \mu_j$, so the coordinates $x_{I_j}$
		are independent under $\mu$ and the random variables $g_j(x_{I_j})$ are independent
		with laws $\nu_j$. Their sum $g(x)$ has law the convolution of the $\nu_j$, and
		\eqref{eq:egf} is the multiplicativity of the moment generating function of a sum of
		independent variables, read coefficientwise. The formula for $\bg$ is the
		separability of the maximization.
	\end{proof}
	
	\subsection{Cost of \textsc{Cheb}}\label{sec:cost_cheb}
	
	Given the Chebyshev moments, the remaining \textsc{Cheb} work is negligible and
	stays in fixed precision.
	\begin{itemize}[leftmargin=*,itemsep=2pt]
		\item \emph{Coefficients.} The closed forms of Theorem~\ref{thm:cheb_coefs}
		produce $c_0, \dots, c_r$ from a single angle $\varphi = \arccos(2/\bg - 1)$ in
		$O(r)$ operations.
		\item \emph{Uniform error.} The certified $\varepsilon_r$ is either the
		closed-form majorant $\tfrac{\bg(1+\sin\varphi)}{2\pi}(\tfrac1r + \tfrac1{r+1})$
		of Corollary~\ref{cor:cheb_decay}, an $O(1)$ evaluation, or the exact value
		obtained by locating the extrema of the piecewise degree-$r$ error polynomial
		$f - f_r^{\mathrm{ch}}$ on $[0,\bg]$ --- the roots of a degree-$(r-1)$
		derivative, found in $O(r^3)$ double-precision operations.
		\item \emph{Estimate and bracket.} The point value $V_r$ of~\eqref{eq:Vk} is the
		length-$(r+1)$ inner product
		$\tfrac{2^n(\ws+m)}{m}\bigl(\tfrac{c_0}{2} y_0 + \sum_{j \geq 1} c_j y_j^{\mathrm{ch}}\bigr)$,
		$O(r)$ operations, and the certified bracket of
		Proposition~\ref{prop:cheb_bracket} is then $O(1)$.
	\end{itemize}
	The cubature~\eqref{eq:cheb_moment_cost} dominates: at order $r$, \textsc{Cheb}
	costs $O\bigl((rm)^n (s+r)/(2^n \prod_i w_i)\bigr)$ arithmetic operations,
	entirely in fixed precision, with no linear-algebra solve of any kind.
	
	\subsection{Cost of \textsc{Gevp}}\label{sec:cost_gevp}
	
	Given the power moments, \textsc{Gevp} performs genuine linear algebra, at a
	precision that grows with $r$.
	\begin{itemize}[leftmargin=*,itemsep=2pt]
		\item \emph{Reference moments and assembly.} The rationals $z_0, \dots, z_{2r}$
		of~\eqref{eq:stokes_moment} cost $O(r)$. The four matrices of~\eqref{eq:vr_pm}
		are Hankel (the moment matrices $M_r(\nu), M_r(\nu_w)$) and Hankel-from-a-three-term
		combination (the localizing matrices, entries $\sum_{l=0}^2 h_l\,y_{i+j+l}$ and
		$\sum_{l=0}^2 h_l\,z_{i+j+l}$); each is determined by $O(r)$ numbers, and filling
		the dense $(r+1)\times(r+1)$ and $r \times r$ arrays is $O(r^2)$.
		\item \emph{Two eigenproblems.} Each bound is $2^n$ times the smallest generalized
		eigenvalue of a symmetric-definite pencil (Lemma~\ref{lem:hmu_pos},
		Remark~\ref{rem:nosdp}): a Cholesky factorization of the definite member, a
		reduction to standard symmetric form, and a symmetric eigensolve --- $O(r^3)$
		field operations for the size-$(r+1)$ upper pencil and likewise for the size-$r$
		lower pencil. 
		\item \emph{Precision.} This is the decisive cost. The Hankel pencils are ill-conditioned, at the rate quantified in
		Remark~\ref{rem:rate_precision}: resolving a gap of size $\gamma^{-2r}$ demands
		the working precision $P = P(r)$ of \eqref{eq:precision_budget}, linear in $r$
		with slope $2\log_{10}\bg + 1.531$.
		Every field operation then costs $M(P)$, the price of a $P$-digit multiplication
		--- $O(P^2)$ schoolbook, $O(P \log P)$ with fast multiplication --- with
		$P = \Theta(r)$.
	\end{itemize}
	The two eigensolves therefore cost $O\bigl(r^3 M(P)\bigr)$ bit operations, i.e.\
	$O(r^5)$ with schoolbook and $\widetilde O(r^4)$ with fast multiplication; the
	exact moments must be carried at the same $P$ digits. (The Hankel structure
	admits fast and superfast structured eigensolvers that lower the $O(r^3)$
	field-operation count, but not the precision factor $M(P)$.) For fixed $n$ and
	growing $r$ this precision-driven term dominates the cost; for growing $n$ the
	moment cost~\eqref{eq:gevp_moment_cost} dominates instead.
	
	\subsection{Comparison}\label{sec:cost_compare}
	
	Table~\ref{tab:complexity} collects the per-order costs.
	
	\begin{table}[ht]
		\centering
		\small
		\renewcommand{\arraystretch}{1.35}
		\begin{tabular}{@{}p{0.26\textwidth} p{0.32\textwidth} p{0.34\textwidth}@{}}
			\toprule
			& \textsc{Cheb} & \textsc{Gevp}\\
			\midrule
			moments used
			& $y_k^{\mathrm{ch}}$, \ $0 \leq k \leq r$
			& $y_k$, \ $0 \leq k \leq 2r$\\
			moment route
			& tensor Gauss-Legendre cubature
			& exact monomial expansion (rational)\\
			moment cost
			& $O\bigl((rm)^n (s{+}r)/(2^n\!\prod_i w_i)\bigr)$
			& $O\bigl(s\,(2r)^n m^{n-1}/(n!\!\prod_i w_i)\bigr)$\\
			arithmetic precision
			& fixed (double)
			& $P(r)$ of \eqref{eq:precision_budget}, $\Theta(r)$ digits \\
			downstream algebra
			& one length-$(r{+}1)$ inner product
			& two sym.-def.\ GEVPs, sizes $r{+}1$, $r$\\
			downstream cost
			& $O(r)$
			& $O(r^3)$ ops $= O\bigl(r^3 M(P)\bigr)$ bits\\
			eigensolver
			& none
			& one symmetric eigensolve per bound\\
			$n$ enters via
			& box moments only
			& box moments only\\
			\bottomrule
		\end{tabular}
		\caption{Per-order computational cost of the two algorithms at relaxation
			order $r$. Here $s$ is the number of
			monomials of $g$, and $M(P)$ the cost of a $P$-digit multiplication.}
		\label{tab:complexity}
	\end{table}
	
	\begin{remark}[Where the two methods differ]\label{rem:cost_summary}
		Three contrasts stand out. \emph{(i) Order of the moments:} \textsc{Cheb}
		consumes moments to order $r$, \textsc{Gevp} to order $2r$. \emph{(ii)
			Precision:} \textsc{Cheb} is a fixed-precision method --- the Chebyshev
		parameterization keeps the Chebyshev moments in $[-1,1]$ and the estimate
		$O(2^n)$ --- whereas \textsc{Gevp} is a
		variable-precision method whose digit budget grows linearly in $r$, the
		arithmetic price of the geometric rate (Remark~\ref{rem:rate_precision}).
		\emph{(iii) Linear algebra:} beyond the shared cubature/expansion, \textsc{Cheb}
		forms a single inner product while \textsc{Gevp} solves two dense
		symmetric-definite eigenproblems --- yet \emph{neither} method invokes a
		semidefinite-programming solver, the upper bound $v_r^+$ of~\eqref{eq:vr_pm}
		being delivered, as recorded in Section~\ref{sec:gevp_bounds}, by an eigenvalue
		routine alone. The shared, and only dimension-dependent, expense is the
		box-moment computation; the entire downstream is univariate.
	\end{remark}
	
	Combined with the convergence rates of Sections~\ref{sec:rate_cheb}
	and~\ref{sec:rate_gevp}, this per-order accounting completes the
	rate-robustness picture anticipated in Remark~\ref{rem:rate_precision}:
	\textsc{Cheb} spends little per order and in fixed precision, but needs many
	orders; \textsc{Gevp} spends much per order and at a precision growing with the
	order, but needs few. Which is preferable is therefore dictated by the target
	accuracy and by the cost of extended-precision arithmetic on the platform at
	hand, not by either count in isolation.
	
	\section{Volume approximation with the moment-SOS hierarchy}\label{sec:momsos}
	
	The two algorithms of Sections~\ref{sec:cheb} and~\ref{sec:gevp} were designed to
	\emph{bypass} semidefinite programming. It is nonetheless illuminating to place
	the pushforward in the classical moment-SOS framework of \cite{hls2009}, for three
	reasons: it exhibits the exact mechanism by which the Stokes structure of
	Section~\ref{sec:pushforward} removes the Gibbs phenomenon; it produces a
	\emph{continuous} analytic dual certificate, the weighted ReLU of
	Proposition~\ref{prop:relu}, in closed form; and it is the formulation against
	which \textsc{Cheb} and \textsc{Gevp} should be compared.
	
	\subsection{Primal-dual formulation on the pushforward}\label{sec:momsos_lp}
	
	The restriction $\nu|_{[0,1]}$ is the largest non-negative measure supported on
	$[0,1]$ and dominated by $\nu$. Writing $\sigma$ for the unknown restriction and
	$\hat\sigma := \nu - \sigma$ for the complementary slack measure on $[0,\bg]$, this
	characterization is the infinite-dimensional primal linear program
	\begin{equation}\label{eq:momsos_primal}
		\nu([0,1]) \;=\;
		\sup_{\sigma,\,\hat\sigma}\ \int d\sigma
		\quad\text{s.t.}\quad
		\sigma + \hat\sigma = \nu,\quad
		\sigma \geq 0 \ \text{on}\ [0,1],\quad
		\hat\sigma \geq 0 \ \text{on}\ [0,\bg],
	\end{equation}
	whose optimal solution is $\sigma^\star = \nu|_{[0,1]}$,
	$\hat\sigma^\star = \nu|_{(1,\bg]}$, and whose value is the normalized volume
	$\nu([0,1]) = 2^{-n}\vol K$. Lagrange duality gives the dual program on continuous
	functions
	\begin{equation}\label{eq:momsos_dual}
		\nu([0,1]) \;=\;
		\inf_{p}\ \int p\, d\nu
		\quad\text{s.t.}\quad
		p \geq 1 \ \text{on}\ [0,1],\quad
		p \geq 0 \ \text{on}\ [0,\bg].
	\end{equation}
	The constraints force $p \geq \mathbf 1_{[0,1]}$ on $[0,\bg]$, so any feasible $p$
	yields the upper bound $\int p\, d\nu \geq \nu([0,1])$, and the formal optimum is
	the \emph{discontinuous} indicator $p^\star = \mathbf 1_{[0,1]}$. The order-$r$
	relaxation replaces $p$ by a polynomial of degree $\leq 2r$; majorizing a jump by
	polynomials forces the Gibbs overshoot, and this is the root cause of the slow,
	oscillatory convergence of the bare hierarchy (left panel of
	Figure~\ref{fig:gibbs}).
	
	\subsection{Stokes constraints and the analytic dual certificate}\label{sec:momsos_stokes}
	
	Quasi-homogeneity furnishes extra information on the restriction $\sigma$ that the
	bare program~\eqref{eq:momsos_primal} ignores. By
	Proposition~\ref{prop:nu_restriction}, $\nu|_{[0,1]} = 2^{-n}\vol K\,\nu_w$, so the
	restriction carries the \emph{exact} reference moments $z_k = \ws/(\ws+km)$ of
	$\nu_w$ up to the common scalar; equivalently, for the functions
	\begin{equation}\label{eq:stokes_fun}
		a_k(t) \;:=\; t^k - z_k, \qquad k \geq 1,
	\end{equation}
	one has the linear \emph{Stokes constraints} $\int a_k\, d\sigma = 0$ (this is
	Lemma~\ref{lem:wstokes} read on the pushforward, since
	$\int a_k\, d\nu_w = z_k - z_k = 0$). Adjoining them to~\eqref{eq:momsos_primal}
	leaves the optimum unchanged but tightens every relaxation, and introduces in the
	dual a free multiplier $q = \sum_{k\geq 1} q_k\, a_k$:
	\begin{equation}\label{eq:stokes_dual}
		\nu([0,1]) \;=\;
		\inf_{p,\,q}\ \int p\, d\nu
		\quad\text{s.t.}\quad
		p + q \geq 1 \ \text{on}\ [0,1],\quad
		p \geq 0 \ \text{on}\ [0,\bg],\quad
		q = \sum_{k\geq 1} q_k\, a_k .
	\end{equation}
	The multiplier $q$ integrates to zero against the optimal restriction
	($\int q\, d\sigma^\star = 0$), so it does not enter the objective; its sole role
	is to relax the constraint on $[0,1]$, allowing $p$ to vanish where
	$\mathbf 1_{[0,1]}$ would force a jump. The dual no longer has to approximate a
	discontinuous function --- and a single multiplier exhibits the optimal
	\emph{continuous} certificate explicitly.
	
	\begin{proposition}[Analytic dual certificate]\label{prop:dual_certificate}
		Set
		\[
		p^\star(t) := \frac{\ws+m}{m}\,(1-t)_+
		\]
		and
		\[
		q^\star(t) := \frac{\ws+m}{m}\bigl(t - \frac{\ws}{\ws+m}\bigr), \quad \text{i.e.} \quad 
		q_1^\star := \frac{\ws+m}{m}, \quad q_k^\star := 0 \ (k\geq 2).
		\]
		Then $(p^\star, q^\star)$ is optimal for~\eqref{eq:stokes_dual}: it is feasible,
		with $p^\star + q^\star = 1$ on $[0,1]$ and $p^\star \geq 0$ on $[0,\bg]$, and its
		value is $\int p^\star\, d\nu = \nu([0,1]) = 2^{-n}\vol K$.
	\end{proposition}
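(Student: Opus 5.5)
The plan is to verify feasibility directly, compute the objective through the weighted ReLU identity, and obtain optimality from weak duality.

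\emph{Feasibility.} On $[0,1]$ we have $(1-t)_+=1-t$. Hence
\[
p^\star(t)+q^\star(t)=\tfrac{\ws+m}{m}\bigl(1-t+t-\tfrac{\ws}{\ws+m}\bigr)=\tfrac{\ws+m}{m}\cdot\tfrac{m}{\ws+m}=1,
\]
so the constraint $p^\star+q^\star\ge 1$ holds with equality on $[0,1]$. Nonnegativity of $p^\star$ on $[0,\bg]$ is immediate, since $(1-t)_+\ge 0$ and $(\ws+m)/m>0$. The multiplier $q^\star$ has the required form $q_1^\star a_1$, because $a_1(t)=t-z_1$ and $z_1=\ws/(\ws+m)$.

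\emph{Value.} Proposition~\ref{prop:relu}, in its pushforward form \eqref{eq:relu}, gives
\[
\int p^\star\,d\nu=\tfrac{\ws+m}{m}\int_0^{\bg}(1-t)_+\,d\nu=2^{-n}\vol K .
\]
By \eqref{eq:vol_eq_nu}, this equals $\nu([0,1])$.

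\emph{Optimality.} It remains to show that no feasible $(p,q)$ attains a value below $\nu([0,1])$. Take any feasible pair $(p,q)$ and let $\sigma^\star=\nu|_{[0,1]}=2^{-n}\vol K\,\nu_w$ as in Proposition~\ref{prop:nu_restriction}. Since $p\ge0$ on $[0,\bg]$ and $\nu\ge\sigma^\star$,
\[
\int p\,d\nu\ \ge\ \int_{[0,1]}p\,d\nu=\int p\,d\sigma^\star .
\]
On $[0,1]$ the constraint gives $p\ge 1-q$, so
\[
\int p\,d\sigma^\star\ \ge\ \sigma^\star([0,1])-\int q\,d\sigma^\star .
\]
For each $k$, $\int a_k\,d\sigma^\star=2^{-n}\vol K\,(z_k-z_k)=0$ by Lemma~\ref{lem:muw}. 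Hence $\int q\,d\sigma^\star=0$ whenever $q$ is a finite combination of the $a_k$, and we obtain $\int p\,d\nu\ge\nu([0,1])$. Combined with the value computed above, this shows that $(p^\star,q^\star)$ attains the infimum in \eqref{eq:stokes_dual}, which also confirms the equality stated there.

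The main obstacle is the class over which $q$ ranges. If infinite series $\sum_k q_k a_k$ are allowed, integrating term by term against $\sigma^\star$ requires convergence. I would restrict to finite sums, or to series converging uniformly on $[0,1]$, in which case term-by-term integration is justified. Everything else is routine.
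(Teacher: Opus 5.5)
Your proof is correct and follows essentially the same route as the paper. Both verify $p^\star+q^\star=1$ on $[0,1]$ and $p^\star\ge 0$, read off $\int p^\star\,d\nu=\nu([0,1])$ from the weighted ReLU identity, and obtain optimality by weak duality against the primal point $\sigma^\star=\nu|_{[0,1]}$; the only difference is that you write out the weak-duality inequality for an arbitrary feasible pair $(p,q)$, via $\int a_k\,d\sigma^\star=0$, which the paper only invokes and which also justifies the equality asserted in \eqref{eq:stokes_dual}.
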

	
	\begin{proof}
		For $t \in [0,1]$,
		$(1-t)_+ = 1-t$, hence
		\[
		p^\star(t) + q^\star(t)
		= \frac{\ws+m}{m}\Bigl[(1-t) + \bigl(t - \frac{\ws}{\ws+m}\bigr)\Bigr]
		= \frac{\ws+m}{m}\Bigl(1 - \frac{\ws}{\ws+m}\Bigr)
		= 1,
		\]
		so the first constraint holds with equality; $p^\star \geq 0$ on $[0,\bg]$ is
		clear. Feasibility for the primal at $\sigma^\star = \nu|_{[0,1]}$ and weak
		duality give $\int p^\star\, d\nu \geq \nu([0,1])$, while the weighted ReLU
		identity~\eqref{eq:relu}, in the normalized form
		$\nu([0,1]) = 2^{-n}\vol K = \tfrac{\ws+m}{m}\int_0^{\bg}(1-t)_+\, d\nu$, gives
		$\int p^\star\, d\nu = \tfrac{\ws+m}{m}\int (1-t)_+\, d\nu = \nu([0,1])$. Hence
		$(p^\star,q^\star)$ attains the dual value and is optimal.
	\end{proof}
	
	Thus the Stokes-augmented dual replaces the discontinuous target
	$\mathbf 1_{[0,1]}$ of~\eqref{eq:momsos_dual} by the \emph{continuous, piecewise
		affine} weighted ReLU $\tfrac{\ws+m}{m}(1-t)_+$ of Proposition~\ref{prop:relu}. 
	
	\subsection{Finite-dimensional SOS relaxation}\label{sec:momsos_sos}
	
	In practice one fixes a relaxation order $r$ and computes finitely many moments
	$y_0, \ldots, y_{2r}$ of $\nu$, cf.~\eqref{eq:y_moments}. The order-$r$ dual is a
	univariate polynomial majorization problem,
	\begin{equation}\label{eq:sos_dual_finite}
		d^{\mathrm{sos}}_r:=\min_{p \in \R[t]_{2r}}\ \int p\, d\nu
		\quad\text{s.t.}\quad
		p \geq f \ \text{on}\ [0,1],\quad
		p \geq 0 \ \text{on}\ [0,\bg],
	\end{equation}
	where the target is $f \equiv 1$ for the bare hierarchy~\eqref{eq:momsos_dual} and
	$f(t) = \tfrac{\ws+m}{m}(1-t)$ for the Stokes hierarchy~\eqref{eq:stokes_dual}
	(having substituted the optimal degree-one multiplier $q^\star$ of
	Proposition~\ref{prop:dual_certificate}, which is admissible at every order
	$r \geq 1$); in the latter case the two constraints combine into
	$p \geq \tfrac{\ws+m}{m}(1-t)_+$ on $[0,\bg]$. In one variable the interval
	positivity constraints are represented \emph{exactly} by sum-of-squares
	certificates of Markov-Luk\'acs type: a degree-$2r$ polynomial satisfies
	$p \geq 0$ on $[0,\bg]$ iff $p = \sigma_0 + t(\bg-t)\,\sigma_1$, and $p - f \geq 0$
	on $[0,1]$ iff $p - f = \tau_0 + t(1-t)\,\tau_1$, with $\sigma_0,\tau_0$ and
	$\sigma_1,\tau_1$ sums of squares of degree $\leq 2r$ and $\leq 2r-2$ respectively.
	Problem~\eqref{eq:sos_dual_finite} is therefore a semidefinite program in the four
	Gram matrices.
	
	To keep the program well conditioned we express every polynomial in the shifted
	Chebyshev basis $T_k(2t/\bg - 1)$ of $[0,\bg]$ rather than in the monomial basis:
	the Gram-matrix-to-coefficient map is assembled from the linearization
	$T_iT_j = \tfrac12(T_{i+j}+T_{|i-j|})$, the localizer $t(\bg-t)$ has the closed
	form $\tfrac{\bg^2}{8}\,(T_0 - T_2)$ in the variable $s = 2t/\bg-1$ (and $t(1-t)$ is
	the elementary degree-two polynomial $-\tfrac12 T_0 - T_1 - \tfrac12 T_2$ when
	$\bg = 2$), and the objective is $\int p\, d\nu = \sum_k p_k\, C_k$ with Chebyshev
	moments
	$C_k = \int_0^{\bg} T_k(2t/\bg-1)\, d\nu = \int_B T_k(2g/\bg-1)\, d\mu \in [-1,1]$
	computed by the exact Gauss-Legendre cubature of Section~\ref{sec:cheb}. This is
	the same conditioning advantage that the Chebyshev parameterization confers
	on \textsc{Cheb}.
	
	Figure~\ref{fig:gibbs} shows the two degree-$20$ optimal majorants for the running
	example $g = x_1^4 + x_2^2$, with $\bg = 2$, $\tfrac{\ws+m}{m} = 7/4$. The bare
	hierarchy~\eqref{eq:momsos_dual} majorizes the indicator $\mathbf 1_{[0,1]}$: the
	optimal polynomial overshoots above $1$ at the level $t = 1$ and rings on
	$[1,\bg]$ --- the Gibbs phenomenon --- and the resulting bound is loose,
	$\vol K \leq 3.695$. The Stokes hierarchy~\eqref{eq:stokes_dual} instead majorizes
	the continuous ReLU $\tfrac{7}{4}(1-t)_+$: the optimal polynomial tracks it without
	oscillation and returns the far tighter bound $\vol K \leq 3.506$, against the exact
	value $\vol K = 3.4961$.
	
	\begin{figure}[ht]
		\centering
		\includegraphics[width=\textwidth]{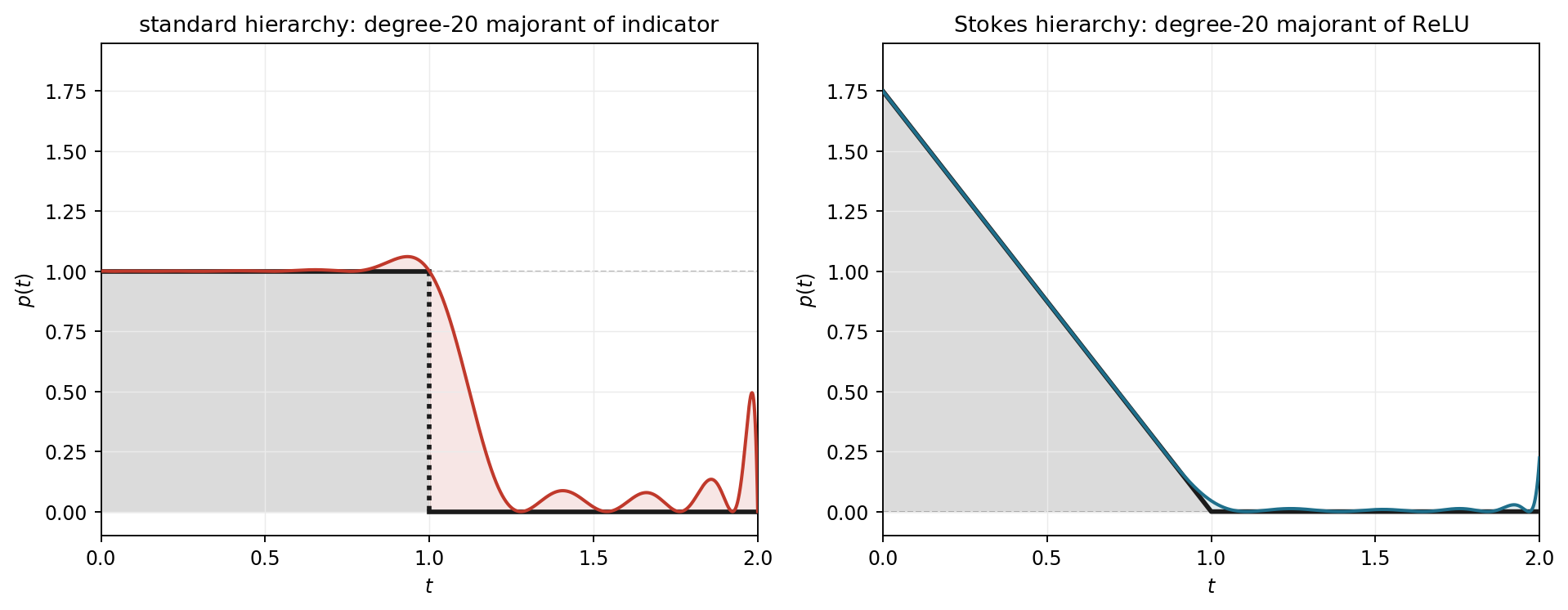}
		\caption{Degree-$20$ optimal SOS majorants for the moment-SOS dual on the
			pushforward $\nu$ of the running example, solved in the shifted
			Chebyshev basis. \emph{Left:} the bare hierarchy~\eqref{eq:momsos_dual}
			majorizes the discontinuous indicator $\mathbf 1_{[0,1]}$ (grey), with a
			marked Gibbs overshoot at the level set $t = 1$ and oscillations on
			$[1,\bg]$; the corresponding upper bound is $\vol K \leq 3.695$.
			\emph{Right:} the Stokes-augmented hierarchy~\eqref{eq:stokes_dual} majorizes
			the continuous weighted ReLU $\tfrac74(1-t)_+$
			(grey) of Proposition~\ref{prop:dual_certificate}, exhibiting no Gibbs
			phenomenon and the much tighter bound $\vol K \leq 3.506$ (exact value
			$\vol K = 3.4961$). The residual upswing near $t = \bg$ reflects that $\nu$
			has vanishing density there, leaving the majorant essentially unconstrained.}
		\label{fig:gibbs}
	\end{figure}
	
	\subsection{Closed-form dual certificates from \textsc{Cheb}}\label{sec:momsos_cheb_dual}
	
	\textsc{Cheb} and the Stokes-accelerated
	dual~\eqref{eq:stokes_dual} approximate the \emph{same} continuous function, the
	weighted ReLU of Proposition~\ref{prop:relu}. We now make this precise: the
	\emph{upper} endpoint $v_r^{\mathrm{ch},+}$ of the \textsc{Cheb}
	bracket~\eqref{eq:cheb_vpm} is exactly the objective value of an explicit,
	closed-form polynomial that is dual-feasible for the order-$r$ relaxation, together
	with explicit SOS certificates. Thus the \textsc{Cheb} upper bound is a
	genuine moment-SOS dual bound, produced without any semidefinite solver.
	
	Throughout we fix the analytic multiplier $q^\star$ of
	Proposition~\ref{prop:dual_certificate}. With $q = q^\star$, and using
	$1 - q^\star(t) = \tfrac{\ws+m}{m}(1-t)$, the two dual constraints
	of~\eqref{eq:stokes_dual} read
	\begin{equation}\label{eq:cheb_dual_reduced}
		p \ge \tfrac{\ws+m}{m}(1-t)\ \text{on}\ [0,1],
		\qquad
		p \ge 0\ \text{on}\ [0,\bg].
	\end{equation}
	Since $\tfrac{\ws+m}{m}(1-t)\ge 0$ on $[0,1]$, the pair~\eqref{eq:cheb_dual_reduced}
	is equivalent to the single requirement that $p$ majorize the continuous
	weighted ReLU on the whole interval,
	\begin{equation}\label{eq:cheb_dual_majorize}
		p(t) \ge \tfrac{\ws+m}{m}(1-t)_+, \qquad t\in[0,\bg].
	\end{equation}
	
	\begin{lemma}\label{lem:markov_lukacs}
		Let $a<b$, set $g(t):=(t-a)(b-t)$, and let $p\in\R[t]$ with $\deg p=2r$.
		Then $p\ge 0$ on $[a,b]$ if and only if
		\begin{equation}\label{eq:ML}
			p \;=\; \sigma_0 + g\,\sigma_1,
			\qquad \sigma_0\in\Sigma[t]_{2r},\quad \sigma_1\in\Sigma[t]_{2(r-1)}
		\end{equation}
		where $\Sigma[t]_{2r}$ is the cone of polynomial SOS of degree up to $2r$.
	\end{lemma}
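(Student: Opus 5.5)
The plan is to prove the easy direction directly, and to prove the converse by factoring $p$ over $\R$ and showing that every factor lies in the degree-graded quadratic module generated by $g$, which is closed under multiplication.

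\emph{Sufficiency.} If $p=\sigma_0+g\,\sigma_1$, then on $[a,b]$ we have $\sigma_0,\sigma_1\ge 0$ and $g\ge 0$, so $p\ge 0$ there.

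\emph{Setup for necessity.} For an integer $d\ge 0$ let
\[
Q_d:=\{\sigma_0+g\,\sigma_1:\ \sigma_0\in\Sigma[t]_{2d},\ \sigma_1\in\Sigma[t]_{2(d-1)}\}.
\]
The only structural fact I need is multiplicativity, $Q_d\cdot Q_e\subseteq Q_{d+e}$. It follows from
\[
(\sigma_0+g\sigma_1)(\tau_0+g\tau_1)=\bigl(\sigma_0\tau_0+g^2\sigma_1\tau_1\bigr)+g\,\bigl(\sigma_0\tau_1+\sigma_1\tau_0\bigr),
\]
together with three checks. First, $g^2\sigma_1\tau_1$ is a sum of squares of degree at most $4+2(d-1)+2(e-1)=2(d+e)$. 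Second, $\sigma_0\tau_0$ has degree at most $2(d+e)$. Third, both $\sigma_0\tau_1$ and $\sigma_1\tau_0$ have degree at most $2(d+e-1)$. Hence it suffices to factor $p$ into pieces that lie in $Q_1$, plus squares, which lie in $Q_d$ with $\sigma_1=0$.

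\emph{Factorization.} Write $p=c\prod_j(t-\rho_j)\prod_k q_k$, where the $q_k$ are monic irreducible quadratics. Each $q_k$ is positive on $\R$, hence a sum of two squares. A real root in $(a,b)$ must have even multiplicity, so it contributes a square. For each remaining real root $\rho\le a$ I use the factor $(t-\rho)$, and for each $\rho\ge b$ I use $(\rho-t)$, absorbing signs into $c$. Every such linear factor is nonnegative on $[a,b]$, and then $c>0$, provided $p\not\equiv 0$ on $(a,b)$; if $p\equiv 0$ there, then $p=0$ and there is nothing to prove.

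The total number of these linear factors is even, because $\deg p=2r$ and all other pieces have even degree. So I pair them arbitrarily into quadratics $\ell_1\ell_2$, each nonnegative on $[a,b]$. It therefore remains to prove the degree-two case: a quadratic $q\ge 0$ on $[a,b]$ lies in $Q_1$, i.e.\ $q=\sigma_0+\lambda g$ with $\lambda\ge0$ and $\sigma_0$ a sum of squares of degree at most $2$ (equivalently, a quadratic nonnegative on $\R$).

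\emph{Main obstacle: the quadratic case.} This is the step I expect to be hardest; the approach I would try first is the following. After an affine change of variable I take $[a,b]=[-1,1]$, so that $g=1-t^2$. I then look for $\lambda\ge0$ such that $q_\lambda:=q-\lambda(1-t^2)$ is nonnegative on all of $\R$.

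\begin{itemize}
\item If $q\ge0$ on $\R$, take $\lambda=0$.
\item Otherwise $q$ has real roots. In the paired situation both roots lie outside $(-1,1)$, and by the sign normalization $q$ is nonnegative on $[-1,1]$. The leading coefficient of $q_\lambda$ is $q_2+\lambda$, which is positive for $\lambda$ large. On the other hand, $q_\lambda(\pm1)=q(\pm1)\ge0$ for every $\lambda$, and $q_\lambda(t)\ge q(t)\ge0$ on $[-1,1]$.
\item The discriminant of $q_\lambda$ is a quadratic polynomial $\Delta(\lambda)$ in $\lambda$. I would show that $\Delta$ has a nonnegative zero, or becomes nonpositive at some $\lambda$, by computing explicitly. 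For $q=(t-c)(d-t)$ with $c\le-1\le1\le d$, I expect a choice of $\lambda$ that places a double root inside $[-1,1]$. For $q=(t-c)(t-c')$ with $c,c'\le-1$, the same approach applies, and the constraint $q(1)\ge0$ should make the feasible interval in $\lambda$ nonempty.
\end{itemize}

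Once the quadratic case is settled, multiplying the pieces via $Q_1^{\,r'}\cdot(\text{squares})\subseteq Q_r$ gives the representation \eqref{eq:ML} with exactly the stated degree bounds.
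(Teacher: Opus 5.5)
You have a genuine gap in the one step that carries the degree bound. The reduction itself is sound: the graded module $Q_d$ is multiplicative with exactly the degree bounds you state, and the factorization is correct (even multiplicities inside $(a,b)$, sign-normalised linear factors outside, an even number of them since $\deg p=2r$). The bookkeeping also lands in $Q_r$. But all the degree-sensitive content of the lemma sits in the base case ``a product of two linear factors nonnegative on $[a,b]$ lies in $Q_1$''. That case is not proved: you only say a suitable $\lambda$ ``should'' exist.

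Moreover, the heuristic you give for it has the inequality reversed. Since $\lambda(1-t^2)\ge 0$ on $[-1,1]$, one has $q_\lambda\le q$ there, not $q_\lambda\ge q$. Increasing $\lambda$ repairs negativity outside $[-1,1]$ but erodes positivity inside, for instance $q_\lambda(0)=q(0)-\lambda$. So the admissible $\lambda$ form a bounded interval (for $q=4-t^2$ it is $[1,4]$), and showing that this interval is nonempty is exactly what remains.

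The gap closes in one line, with no discriminant and no affine reduction. A linear $\ell\ge 0$ on $[a,b]$ can be written $\ell=\alpha(t-a)+\beta(b-t)$ with $\alpha=\ell(b)/(b-a)\ge 0$ and $\beta=\ell(a)/(b-a)\ge 0$. Hence
\[
\ell_1\ell_2=\alpha_1\alpha_2\,(t-a)^2+\beta_1\beta_2\,(b-t)^2+(\alpha_1\beta_2+\alpha_2\beta_1)\,g\;\in\; Q_1 .
\]
Your discriminant route can also be completed. For $q=(t-c)(d-t)$ with $c\le -1$ and $d\ge 1$, the choice $\lambda=(1-cd)/2\ge 1$ gives discriminant $(1+c)(1-d)(c-1)(d+1)\le 0$. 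The identity above is cleaner.

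With this fix, your argument is a complete proof, and it differs from the paper's. The paper lifts to $s=\cos\theta$, applies Fej\'er--Riesz to $p(\cos\theta)$, and splits the recentred spectral factor into real and imaginary parts. That yields the sharper form with single squares, $\sigma_0=A^2$ and $\sigma_1=B^2$. Your route avoids the trigonometric lift and spectral factorization, using only real factorization and a two-term identity, but it produces genuine sums of several squares.
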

	
	Representation \eqref{eq:ML} is classical \cite[Theorem~2.6(a)]{lasserre2010} and is commonly called the
	\emph{Markov-Luk\'acs theorem}. 
	
	\paragraph{Proof.}
	The necessary direction is immediate: $g\ge 0$ on $[a,b]$ and $\sigma_0,\sigma_1\ge 0$
	everywhere, so $p\ge 0$ on $[a,b]$. For the converse we construct \eqref{eq:ML}.
	
	\emph{Step 0 (reduction to $[-1,1]$).} The affine map
	$t=\tfrac{a+b}{2}+\tfrac{b-a}{2}\,s$ carries $[-1,1]$ onto $[a,b]$ and
	$g(t)=\tfrac{(b-a)^2}{4}(1-s^2)$. The SOS structure and the degree bounds are
	preserved, and transporting back only multiplies $\sigma_1$ by the positive
	constant $4/(b-a)^2$; hence we may assume $[a,b]=[-1,1]$ and produce
	$p=\sigma_0+(1-s^2)\sigma_1$.
	
	\emph{Step 1 (trigonometric lift).} Put $s=\cos\theta$. Since $p\ge 0$ on
	$[-1,1]$, the function $T(\theta):=p(\cos\theta)$ is a nonnegative trigonometric
	polynomial, even in $\theta$, of degree $2r$,
	\[
	T(\theta)=\sum_{k=-2r}^{2r} c_k\,e^{ik\theta},
	\qquad c_{-k}=c_k\in\R,\qquad T\ge 0,
	\]
	whose coefficients are, up to normalisation, the Chebyshev coefficients of $p$.
	
	\emph{Step 2 (Fej\'er-Riesz factorisation).} Multiplying the Laurent polynomial
	$\sum_k c_k z^k$ by $z^{2r}$ gives the palindromic polynomial
	$R(z)=\sum_{m=0}^{4r} c_{m-2r}\,z^m$ of degree $4r$. Its real coefficients make
	its zero set invariant under $z\mapsto\bar z$, palindromy makes it invariant
	under $z\mapsto 1/z$, and $T\ge 0$ forces every zero on the unit circle to have
	even multiplicity. Collecting, from each conjugate-reciprocal orbit, the
	representatives in the closed unit disk (unit-circle zeros with half their
	multiplicity) yields $G(z)=\sum_{k=0}^{2r} g_k z^k$ with
	$T(\theta)=\lvert G(e^{i\theta})\rvert^2$. Since the chosen zero multiset is
	closed under conjugation, $G$ may be taken with \emph{real} coefficients,
	$g_k\in\R$.
	
	\emph{Step 3 (recentring).} Define the symmetric (linear-phase) factor
	\[
	\Phi(\theta):=e^{-ir\theta}\,G(e^{i\theta})
	=\sum_{k=0}^{2r} g_k\,e^{\,i(k-r)\theta}
	=\sum_{j=-r}^{r}\gamma_j\,e^{ij\theta},
	\qquad \gamma_j:=g_{j+r}\in\R,
	\]
	so that $\lvert\Phi(\theta)\rvert^2=T(\theta)$ and, since $\gamma_j\in\R$,
	$\Phi(-\theta)=\overline{\Phi(\theta)}$. Hence $\operatorname{Re}\Phi$ is even and
	$\operatorname{Im}\Phi$ is odd in $\theta$, and
	\[
	\operatorname{Re}\Phi=A(\cos\theta),\qquad
	\operatorname{Im}\Phi=\sin\theta\,B(\cos\theta),
	\qquad \deg A\le r,\ \deg B\le r-1,
	\]
	using $\cos j\theta=T_j(\cos\theta)$ and
	$\sin j\theta=\sin\theta\,U_{j-1}(\cos\theta)$ ($T_j,U_\ell$ the Chebyshev
	polynomials of the first and second kind). Both sides being polynomials in $s$
	that agree on $[-1,1]$,
	\[
	p(s)=\lvert\Phi\rvert^2=A(s)^2+\sin^2\!\theta\,B(s)^2
	=A(s)^2+(1-s^2)\,B(s)^2,
	\]
	which is \eqref{eq:ML} with $\sigma_0=A^2$ and $\sigma_1=B^2$ single squares,
	$\deg\sigma_0\le 2r$, $\deg\sigma_1\le 2r-2$.
	
	\emph{Step 4 (transport).} Substituting $s=\tfrac{2t-a-b}{b-a}$ returns the
	$[-1,1]$ identity to $p=\sigma_0+g\,\sigma_1$ on $[a,b]$ with the asserted degree
	bounds.
	
	Representation \eqref{eq:ML} is obtained by numerical linear algebra alone; no
	semidefinite solver is required. After the affine reduction and the substitution
	$s=\cos\theta$, the only nontrivial step is the Fej\'er-Riesz factorisation of
	Step~2, i.e.\ computing the zeros of the degree-$4r$ palindromic polynomial $R$ and
	keeping those in the closed unit disk. For moderate degrees ($r$ up to a few
	hundred) this is a backward-stable companion-matrix eigenvalue computation, at cost
	$O(r^3)$; dedicated polynomial root-finders bring this to roughly $O(r^2)$ and reach
	much higher degrees. When $r$ is large one bypasses the explicit zeros and iterates
	the spectral factor directly.
	
	\begin{theorem}[\textsc{Cheb} dual certificate]\label{thm:cheb_dual}
		Let $r\ge 1$. Let $f_r^{\mathrm{ch}}\in\R[t]_r$ be the truncated Chebyshev
		approximant~\eqref{eq:qkC} of the ReLU $f(t)=(1-t)_+$ on $[0,\bg]$, with the
		closed-form coefficients~\eqref{eq:c0}--\eqref{eq:cj}, and let $\varepsilon_r$ be
		any certified upper bound on the uniform error~\eqref{eq:eps_def} (for instance
		the closed form of Corollary~\ref{cor:cheb_decay}). Define the degree-$r$
		polynomial
		\begin{equation}\label{eq:cheb_dual_poly}
			p_r(t) := \frac{\ws+m}{m}\bigl(f_r^{\mathrm{ch}}(t) + \varepsilon_r\bigr).
		\end{equation}
		Then, with the multiplier $q^\star$ of Proposition~\ref{prop:dual_certificate}:
		\begin{enumerate}
			\item[(i)] \emph{(Feasibility.)} $p_r$ satisfies~\eqref{eq:cheb_dual_majorize},
			equivalently~\eqref{eq:cheb_dual_reduced}; hence $(p_r,q^\star)$ is feasible
			for the order-$r$ Stokes relaxation~\eqref{eq:sos_dual_finite}.
			\item[(ii)] \emph{(SOS certificates.)} Both inequalities
			in~\eqref{eq:cheb_dual_reduced} admit degree $2r$  
			SOS certificates   (Lemma~\ref{lem:markov_lukacs}).
			\item[(iii)] \emph{(Value.)} The objective value of $p_r$ reproduces the
			upper \textsc{Cheb} endpoint,
			\begin{equation}\label{eq:cheb_dual_value}
				2^n\!\int_0^{\bg} p_r\,d\nu
				\;=\; V_r + 2^n\frac{\ws+m}{m}\,\varepsilon_r
				\;=\; v_r^{\mathrm{ch},+},
			\end{equation}
			with $V_r$ and $v_r^{\mathrm{ch},+}$ as in~\eqref{eq:Vk}--\eqref{eq:cheb_vpm}.
			Consequently $p_r$ certifies $\vol K \le v_r^{\mathrm{ch},+}$, and the optimal
			value $d_r^{\mathrm{sos}}$ of the order-$r$ Stokes
			dual~\eqref{eq:sos_dual_finite} is sandwiched as
			\begin{equation}\label{eq:cheb_dual_sandwich}
				\vol K \;\le\; 2^n d_r^{\mathrm{sos}}\;\le\; v_r^{\mathrm{ch},+}.
			\end{equation}
		\end{enumerate}
	\end{theorem}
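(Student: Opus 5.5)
The plan is to prove the three items in order, each resting on a result already established.

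For (i), I would use that $\varepsilon_r$ upper-bounds the uniform error~\eqref{eq:eps_def} on $[0,\bg]$. This gives $f_r^{\mathrm{ch}}(t)+\varepsilon_r \ge f(t)=(1-t)_+$ for every $t\in[0,\bg]$. Multiplying by $\tfrac{\ws+m}{m}>0$ yields exactly~\eqref{eq:cheb_dual_majorize}. By the discussion preceding the theorem, this is equivalent to the pair~\eqref{eq:cheb_dual_reduced}. Two remarks complete the feasibility claim. First, $\deg p_r\le r\le 2r$, so $p_r\in\R[t]_{2r}$ is admissible in~\eqref{eq:sos_dual_finite}. Second, $q^\star$ has degree one, so $(p_r,q^\star)$ is feasible for the order-$r$ Stokes relaxation.

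For (ii), apply Lemma~\ref{lem:markov_lukacs} twice.
\begin{itemize}
\item On $[0,\bg]$, take $g(t)=t(\bg-t)$ and the polynomial $p_r$.
\item On $[0,1]$, take $g(t)=t(1-t)$ and the polynomial $p_r-\tfrac{\ws+m}{m}(1-t)$, whose degree is at most $\max(r,1)\le 2r$.
\end{itemize}
Both polynomials are nonnegative on the respective interval by (i).

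The only technical point is that the lemma is stated for $\deg p=2r$ exactly, while here the degrees are at most $r$. I would handle this by noting that Steps 1--3 of its proof work verbatim for any $\deg p\le 2r$. Viewed as a trigonometric polynomial of degree at most $2r$, the Fej\'er--Riesz factor $G$ has degree at most $2r$ after padding with zero coefficients. This produces $\sigma_0=A^2$ with $\deg A\le r$ and $\sigma_1=B^2$ with $\deg B\le r-1$. Alternatively, one can apply the lemma at the true half-degree $\lceil\deg p/2\rceil\le r$; this only lowers the degrees and so stays within the degree-$2r$ budget. Either way, both certificates have degree at most $2r$. A minor subcase is a nonnegative polynomial of odd degree. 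It is covered by the trigonometric argument, since $p(\cos\theta)$ is still a nonnegative cosine polynomial of degree at most $2r$.

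For (iii), compute $\int_0^{\bg}p_r\,d\nu$ using linearity and $\nu([0,\bg])=y_0=1$:
\[
2^n\!\int p_r\,d\nu=\tfrac{\ws+m}{m}2^n\Bigl(\tfrac{c_0}{2}+\sum_{k=1}^r c_k\!\int T_k\bigl(\tfrac{2t}{\bg}-1\bigr)d\nu\Bigr)+2^n\tfrac{\ws+m}{m}\varepsilon_r .
\]
The inner integrals are $y_k^{\mathrm{ch}}$ by~\eqref{eq:Vk}, so the first term is $V_r$ and the sum equals $v_r^{\mathrm{ch},+}$ by~\eqref{eq:cheb_vpm}. If $\varepsilon_r$ is a strict upper bound rather than the exact error, the same computation gives the endpoint evaluated at that bound, which is still a valid upper endpoint.

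The sandwich~\eqref{eq:cheb_dual_sandwich} then follows in two steps. The right inequality holds because $d_r^{\mathrm{sos}}$ is a minimum over a feasible set containing $p_r$. The left inequality is weak duality: any feasible $p$ satisfies $p\ge\tfrac{\ws+m}{m}(1-t)_+$ on $[0,\bg]\supseteq\supp\nu$. Hence $\int p\,d\nu\ge\tfrac{\ws+m}{m}\int(1-t)_+d\nu=2^{-n}\vol K$ by Proposition~\ref{prop:relu}.

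I expect no serious obstacle. The only care needed is the degree bookkeeping in (ii), that is, applying Markov--Luk\'acs to polynomials of degree below $2r$.
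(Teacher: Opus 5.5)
Your proposal is correct and follows the paper's proof essentially step for step: (i) majorization from the uniform-error bound, (ii) Markov--Luk\'acs on $[0,\bg]$ and on $[0,1]$, and (iii) linearity with $y_0=1$ plus weak duality for the sandwich. Two of your additions are refinements of points the paper passes over quickly, not a different route: your degree bookkeeping (Lemma~\ref{lem:markov_lukacs} is stated for $\deg p=2r$ exactly, whereas $p_r$ has degree $r$, possibly odd), and your direct derivation of the left sandwich inequality from Proposition~\ref{prop:relu}.
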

	
	\begin{proof}
		\emph{(i)} By Proposition~\ref{prop:dual_certificate},
		$q^\star(t)=\tfrac{\ws+m}{m}\bigl(t-\tfrac{\ws}{\ws+m}\bigr)$, so
		$1-q^\star(t)=\tfrac{\ws+m}{m}(1-t)$ and the dual constraints
		of~\eqref{eq:stokes_dual} with $q=q^\star$ are exactly~\eqref{eq:cheb_dual_reduced};
		their equivalence with~\eqref{eq:cheb_dual_majorize} was noted above. By the very
		definition of the uniform error~\eqref{eq:eps_def} and the choice
		$\varepsilon_r\ge\sup_{[0,\bg]}\lvert f-f_r^{\mathrm{ch}}\rvert$, one has
		$f(t)-f_r^{\mathrm{ch}}(t)\le\varepsilon_r$ for all $t\in[0,\bg]$, that is
		\[
		f_r^{\mathrm{ch}}(t)+\varepsilon_r \;\ge\; f(t) \;=\;(1-t)_+\;\ge\;0,
		\qquad t\in[0,\bg].
		\]
		Multiplying by $\tfrac{\ws+m}{m}>0$ gives
		$p_r(t)\ge\tfrac{\ws+m}{m}(1-t)_+ = h(t)\ge 0$ on $[0,\bg]$, which
		is~\eqref{eq:cheb_dual_majorize}. This proves~(i).
		
		\emph{(ii)} The polynomials $p_r$ and $p_r-\tfrac{\ws+m}{m}(1-t)$ have degree
		$r$ and are nonnegative on $[0,\bg]$ resp.\ $[0,1]$ by~(i)
		(on $[0,1]$, $(1-t)_+=1-t$, so $p_r-\tfrac{\ws+m}{m}(1-t)=\tfrac{\ws+m}{m}
		(\varepsilon_r-(f-f_r^{\mathrm{ch}}))\ge 0$). The representations are then exactly
		Lemma~\ref{lem:markov_lukacs} applied on $[0,\bg]$ resp.\ $[0,1]$.
		
		\emph{(iii)} As $\nu$ is a probability measure, $\int_0^{\bg}d\nu=1$, so
		\[
		\int_0^{\bg}p_r\,d\nu
		=\frac{\ws+m}{m}\Bigl(\int_0^{\bg}f_r^{\mathrm{ch}}\,d\nu+\varepsilon_r\Bigr).
		\]
		By~\eqref{eq:qkC} and $\int_0^{\bg}T_k(2t/\bg-1)\,d\nu=y_k^{\mathrm{ch}}$ with
		$y_0=1$, we have
		$\int_0^{\bg}f_r^{\mathrm{ch}}\,d\nu=\tfrac{c_0}{2}y_0+\sum_{k=1}^r c_k\,
		y_k^{\mathrm{ch}}$, hence $2^n\tfrac{\ws+m}{m}\int_0^{\bg}f_r^{\mathrm{ch}}\,d\nu
		=V_r$ by~\eqref{eq:Vk}. Therefore
		$2^n\int_0^{\bg}p_r\,d\nu=V_r+2^n\tfrac{\ws+m}{m}\varepsilon_r
		=v_r^{\mathrm{ch},+}$, which is~\eqref{eq:cheb_dual_value}. Feasibility~(i) and
		weak duality for~\eqref{eq:stokes_dual} give
		$2^{-n}\vol K=\nu([0,1])\le\int_0^{\bg}p_r\,d\nu$, i.e.\
		$\vol K\le v_r^{\mathrm{ch},+}$. Finally, $p_r$ being feasible for the order-$r$
		program forces $d_r^{\mathrm {sos }}\le\int_0^{\bg}p_r\,d\nu=2^{-n}v_r^{\mathrm{ch},+}$,
		while $2^n d_r^{\mathrm{sos}}\ge\vol K$ by weak duality; this
		is~\eqref{eq:cheb_dual_sandwich}.
	\end{proof}
	
	\begin{remark}[Rate-optimality and suboptimality]\label{rem:cheb_dual_gap}
		Combining~\eqref{eq:cheb_dual_sandwich} with the \textsc{Cheb}
		bracket~\eqref{eq:cheb_vpm} and Proposition~\ref{prop:cheb_bracket},
		\[
		0\;\le\; 2^n d_r^{\mathrm{sos}}-\vol K
		\;\le\; v_r^{\mathrm{ch},+}-\vol K
		\;\le\; 2^{\,n+1}\frac{\ws+m}{m}\,\varepsilon_r
		\;\in\;O(1/r),
		\]
		so the closed-form certificate $p_r$ attains the same $O(1/r)$ rate as the
		(generally tighter) semidefinite optimum: it is rate-optimal within the order-$r$
		Stokes hierarchy, suboptimal only in the constant. By
		Remark~\ref{rem:bernstein} the truncated-series slack $\varepsilon_r$ exceeds the
		best degree-$r$ uniform majorant of the ReLU by only a universal factor, so the
		constant is itself near-optimal. For the running example
		($\bg=2$, $\tfrac{\ws+m}{m}=\tfrac74$) one finds $\varepsilon_{20}\approx 0.0152$
		and $v_{20}^{\mathrm{ch},+}\approx 3.602$, against the degree-$20$ semidefinite
		optimum $2^n d_{20}^{\mathrm S}\approx 3.506$ of Figure~\ref{fig:gibbs} and the
		exact value $\vol K=3.4961$; the bound $v_r^{\mathrm{ch},+}-\vol K$ halves as $r$
		doubles ($\approx 0.203,\,0.106,\,0.054$ at $r=10,20,40$).
	\end{remark}
	
	\begin{remark}[No semidefinite solve, fixed precision]\label{rem:cheb_dual_nosdp}
		The certificate $p_r$ in~\eqref{eq:cheb_dual_poly}, its Markov-Luk\'acs
		multipliers, and the bound~\eqref{eq:cheb_dual_value} are obtained in closed form
		from the Chebyshev coefficients~\eqref{eq:c0}--\eqref{eq:cj}, the moments
		$y_k^{\mathrm{ch}}$, and the error bound $\varepsilon_r$ --- with no semidefinite
		optimization and in fixed precision. This is the dual-certificate counterpart of
		the conditioning advantage that motivated \textsc{Cheb} in
		Section~\ref{sec:cheb}: solving~\eqref{eq:sos_dual_finite} directly would return
		the tighter value $2^n d_r^{\mathrm{sos}}$ at the cost of a semidefinite program in four Gram
		matrices, whereas post-processing yields a certified bound of the same order for
		free.
	\end{remark}
	
	\begin{remark}[The matching lower endpoint]\label{rem:cheb_dual_lower}
		Symmetrically, the \emph{minorant}
		$\underline p_r := \tfrac{\ws+m}{m}\bigl(f_r^{\mathrm{ch}}-\varepsilon_r\bigr)$
		satisfies $\underline p_r\le h$ on $[0,\bg]$, whence, by the weighted ReLU
		identity~\eqref{eq:relu} in the normalized form
		$\int_0^{\bg}h\,d\nu=2^{-n}\vol K$,
		\[
		2^n\!\int_0^{\bg}\underline p_r\,d\nu
		= V_r - 2^n\tfrac{\ws+m}{m}\varepsilon_r = v_r^{\mathrm{ch},-}\le\vol K .
		\]
		This recovers the lower \textsc{Cheb} endpoint. It is not a dual certificate: it
		is a direct minorant bound through the exact identity~\eqref{eq:relu}, the
		primal-side counterpart of $p_r$. The genuine primal moment certificate, with the
		Stokes relations imposed exactly, is described in the next section.
	\end{remark}
	
	\subsection{Closed-form dual certificates from \textsc{Gevp}}\label{sec:momsos_gevp_dual}
	
	The certificate of Section~\ref{sec:momsos_cheb_dual} froze the single analytic
	multiplier $q^\star$ of Proposition~\ref{prop:dual_certificate} and majorized the
	resulting ReLU; the price was the algebraic rate $O(1/r)$. We now let \emph{all}
	Stokes multipliers $q_1,\dots,q_{2r}$ act, which is exactly what \textsc{Gevp}
	does. The order-$r$ Stokes constraints~\eqref{eq:stokes_fun} pin down every moment
	of the restriction up to a single scalar: imposing $\int a_k\,d\sigma = 0$ for
	$k=1,\dots,2r$ forces the order-$r$ moment data of $\sigma$ to coincide with those
	of $\alpha\,\nu_w$, where $\alpha=\sigma([0,1])$ and $\nu_w$ is the reference
	measure with moments $z_k=\ws/(\ws+km)$ (Proposition~\ref{prop:nu_restriction}).
	Writing $y=(y_k)$ for the moments of $\nu$ and $z=(z_k)$ for those of $\nu_w$, the
	order-$r$ Stokes primal collapses to the scalar semidefinite program
	\begin{equation}\label{eq:gevp_primal}
		\alpha_r^+ \;:=\; \max\Bigl\{\alpha\ge 0 \ :\
		M_r(y)\succeq \alpha M_r(z),\ \ M_{r-1}(t(\bg-t)y) \succeq \alpha M_{r-1}(t(\bg-t)z)\Bigr\}
		\;\ge\; \nu([0,1]),
	\end{equation}
	where $M_r(\cdot)$ is the moment matrix and
	$M_{r-1}(t(\bg-t)\cdot)$ the localizing matrix. Both inequalities are
	affine in $\alpha$ with positive-definite $\alpha$-coefficient, so the maximal
	feasible $\alpha$ is the smallest generalized eigenvalue of the corresponding
	matrix pencils --- the generalized eigenvalue problem solved in
	Section~\ref{sec:gevp}, eq.~\eqref{eq:vr_pm} --- and $v_r^+=2^n\alpha_r^+$. We show
	that its dual is, again, a single explicit SOS polynomial, and that this
	polynomial certifies the geometric upper bound.
	
	The certificate rests on the following one-sided domination, a direct consequence
	of the Stokes structure $\nu|_{[0,1]}=\nu([0,1])\,\nu_w$ of
	Proposition~\ref{prop:nu_restriction}.
	
	\begin{lemma}[Stokes domination]\label{lem:stokes_domination}
		For every $\pi\in\R[t]$ with $\pi\ge 0$ on $[0,\bg]$,
		\begin{equation}\label{eq:stokes_domination}
			\int_0^{\bg}\pi\,d\nu \;\ge\; \nu([0,1])\int_0^{1}\pi\,d\nu_w .
		\end{equation}
		If moreover $\int_0^1\pi\,d\nu_w>0$, then
		$\vol K \le 2^n\,\dfrac{\int_0^{\bg}\pi\,d\nu}{\int_0^1\pi\,d\nu_w}$.
	\end{lemma}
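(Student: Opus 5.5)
The plan is to split the integral of $\pi$ against $\nu$ over $[0,1]$ and $(1,\bg]$, and to invoke the structural identity of Proposition~\ref{prop:nu_restriction} on the first piece. Since $\pi$ is a polynomial, hence bounded and continuous on the compact interval $[0,\bg]$ that carries $\nu$, we can write
\[
\int_0^{\bg}\pi\,d\nu \;=\; \int_{[0,1]}\pi\,d\nu \;+\; \int_{(1,\bg]}\pi\,d\nu .
\]
By~\eqref{eq:nu_restriction}, $\nu|_{[0,1]} = 2^{-n}\vol K\cdot\nu_w = \nu([0,1])\,\nu_w$, where the last equality uses~\eqref{eq:vol_eq_nu}. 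The first term is therefore exactly $\nu([0,1])\int_0^1\pi\,d\nu_w$. The second term is non-negative because $\nu\ge 0$ and $\pi\ge 0$ on $(1,\bg]\subseteq[0,\bg]$. Together these give~\eqref{eq:stokes_domination}.

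Equivalently, the argument can be phrased through the residual measure $\hat\nu = \nu - 2^{-n}\vol K\,\nu_w \ge 0$ supported in $[1,\bg]$. Then
\[
\int\pi\,d\nu - \nu([0,1])\int\pi\,d\nu_w \;=\; \int\pi\,d\hat\nu \;\ge\; 0 .
\]
This is the form that will pair naturally with the Rayleigh quotients of Lemma~\ref{lem:rayleigh}, so I would present it this way.

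For the second claim, assume $\int_0^1\pi\,d\nu_w>0$ and divide~\eqref{eq:stokes_domination} by this quantity. This gives $\nu([0,1]) \le \int\pi\,d\nu\big/\int_0^1\pi\,d\nu_w$. Multiplying by $2^n$ and using $\vol K = 2^n\nu([0,1])$ from~\eqref{eq:vol_eq_nu} yields the stated bound.

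There is no real obstacle here; the lemma is a direct consequence of the Stokes structure. The one point to check is the boundary point $t=1$, which belongs to $[0,1]$. It is consistent to assign it to the restriction, because $\nu_w$ has a density and so charges no single point, and Proposition~\ref{prop:nu_restriction} is stated on the closed interval $[0,1]$. Note also that positivity of $\pi$ is needed only on $(1,\bg]$, which is the support of $\hat\nu$, and not on all of $[0,\bg]$.
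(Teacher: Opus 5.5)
Your proof is correct and is essentially the paper's argument: split $\int_0^{\bg}\pi\,d\nu$ over $[0,1]$ and $(1,\bg]$, identify the first piece as $\nu([0,1])\int_0^1\pi\,d\nu_w$ via Proposition~\ref{prop:nu_restriction}, drop the nonnegative second piece, and divide by $\int_0^1\pi\,d\nu_w$ for the volume bound. Your side remark is also right: nonnegativity of $\pi$ is only needed on $(1,\bg]$, since $\hat\nu$ puts no mass at $t=1$.
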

	
	\begin{proof}
		By Proposition~\ref{prop:nu_restriction}, $\nu|_{[0,1]}=\nu([0,1])\,\nu_w$, and
		$\nu_w$ is supported on $[0,1]$. Hence
		\[
		\int_0^{\bg}\pi\,d\nu
		=\int_{[0,1]}\pi\,d\nu+\int_{(1,\bg]}\pi\,d\nu
		=\nu([0,1])\int_0^{1}\pi\,d\nu_w+\int_{(1,\bg]}\pi\,d\nu .
		\]
		As $\pi\ge 0$ on $[0,\bg]\supseteq(1,\bg]$ and $\nu\ge 0$, the last integral is
		nonnegative, proving~\eqref{eq:stokes_domination}. Dividing by
		$\int_0^1\pi\,d\nu_w>0$ and using $\vol K=2^n\nu([0,1])$ (eq.~\eqref{eq:vol_eq_nu})
		gives the bound.
	\end{proof}
	
	\begin{theorem}[\textsc{Gevp} dual certificate]\label{thm:gevp_dual}
		Let $r\ge 1$ and let $Q_r := \Sigma[t]_{2r}$ be the cone of sums of squares of degree at most $2r$,
		whose dual cone is exactly $\{y : M_r(y)\succeq 0\}$. Then:
		\begin{enumerate}
			\item[(i)] The value $\alpha_r^+$ of~\eqref{eq:gevp_primal} is the generalized
			Rayleigh minimum
			\begin{equation}\label{eq:gevp_rayleigh}
				\alpha_r^+
				=\min\Bigl\{\frac{\int_0^{\bg}\pi\,d\nu}{\int_0^1\pi\,d\nu_w}
				\ :\ \pi\in Q_r,\ \int_0^1\pi\,d\nu_w>0\Bigr\},
			\end{equation}
			attained by the single square $\pi_r^+ = (p_r^+)^2$, where $p_r^+ \in \R[t]_r$
			is the generalized eigenvector of \eqref{eq:vr_pm} associated with
			$\lmin\bigl(M_r(\nu), M_r(\nu_w)\bigr)$.
			\item[(ii)] \emph{(Admissibility.)} Every $\pi\in Q_r$ with
			$\int_0^1\pi\,d\nu_w>0$ certifies the upper bound
			$\vol K\le 2^n\int\pi\,d\nu/\int\pi\,d\nu_w$ through
			Lemma~\ref{lem:stokes_domination}; the squared eigenfunction $\pi_r^+$ is the
			tightest such degree-$2r$ certificate, and it attains
			\begin{equation}\label{eq:gevp_value}
				2^n\,\frac{\int_0^{\bg}\pi_r^+\,d\nu}{\int_0^1\pi_r^+\,d\nu_w}
				\;=\;2^n\alpha_r^+\;=\;v_r^+\;\ge\;\vol K .
			\end{equation}
			\item[(iii)] \emph{(Optimality and rate.)} $v_r^+=2^n\delta_r$, where $\delta_r$
			is the optimal value of the full Stokes dual~\eqref{eq:stokes_dual} at order
			$r$, with \emph{all} multipliers $q_1,\dots,q_{2r}$ free; and the bound
			converges geometrically,
			\begin{equation}\label{eq:gevp_rate}
				0\;\le\; v_r^+-\vol K \;=\;O(\gamma^{-2r}),
				\qquad \gamma=\frac{\sqrt{\bg}+1}{\sqrt{\bg}-1},
			\end{equation}
			by the analysis of Section~\ref{sec:gevp}.
		\end{enumerate}
	\end{theorem}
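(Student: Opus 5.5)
The plan is to read everything off the variational characterization of the smallest generalized eigenvalue, which is already used in Lemma~\ref{lem:rayleigh}, and to combine it with Lemma~\ref{lem:stokes_domination} and Proposition~\ref{prop:rate_upper}.

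\emph{Part (i).} Identify $p\in\R[t]_r$ with its coefficient vector $\mathbf p$, so that $\langle\mathbf p,M_r(\nu)\mathbf p\rangle=\int p^2\,d\nu$ and $\langle\mathbf p,M_r(\nu_w)\mathbf p\rangle=\int_0^1p^2\,d\nu_w$. By Lemma~\ref{lem:hmu_pos}, $M_r(\nu_w)\succ0$, and the equivalence~\eqref{eq:gevp_equiv} then gives
\[
\lmin\bigl(M_r(\nu),M_r(\nu_w)\bigr)=\min_{p\neq0}\frac{\int p^2\,d\nu}{\int_0^1 p^2\,d\nu_w},
\]
with the minimum attained at the generalized eigenvector $p_r^+$. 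To pass from single squares to the whole cone $Q_r$, write $\pi=\sum_i p_i^2\in Q_r$ and use the mediant inequality
\[
\frac{\sum_i a_i}{\sum_i b_i}\ \ge\ \min_{i:\,b_i>0}\frac{a_i}{b_i},\qquad a_i=\int p_i^2\,d\nu\ge0,\quad b_i=\int_0^1 p_i^2\,d\nu_w\ge0 .
\]
Terms with $b_i=0$ force $p_i=0$, by the strict positivity in Lemma~\ref{lem:hmu_pos}, so they can be dropped. It follows that the minimum of the ratio over $Q_r$ equals the minimum over single squares, and that it is attained at $\pi_r^+=(p_r^+)^2$.

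It remains to identify this value with $\alpha_r^+$ as defined in~\eqref{eq:gevp_primal}. The first LMI in~\eqref{eq:gevp_primal} is, by~\eqref{eq:gevp_equiv}, exactly $\alpha\le\lmin(M_r(\nu),M_r(\nu_w))$. \textbf{Main obstacle: the second (localizing) LMI.} Its $\alpha$-coefficient $M_{r-1}(t(\bg-t)\nu_w)$ is positive definite, because $t(\bg-t)>0$ on $(0,1]$. So the second LMI could a priori cut the feasible interval further, and then the value of~\eqref{eq:gevp_primal} would be the minimum of two generalized eigenvalues rather than the first one alone.

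I will proceed in two steps.
\begin{itemize}
\item \emph{First attempt.} Try to show the localizing constraint is implied: for $\alpha\le\lmin(M_r(\nu),M_r(\nu_w))$, show that $t(\bg-t)$ times the truncated sequence of $y-\alpha z$ remains positive semidefinite at order $r-1$. The route is Markov--Luk\'acs duality (Lemma~\ref{lem:markov_lukacs}), which writes polynomials nonnegative on $[0,\bg]$ as $\sigma_0+t(\bg-t)\sigma_1$.
\item \emph{Fallback.} If this implication cannot be established in general, adopt the reading, consistent with \eqref{eq:vr_pm} and the claim $v_r^+=2^n\alpha_r^+$, in which $\alpha_r^+$ is the value of the pencil $(M_r(\nu),M_r(\nu_w))$. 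The dual statement~\eqref{eq:gevp_rayleigh} over $Q_r$, whose dual cone is $\{y: M_r(y)\succeq0\}$, then matches that pencil exactly, by conic duality between $Q_r$ and $\{M_r\succeq0\}$.
\end{itemize}

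\emph{Part (ii).} Any $\pi\in Q_r$ is nonnegative everywhere, in particular on $[0,\bg]$. Lemma~\ref{lem:stokes_domination} therefore gives $\vol K\le 2^n\int\pi\,d\nu/\int_0^1\pi\,d\nu_w$. By (i) the smallest such bound is attained by $\pi_r^+$ and equals $2^n\alpha_r^+=v_r^+$; the inequality $v_r^+\ge\vol K$ is Proposition~\ref{prop:bracket}.

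\emph{Part (iii).} Write the order-$r$ Stokes dual~\eqref{eq:stokes_dual} as a conic program:
\[
\text{minimize } \int p\,d\nu \quad\text{s.t.}\quad p+q-1\ \text{nonnegative on }[0,1],\qquad p\ \text{nonnegative on }[0,\bg],
\]
with $q=\sum_{k\le 2r}q_ka_k$ free. Since the $q_k$ are free, their dual variables impose the Stokes equalities $\int a_k\,d\sigma=0$. These collapse the primal variable $\sigma$ to $\alpha\,z$ at the level of truncated moments, giving the scalar problem~\eqref{eq:gevp_primal}.

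Weak duality yields $2^n\delta_r\ge v_r^+$. For equality I will invoke strong duality: the primal has a strictly feasible point, for instance small $\alpha>0$ together with the strictly positive definite moment matrices of the measure with density on $[0,\bg]$. Alternatively, I can exhibit the matching dual point directly: take $p=\pi_r^+/\int_0^1\pi_r^+\,d\nu_w$ and choose the $q_k$ so that $p+q-1$ integrates to zero against $\nu_w$. The identification $\delta_r=\alpha_r^+$ is where the ambiguity from (i) resurfaces, so it should be settled together with the localizing LMI.

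Finally, the rate~\eqref{eq:gevp_rate} is Proposition~\ref{prop:rate_upper}. It gives $C^+r^{\ws/m}\gamma^{-2r}$, which is $O(\gamma'^{-2r})$ for every $\gamma'<\gamma$. The paper's $O(\gamma^{-2r})$ should therefore be read as holding up to this polynomial prefactor, and I will state it in that form.
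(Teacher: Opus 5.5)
You have located exactly the right weak point, but the proposal stops there. Without resolving it, neither (i) as stated nor the identity $v_r^+=2^n\delta_r$ in (iii) is proved. Your mediant reduction from $Q_r$ to single squares and your treatment of (ii) are otherwise fine.

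Put $\lambda_1:=\lmin\bigl(M_r(\nu),M_r(\nu_w)\bigr)$ and $\lambda_2:=\lmin\bigl(M_{r-1}(t(\bg-t)\nu),M_{r-1}(t(\bg-t)\nu_w)\bigr)$. The value of \eqref{eq:gevp_primal} is $\min(\lambda_1,\lambda_2)$. Now consider the order-$r$ Stokes dual. The set $\mathrm{span}\{a_1,\dots,a_{2r}\}$ is exactly the set of polynomials of degree $\le 2r$ with zero $\nu_w$-mean, so a free $q$ makes $p+q\ge1$ on $[0,1]$ achievable if and only if $\int p\,d\nu_w\ge1$. Hence $\delta_r$ is the minimum of $\int\pi\,d\nu/\int\pi\,d\nu_w$ over all $\pi$ of degree $\le 2r$ nonnegative on $[0,\bg]$. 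By Markov--Luk\'acs and your mediant argument, this is again $\min(\lambda_1,\lambda_2)$.

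So both claims are \emph{equivalent} to $\lambda_1\le\lambda_2$. The fallback of redefining $\alpha_r^+$ cannot rescue (iii), and your ``weak duality gives $2^n\delta_r\ge v_r^+$'' presupposes it. Nor does Markov--Luk\'acs duality alone give the implication you want. For a general sequence $u$, $M_r(u)\succeq0$ does not imply $M_{r-1}(t(\bg-t)u)\succeq0$ (take the moments of a point mass at $2\bg$). The paper's own proof of (i) has the same hole: it parametrizes $\pi=\sigma_0+t(\bg-t)\sigma_1$ and then silently keeps only the $G_0$ term.

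The missing idea combines two facts. The first is the Stokes split $\nu=\alpha^\ast\nu_w+\hat\nu$, with $\alpha^\ast:=2^{-n}\vol K$ and $\supp\hat\nu\subseteq[1,\bg]$. The second is the identity $(\bg-t)^2-(\bg-1)\,t(\bg-t)=\bg\,(\bg-t)(1-t)$, which is $\ge0$ on $[0,1]$ and $\le0$ on $[1,\bg]$. Let $M_r(y-\alpha z)\succeq0$ and $q\in\R[t]_{r-1}$. If $\alpha\le\alpha^\ast$, then $\nu-\alpha\nu_w\ge0$ on $[0,\bg]$, and $\int t(\bg-t)q^2\,d(\nu-\alpha\nu_w)\ge0$ is immediate. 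If $\alpha>\alpha^\ast$, set $p:=(\bg-t)\,q\in\R[t]_r$; then
\[
\begin{aligned}
(\bg-1)\int t(\bg-t)q^2\,d(\nu-\alpha\nu_w)
&=(\bg-1)\Bigl[\int_1^{\bg} t(\bg-t)q^2\,d\hat\nu-(\alpha-\alpha^\ast)\int_0^1 t(\bg-t)q^2\,d\nu_w\Bigr]\\
&\ge\int p^2\,d\hat\nu-(\alpha-\alpha^\ast)\int p^2\,d\nu_w=\int p^2\,d(\nu-\alpha\nu_w)\;\ge\;0 .
\end{aligned}
\]
Thus the localizing LMI of \eqref{eq:gevp_primal} is implied by the moment LMI. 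Hence $\alpha_r^+=\lambda_1$ and $2^n\alpha_r^+=v_r^+$, which completes (i).

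For (iii), your explicit dual point works once the multiplier is chosen properly. With $p=\pi_r^+/\int\pi_r^+\,d\nu_w$, take $q:=1-p\in\mathrm{span}\{a_k\}$, so that $p+q\equiv1$ on $[0,1]$. Requiring only that $p+q-1$ have zero $\nu_w$-mean, as you wrote, is automatic and is not the feasibility condition. This point gives $\delta_r\le\lambda_1$, while weak duality gives $\delta_r\ge\alpha_r^+=\lambda_1$, so no Slater argument is needed.

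Finally, your reading of the rate is correct. Proposition~\ref{prop:rate_upper} gives $C^+r^{\ws/m}\gamma^{-2r}$, so what is established is $O(r^{\ws/m}\gamma^{-2r})$, not the literal $O(\gamma^{-2r})$ of \eqref{eq:gevp_rate}.
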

	
	\begin{proof}
		\emph{(i)} Parametrizing $\pi=\sigma_0+t(\bg-t)\sigma_1\in Q_r$ by the Gram
		matrices $G_0\succeq 0$, $G_1\succeq 0$ of $\sigma_0,\sigma_1$, one has
		$\int\pi\,d\nu=\langle G_0,M_r(y)\rangle+\langle G_1,M_{r-1}(t(\bg -t)y)\rangle$
		and likewise for $\nu_w$ with $z$ in place of $y$. Thus the right-hand side
		of~\eqref{eq:gevp_rayleigh} is the minimum of a ratio of two linear forms in
		$(G_0,G_1)\succeq 0$; its value is the largest $\alpha$ for which
		$\langle G, M_r(y - \alpha z)\rangle \ge 0$ holds for every $G \succeq 0$, that is,
		the largest $\alpha$ with $M_r(y) - \alpha M_r(z) \succeq 0$, which
		is~\eqref{eq:gevp_primal}. The
		minimum is attained on an extreme ray, giving an optimizer $\pi_r^+\in Q_r$;
		being nonnegative on $[0,\bg]$, it admits the stated Markov-Luk\'acs form by
		Lemma~\ref{lem:markov_lukacs}, with the squares read off the generalized
		eigenvector of~\eqref{eq:vr_pm}.
		
		\emph{(ii)} Immediate from Lemma~\ref{lem:stokes_domination} applied to each
		$\pi\in Q_r$, and from~\eqref{eq:gevp_rayleigh} for the optimal $\pi_r^+$, which
		gives $2^n\alpha_r^+=v_r^+\ge\vol K$.
		
		\emph{(iii)} Problem~\eqref{eq:gevp_primal} is the order-$r$ Stokes primal:
		eliminating $\sigma$ via the order-$r$ Stokes
		constraints~\eqref{eq:stokes_fun} (which force $M_r(\sigma)=\alpha M_r(z)$ and
		$M_{r-1}(t(\bg -t)\sigma)=\alpha M_{r-1}(t(\bg -t)z)$, hence reduce the constraints
		$\sigma\ge 0$ on $[0,1]$ to $\alpha\ge 0$) leaves the slack constraints
		$\nu-\alpha\nu_w\ge 0$ on $[0,\bg]$, i.e.~\eqref{eq:gevp_primal}. Slater's
		condition holds (take $\alpha$ slightly below $\nu([0,1])$), so strong duality
		identifies its value with that of its Lagrangian dual, which is precisely the
		order-$r$ instance of~\eqref{eq:stokes_dual} with all multipliers free; thus
		$\alpha_r^+=\delta_r$ and $v_r^+=2^n\delta_r$. The geometric
		rate~\eqref{eq:gevp_rate} is established in Section~\ref{sec:gevp}.
	\end{proof}
	
	\begin{remark}[Suboptimal admissible certificates]\label{rem:gevp_suboptimal}
		Lemma~\ref{lem:stokes_domination} turns \emph{any} polynomial $\pi$ nonnegative
		on $[0,\bg]$ with positive reference mass into a dual-admissible certificate
		$\vol K\le 2^n\int\pi\,d\nu/\int\pi\,d\nu_w$; \textsc{Gevp} merely selects the
		minimizer $\pi_r^+$ over $Q_r$. Cheaper, suboptimal choices are legitimate: a
		lower-degree square, a fixed test polynomial, or the squared partial sum
		$(f^{\mathrm{ch}}_{\lfloor r/2\rfloor})^2$ built from the \textsc{Cheb}
		coefficients all give valid --- if looser --- upper bounds, without any
		eigenvalue computation. The certificate $\pi_r^+$ is optimal within $Q_r$ but
		still suboptimal with respect to $\vol K$ at finite order.
	\end{remark}
	
	\begin{remark}[Companion lower bound]\label{rem:gevp_lower}
		The lower bracket endpoint $v_r^-\le\vol K$ of Section~\ref{sec:gevp},
		eq.~\eqref{eq:vr_pm}, is certified symmetrically. Since
		$\int_{(1,\bg]}\pi\,d\nu=\int\pi\,d\nu-\nu([0,1])\int\pi\,d\nu_w$ for every
		$\pi$, a polynomial $\psi$ nonnegative on $[1,\bg]\supseteq\supp(\nu|_{(1,\bg]})$
		with \emph{negative} reference mass $\int\psi\,d\nu_w<0$ yields, after dividing
		the inequality $\int\psi\,d\nu-\nu([0,1])\int\psi\,d\nu_w\ge 0$ by that negative
		quantity, the reverse bound $\vol K\ge 2^n\int\psi\,d\nu/\int\psi\,d\nu_w$. The
		reference localizer being sign-indefinite on $[0,1]$, this lower problem is not a
		standard definite pencil; we refer to Section~\ref{sec:gevp} for its precise
		generalized-eigenvalue form and its (also geometric) convergence. Unlike
		$\pi_r^+$, this is a primal-side certificate.
	\end{remark}
	
	\begin{remark}[\textsc{Cheb} versus \textsc{Gevp}]\label{rem:cheb_vs_gevp}
		The two post-processings certify the volume from above by a single
		SOS polynomial, but through complementary uses of the Stokes
		multipliers. \textsc{Cheb} (Theorem~\ref{thm:cheb_dual}) keeps only $q^\star$ and
		majorizes the ReLU, giving the closed-form  bound $v_r^{\mathrm{ch},+}$
		at the algebraic rate $O(1/r)$. \textsc{Gevp} activates all $q_k$ and returns the
		full Stokes-dual value $v_r^+$ via one generalized eigenproblem, with the
		geometric rate~\eqref{eq:gevp_rate}; its certificate is the squared
		eigenfunction $\pi_r^+\in Q_r$. For the running example
		($\bg=2$, $\gamma=3+2\sqrt2$) the gap $v_r^+-\vol K$ contracts by the factor
		$\gamma^{-2}\approx 0.029$ per order ($\approx 2.9\!\times\!10^{-2},\,
		1.2\!\times\!10^{-3},\,4.6\!\times\!10^{-5},\,1.6\!\times\!10^{-6}$ at
		$r=1,2,3,4$), reaching machine precision near $r=6$: at the degree-$20$ budget of
		Figure~\ref{fig:gibbs}, where the fixed-multiplier majorant gives $3.602$ and the
		majorization semidefinite program gives $3.506$, \textsc{Gevp} returns $\vol K=3.4961$ to full
		accuracy.
	\end{remark}
	
	\subsection{Lifting to the original moment-SOS hierarchy}\label{sec:momsos_lift}
	
	The certificates of Sections~\ref{sec:momsos_cheb_dual}
	and~\ref{sec:momsos_gevp_dual} are univariate. We now check that they lift to the original hierarchy of \cite{hls2009} on $B = [-1,1]^n$, so that the algebraic and
	geometric rates are rates for that hierarchy as well. Write
	\[
	Q(B) \;:=\; \Bigl\{ \textstyle\sum_{i=0}^n \theta_i\, b_i \ :\
	\theta_i \in \Sigma[x],\ b_0 = 1,\ b_i = 1-x_i^2 \Bigr\}
	\]
	for the quadratic module generated by the box constraints, which is Archimedean.
	
	\begin{proposition}[Composition]\label{prop:lift}
		Let $\pi \in \R[t]_{2r}$ be non-negative on $[0,\bg]$ and let $\bg' > \bg$. Then
		\[
		\pi \circ g \;\in\; Q(B) \;+\; \Sigma[x]\cdot g ,
		\]
		and more precisely there are $d_0 \in \N$ depending on $g$, $\bg'$ and $n$ but not
		on $r$, and sums of squares $\theta_i$, such that $\pi \circ g$ admits a
		representation in $Q(B) + \Sigma[x]\cdot g$ of degree at most $2rm + d_0$.
	\end{proposition}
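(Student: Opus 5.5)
The plan is to reduce to the univariate Markov--Luk\'acs certificate of Lemma~\ref{lem:markov_lukacs} and then compose it with $g$ term by term, so that the only genuinely multivariate input enters through a single auxiliary representation whose degree does not depend on $r$.

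\emph{Step 1 (univariate certificate).} Since $\pi\ge 0$ on $[0,\bg]$, Lemma~\ref{lem:markov_lukacs} on $[a,b]=[0,\bg]$ gives
\[
\pi=\sigma_0+t(\bg-t)\,\sigma_1,\qquad \sigma_0\in\Sigma[t]_{2r},\ \sigma_1\in\Sigma[t]_{2r-2}.
\]
Composition preserves sums of squares, so $\sigma_0\circ g\in\Sigma[x]$ has degree $\le 2rm$. Writing $\sigma_1=\sum_j h_j^2$, we get
\[
\pi\circ g=\sigma_0(g)+\sum_j h_j(g)^2\, g\,(\bg-g).
\]
It therefore suffices to place each $h_j(g)^2\,g\,(\bg-g)$ in $Q(B)+\Sigma[x]\cdot g$, with degree at most $2rm+d_0$.

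\emph{Step 2 (the auxiliary polynomial $\bg'$).} This is where the hypothesis $\bg'>\bg$ enters. The polynomial $\bg'-g$ is strictly positive on $B$, and $Q(B)$ is Archimedean. Putinar's Positivstellensatz therefore gives
\[
\bg'-g=\theta_0+\sum_{i=1}^n\theta_i(1-x_i^2),
\]
with a degree $d_1$ that depends on $g$, $\bg'$ and $n$ but not on $r$. We then split
\[
\bg-g=(\bg'-g)-(\bg'-\bg).
\]
This produces a positive part $h_j(g)^2\,g\,(\bg'-g)$ and a negative part $-(\bg'-\bg)\,h_j(g)^2\,g$.

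\emph{Step 3 (the obstacle).} I expect two difficulties here.

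First, the negative remainder has to be absorbed. I would first try not to split at all. Instead I would rescale the univariate certificate to the larger interval $[0,\bg']$, using the fact that $\pi\ge0$ only on $[0,\bg]$. The idea is to replace the factor $t(\bg-t)$ by a combination such as $t(\bg'-t)-c\,t$ and to charge the term $-c\,t\,\sigma_1$ against the strictly positive part of $\sigma_0$. If that fails, I would fall back on proving the representation for $\pi+\eta$ and controlling $\eta$, but that changes the statement, so the first route is preferred.

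Second, the positive part $g\,h_j(g)^2\,\theta_i(1-x_i^2)$ is a product of $g$ with a box generator, and it is not literally of the form $Q(B)+\Sigma[x]\,g$. My first attempt is to exhibit $g$ itself as a sum of squares, possibly after multiplication by a fixed weighted-radial SOS multiplier of degree independent of $r$. This uses quasi-homogeneity and positivity (Assumption~\ref{ass:pos}); in the running example $g=x_1^4+x_2^2$ is already SOS. Once $g\in\Sigma[x]$, every such product is (SOS)$\cdot(1-x_i^2)\in Q(B)$, and the degree count is immediate.

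\emph{Step 4 (degree bookkeeping).} The term $\sigma_0(g)$ has degree $\le 2rm$. Each $h_j(g)^2$ has degree $\le(2r-2)m$, and the remaining factors contribute $2m+d_1$ plus the multiplier degree. Collecting these gives the bound $2rm+d_0$, with $d_0$ independent of $r$ as claimed.
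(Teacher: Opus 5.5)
Your proposal stops at the two real difficulties without resolving either, and the remedies you sketch do not work.

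\emph{Positivity only on $[0,\bg]$.} Your plan is to absorb the remainder $-(\bg'-\bg)\,g\,h_j(g)^2$ by ``charging'' $-c\,t\,\sigma_1$ against $\sigma_0$. That amounts to a univariate certificate of $\pi$ built from sums of squares, $t$ and $\bg'-t$, and any such certificate proves $\pi\ge 0$ on $[0,\bg']$. This is false under the stated hypothesis. For instance $\pi(t)=\bg-t$, or any $\pi$ that changes sign at $\bg$, is non-negative on $[0,\bg]$ but negative just beyond it.

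Handling such $\pi$ would need a certificate of $g\le\bg$ on $B$, essentially $\bg-g\in Q(B)$ with controlled degree. Putinar's theorem gives nothing here, because $\bg-g$ vanishes at the maximizers of $g$ on $B$. So the auxiliary $\bg'$ only helps when $\pi\ge 0$ on $[0,\bg']$. That is what the paper's proof actually uses: it applies Lemma~\ref{lem:markov_lukacs} directly on $[0,\bg']$, which is legitimate, e.g., when the univariate relaxation is posed on $[0,\bg']$ as in Proposition~\ref{prop:gbar_upper}. With that hypothesis you do not need to split at all.

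\emph{The cross terms $g\,\theta_i(1-x_i^2)$.} Your concern is legitimate. The paper's proof passes over it when it multiplies Putinar's representation of $\bg'-g$ by $g\,\sigma_1(g)$ and calls the result a sum of squares. But your fix fails. A positive quasi-homogeneous $g$ need not be SOS: take $M+\epsilon(x_1^6+x_2^6+x_3^6)$, with $M$ Motzkin's form and $\epsilon>0$ small, suitably rescaled. A multiplier $\rho$ with $\rho\, g\in\Sigma[x]$ does not help either, since you cannot divide by $\rho$.

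The repair is univariate. From the identity $\bg'\,t(\bg'-t)=t(\bg'-t)^2+(\bg'-t)\,t^2$, rewrite the Markov--Luk\'acs certificate on $[0,\bg']$ as $\pi=\sigma_0+t\,\tau_1+(\bg'-t)\,\tau_2$. Here $\tau_1=(\bg'-t)^2\sigma_1/\bg'$ and $\tau_2=t^2\sigma_1/\bg'$ are sums of squares of degree at most $2r$. Then
\[
\pi\circ g=\sigma_0(g)+g\,\tau_1(g)+(\bg'-g)\,\tau_2(g).
\]
The middle term lies in $\Sigma[x]\cdot g$. Inserting Putinar's representation of $\bg'-g$ (of degree $d_1$) into the last term puts it in $Q(B)$, so $g$ never multiplies a box generator. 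The total degree is at most $2rm+\max(m,d_1)$, so $d_0=\max(m,d_1)$ is independent of $r$.
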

	
	\begin{proof}
		By Lemma~\ref{lem:markov_lukacs} on $[0,\bg']$, $\pi = \sigma_0 + t(\bg'-t)\sigma_1$
		with $\sigma_0, \sigma_1$ sums of squares of degree $\le 2r$ and $\le 2r-2$,
		whence
		\[
		\pi \circ g \;=\; \sigma_0(g) \;+\; g\,\bigl(\bg' - g\bigr)\,\sigma_1(g).
		\]
		The compositions $\sigma_i(g)$ are sums of squares in $x$, of degree $\le 2rm$.
		It remains to place $\bg' - g$ in $Q(B)$. Since $\bg' > \bg = \max_B g$, the
		polynomial $\bg' - g$ is strictly positive on $B$, and $Q(B)$ is Archimedean, so
		Putinar's Positivstellensatz gives $\bg' - g \in Q(B)$ with some representation
		degree $d_0$, a quantity fixed by $g$, $\bg'$ and $n$ and independent of $r$.
		Multiplying that representation by the sum of squares $g\,\sigma_1(g)$ --- note
		$g \ge 0$ on $\R^n$, and $g$ itself need not be a sum of squares, whence the term
		$\Sigma[x]\cdot g$ --- yields the claim, with total degree at most
		$2rm + d_0$.
	\end{proof}
	
	\begin{corollary}[Rates for the original hierarchy]\label{cor:lift_rates}
		Let $u_d$ denote the optimal value of the degree-$d$ SOS relaxation of the original
		volume hierarchy of \cite{hls2009}, augmented with the Stokes constraints of
		\cite{stokesgibbs}, on the box $B$. Then, with $d = 2rm + d_0$,
		\[
		0 \;\le\; u_d - \vol K \;\le\; v_r^+ - \vol K \;=\; O\bigl(\gamma^{-2r}\bigr)
		\;=\; O\bigl(\gamma^{-(d-d_0)/m}\bigr),
		\]
		and likewise $u_d - \vol K = O(m/d)$ using the closed-form certificate $p_r$ of
		Theorem~\ref{thm:cheb_dual} in place of $\pi_r^+$.
	\end{corollary}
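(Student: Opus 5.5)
The plan is to show that the univariate certificates of Sections~\ref{sec:momsos_cheb_dual}--\ref{sec:momsos_gevp_dual}, composed with $g$, are feasible points of the degree-$d$ Stokes-augmented SOS relaxation on $B$, with objective value equal to the corresponding univariate bound. Then $u_d$, an infimum over a set containing them, is at most that bound. The lower bound $u_d\ge\vol K$ is weak duality for the hierarchy of \cite{hls2009,stokesgibbs}, since every feasible multivariate polynomial majorizes the (Stokes-relaxed) indicator and integrates to at least $\vol K$.

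\textbf{Step 1: lift the \textsc{Gevp} certificate.} Take the squared eigenfunction $\pi_r^+=(p_r^+)^2$ of Theorem~\ref{thm:gevp_dual}, normalized by $\int_0^1\pi_r^+\,d\nu_w=1$. Lemma~\ref{lem:stokes_domination} and the identity $\nu|_{[0,1]}=\nu([0,1])\nu_w$ show that $\pi_r^+-\alpha\cdot(\text{Stokes correction})$ plays the role of the dual pair $(p,q)$ of \eqref{eq:stokes_dual}. More precisely, by Theorem~\ref{thm:gevp_dual}(iii) the order-$r$ Stokes dual has an optimal pair $(p,q)$ with value $\delta_r=2^{-n}v_r^+$. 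Here $p\ge0$ on $[0,\bg]$, and $p+q-1\ge0$ on $[0,1]$ with $q=\sum_k q_ka_k$.

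Composing with $g$ gives the following:
\begin{itemize}
\item $p\circ g\ge0$ on $B$;
\item $(p+q-1)\circ g\ge0$ on $K$;
\item $q\circ g=\sum_k q_k(g^k-z_k)$ is exactly a combination of the Stokes test functions of \cite{stokesgibbs} for the weighted radial field $(\wox)(1-g^k)$, whose divergence is $|w|-(|w|+km)g^k$ by Lemma~\ref{lem:wstokes}.
\end{itemize}
So the composed triple is feasible for the multivariate Stokes-augmented dual, and its objective $\int_B p(g)\,dx=2^n\int p\,d\nu=v_r^+$ by \eqref{eq:cov}.

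\textbf{Step 2: degree bookkeeping via Proposition~\ref{prop:lift}.} Apply Proposition~\ref{prop:lift} to $p$ on $[0,\bg']$ with a fixed $\bg'>\bg$. This gives $p\circ g\in Q(B)+\Sigma[x]\,g$ with degree at most $2rm+d_0$. Likewise $(p+q-1)$ is nonnegative on $[0,1]$, so by Lemma~\ref{lem:markov_lukacs} it equals $\tau_0+t(1-t)\tau_1$. Composition then gives $\tau_0(g)+g(1-g)\tau_1(g)$, a certificate in the quadratic module of $K=\{1-g\ge0\}$ (with the harmless factor $g$ handled as in Proposition~\ref{prop:lift}) of degree at most $2rm$. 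Hence the composed triple is admissible at degree $d=2rm+d_0$, and $u_d\le v_r^+$. Theorem~\ref{thm:rate} then gives $O(\gamma^{-2r})$, and $r=(d-d_0)/(2m)$ rewrites this as $O(\gamma^{-(d-d_0)/m})$. For $d$ not of this exact form, monotonicity of $u_d$ in $d$ lets us round down $r$, which costs only a constant factor.

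\textbf{Step 3: the \textsc{Cheb} version.} Repeat the argument with $(p_r,q^\star)$ of Theorem~\ref{thm:cheb_dual}. This pair has degree $r$, and after composition degree $rm+d_0$, with value $v_r^{\mathrm{ch},+}$. By Remark~\ref{rem:cheb_dual_gap}, $v_r^{\mathrm{ch},+}-\vol K=O(1/r)=O(m/d)$.

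\textbf{Main obstacle.} The delicate point is matching the univariate Stokes multipliers $q_k(g^k-z_k)$ with the exact form of the Stokes constraints in \cite{stokesgibbs}, whose test fields may be restricted to a given degree. The fields $(\wox)(1-g^k)$ have degree about $km+\max_i w_i$, so the constant $d_0$ may need enlarging by an $r$-independent amount, or the degree convention for the Stokes part must be stated accordingly. I would first verify that $\int_K (g^k-z_k)\,dx=0$ is realized by a divergence term of degree at most $2rm+d_0$. I would also check that the factor $g$ multiplying SOS terms is acceptable, since $g\ge0$ on $\R^n$, or can be replaced by writing $g=\tfrac{1}{\bg'}(\bg'-(\bg'-g))$ inside $Q(B)$.
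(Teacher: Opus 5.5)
\textbf{Where the proposal and the paper agree.} Your reduction $u_d\le v_r^+$ is the paper's own argument: compose the univariate certificate with $g$, count degrees with Proposition~\ref{prop:lift}, evaluate the objective through \eqref{eq:cov}, and conclude by optimality of $u_d$. You also make explicit something the paper's two-line proof leaves implicit: $q\circ g=-\sum_k\frac{q_k}{\ws+km}\dive\bigl((\wox)(1-g^k)\bigr)$ is a genuine Stokes term, with a field divisible by $1-g$.

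\textbf{The gap is in the last step.} Theorem~\ref{thm:rate} (via Proposition~\ref{prop:rate_upper}) gives $v_r^+-\vol K\le C^+r^{\ws/m}\gamma^{-2r}$, not $O(\gamma^{-2r})$, and nothing in your argument removes the factor $r^{\ws/m}$. This prefactor is not an artifact of the proof. Take a regular $\hat\nu$, e.g.\ of Szeg\H{o} class on $[1,\bg]$, and put $\Lambda_r(t):=\sum_{k\le r}P_k(t)^2$, where the $P_k$ are the orthonormal polynomials of $\hat\nu$.
\begin{itemize}
\item Every $p\in\R[t]_r$ satisfies $p(t)^2\le\Lambda_r(t)\int p^2\,d\hat\nu$.
\item One has $\Lambda_r(t)\asymp e^{2rG_{[1,\bg]}(t)}\le\gamma^{2r}e^{-2rct}$ on $[0,\tfrac12]$ with $c>0$, and $\Lambda_r$ is exponentially smaller than $\gamma^{2r}$ on $[\tfrac12,1]$.
\item Integrating against the density $\frac{\ws}{m}t^{\ws/m-1}$ therefore gives $\int p^2\,d\nu_w\lesssim r^{-\ws/m}\gamma^{2r}\int p^2\,d\hat\nu$.
\end{itemize}
So the Rayleigh quotient \eqref{eq:gap_upper_ray} is itself of exact order $r^{\ws/m}\gamma^{-2r}$, as the experiments of Section~\ref{sec:lp_budget} also show. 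The paper's proof has the same defect: it imports $O(\gamma^{-2r})$ from Theorem~\ref{thm:gevp_dual}(iii), which rests on the same analysis and silently drops the prefactor. What your argument actually proves is $u_d-\vol K=O\bigl(r^{\ws/m}\gamma^{-2r}\bigr)$ with $r=(d-d_0)/(2m)$, i.e.\ $O\bigl(\gamma_1^{-(d-d_0)/m}\bigr)$ for every $1<\gamma_1<\gamma$. The \textsc{Cheb} rate $O(m/d)$ is unaffected.

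\textbf{Smaller repairs.}
\begin{itemize}
\item Proposition~\ref{prop:lift} really needs $\pi\ge0$ on $[0,\bg']$, not merely on $[0,\bg]$, because its proof applies Markov--Luk\'acs on $[0,\bg']$. Neither the unspecified optimal $p$ of your Step~1 nor a $p_r$ built on $[0,\bg]$ is known to satisfy this.
\item For \textsc{Gevp}, use the explicit pair $p=\pi_r^+$, $q=1-\pi_r^+$, normalized by $\int_0^1\pi_r^+\,d\nu_w=1$. It is optimal for the order-$r$ Stokes dual, since $q$ has zero $\nu_w$-mean and hence lies in the span of the $a_k$. Then $(p+q-1)\circ g\equiv0$ and $p\circ g=(p_r^+\circ g)^2$ is a single square, so no Markov--Luk\'acs or Putinar step is needed at all.
\item For \textsc{Cheb}, build $p_r$ with $\bg'$ in place of $\bg$, as Proposition~\ref{prop:gbar_upper} permits; this changes only constants.
\item Your fallback for the multiplier $g$ fails: writing $g$ through $\bg'-(\bg'-g)$ exhibits it as a \emph{difference} of elements of $Q(B)$. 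You must keep the generator $\Sigma[x]\cdot g$, exactly as Proposition~\ref{prop:lift} does.
\end{itemize}
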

	
	\begin{proof}
		Apply Proposition~\ref{prop:lift} to the optimal $\pi_r^+ \in Q_r$ of
		Theorem~\ref{thm:gevp_dual} (resp.\ to $p_r$ of Theorem~\ref{thm:cheb_dual}). The
		resulting $\pi_r^+ \circ g$ is a feasible point of the degree-$d$ original
		relaxation whose objective value is, by the change of variables \eqref{eq:cov},
		the univariate value already bounded in Theorem~\ref{thm:gevp_dual}(iii)
		(resp.\ Theorem~\ref{thm:cheb_dual}(iii)). Optimality of $u_d$ gives the first
		inequality.
	\end{proof}
	
	Corollary~\ref{cor:lift_rates} is the transfer announced in
	Section~\ref{sec:contrib}: it replaces the algebraic rate $O(d^{-1/(2.5 n \bar L)})$
	of \cite{schlosserlazarev} by a geometric one, at the cost of the quasi-homogeneity
	hypothesis. The exponent degrades with the degree $m$ of $g$, but --- in contrast
	with \cite{kordahenrion,schlosserlazarev} --- not with the ambient dimension $n$,
	which enters only the additive constant $d_0$.
	
	\section{Numerical illustrations}\label{sec:numerics}
	
	Two families of experiments are reported:
	\begin{enumerate}[leftmargin=*,itemsep=0pt]
		\item \textbf{Non-convex homogeneous quartic and sextic}:
		$g_m = 2^m\bigl(x_1^m + x_2^m - c\,x_1^{m/2}x_2^{m/2}\bigr)$ for $m = 4, 6$
		($c = 1.925$) on $[-1,1]^2$, with reference values from polar integration;
		isolates the dependence of both algorithms on $\bg$.
		\item \textbf{Balls}: $g(x) = \sum_i x_i^p$, $p$ even, $m = p$,
		$w = (1,\dots,1)$, $\bg = n$, exact value
		$\vol K = 2^n\Gamma(1+1/p)^n/\Gamma(1+n/p)$; the grid
		$p \in \{2,4,6\}$, $n \in \{2,3,4\}$ at fixed order, then $p = 2$ with $n$ up to
		$64$ under a wall-clock budget. Separability makes the moments free
		(Proposition~\ref{prop:blockmoments}), so $n$ enters only through $\bg$.
	\end{enumerate}
	
	The computations were performed under Linux using Python~3.13.5, NumPy~2.3.5, SciPy~1.17.0, \texttt{mpmath}~1.3.0, and Matplotlib~3.10.8; the \textsc{Cheb} computations were carried out in IEEE double precision, whereas the \textsc{Gevp} computations used adaptive multiprecision arithmetic through \texttt{mpmath}.
	
	\subsection{Two non-convex examples}
	\label{sec:nonconvex}
	
	\newcommand{\bv}{b}                       
	\newcommand{\Kell}{\operatorname{K}}      
	To illustrate the pushforward construction on non-convex sets we take the homogeneous
	quartic and sextic of \cite[Fig.~1]{lasserre2013},
	\begin{equation}\label{eq:nc_forms}
		G_4(x)=x_1^4+x_2^4-c\,x_1^2x_2^2,
		\qquad
		G_6(x)=x_1^6+x_2^6-c\,x_1^3x_2^3,
		\qquad c=1.925 .
	\end{equation}
	Writing $u=x_1^{m/2},\,v=x_2^{m/2}$, both equal the binary quadratic $u^2-c\,uv+v^2$,
	positive definite for $c\in(0,2)$; hence $G_4,G_6>0$ on $\R^2\setminus\{0\}$
	(Assumption~\ref{ass:pos}), and the unit sublevel sets are compact and markedly
	non-convex, see Figure~\ref{fig:nc_sublevel}. They reach $(2-c)^{-1/m}\approx1.91$ and
	$1.54$ along a diagonal, hence are \emph{not} contained in $B=[-1,1]^2$. We therefore
	work with the rescaled forms $g_m:=2^m G_m$, whose unit sublevel set
	$K=\{g_m\le1\}=\tfrac12\{G_m\le1\}$ sits strictly inside $B$, so that
	\begin{equation}\label{eq:nc_volrel}
		\vol K=\vol\{g_m\le1\}=2^{-n}\vol\{G_m\le1\},
		\qquad
		\bg_4=\max_B g_4=2^4=16,
		\quad
		\bg_6=2^6(2+c)=251.2 .
	\end{equation}
	
	\begin{figure}[ht]
		\centering
		\includegraphics[width=0.49\linewidth]{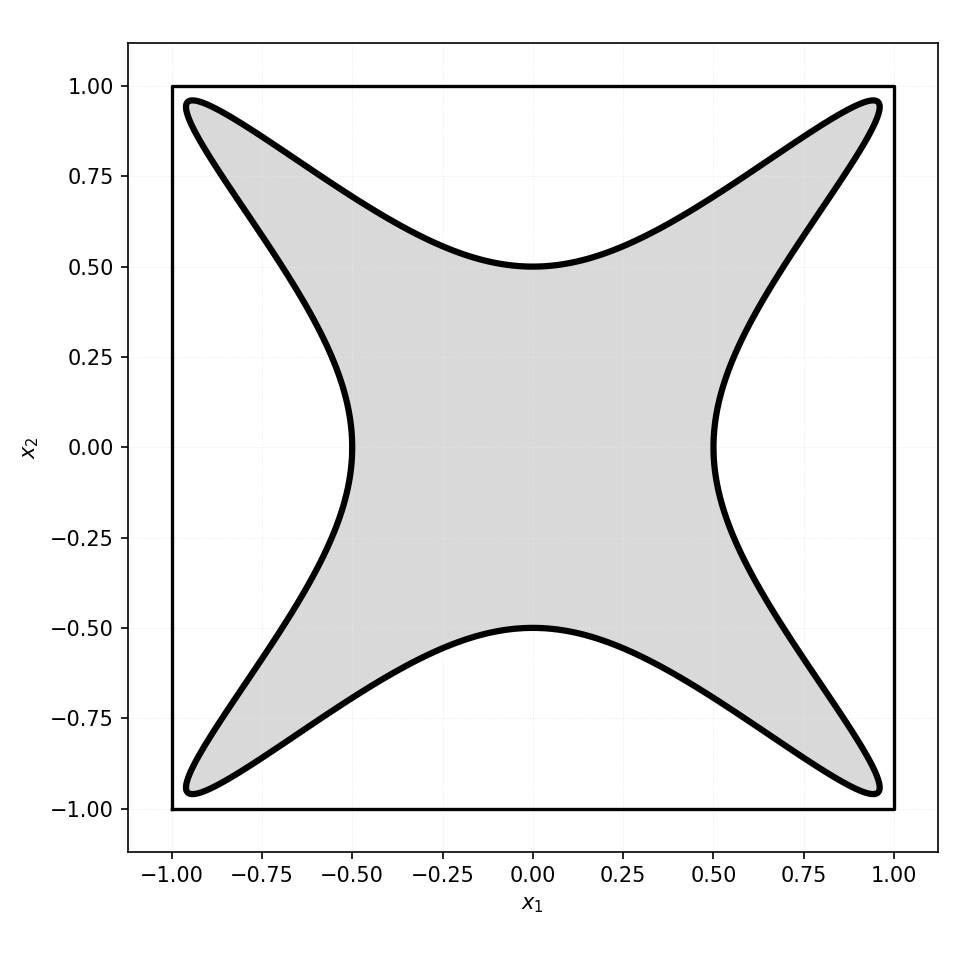}\hfill
		\includegraphics[width=0.49\linewidth]{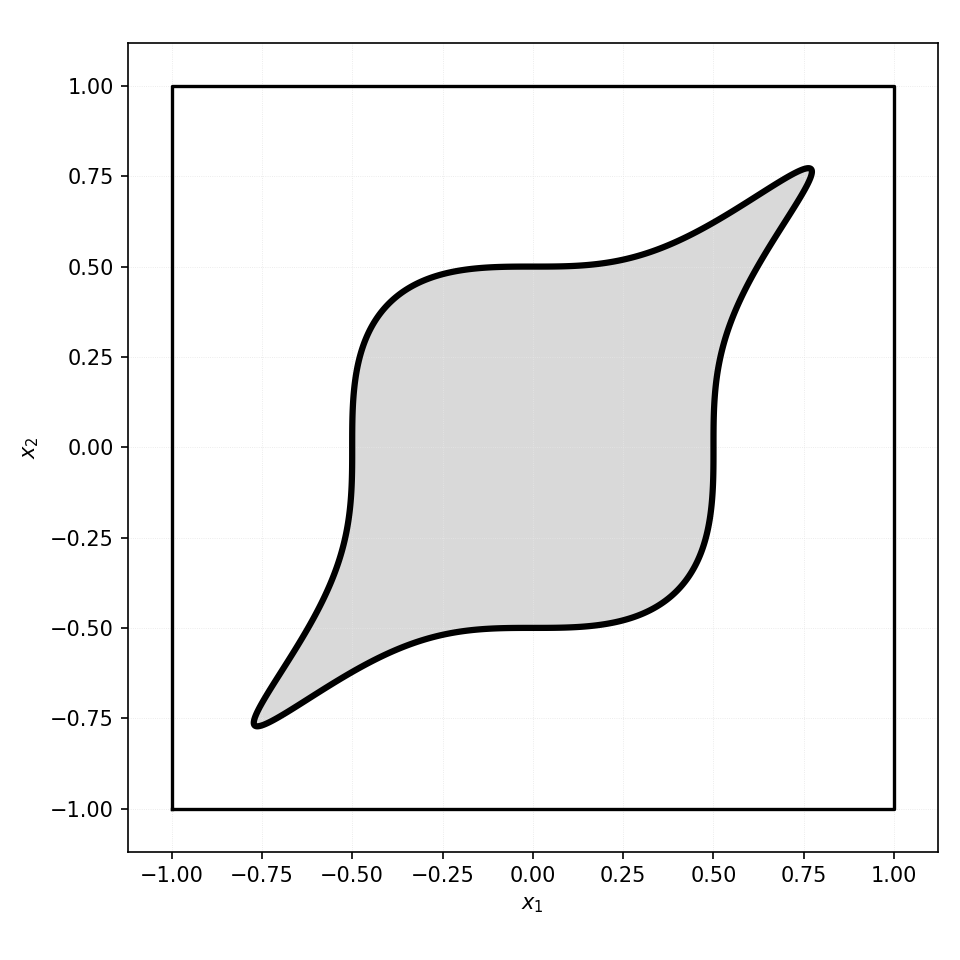}
		\caption{The unit sublevel sets $K=\{g_m\le1\}=\tfrac12\{G_m\le1\}\subset B=[-1,1]^2$
			of the rescaled forms, whose volumes are approximated below: the quartic (left) and
			the sextic (right), both strictly inside the bounding box.}
		\label{fig:nc_sublevel}
	\end{figure}
	
	\subsubsection{The pushforward density in polar form}\label{sec:nc_radial}
	
	Let $\mu$ be the uniform probability measure on $B$ and $\nu=(g_m)_\#\mu$. Write
	$x=r(\cos\theta,\sin\theta)$. With the \emph{circle profile}
	$\phi(\theta):=g(\cos\theta,\sin\theta)>0$, the \emph{box radius}
	$r_B(\theta):=1/\max(|\cos\theta|,|\sin\theta|)$ and the \emph{boundary value}
	$\bv(\theta):=r_B(\theta)^m\phi(\theta)=g|_{\partial B}$, the density takes a clean polar form.
	
	\begin{lemma}[Radial density]\label{lem:nc_radial}
		The pushforward measure $\nu=g_\#\mu$ is
		absolutely continuous on $(0,\bg)$ with density
		\begin{equation}\label{eq:nc_density}
			\rho(t)=\frac{t^{\,2/m-1}}{4m}\,\beta(t),
			\qquad
			\beta(t):=\int_{\{\theta:\,\bv(\theta)>t\}}\phi(\theta)^{-2/m}\,d\theta .
		\end{equation}
		For $t\le\bv_{\min}:=\min_\theta\bv$ the truncation is inactive, $\beta(t)\equiv
		\beta_0:=\int_0^{2\pi}\phi^{-2/m}\,d\theta$, and $\rho$ is the pure power law
		$\rho(t)=\tfrac{\beta_0}{4m}\,t^{2/m-1}$.
	\end{lemma}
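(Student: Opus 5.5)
We compute the distribution function $F(t):=\nu([0,t])=\tfrac14\vol\{x\in B: g(x)\le t\}$ in polar coordinates and differentiate it. For this we use homogeneity with $w=(1,1)$ and $n=2$, so that $g(r\cos\theta,r\sin\theta)=r^m\phi(\theta)$. For fixed $\theta$, the ray meets $B$ exactly for $0\le r\le r_B(\theta)$. The condition $g\le t$ reads $r\le (t/\phi(\theta))^{1/m}$. Hence the admissible radii form the interval $[0,R_t(\theta)]$ with
\[
R_t(\theta):=\min\bigl((t/\phi(\theta))^{1/m},\,r_B(\theta)\bigr).
\]
The polar Jacobian $r\,dr\,d\theta$ then gives
\[
F(t)=\frac14\int_0^{2\pi}\frac{R_t(\theta)^2}{2}\,d\theta .
\]
The first term of the minimum is the smaller one exactly when $t<r_B(\theta)^m\phi(\theta)=\bv(\theta)$. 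So the integrand equals $\tfrac12 t^{2/m}\phi(\theta)^{-2/m}$ on $\{\bv>t\}$ and the constant $\tfrac12 r_B(\theta)^2$ on its complement.

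Next we differentiate in $t$. For each fixed $\theta$, the map $t\mapsto R_t(\theta)^2/2$ is continuous and nondecreasing. It is differentiable except at the single point $t=\bv(\theta)$, and its derivative is
\[
\frac{1}{m}\,t^{2/m-1}\phi(\theta)^{-2/m}\,\mathbf 1_{\{\bv(\theta)>t\}}.
\]
This function is absolutely continuous in $t$ on compact subintervals of $(0,\bg)$. Write it as the integral of its derivative from $0$ to $t$, then apply Fubini/Tonelli: the integrand is nonnegative, and $\int_0^t s^{2/m-1}ds<\infty$. This yields
\[
F(t)=\int_0^t \frac{s^{2/m-1}}{4m}\,\beta(s)\,ds .
\]
This identity is exactly absolute continuity of $\nu$ with the density $\rho$ of~\eqref{eq:nc_density}.

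The one point needing care is that $\nu$ has no atoms, so that $F$ is genuinely absolutely continuous. An atom could come only from a set $\{g=t\}\cap B$ of positive area. By Corollary~\ref{cor:regular}(i), these level sets for $t>0$ are smooth curves and hence Lebesgue-null. Alternatively, the Tonelli computation handles this directly, since it represents $F$ itself, not just its a.e. derivative. We also need $\beta$ to be measurable: $\bv$ is continuous, as $r_B$ and $\phi$ are continuous, so $\{\bv>t\}$ is open. Moreover $\beta(t)\le\beta_0<\infty$ because $\phi>0$ is continuous on the circle.

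Finally, if $t\le\bv_{\min}$, then $\{\bv(\theta)>t\}$ is the whole circle up to the set where $\bv=t$. That set has measure zero in $\theta$ for all but finitely many $t$, and in any case it does not affect the density as an $L^1$ function. Therefore $\beta(t)=\beta_0$, and $\rho$ reduces to the stated power law. As a sanity check, this matches Lemma~\ref{lem:muw}: with $\ws/m=2/m$, $\nu|_{[0,1]}$ should be proportional to $t^{2/m-1}$. We expect the only mildly delicate step to be the justification of exchanging differentiation and integration. We will handle it through the Tonelli representation rather than by differentiating under the integral sign.
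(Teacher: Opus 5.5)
Your proof is correct and follows the paper's own argument. The paper likewise writes $\{g\le t\}\cap B$ as the star domain of radius $R_t(\theta)=\min\bigl((t/\phi(\theta))^{1/m},r_B(\theta)\bigr)$, integrates $\tfrac12R_t^2$ over $\theta$ and differentiates, with a direction contributing only while $b(\theta)>t$; your identity $\tfrac12R_t(\theta)^2=\int_0^t\tfrac1m s^{2/m-1}\phi(\theta)^{-2/m}\mathbf 1_{\{b(\theta)>s\}}\,ds$ combined with Tonelli is a rigorous version of that differentiation step. Your hedge at $t=b_{\min}$ can be dropped: on each edge of $B$, $b=g|_{\partial B}$ is a polynomial in the edge coordinate (e.g.\ $g(1,s)$), which cannot be constant without forcing $g(0,1)=0$, so $\{b=t\}$ is finite for every $t$ and $\beta(b_{\min})=\beta_0$ exactly.
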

	
	Indeed $\{g=t\}$ has polar radius $(t/\phi)^{1/m}$, so $\{g\le t\}\cap B$ is the star domain of
	radius $\min((t/\phi)^{1/m},r_B)$; integrating $\tfrac12 r^2$ over $\theta$ and differentiating
	gives~\eqref{eq:nc_density}, a direction contributing only while $(t/\phi)^{1/m}\le r_B$, i.e.\
	$\bv(\theta)>t$. The homogeneous power $t^{2/m-1}$ is carried by the prefactor; the whole effect
	of the box sits in the monotone angular mass $\beta$, decreasing from $\beta_0$ to $0$ as
	$\{g=t\}$ is clipped by and finally leaves $B$.
	
	\begin{corollary}\label{cor:nc_beta}
		On $[0,\bv_{\min}]$ the measure $\nu$ is $\mathrm{Beta}(\tfrac2m,1)$ up to scale. Hence when
		$\bv_{\min}>1$ (as for $g_m$), the restriction
		$\nu|_{[0,1]}=2^{-n}\vol K\,\nu_w$ is \emph{exactly} the reference measure $\nu_w$ of
		Lemma~\ref{lem:muw}, in agreement with Proposition~\ref{prop:nu_restriction}; in
		particular $\vol K=\tfrac12\beta_0$ and
		$\nu([0,1])=\tfrac18\beta_0=2^{-n}\vol K$, the dark-grey areas in
		Figures~\ref{fig:nc_density_g4} and~\ref{fig:nc_density_g6}.
	\end{corollary}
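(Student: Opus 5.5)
The statement to prove is Corollary~\ref{cor:nc_beta}. I would deduce everything from the radial density of Lemma~\ref{lem:nc_radial} together with the homogeneous setting $w=(1,\dots,1)$, $n=2$, so $\ws=2$ and $\ws/m=2/m$.

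\emph{Step 1 (Beta shape on $[0,\bv_{\min}]$).} For $t\le\bv_{\min}$ every direction satisfies $\bv(\theta)\ge\bv_{\min}\ge t$. The set $\{\theta:\bv(\theta)=t\}$ has measure zero except possibly at $t=\bv_{\min}$, where it does not affect an integral in $t$. Hence $\beta(t)=\beta_0$ there, and $\rho(t)=\frac{\beta_0}{4m}t^{2/m-1}$. Up to the constant factor this is the density $\frac{2}{m}t^{2/m-1}$ of $\nu_w=\mathrm{Beta}(\ws/m,1)=\mathrm{Beta}(2/m,1)$ from Lemma~\ref{lem:muw}. Precisely, $\nu|_{[0,s]}=\frac{\beta_0}{8}\,\nu_w$ restricted to $[0,s]$ for every $s\le\bv_{\min}$.

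\emph{Step 2 (specialize to $s=1$).} For $g_m$ we need $\bv_{\min}>1$, i.e.\ $g_m>1$ on $\partial B$. This holds because $K=\{g_m\le1\}=\tfrac12\{G_m\le1\}$ sits strictly inside $B$, as established before~\eqref{eq:nc_volrel}. Taking $s=1$ gives $\nu|_{[0,1]}=\frac{\beta_0}{8}\nu_w$, hence $\nu([0,1])=\beta_0/8$ since $\nu_w$ has mass one. Combining with~\eqref{eq:vol_eq_nu}, which reads $\vol K=2^n\nu([0,1])=4\nu([0,1])$, yields $\vol K=\beta_0/2$ and $\nu([0,1])=\tfrac18\beta_0=2^{-n}\vol K$. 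This agrees with Proposition~\ref{prop:nu_restriction}.

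As an independent check, $\vol K=\int_0^{2\pi}\tfrac12\phi^{-2/m}\,d\theta$ directly: the polar radius of $\{g=1\}$ is $\phi^{-1/m}$ and the region is star-shaped inside $B$.

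The only delicate point is Step 1's justification of Lemma~\ref{lem:nc_radial} itself: differentiating $\tfrac12\int\min\bigl((t/\phi)^{2/m},r_B^2\bigr)\,d\theta$ in $t$ and normalizing by $\vol B=4$. This produces the factor $\frac{1}{4}\cdot\frac12\cdot\frac{2}{m}=\frac{1}{4m}$, consistent with~\eqref{eq:nc_density}. Since the lemma is given, I would simply invoke it, and the rest is bookkeeping of constants.
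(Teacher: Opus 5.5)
Your proof is correct and follows the paper's route. The corollary is read off from Lemma~\ref{lem:nc_radial}: on $[0,\bv_{\min}]$ one has $\beta\equiv\beta_0$, so the density is $\frac{\beta_0}{4m}t^{2/m-1}=\frac{\beta_0}{8}\cdot\frac{2}{m}t^{2/m-1}$; integrating over $[0,1]$ and using $\vol K=2^n\nu([0,1])$ gives $\nu([0,1])=\beta_0/8$ and $\vol K=\beta_0/2$.

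One small slip, which does not affect the argument: $\nu_w$ is supported on $[0,1]$, so your identity $\nu|_{[0,s]}=\frac{\beta_0}{8}\nu_w|_{[0,s]}$ only makes sense for $s\le\min(1,\bv_{\min})$, and for $1<s\le\bv_{\min}$ you would need the unnormalized density $\frac{2}{m}t^{2/m-1}\,dt$ instead. Since you only use $s=1$, nothing downstream is affected.
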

	
	\paragraph{Singularities.}
	By Lemma~\ref{lem:nc_radial} the only singularities of $\rho$ on $(0,\bg)$ come from $\beta$,
	which changes regularity exactly at the critical values of $\bv=g|_{\partial B}$ (at a regular
	value $\beta'(t)=-\sum_{\bv(\theta)=t}\phi^{-2/m}/|\bv'(\theta)|$, by the angular coarea formula).
	For a form on the square the local types are four: a nondegenerate edge tangency (interior
	extremum of $\bv$) gives a \emph{square-root cusp}; a box vertex, where $\bv$ has a corner,
	gives a \emph{kink}; a degenerate inflection ($\bv'=\bv''=0\ne\bv'''$) gives a
	\emph{cube-root cusp}; and at $t=\bg$ the density vanishes like $\sqrt{\bg-t}$ (smooth
	maximiser) or like $\bg-t$ (corner maximiser). For homogeneous $g$ the only interior critical
	point is the origin, so apart from the power-law blow-up at $0$ the entire singular set of
	$\nu$ is generated by $g|_{\partial B}$.
	
	\subsubsection{The quartic}\label{sec:nc_g4}
	
	Here $\phi_4(\theta)=2^4\bigl(1-k^2\sin^2 2\theta\bigr)$ with $k=\tfrac12\sqrt{2+c}\in(0,1)$,
	so the angular mass is closed-form,
	$\beta_0=\int_0^{2\pi}\phi_4^{-1/2}\,d\theta=2^{-2}\cdot4\Kell(k)=\Kell(k)$ where $\Kell$ the
	complete elliptic integral of the first kind, and
	\begin{equation}\label{eq:nc_rho4}
		\rho(t)\sim\tfrac{\Kell(k)}{16}\,t^{-1/2}\ (t\to0^+),
		\quad
		\vol K=\vol\{g_4\le1\}=\tfrac12\Kell(k)=1.69291,
		\quad
		\nu([0,1])=\tfrac18\Kell(k)=0.42323
	\end{equation}
	As $c\uparrow2$, $k\uparrow1$ and $\Kell(k)\to\infty$, the diagonal of the sublevel set
	escaping to infinity. The boundary profile
	$\bv_4(\theta)=g_4(1,s)=2^4(1+s^4-cs^2)$ ($s=\tan\theta$) has critical values
	\begin{center}
		\renewcommand{\arraystretch}{1.2}
		\begin{tabular}{llll}
			\toprule
			boundary point(s) & value of $g_4$ & type of $\bv_4$ & mult.\\
			\midrule
			axis midpoints $(\pm1,0),(0,\pm1)$ & $2^4=16=\bg$ & nondeg.\ max & $4$\\
			edge tangencies $s=\pm\sqrt{c/2}$ & $2^4(1-c^2/4)=1.1775$ & nondeg.\ min & $8$\\
			vertices $(\pm1,\pm1)$ & $2^4(2-c)=1.2$ & corner & $4$\\
			\bottomrule
		\end{tabular}
	\end{center}
	so on $(0,\bg=16)$ the density $\rho_4(t)=\tfrac{t^{-1/2}}{16}\beta_4(t)$ has a square-root cusp
	at $t_1=1.1775$, a kink at $t_2=1.2$ --- nearly coincident, separated by
	$2^4(1-c/2)^2=0.0225$ --- and a square-root vanishing at $\bg=16$ ($\beta_4\equiv\Kell(k)$
	for $t\le t_1$, a complete-minus-incomplete elliptic integral above). See
	Figure~\ref{fig:nc_density_g4}.
	
	\begin{figure}[ht]
		\centering
		\includegraphics[width=0.8\linewidth]{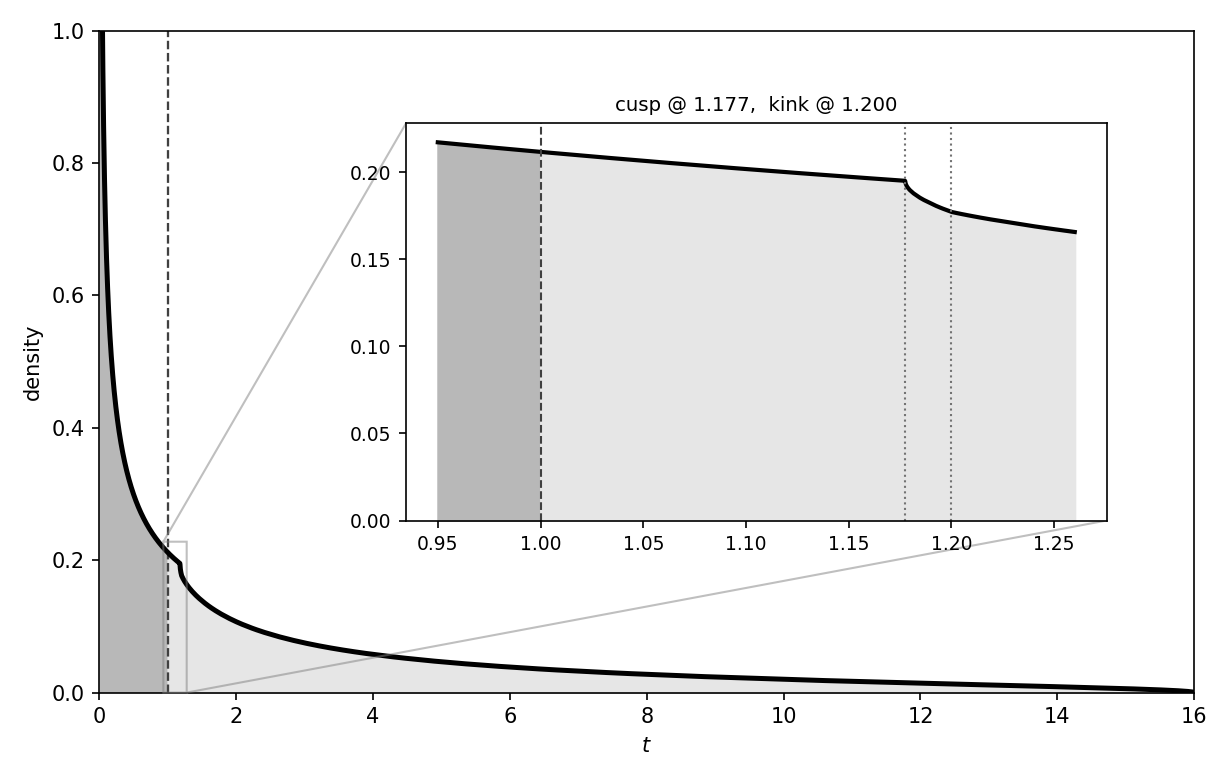}
		\caption{Pushforward density $\rho_4$ of $\nu=(g_4)_\#\mu$ on $[0,\bg_4]=[0,16]$. The
			dark-grey area over $[0,1]$ equals $\nu([0,1])=2^{-n}\vol K=0.4232$; the light-grey
			area over $(1,\bg)$ is the remaining mass. The zoom window resolves the $t=1$
			boundary together with the near-coincident square-root cusp ($t_1=1.1775$) and kink
			($t_2=1.2$); the density blows up like $\tfrac{1}{16}\Kell(k)\,t^{-1/2}$ at $0$ and
			vanishes like $\sqrt{\bg-t}$ at $\bg=16$.}
		\label{fig:nc_density_g4}
	\end{figure}
	
	\subsubsection{The sextic}\label{sec:nc_g6}
	
	Now $\phi_6(\theta)=2^6\bigl(1-\tfrac34\sin^2 2\theta-\tfrac c8\sin^3 2\theta\bigr)$; the odd
	term breaks the reflection symmetry, and
	$\beta_0=\int_0^{2\pi}\phi_6^{-1/3}\,d\theta=2^{-2}\cdot8.67874=2.16969$ is a hyperelliptic
	period with no elementary closed form, giving
	\[
	\rho(t)\sim0.09040\,t^{-2/3}\ (t\to0^+),
	\quad
	\vol K=\vol\{g_6\le1\}=\tfrac12\beta_0=1.08484,
	\quad
	\nu([0,1])=\tfrac18\beta_0=0.27121.
	\]
	The boundary profile
	$\bv_6(\theta)=g_6(1,s)=2^6(1+s^6-cs^3)$ has $\bv_6'\propto s^2(2s^3-c)$, so the axis point
	$s=0$ is a degenerate inflection ($\bv_6'=\bv_6''=0\ne\bv_6'''$), and the two corners of each
	edge become inequivalent:
	\begin{center}
		\renewcommand{\arraystretch}{1.2}\setlength{\tabcolsep}{4pt}
		\begin{tabular}{llll}
			\toprule
			boundary point(s) & value of $g_6$ & type of $\bv_6$ & mult.\\
			\midrule
			anti-diag.\ vertices $(1,-1),(-1,1)$ & $2^6(2+c)=251.2=\bg$ & corner (max) & $2$\\
			axis midpoints $(\pm1,0),(0,\pm1)$ & $2^6=64$ & degenerate inflection & $4$\\
			diagonal vertices $(1,1),(-1,-1)$ & $2^6(2-c)=4.8$ & corner & $2$\\
			edge tangencies $s=(c/2)^{1/3}$ & $2^6(1-c^2/4)=4.71$ & nondeg.\ min & $4$\\
			\bottomrule
		\end{tabular}
	\end{center}
	Hence $\rho_6(t)=\tfrac{t^{-2/3}}{24}\beta_6(t)$ has a square-root cusp at $t_1=4.71$, a kink at
	$t_2=4.8$, a \emph{cube-root cusp} at $t_3=2^6=64$ (the degenerate inflection --- the genuinely
	new singularity, absent for $g_4$, with a two-sided vertical tangent), and a \emph{linear}
	vanishing at $\bg=251.2$ (a corner maximiser, versus the square-root vanishing of $g_4$). See
	Figure~\ref{fig:nc_density_g6}.
	
	\begin{figure}[ht]
		\centering
		\includegraphics[width=0.8\linewidth]{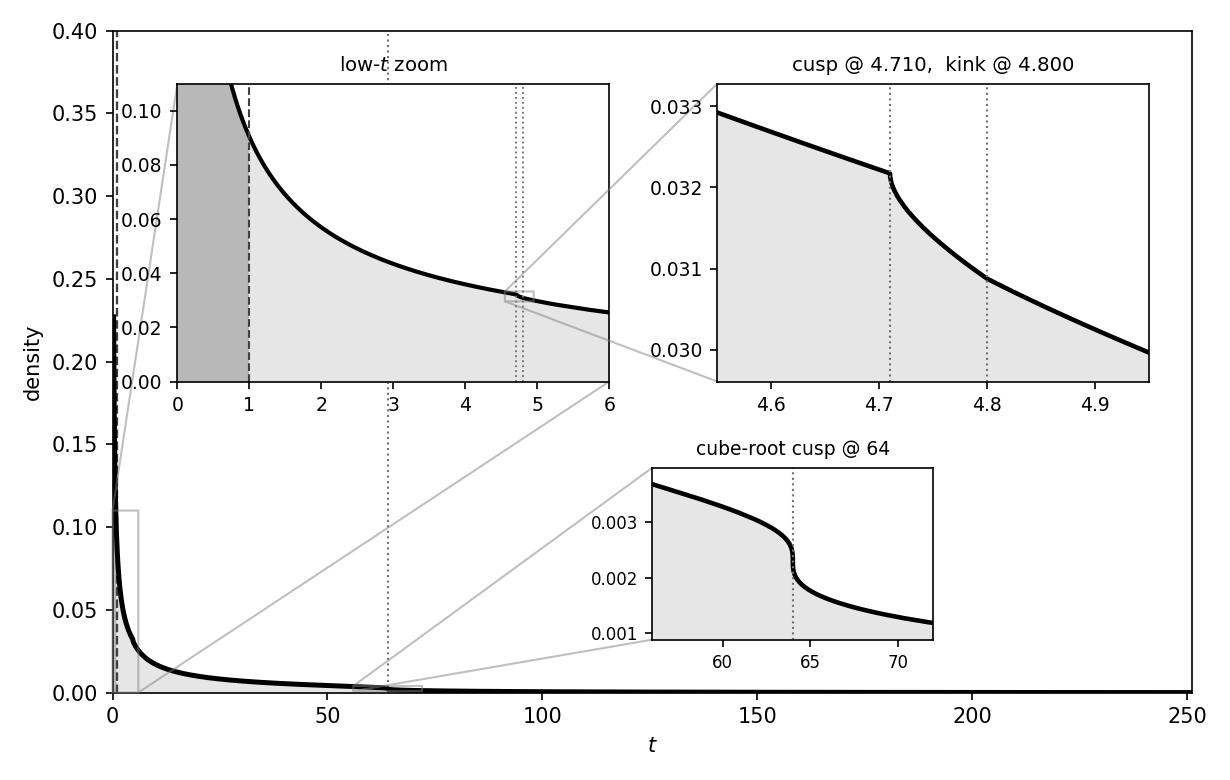}
		\caption{Pushforward density $\rho_6$ of $\nu=(g_6)_\#\mu$ on $[0,\bg_6]=[0,251.2]$.
			Dark grey over $[0,1]$: $\nu([0,1])=2^{-n}\vol K=0.2712$; light grey over
			$(1,\bg)$: the remaining mass. Three chained zoom windows: a low-$t$ window with the
			$[0,1]$ area and the $0.0904\,t^{-2/3}$ blow-up; a window resolving the square-root
			cusp ($t_1=4.71$) and the kink ($t_2=4.8$); and a window at the cube-root cusp
			$t_3=2^6=64$, whose two-sided vertical tangent is the degenerate singularity absent
			for $g_4$. The density vanishes linearly at $\bg=251.2$.}
		\label{fig:nc_density_g6}
	\end{figure}
	
	\subsubsection{Volume approximation}\label{sec:nc_numerics}
	
	\begin{remark}[The rate is governed by $\bg$]\label{rem:nc_rates}
		Both forms are homogeneous ($w=(1,1)$), so the machinery of
		Sections~\ref{sec:pf_stokes}--\ref{sec:gevp} applies to $g_m$. By
		Corollary~\ref{cor:nc_beta} the restriction $\nu|_{[0,1]}$ is exactly the Beta reference
		($\bv_{\min}=1.1775$ and $4.71$, both $>1$), and all the singularities above lie in the
		residual support $[1,\bg]$. The \textsc{Gevp} rate of
		Theorem~\ref{thm:rate} depends on $\bg$ alone, through
		$\gamma=(\sqrt{\bg}+1)/(\sqrt{\bg}-1)$, not on that singularity structure: $\gamma_4=\tfrac53$
		($\gamma_4^{-2}=0.360$) and $\gamma_6=1.135$ ($\gamma_6^{-2}=0.777$); the \textsc{Cheb} error
		is $\varepsilon_r\sim\sqrt{\bg-1}/(\pi r)$, i.e.\ $\approx1.23/r$ and $\approx5.03/r$. The
		examples thus isolate the \emph{$\bg$-dependence}: the loose box inflates $\bg$, slowing
		\textsc{Gevp} ($\gamma\downarrow1$) and widening the \textsc{Cheb} bracket, separately from the
		loss of analyticity a non-quasi-homogeneous $g$ would add.
	\end{remark}
	
	We run both algorithms on $g_4,g_6$ against the exact volumes $\vol K_4=1.6929082$ and
	$\vol K_6=1.0848422$. The moments $y_k=2^{-n}\int_B g_m^k\,dx$ are computed exactly (rational
	multinomial expansion), the Chebyshev moments by tensor Gauss-Legendre, and the \textsc{Gevp}
	generalized eigenvalues in multiprecision.
	
	\textsc{Cheb} (Figure~\ref{fig:nc_cheb}): the central estimate converges quickly, while the
	certified half-width $\tfrac{2^n(\ws+m)}{m}\varepsilon_r$ of the bracket \eqref{eq:cheb_vpm} decays at the guaranteed rate $O(1/r)$
	in plain double precision.
	
	\textsc{Gevp} (Figure~\ref{fig:nc_gevp}): the two generalized eigenvalues~\eqref{eq:vr_pm}
	give a monotone bracket whose gap closes geometrically at the rate $\gamma^{-2r}$, reaching
	width $1.3\cdot10^{-12}$ at $r=30$ for $g_4$ and $9.5\cdot10^{-6}$ at $r=55$ for $g_6$, at a
	working precision growing linearly in $r$.
	
	Table~\ref{tab:nc_compare} contrasts the two: the geometric \textsc{Gevp} bracket is far
	tighter at equal order but precision-hungry, whereas \textsc{Cheb} stays in double precision;
	both degrade as $\bg$ grows, \textsc{Gevp} through its rate and \textsc{Cheb} through its
	constant.
	
	\begin{figure}[ht]
		\centering
		\includegraphics[width=0.49\linewidth]{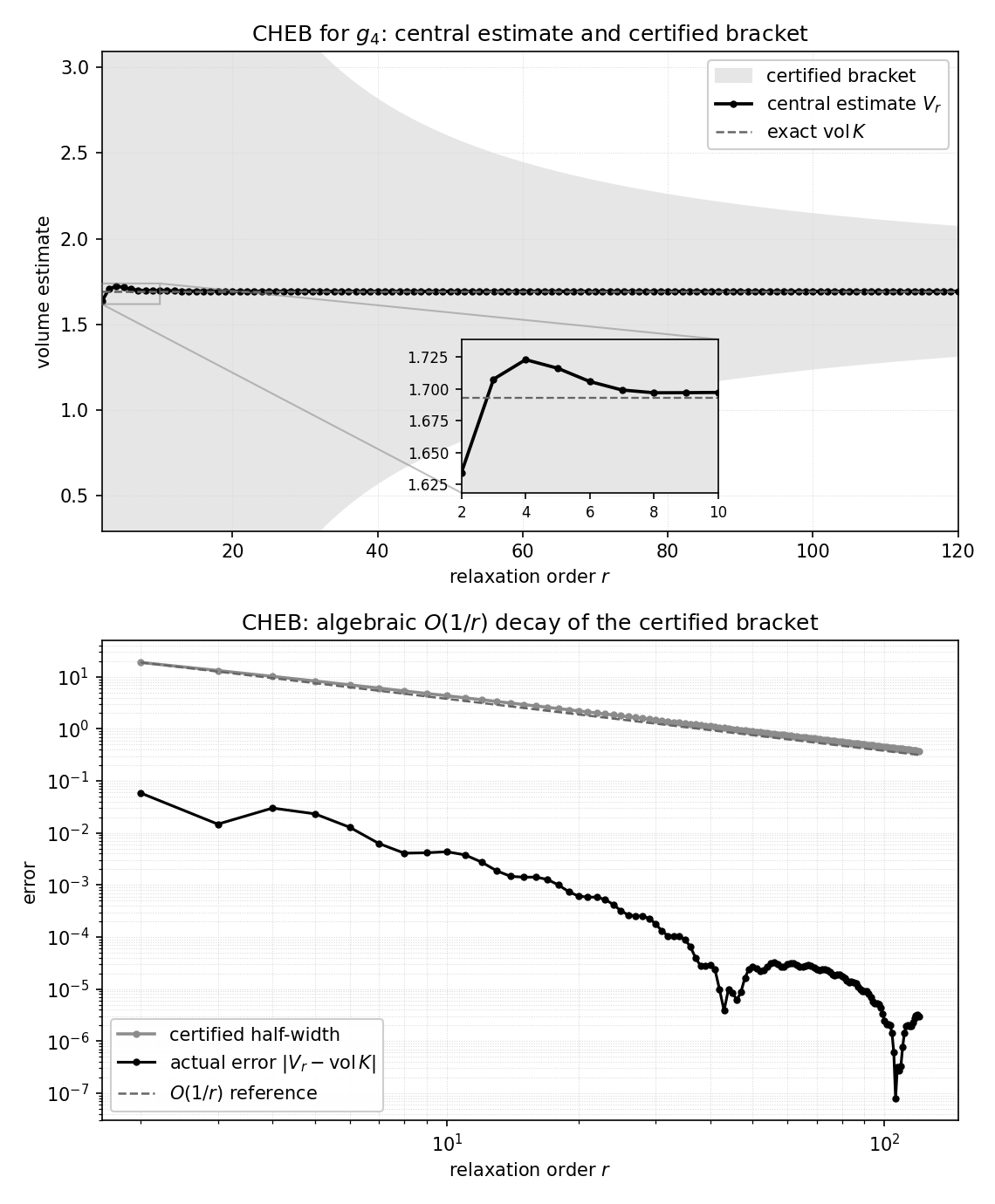}\hfill
		\includegraphics[width=0.49\linewidth]{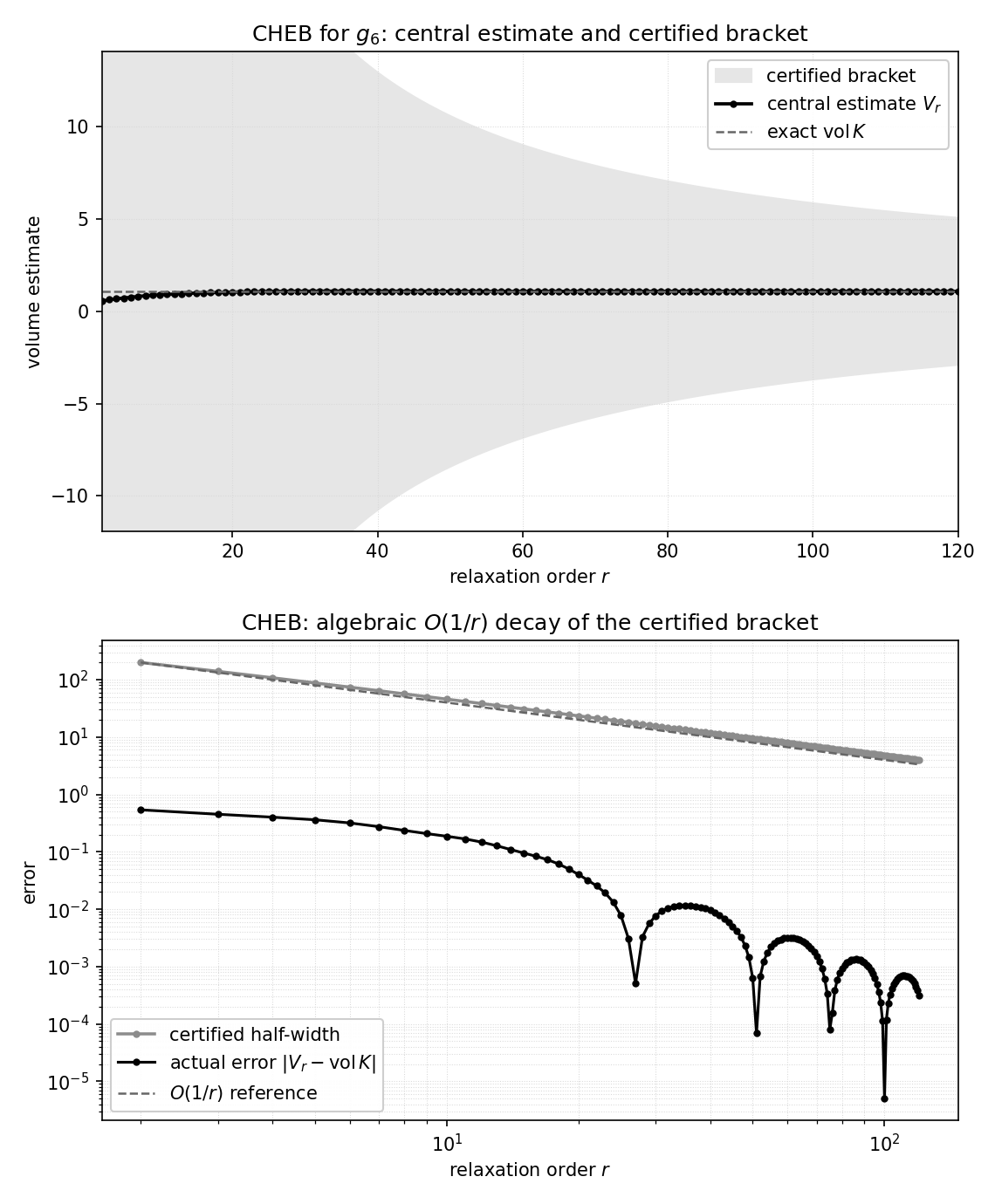}
		\caption{\textsc{Cheb} volume approximation, $g_4$ (left) and $g_6$ (right). Top: central
			estimate and certified bracket against the exact volume (dashed). Bottom
			(log--log): the certified half-width follows the $O(1/r)$ law (slope $-1$), while the
			actual error decays faster. The $g_6$ bracket is wider by $\approx4.1$ but obeys the same
			rate.}
		\label{fig:nc_cheb}
	\end{figure}
	
	\begin{figure}[ht]
		\centering
		\includegraphics[width=0.49\linewidth]{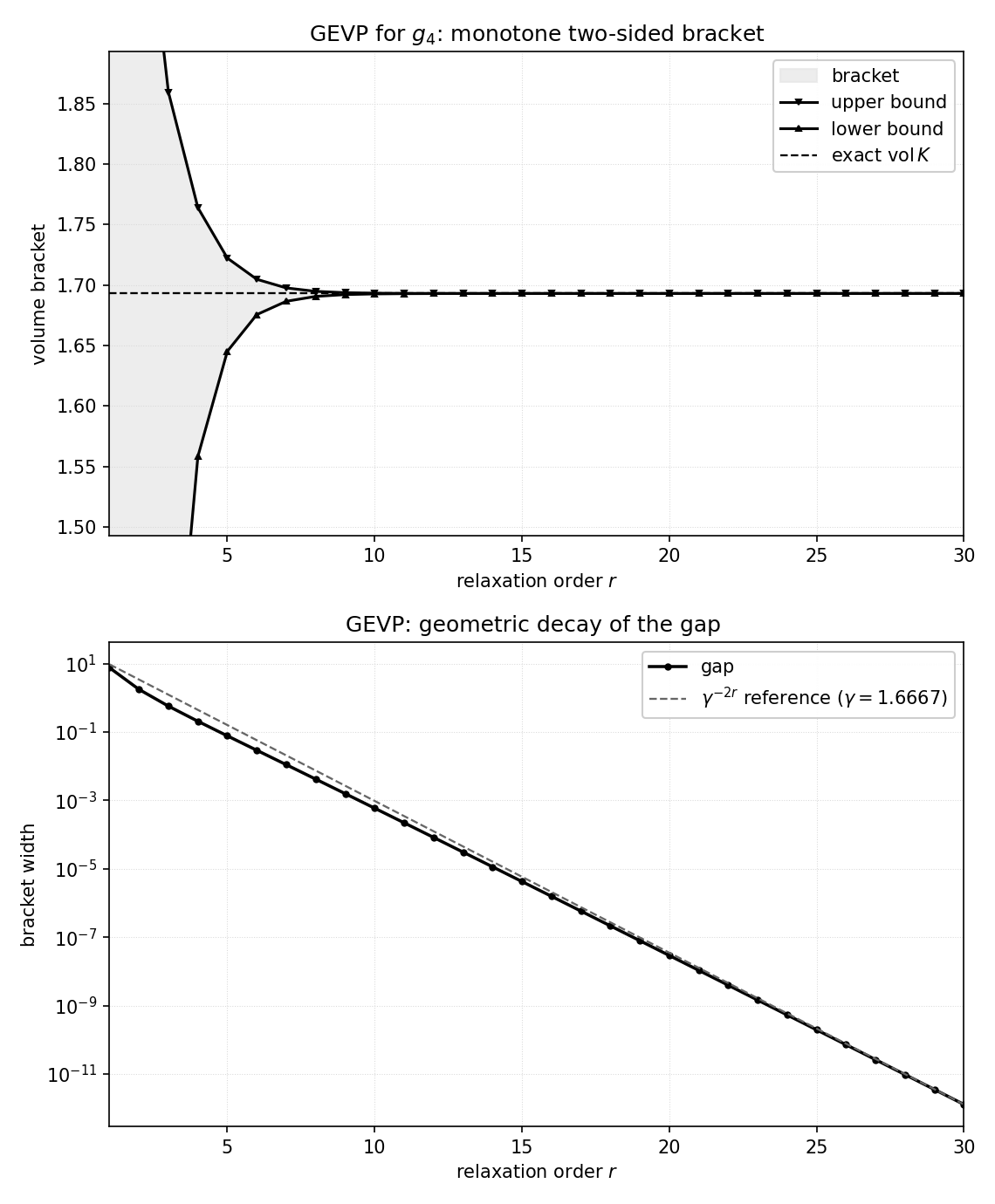}\hfill
		\includegraphics[width=0.49\linewidth]{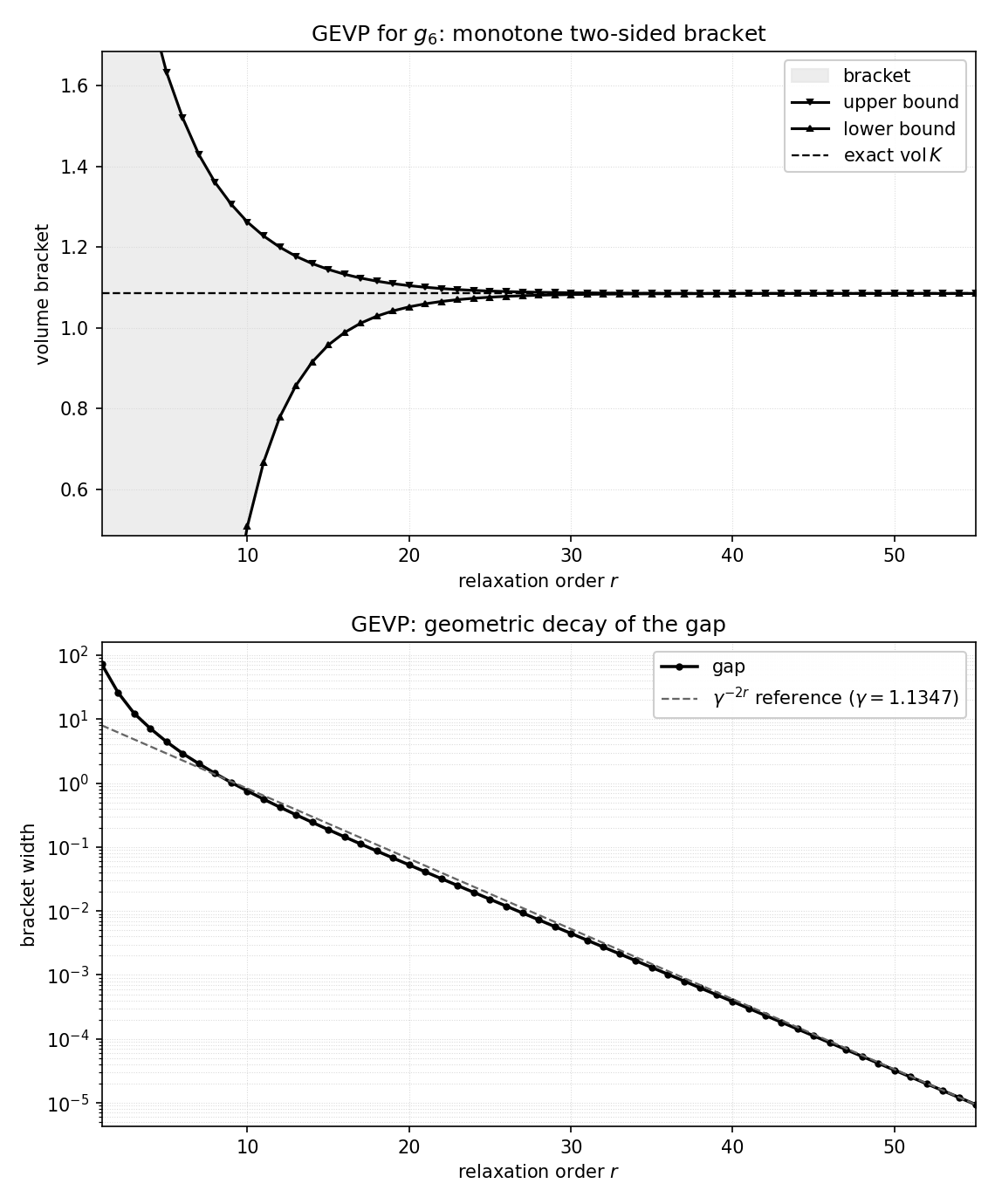}
		\caption{\textsc{Gevp} volume approximation, $g_4$ (left) and $g_6$ (right). Top: the
			monotone bracket $[v_r^-,v_r^+]$ squeezing onto the exact volume. Bottom (semi-log): the
			gap closes geometrically, the dashed reference having the predicted slope
			$\log_{10}\gamma^{-2}$ ($0.360$ for $g_4$, $0.777$ for $g_6$); the slow $g_6$ rate is the
			signature of the large $\bg_6=251.2$.}
		\label{fig:nc_gevp}
	\end{figure}
	
	\begin{table}[ht]
		\centering
		\renewcommand{\arraystretch}{1.2}
		\begin{tabular}{r cc cc}
			\toprule
			& \multicolumn{2}{c}{$g_4$\ \ ($\bg=16$, $\gamma=5/3$)}
			& \multicolumn{2}{c}{$g_6$\ \ ($\bg=251.2$, $\gamma=1.135$)}\\
			\cmidrule(lr){2-3}\cmidrule(lr){4-5}
			$r$ & \textsc{Cheb} width & \textsc{Gevp} width
			& \textsc{Cheb} width & \textsc{Gevp} width\\
			\midrule
			$5$  & $1.7\cdot10^{1}$ & $7.7\cdot10^{-2}$ & $1.8\cdot10^{2}$ & $4.4\cdot10^{0}$\\
			$10$ & $8.7\cdot10^{0}$ & $5.9\cdot10^{-4}$ & $9.2\cdot10^{1}$ & $7.5\cdot10^{-1}$\\
			$20$ & $4.4\cdot10^{0}$ & $2.9\cdot10^{-8}$ & $4.7\cdot10^{1}$ & $5.3\cdot10^{-2}$\\
			$30$ & $3.0\cdot10^{0}$ & $1.3\cdot10^{-12}$& $3.2\cdot10^{1}$ & $4.5\cdot10^{-3}$\\
			\bottomrule
		\end{tabular}
		\caption{Certified bracket \emph{widths} (upper minus lower bound) at matched orders.
			\textsc{Cheb} widths are $2\cdot\tfrac{2^n(\ws+m)}{m}\varepsilon_r$ (double precision);
			\textsc{Gevp} widths are $v_r^+-v_r^-$ (multiprecision). The geometric \textsc{Gevp}
			bracket overtakes the algebraic \textsc{Cheb} one by many orders of magnitude, the gain
			shrinking but persisting as $\bg$ grows from $16$ to $251.2$.}
		\label{tab:nc_compare}
	\end{table}
	
	\paragraph{Computational cost.}
	The two columns of Table~\ref{tab:nc_compare} also differ by orders of magnitude in cost. \textsc{Cheb} runs entirely in
	hardware double precision, its only nontrivial work being the tensor Gauss-Legendre
	cubature for the Chebyshev moments ($Q=241$, resp.\ $361$, nodes per axis at $r=120$): no
	displayed order costs more than $0.15$\,s, and the complete sweeps $r\le120$, moments
	included, take $0.10$\,s for $g_4$ and $0.20$\,s for $g_6$. \textsc{Gevp} pays for its
	geometric rate in precision: the Hankel pencils are solved in multiprecision with
	$\lceil2r\log_{10}\bg\rceil+40$ decimal digits, a working precision growing linearly in
	$r$, so the cost per order rises from a few hundredths of a second at $r=10$ to
	$0.85$\,s at $r=30$ ($113$ digits, $g_4$) and $6.7$\,s at $r=55$ ($305$ digits, $g_6$);
	the full sweeps take $7.2$\,s and $79$\,s, on top of $0.4$\,s, resp.\ $1.3$\,s, for the
	exact rational moments. The trade-off nevertheless favours \textsc{Gevp} whenever
	multiprecision is acceptable: at $r=10$ --- five hundredths of a second --- its certified
	width ($5.9\cdot10^{-4}$ for $g_4$, $7.5\cdot10^{-1}$ for $g_6$) is already beyond
	anything \textsc{Cheb} certifies at any order of the sweep, and the terminal widths
	$1.3\cdot10^{-12}$ ($r=30$) and $9.5\cdot10^{-6}$ ($r=55$) of Figure~\ref{fig:nc_gevp}
	are reached at under one, resp.\ seven, seconds per order. The large $\bg_6$ thus taxes
	\textsc{Gevp} a second time: not only through the slower rate $\gamma^{-2r}$ of
	Remark~\ref{rem:nc_rates}, but through the precision --- hence time --- needed per order.
	
	\subsection{Large dimensional balls}\label{sec:lpballs}
	
	To probe the dependence on the dimension we take the $\ell^p$ balls
	\begin{equation}\label{eq:lp_balls}
		K=\{x\in\R^n:\ q_p(x)\le1\},
		\qquad
		q_p(x)=\sum_{i=1}^n x_i^p,
		\qquad p\ \text{even},
	\end{equation}
	on $B=[-1,1]^n$, with
	$\vol K=2^n\,\Gamma(1+1/p)^n/\Gamma(1+n/p)$. The framework applies with
	$g=q_p$, degree $m=p$, weights $w=(1,\dots,1)$, $\ws=n$; the ball touches $\partial B$ at
	the $2n$ axis points, so $\nu|_{[0,1)}$ is exactly the Beta reference
	(Proposition~\ref{prop:nu_restriction}) with $\nu_w=\mathrm{Beta}(n/p,1)$,
	$z_j=n/(n+pj)$, and the residual support is $[1,\bg]$. Two structural facts make the
	family a pure dimension probe. First,
	\[
	\bg=\max_B q_p=n\quad\text{for every }p,
	\]
	attained at the corners: the \textsc{Gevp} rate $\gamma=(\sqrt n+1)/(\sqrt n-1)$ of
	Theorem~\ref{thm:rate} and the \textsc{Cheb} constant $\sqrt{\bg-1}/\pi$ depend on $n$
	alone, so $p$ can affect only constants --- the cleanest instance of
	Remark~\ref{rem:nc_rates}. Second, the moments are one-dimensional:
	$y_k=2^{-n}\int_B q_p^k\,dx$ is a $k$-fold binomial convolution of the univariate moments
	$1/(pj+1)$, computed by binary powering of a truncated exponential generating function in
	$O(\log n\,(2r)^2)$ operations --- in exact rational arithmetic for \textsc{Cheb}, whose
	Chebyshev moments follow by an exact change of basis (the monomial coefficients of
	$T_k(2t/n-1)$ grow like $(3+2\sqrt2)^k$, beyond double precision past $k\approx18$), and
	in multiprecision for \textsc{Gevp} (all terms positive, no cancellation; both pipelines
	agree to $30$ digits). No cubature is needed at any $n$: the curse of dimensionality is
	absent from the moment side, and \emph{$n$ enters the two algorithms only through
		$\bg=n$}.
	
	\subsubsection{Matched orders: the rate is set by $n$ alone}\label{sec:lp_grid}
	
	Table~\ref{tab:lp_grid} runs both algorithms on the grid $p\in\{2,4,6\}$,
	$n\in\{2,3,4\}$ at order $r=30$. The \textsc{Gevp} slope is a function of $n$ only: at
	$n=3$ the bracket width \eqref{eq:rate_gap} of Theorem~\ref{thm:rate} decays by the factor $10^{11.2}$, $10^{11.3}$, $10^{11.4}$ per
	ten orders for $p=2,4,6$ respectively, against the predicted
	$\gamma^{20}=(2+\sqrt 3)^{20} = 10^{20\log(2+\sqrt 3)} = 10^{11.4}$; the exponent $p$ affects only the constant (larger $p$ gives a
	flatter reference $\nu_w$ and a smaller constant). The \textsc{Cheb} brackets follow the
	$\sqrt{n-1}/(\pi r)$ law of eq.~\eqref{eq:relu_rate},  Theorem~\ref{thm:cheb_rate},  with the prefactor $2^n(n+p)/p$, hence relative widths between
	$1.2\cdot10^{-1}$ and $1.5\cdot10^{0}$ at $r=30$ --- already marginal at $n=4$ --- while
	its central estimate remains accurate to a few $10^{-6}$ at $r=60$
	(Figure~\ref{fig:lp_volume}, left): as for the non-convex examples, it is the
	certificate, not the estimate, that is loose. Every displayed \textsc{Cheb} bracket costs
	$3$--$5$\,ms; the \textsc{Gevp} cells cost $0.55$--$0.59$\,s at the working precision of
	$148$ digits used to resolve the deep gaps.
	
	\begin{table}[ht]
		\centering
		\renewcommand{\arraystretch}{1.2}
		\begin{tabular}{cc c cc cc}
			\toprule
			& & & \multicolumn{2}{c}{\textsc{Cheb}} & \multicolumn{2}{c}{\textsc{Gevp}}\\
			\cmidrule(lr){4-5}\cmidrule(lr){6-7}
			$p$ & $n$ & $\vol K$ & rel.\ width & time (s) & rel.\ width & time (s)\\
			\midrule
			$2$ & $2$ & $3.1416$  & $2.1\cdot10^{-1}$ & $0.003$ & $2.7\cdot10^{-45}$ & $0.59$\\
			$2$ & $3$ & $4.1888$  & $5.8\cdot10^{-1}$ & $0.004$ & $7.9\cdot10^{-33}$ & $0.57$\\
			$2$ & $4$ & $4.9348$  & $1.5\cdot10^{0}$  & $0.004$ & $1.9\cdot10^{-26}$ & $0.58$\\
			$4$ & $2$ & $3.7081$  & $1.4\cdot10^{-1}$ & $0.003$ & $1.3\cdot10^{-46}$ & $0.55$\\
			$4$ & $3$ & $6.4820$  & $2.6\cdot10^{-1}$ & $0.005$ & $1.5\cdot10^{-34}$ & $0.57$\\
			$4$ & $4$ & $10.800$  & $4.6\cdot10^{-1}$ & $0.004$ & $1.7\cdot10^{-28}$ & $0.59$\\
			$6$ & $2$ & $3.8552$  & $1.2\cdot10^{-1}$ & $0.003$ & $3.2\cdot10^{-47}$ & $0.56$\\
			$6$ & $3$ & $7.2077$  & $2.0\cdot10^{-1}$ & $0.005$ & $2.6\cdot10^{-35}$ & $0.58$\\
			$6$ & $4$ & $13.129$  & $3.2\cdot10^{-1}$ & $0.005$ & $2.1\cdot10^{-29}$ & $0.58$\\
			\bottomrule
		\end{tabular}
		\caption{$\ell^p$ balls of dimension $n$ at order $r=30$: relative certified widths and
			standalone times per order. The \textsc{Gevp} width depends
			on $n$ essentially alone, the \textsc{Cheb} width carries
			the scale factor $2^n(n+p)/(p\,\vol K)$. \textsc{Cheb} times cover the exact
			moments, the change of basis and the double-precision sum; \textsc{Gevp} times
			the multiprecision moments, pencils and eigensolves ($148$ digits).}
		\label{tab:lp_grid}
	\end{table}
	
	\begin{remark}[Absolute versus relative certificates]\label{rem:lp_scale}
		The \textsc{Cheb} certificate $|\vol K-V_r|\le\tfrac{2^n(\ws+m)}{m}\varepsilon_r$ of Theorem~\ref{thm:cheb_conv} and Proposition~\ref{prop:cheb_bracket}
		pays the uniform error $\|f-f_r\|_\infty$ against the \emph{unit mass} of $\nu$,
		while the signal $\int(1-t)_+\,d\nu=\tfrac{m}{2^n(\ws+m)}\vol K$ is exponentially
		small in $n$: the relative width is
		$\sim2^n(n+p)\sqrt{n-1}/(p\pi r\,\vol K)$, with the rate $1/r$ unimprovable at the
		kink (Remark~\ref{rem:bernstein}). The defect is invariant under renormalising $\mu$
		--- the product of prefactor and total mass is --- and expresses the
		measure-blindness of a sup-norm certificate: it cannot use the fact that the
		integrand is supported on $[0,1]$, where $\nu$ carries only the mass
		$2^{-n}\vol K$, while the bulk of $\nu$ sits near $t=n/3$. \textsc{Gevp} repairs
		this structurally: by~\eqref{eq:vr_pm}, $v_r^+/2^n$ minimises
		$\int q^2\,d\nu/\int q^2\,d\nu_w$ over $\deg q\le r$, i.e.\ it optimises the test
		polynomial \emph{against $\nu$ itself}; the scale mismatch survives only in the
		constant, which the geometric rate converts into an additive
		$\log C/(2\log\gamma)$ extra orders --- the transients of
		Figure~\ref{fig:lp_volume} (right) and the column $r$ of Table~\ref{tab:lp_budget}.
	\end{remark}
	
	\subsubsection{Scaling in the dimension under a one-minute budget}\label{sec:lp_budget}
	
	For $p=2$ we let $n$ grow along the ladder of Table~\ref{tab:lp_budget}, running
	\textsc{Gevp} over the orders $r=1,\dots,4$ and then $r=5,10,15,\dots$, each order
	computed independently at the working precision
	\[
	\mathrm{dps}(r)
	=\bigl\lceil\max\bigl(2r\log_{10}n,\ 1.55\,r\bigr)\bigr\rceil
	+\bigl\lceil-\log_{10}\alpha\bigr\rceil+50,
	\qquad
	\alpha=2^{-n}\vol K,
	\]
	where $\alpha$ is the scale of the generalized eigenvalue itself
	($\alpha\sim10^{-15}$ at $n=32$, $10^{-39}$ at $n=64$) and the term $1.55\,r$ accounts
	for the conditioning of the reference Hankel moment matrix $M_r(\nu_w)$, dominant for
	small $n$; each $n$ is stopped at relative width $10^{-6}$ or at a $60$\,s wall budget.
	Every computed bracket contained the exact volume. The tolerance is reached
	through $n=32$: the volume $4.30\cdot10^{-6}$ of the Euclidean ball in $\R^{32}$ is
	certified to relative width $6.1\cdot10^{-7}$ in $33$\,s, through Hankel pencils of size
	at most $71$. Beyond, the budget binds: $n=40$ still certifies $7.7\cdot10^{-5}$,
	informative certificates persist through $n=48$ (relative width $8\cdot10^{-2}$), and
	collapse at $n=64$. The cost per order is dominated by the eigensolves and grows with
	the precision ladder --- $8.5$\,s at $(r,\mathrm{dps})=(70,276)$ for $n=32$, $12.4$\,s at
	$(75,360)$ for $n=64$ --- so the frontier is set jointly by the rate
	$\gamma(n)^{-2r}$, the constant, and the precision price of each order. The rate fan of
	Figure~\ref{fig:lp_volume} (right), extended beyond the budget until every curve
	certifies $10^{-9}$, in fact exhibits the bound of Theorem~\ref{thm:rate} \emph{with its
		prefactor}: under the full law $r^{\ws/m}\gamma^{-2r}$ the decay factor per order is
	$\gamma^{2}(1+1/r)^{-n/p}$, i.e.\ the local slope is $-2\log_{10}\gamma+(n/p)/(r\ln10)$,
	in quantitative agreement with the data --- at $n=32$ the observed per-order factor near
	$r=70$ is $1.65$, against $1.61$ predicted and the asymptotic $\gamma^{2}=2.04$. The
	dashed references of the figure carry this full law, anchored at the final order and
	drawn over its decaying branch $r\ge n/(4\ln\gamma)$. The pure rate $\gamma^{-2r}$ is
	only the $r\to\infty$ limit of the slope, approached like $1/r$: matching it to within
	$10\%$ requires $r\gtrsim1.25\,n^{3/2}$ for $p=2$, i.e.\ $r\approx225$ at $n=32$, beyond
	any budget --- for large $n$ the prefactor $r^{\ws/m}$ is not a technicality but the
	visible regime. What the references do not capture is the still faster initial collapse
	from the starting scale, of the order of $\vol B/\vol K=2^n/\vol K$: this is the
	constant of Theorem~\ref{thm:rate} being paid.
	
	\begin{table}[ht]
		\centering
		\renewcommand{\arraystretch}{1.2}
		\begin{tabular}{r c rrrr}
			\toprule
			$n$ & $\vol K$ & $r$ & dps & rel.\ width & time (s)\\
			\midrule
			$2$  & $3.14$               & $5$  & $59$  & $1.1\cdot10^{-7}$ & $0.04$\\
			$4$  & $4.93$               & $10$ & $67$  & $3.6\cdot10^{-8}$ & $0.10$\\
			$8$  & $4.06$               & $20$ & $89$  & $4.2\cdot10^{-9}$ & $0.41$\\
			$16$ & $2.35\cdot10^{-1}$   & $35$ & $141$ & $6.5\cdot10^{-8}$ & $2.3$\\
			$24$ & $1.93\cdot10^{-3}$   & $50$ & $199$ & $7.1\cdot10^{-7}$ & $8.2$\\
			$32$ & $4.30\cdot10^{-6}$   & $70$ & $276$ & $6.1\cdot10^{-7}$ & $32.6$\\
			$40$ & $3.60\cdot10^{-9}$   & $80$ & $328$ & $7.7\cdot10^{-5}$ & $57.6$\\
			$48$ & $1.38\cdot10^{-12}$  & $80$ & $346$ & $8.1\cdot10^{-2}$ & $59.7$\\
			$64$ & $3.08\cdot10^{-20}$  & $75$ & $360$ & $2.3\cdot10^{4}$  & $50.8$\\
			\bottomrule
		\end{tabular}
		\caption{\textsc{Gevp} for the Euclidean balls under a $60$\,s budget per
			dimension: last order reached, its working precision, the certified relative
			width and the total time. The target $10^{-6}$ is met through $n=32$;
			informative certificates (relative width $<1$) persist through $n=48$.}
		\label{tab:lp_budget}
	\end{table}
	
	\begin{figure}[ht]
		\centering
		\includegraphics[width=0.49\linewidth]{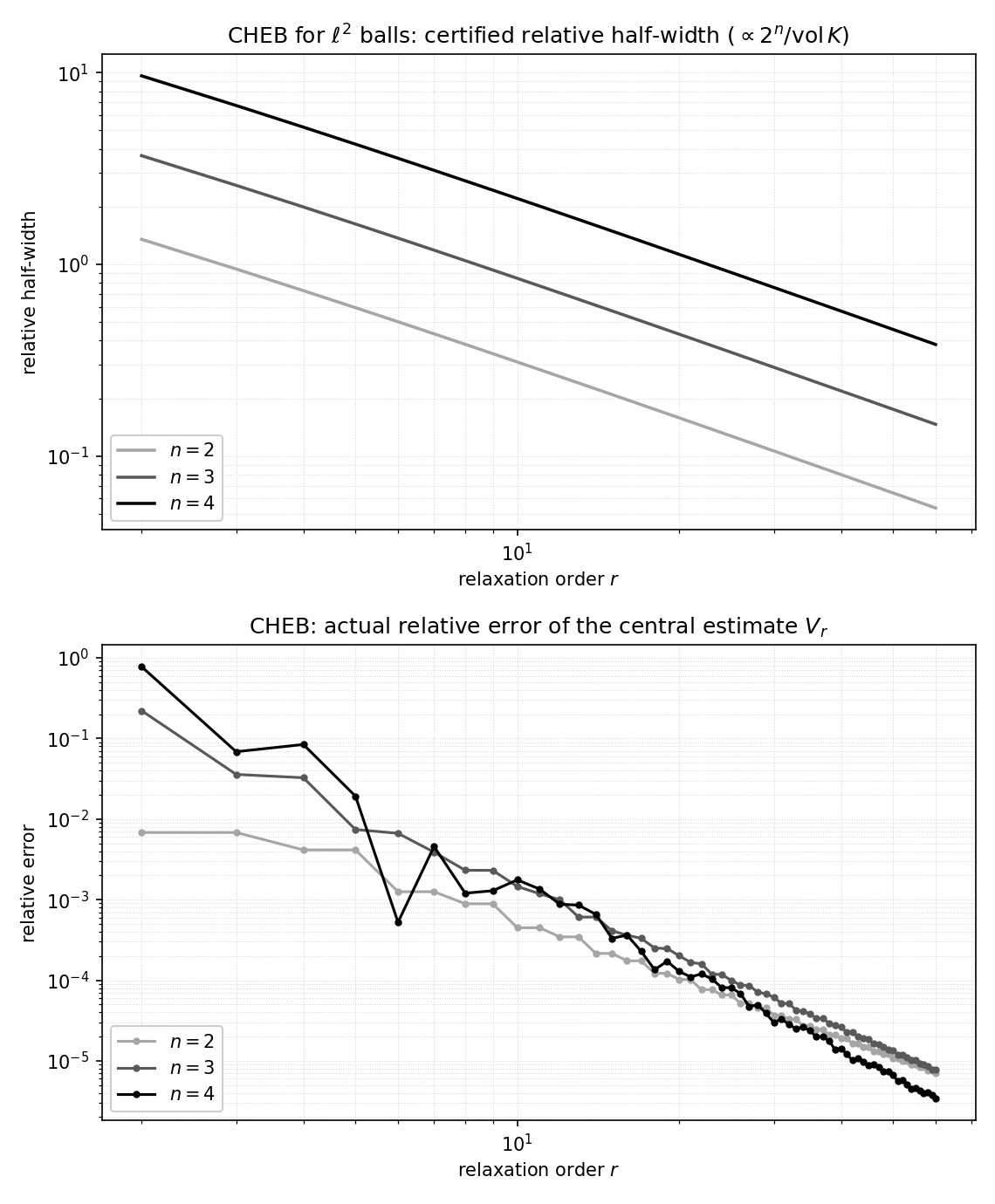}\hfill
		\includegraphics[width=0.49\linewidth]{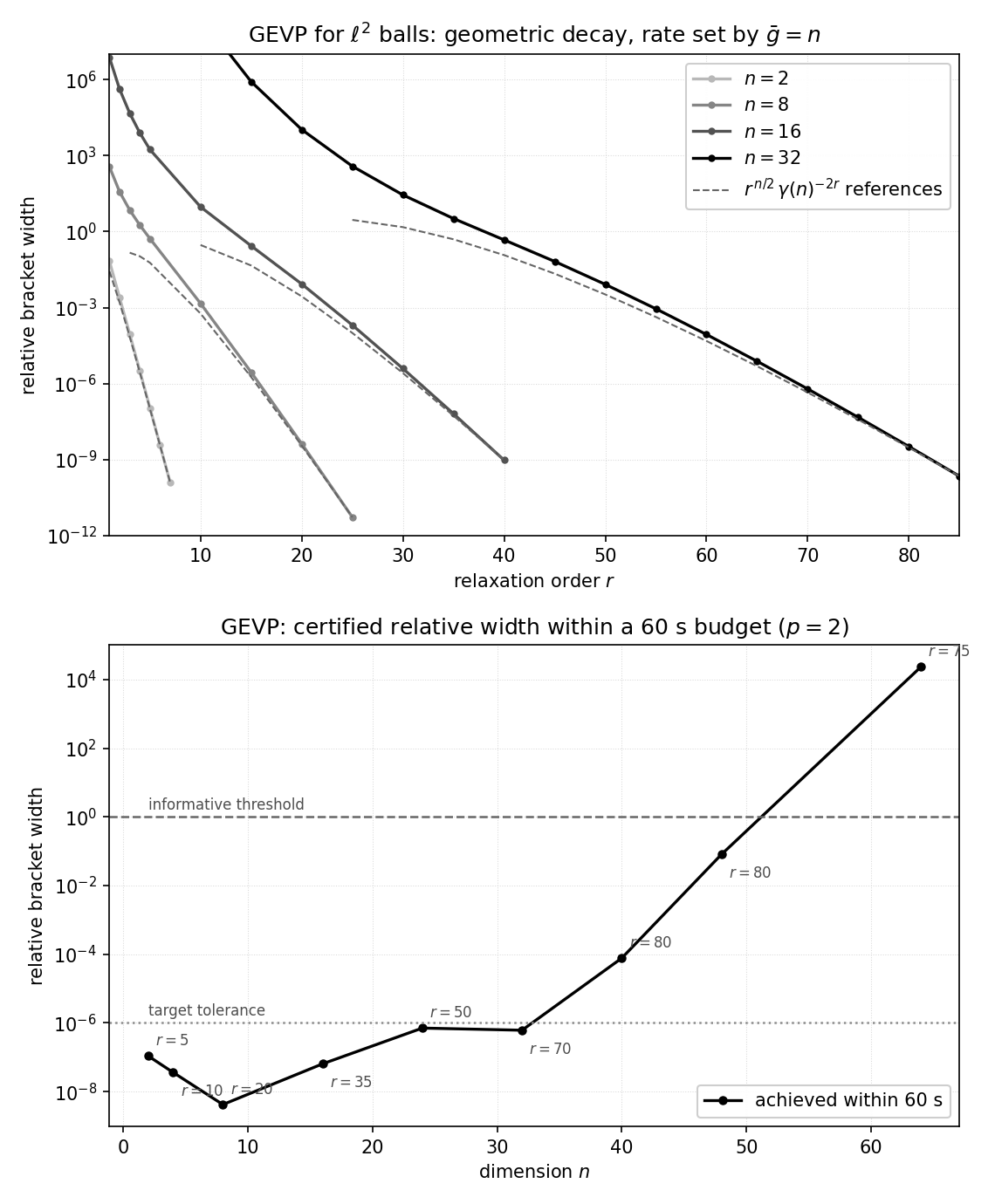}
		\caption{$\ell^2$ balls. \emph{Left} (\textsc{Cheb}): certified relative half-width
			for $n=2,3,4$ (top, the $O(1/r)$ fan with offsets $2^n(n+p)\sqrt{n-1}/(p\pi\,
			\vol K)$) and actual relative error of the central estimate $V_r$ (bottom).
			\emph{Right} (\textsc{Gevp}): relative bracket width against the order for
			$n\in\{2,8,16,32\}$, extended beyond the budget until every curve certifies
			$10^{-9}$, with the $r^{n/2}\gamma(n)^{-2r}$ references --- the full law of
			Theorem~\ref{thm:rate} --- anchored at the final order and drawn over their
			decaying branch (top), and the certified relative width achieved within the
			$60$\,s budget against the dimension, with the informativeness (width $1$) and
			target ($10^{-6}$) thresholds (bottom).}
		\label{fig:lp_volume}
	\end{figure}
	
	\section{Conclusion and outlook}\label{sec:conclusion}
	
	We have shown that a deterministic algorithm for computing the volume of a sublevel set of a nonnegative quasi-homogeneous polynomial need not involve any semidefinite programming:
	the problem can be solved, to arbitrary accuracy, by a purely one-dimensional
	procedure. The construction pushes the Lebesgue measure of the ambient box
	forward through the polynomial, producing a univariate measure on the line
	whose cumulative distribution, evaluated at the prescribed threshold, is
	exactly the volume in question. Because the polynomial is quasi-homogeneous, a
	weighted Stokes identity fixes every moment of this pushforward against an
	explicit Beta reference measure, leaving a single undetermined scalar -- the
	volume itself. The original multivariate volume problem is thereby reduced to a
	truncated one-dimensional moment problem whose data are available in closed
	form.
	
	On this reduced problem we proposed two complementary algorithms, each
	returning a pair of rigorous lower and upper bounds that bracket the volume.
	\textsc{Cheb} assembles the bounds from closed-form Chebyshev coefficients and
	converges at an algebraic rate in the truncation order. \textsc{Gevp} extracts
	them from a single generalized eigenvalue problem and converges geometrically;
	its only overhead is a working precision growing linearly with the order, which
	arbitrary-precision arithmetic absorbs at negligible cost.
	
	Two extensions seem to us especially promising. The first replaces the single
	constraint by several quasi-homogeneous inequalities $g_j \leq 1$, each with its
	own weight vector. This is within reach because the two ingredients that drive
	\textsc{Cheb} and \textsc{Gevp} -- closed-form moments delivered by the weighted
	Stokes identity, and a reference measure pinning those moments down up to the
	volume -- both survive: pushing the Lebesgue measure forward through the map
	$(g_1,\ldots,g_p)$ yields a joint measure whose mixed moments remain
	Stokes-accessible, since each $g_j$ contributes its own scaling, while every
	one-dimensional marginal still carries a Beta-type reference. The volume then
	becomes a truncated moment problem on the unit box that can be approached
	through these joint univariate marginals. Only the bookkeeping changes --
	several weight vectors and a multivariate rather than scalar moment problem --
	not the underlying mechanism.
	
	The second extension abandons quasi-homogeneity entirely. The univariate
	pushforward itself needs no structural assumption on $g$: one can always form
	the image measure $\nu$ and read the volume off its distribution function, and
	the Stokes-based acceleration of moment methods for volume computation continues
	to apply, exactly as developed in \cite{stokesgibbs}. What is lost is the favourable convergence
	rate, and the reason is instructive. By the coarea formula the density of $\nu$
	at a value $t$ is the integral of the reciprocal of the norm of the gradient of $g$ over the level set
	$\{g = t\}$; it therefore blows up wherever that level set meets a critical
	point of $g$, that is, at the \emph{critical values} of $g$, producing
	singularities of $\nu$ that are typically algebraic or logarithmic. In the
	quasi-homogeneous case the only critical value is the origin, sitting harmlessly
	at the endpoint of the interval, so the density reduces to a single power
	function -- a monomial in $t$ -- that is analytic on the interior; this
	analyticity is precisely what makes the Chebyshev and eigenvalue approximations
	converge so fast. A general $g$, by contrast, has critical values scattered
	through the interior of $[0, \bg]$, and approximating a density with such
	interior singularities is intrinsically slower than approximating an analytic
	one.  The bounds produced by both algorithms remain valid, and the degradation admits a
	quantitative form. The proofs of
	Propositions~\ref{prop:rate_upper}--\ref{prop:rate_lower} use only that $\hat\nu$ is
	supported in $[1,\bg]$ and that $\nu_w$ charges a neighbourhood of a point at
	positive distance from that support. Suppose $g$ has no critical value in
	$(1,c)$ for some $c \le \bg$. Then $\nu$ is analytic on $(1,c)$, the pair of
	intervals $[0,1]$ and $[1,c]$ retains a strictly positive conformal separation, and
	the same Bernstein-Walsh argument applied to $[1,c]$ in place of $[1,\bg]$ yields the
	geometric rate $\gamma_c^{-2r}$ with
	$\gamma_c = (\sqrt c + 1)/(\sqrt c - 1)$, at the price of a constant depending on
	$\nu\bigl((c,\bg]\bigr)$. The rate of the general problem should therefore be
	governed by the first critical value of $g$ above the threshold --- the same
	organizing quantity as in the Picard-Fuchs approach of \cite{slm2019} --- rather
	than by the absence of quasi-homogeneity as such. Making this precise, and
	estimating that critical value effectively, seems to us the natural continuation.

	\section*{Acknowledgments}
	
	The authors are grateful to Matteo Tacchi-B\'enard for useful exchanges.
	The authors acknowledge the use of AI (ChatGPT 5.6 and Claude Fable 5) for assistance with brainstorming ideas, mathematical development, coding and drafting the manuscript. The final content, analysis and conclusions remain the sole responsibility of the authors.

\end{document}